\title{Operads in Unstable Global Homotopy Theory}
\author{Miguel Barrero}
\address{IMAPP, Radboud University Nijmegen, The Netherlands}
\email{m.barrero@math.ru.nl}
\newcommand{\adjunction}[4]{%
  #1\colon #2%
  \mathrel{\vcenter{%
    \offinterlineskip\m@th
    \ialign{%
      \hfil$##$\hfil\cr
      \longrightharpoonup\cr
      \noalign{\kern-.3ex}
      \smallbot\cr
      \longleftharpoondown\cr
    }%
  }}%
  #3 \noloc #4%
}
\newcommand{\longrightharpoonup}{\relbar\joinrel\rightharpoonup}
\newcommand{\longleftharpoondown}{\leftharpoondown\joinrel\relbar}
\newcommand\noloc{%
  \nobreak
  \mspace{6mu plus 1mu}
  {:}
  \nonscript\mkern-\thinmuskip
  \mathpunct{}
  \mspace{2mu}
}
\newcommand{\smallbot}{%
  \begingroup\setlength\unitlength{.15em}%
  \begin{picture}(1,1)
  \roundcap
  \polyline(0,0)(1,0)
  \polyline(0.5,0)(0.5,1)
  \end{picture}%
  \endgroup
}
\tikzset{double line with arrow/.style args={#1,#2}{decorate,decoration={markings,%
mark=at position 0 with {\coordinate (ta-base-1) at (0,1pt);
\coordinate (ta-base-2) at (0,-1pt);},
mark=at position 1 with {\draw[#1] (ta-base-1) -- (0,1pt);
\draw[#2] (ta-base-2) -- (0,-1pt);
}}}}
\tikzset{Equal/.style={-,double line with arrow={-,-}}}
\newtheorem{thm}{Theorem}[section]
\newtheorem{coro}[thm]{Corollary}
\newtheorem{lemm}[thm]{Lemma}
\newtheorem{prop}[thm]{Proposition}
\newtheorem{thmintro}{Theorem}
\theoremstyle{definition}
\newtheorem{defi}[thm]{Definition}
\newtheorem*{question}{Question}
\theoremstyle{remark}
\newtheorem{constr}[thm]{Construction}
\theoremstyle{remark}
\newtheorem{cond}[thm]{Condition}
\theoremstyle{remark}
\newtheorem{rem}[thm]{Remark}
\theoremstyle{remark}
\newtheorem{ex}[thm]{Example}
\DeclareMathAlphabet{\mathpzc}{OT1}{pzc}{m}{it}
\newcommand{\Topcat}{\underline{\smash{\mathrm{Top}}}}
\newcommand{\Setcat}{\underline{\mathrm{Set}}}
\newcommand{\id}{\mathrm{id}}
\newcommand{\RR}{\mathbb{R}}
\newcommand{\NN}{\mathbb{N}}
\DeclareMathOperator{\Ima}{Im}
\DeclareMathOperator{\tel}{tel}
\DeclareMathOperator{\Map}{Map}
\DeclareMathOperator{\ev}{ev}
\newcommand{\cat}{\mathscr{C}}
\newcommand{\dat}{\mathscr{D}}
\newcommand{\Spc}{\underline{\smash{\mathpzc{Spc}}}}
\DeclareMathOperator{\Hom}{Hom}
\DeclareMathOperator{\End}{End}
\DeclareMathOperator*{\colim}{colim}
\DeclareMathOperator{\Fun}{Fun}
\newcommand{\Fu}{\underline{\Fun}}
\newcommand{\GTopcat}{\underline{\smash{\mathrm{GTop}}}}
\newcommand{\KTopcat}{\underline{\smash{\mathrm{KTop}}}}
\newcommand{\GSpc}{\underline{\mathrm{G}}\underline{\smash{\mathpzc{Spc}}}}
\newcommand{\SnSpc}{\underline{\smash{\Sigma_n}}\underline{\smash{\mathpzc{Spc}}}}
\newcommand{\Skcatfull}{\underline{\smash{\Sigma_k}}\underline{\smash{\cat}}}
\newcommand{\Gcat}{\underline{\mathrm{G}}}
\newcommand{\Sncat}{\underline{\Sigma_n}}
\newcommand{\Lcat}{\underline{\mathrm{L}}}
\newcommand{\Opn}{\mathcal{O}_n}
\newcommand{\Ug}{\mathcal{U}_G}
\newcommand{\Uk}{\mathcal{U}_K}
\newcommand{\Ul}{\mathcal{U}_L}
\newcommand{\Op}{\mathcal{O}}
\newcommand{\Ld}{\mathcal{LD}}
\newcommand{\Pop}{\mathcal{P}}
\newcommand{\Popn}{\mathcal{P}_n}
\DeclareMathOperator{\conf}{Conf}
\DeclareMathOperator{\Comm}{Comm}
\newcommand{\FF}{\mathcal{F}}
\newcommand{\fat}{\mathscr{F}}
\newcommand{\sob}{\Sigma_\ast\textrm{-}\cat}
\newcommand{\algop}{\underline{\smash{\mathpzc{Alg}}}(\Op)}
\newcommand{\algcomm}{\underline{\smash{\mathpzc{Alg}}}(\Comm)}
\newcommand{\Uop}{U_{\algop}}
\newcommand{\Fop}{F_{\algop}}
\newcommand{\algpop}{\underline{\smash{\mathpzc{Alg}}}(\Pop)}
\newcommand{\Upop}{U_{\algpop}}
\newcommand{\Fpop}{F_{\algpop}}
\newcommand{\Hcof}{\mathpzc{Hcof}}
\newcommand{\OpeSpc}{\mathcal{OP}\textrm{-}\Spc}
\newcommand{\OpeTop}{\mathcal{OP}\textrm{-}\Topcat}
\newcommand{\KK}{\mathpzc{K}}
\newcommand{\OmGTopcat}{\underline{\smash{\mathrm{(O(m)}\times \mathrm{G)} \mathrm{Top}}}}
\newcommand{\Is}{\mathsf{I}}
\newcommand{\Js}{\mathsf{J}}
\newcommand{\Ks}{\mathsf{K}}
\DeclareMathOperator{\sh}{sh}
\newcommand{\reg}{\textrm{-reg}}
\begin{document}
\begin{abstract}
    We study operads in unstable global homotopy theory, which is the homotopy theory of spaces with compatible actions by all compact Lie groups. We show that the theory of these operads works remarkably well, as for example it is possible to give a model structure for the category of algebras over any such operad. We define global $E_\infty$-operads, a good generalization of $E_\infty$-operads to the global setting, and we give a rectification result for algebras over them.
\end{abstract}
\maketitle
\tableofcontents

\section{Introduction}

Operads were first introduced by May \cite{may1972geometry} to study infinite loop spaces. Since then they have found uses in many areas of mathematics, including algebra, higher category theory, geometry, and mathematical physics. In general, an operad codifies a collection of operations of varying arity in a symmetric monoidal category.

An algebra over an operad $\Op$ is a representation of the abstract operations that the operad encodes as actual operations in some object. For example, an algebra over the commutative operad $\Comm$ is a commutative monoid in the given symmetric monoidal category. Another important example is that of an \emph{$E_\infty$-operad}, which encodes a binary operation that is unital, associative and commutative but only up to all higher homotopies.

One area that has seen increased interest in the last decade is equivariant homotopy theory. It is dedicated to studying the homotopy theory of spaces with an action by a topological group $G$. One can construct operads in the category of $G$-spaces, and this yields a theory that is remarkably different to the non-equivariant case. Unlike in the non-equivariant case, where all $E_\infty$-operads are equivalent, there are multiple possible non-equivalent notions of what an $E_\infty$-operad in $G$-spaces could be, all of which are non-equivariantly $E_\infty$-operads. For example, there is the naive one, an $E_\infty$-operad in spaces given the trivial $G$-action. This is however not the best choice when one wants to study objects like equivariant infinite loop spaces or equivariant spectra with some multiplicative structure.

Instead the better choice is to look at both the $G$-action and the $\Sigma_n$-action on each $\Opn$ at the same time. An \emph{$E_\infty$-$G$-operad} is an operad in $G$-spaces where each $\Opn$ is a universal space for the family of \emph{graph subgroups} of $G \times \Sigma_n$. Algebras over an $E_\infty$-$G$-operad have more structure than algebras over a naive $E_\infty$-operad in $G$-spaces.

In this paper we look at operads in the setting of \emph{unstable global equivariant homotopy theory}. This is the homotopy theory of spaces which have simultaneous and compatible actions by all compact Lie groups. There are important equivariant constructions, like equivariant $K$-theory spectra and equivariant Thom spectra, that can be understood as a single globally equivariant object. We work with the model for unstable global homotopy theory based on \emph{orthogonal spaces}, introduced by Schwede \cite{global}. An orthogonal space can be thought of as the unstable analog of an orthogonal spectrum. There are some similarities between the theories of operads in the global equivariant setting and the $G$-equivariant setting for a single group $G$, but operads in the global equivariant setting are technically better behaved.

An orthogonal space has an \emph{underlying $K$-space} for each compact Lie group $K$. We study orthogonal spaces through these compatible $K$-actions for each $K$. A morphism of orthogonal spaces is said to be a \emph{global equivalence} if it is an equivalence of underlying $K$-spaces for each compact Lie group $K$. There is a model structure in the category of orthogonal spaces with these global equivalences as the weak equivalences, called the \emph{global model structure}.

A natural question to consider is whether one can construct a model structure on $\algop$ the category of algebras over a given operad $\Op$ in orthogonal spaces using the global model structure on the underlying category.

\begin{question}
Does the forgetful functor create the weak equivalences and fibrations of some model structure on the category of algebras over a given operad $\Op$?
\end{question}

The first place where this question was examined for a general category was in Spitzweck's PhD thesis \cite{spitzweck2001operads}, which provided some conditions under which this is true. The main technical point there was a factorization for pushout diagrams in the category of algebras over an operad. A different approach was used by Berger and Moerdijk \cite{Berger02axiomatichomotopy}. Pavlov and Scholbach \cite{PavlovSchAdmissibility} studied this question most extensively with full generality, and White and Yau \cite{WhiteYau} studied an analogous question for semi-model structures. We use a different factorization for pushout diagrams given by Sagave and Schlichtkrull \cite{sagave}, originally from Elmendorf and Mandell \cite{ElmendorfMandell}.

The first main result that we obtain is that the desired model structure exists for any operad in orthogonal spaces.

\begin{thmintro}[Theorem~\ref{thmoperadmodelcat}]
\label{thmintrooperadmodelcat}
Let $\Op$ be any operad in $(\Spc, \boxtimes)$ the category of orthogonal spaces, with the positive global model structure and the symmetric monoidal structure given by the box product. Then there is a cofibrantly generated model category structure on $\algop$ the category of algebras over $\Op$, where the forgetful functor $U_{\algop}$ creates the weak equivalences and fibrations, and sends cofibrations in $\algop$ to $h$-cofibrations in $\Spc$.
\end{thmintro}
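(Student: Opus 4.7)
My plan is to apply a standard transfer principle along the free-forgetful adjunction $\Fop \dashv \Uop \colon \algop \to \Spc$, taking as generating (trivial) cofibrations of $\algop$ the images under $\Fop$ of the generating (trivial) cofibrations of the positive global model structure on $\Spc$. For this I need $\algop$ to be complete and cocomplete (limits are created in $\Spc$; cocompleteness is standard for algebras over an operad in a cocomplete symmetric monoidal category), the small object argument to apply to the sets $\Fop(I)$ and $\Fop(J)$, and, most crucially, the acyclicity condition: every relative $\Fop(J)$-cell complex should become a global equivalence under $\Uop$. A standard reduction shows that this in turn follows from the statement that in any pushout diagram
\[
\begin{tikzcd}
\Fop(X) \arrow[r] \arrow[d] & \Fop(Y) \arrow[d] \\
A \arrow[r] & B
\end{tikzcd}
\]
in $\algop$ with $X \to Y$ a positive global trivial cofibration, the underlying map $A \to B$ in $\Spc$ is a global equivalence.

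To analyse this pushout I would import the filtration developed in \cite{sagave}. It exhibits $A \to B$ as a sequential colimit of pushouts in $\Spc$, where the $n$-th step is built from the operadic piece $\Op(n)$ (with its $\Sigma_n$-action), the iterated pushout-product power of $X \to Y$ (as a $\Sigma_n$-equivariant map), and the underlying orthogonal space of $A$, all assembled and quotiented by the diagonal $\Sigma_n$-action. Thus the acyclicity condition reduces to the claim that for each $n$ these attaching maps remain global equivalences and h-cofibrations after forming $\Sigma_n$-orbits, whence their pushouts and the sequential colimit inherit the same properties.

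The main obstacle, and the reason for insisting on the positive global model structure, is the presence of these symmetric group quotients in each filtration step: for an arbitrary operad the components $\Op(n)$ carry no restriction on their $\Sigma_n$-action, so homotopical control of the quotient depends entirely on the $\Sigma_n$-freeness of the other factor. Positivity is arranged precisely so that the iterated pushout-product powers of generating (trivial) cofibrations carry a sufficiently free $\Sigma_n$-action, rendering the $\Sigma_n$-orbits homotopically well behaved. Combining this with standard closure properties of h-cofibrations and global equivalences under pushouts, sequential colimits, and the box product yields the acyclicity condition. Running the same argument with cofibrations in place of trivial cofibrations simultaneously gives the final assertion that cofibrations in $\algop$ are sent to h-cofibrations in $\Spc$.
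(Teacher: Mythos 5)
Your proposal follows essentially the same route as the paper: transfer the model structure along the free--forgetful adjunction, analyze pushouts of free maps via Sagave's filtration to reduce to the attaching maps $U_n^\Op(A) \boxtimes_{\Sigma_n} i^{\square n}$, and use that positivity forces $L_{G,V}^{\boxtimes n}$ to be levelwise $\Sigma_n$-free and Hausdorff so that passing to $\Sigma_n$-orbits preserves $h$-cofibrations and global equivalences. You correctly pinpoint why the result holds for \emph{all} operads with no cofibrancy hypothesis: the homotopical control needed to pass through the $\Sigma_n$-quotients rests on the free $\Sigma_n$-action carried by the box powers of the generating (acyclic) cofibrations, not on any property of $\Op(n)$, together with the box product being fully homotopical.
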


This result is surprising, in that it holds for all operads. Such a result generally holds for all operads if the category is nice enough,  for example symmetric spectra based on simplicial sets, see the work of Harper \cite{Harper_2009}. One relevant property there is that all simplicial sets are cofibrant. Since not all orthogonal spaces are cofibrant, the approach of \cite{Harper_2009} does not apply to the present case.

Instead we use that the box product of orthogonal spaces is fully homotopical. By definition, this means that the box product of two global equivalences is a global equivalence, without any cofibrancy assumptions. This in turn removes any cofibrancy assumptions on the operad in Theorem~\ref{thmintrooperadmodelcat}.

Theorem~\ref{thmintrooperadmodelcat} was proven by Schwede \cite{global} for the specific case of the commutative operad $\Comm$. Algebras over $\Comm$ are the commutative monoids in orthogonal spaces with respect to the box product, which are usually called \emph{ultra-commutative monoids}, and they have a very rich structure. We generalize the result in \cite{global} to any operad. To accomplish this we need to use several different technical results and tools.

Some of these technical results deal with the $\Sigma_n$-objects in the category of orthogonal spaces, and so we study them in detail. We consider more generally orthogonal spaces which have an additional action by a fixed compact Lie group $G$, which we call \emph{$G$-orthogonal spaces}. Thus, the underlying $K$-space of a $G$-orthogonal space is a $(K \times G)$-space.

We define the notion of a $G$-global equivalence between $G$-orthogonal spaces, which takes into account both the $G$-action, and the action by each compact Lie group $K$. We also study various properties of these $G$-orthogonal spaces. In Appendix~\ref{appendixmodel} we give a model structure for $G$-orthogonal spaces which has the $G$-global equivalences as weak equivalences. Since $G$ is any compact Lie group, the results of Appendix~\ref{appendixmodel} are new in this generality.

This notion of "globally equivariant objects" with an additional action by a fixed group $G$ was studied extensively by Lenz in the context of Algebraic $K$-theory \cite{lenz2021gglobal}. There, various model structures were given for a discrete group $G$ not necessarily finite. Orthogonal spaces and orthogonal spectra with a $G$-action for a compact Lie group $G$ were also studied from the global point of view by Schwede \cite{schwede2021global}, however no model structure was defined there.

Our second main result is a characterization of morphisms of operads in orthogonal spaces that induce a Quillen equivalence between the respective categories of algebras.

\begin{thmintro}[Theorem~\ref{thmquillen}]
\label{thmintroquillen}
Let $g \colon \Op \to \Pop$ be a morphism of operads in $(\Spc, \boxtimes)$ the category of orthogonal spaces, with the positive global model structure and the symmetric monoidal structure given by the box product. Then the extension and restriction adjunction $(g_!, g^\ast)$ is a Quillen equivalence between the respective categories of algebras if and only if for each $n \geqslant 0$ the morphism $g_n \colon\Opn \to \Popn$ is a $\Sigma_n$-global equivalence.
\end{thmintro}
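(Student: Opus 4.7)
The plan is to reduce the Quillen equivalence question to a statement about units on cofibrant objects and then treat each direction separately. The restriction functor $g^\ast \colon \algpop \to \algop$ commutes with the forgetful functors down to $\Spc$; since by Theorem~\ref{thmintrooperadmodelcat} these forgetful functors create both fibrations and weak equivalences, $g^\ast$ preserves and reflects them. Consequently, $(g_!, g^\ast)$ is automatically a Quillen adjunction in which the right adjoint preserves all weak equivalences, and the Quillen equivalence property is equivalent to the condition that for every cofibrant $\Op$-algebra $A$ the unit $A \to g^\ast g_! A$ is a global equivalence on underlying orthogonal spaces.

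For the sufficiency direction, I would assume each $g_n$ is a $\Sigma_n$-global equivalence and argue by transfinite induction that the unit is a global equivalence for every cell $\Op$-algebra $A$. The only nontrivial step is the pushout step $A \to A' = A \cup_{F_\Op X} F_\Op Y$ along a generating cofibration, for which I would apply the pushout filtration of \cite{sagave}. This writes $A'$ over $A$ as a transfinite composition of $h$-cofibrations whose cofibers at stage $n$ are built by tensoring $\Op(n)$ over $\Sigma_n$ with a $\Sigma_n$-orthogonal space $Q_n(A, X\to Y)$ depending only on $A$, $X$, $Y$. The analogous filtration applies to $g_! A \to g_! A'$, and the comparison at stage $n$ is the map obtained from $g_n$ by tensoring over $\Sigma_n$ with $Q_n$. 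Because the box product is fully homotopical and $g_n$ is a $\Sigma_n$-global equivalence, the stagewise comparison is a global equivalence; stability of global equivalences along transfinite compositions of $h$-cofibrations (already used in the proof of Theorem~\ref{thmintrooperadmodelcat}) then promotes this to a global equivalence of colimits, completing the induction.

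For the necessity direction, I would assume $(g_!, g^\ast)$ is a Quillen equivalence and apply the cofibrant-algebra unit criterion to the free algebra $F_\Op(X)$ on a cofibrant $X \in \Spc$. Since $g_!\, F_\Op(X) = F_\Pop(X)$, the unit becomes the coproduct $\coprod_n \Op(n) \otimes_{\Sigma_n} X^{\boxtimes n} \to \coprod_n \Pop(n) \otimes_{\Sigma_n} X^{\boxtimes n}$, and each arity-$n$ summand must therefore be a global equivalence. To upgrade this into the statement that $g_n$ itself is a $\Sigma_n$-global equivalence, I would vary $X$ through a family of cofibrant test objects whose $n$-th box powers, with their canonical $\Sigma_n$-actions, detect $\Sigma_n$-global equivalences between $\Sigma_n$-orthogonal spaces. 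Concretely, I expect semifree orthogonal spaces at suitably chosen positive-dimensional representations to serve this role, using the $\Sigma_n$-orthogonal space machinery developed earlier in the paper.

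The main obstacle I anticipate is the homotopical control of the pushout filtration in the sufficiency direction: one must verify that the auxiliary $\Sigma_n$-orthogonal spaces $Q_n$ are sufficiently $\Sigma_n$-free (in particular on the family of graph subgroups) for a $\Sigma_n$-global equivalence between the operad terms to descend to a global equivalence after taking $\Sigma_n$-coinvariants. This is precisely where full homotopicality of the box product and the positive global model structure become indispensable, and a careful study of the $\Sigma_n$-equivariant homotopy type of the $Q_n$ has to be carried out before the induction goes through.
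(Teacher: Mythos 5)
Your overall strategy matches the paper's: reduce to the unit on cofibrant objects (using that $g^\ast$ preserves and reflects weak equivalences and fibrations, which gives the Quillen adjunction automatically), prove sufficiency by cell induction through the Sagave--Schlichtkrull filtration, and prove necessity by applying the unit criterion to a free algebra and decoding the resulting coproduct. The necessity direction is in the right spirit; the paper uses the single test object $L_\RR = \Lcat(\RR, -)$, since $L_\RR^{\boxtimes n} \cong L_{\RR^n}$ is $\Sigma_n$-free and Hausdorff at every level and has non-empty fixed points on all graph subgroups, which, combined with Proposition~\ref{propfreefiniteorbitspc}, $\rho$-comparison, and a telescope argument, promotes each $g_n \boxtimes_{\Sigma_n} L_{\RR^n}$ being a global equivalence to $g_n$ being a $\Sigma_n$-global equivalence.

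The genuine gap is in the sufficiency direction. You describe the stage-$n$ cofiber of the pushout $A \to A' = A \cup_{F_\Op X} F_\Op Y$ as $\Op(n) \otimes_{\Sigma_n} Q_n(A, X\to Y)$, with $Q_n$ ``depending only on $A$, $X$, $Y$'', and hence the comparison with the $\Pop$-side as $g_n \otimes_{\Sigma_n} Q_n$. This is not the filtration in \cite[Proposition~A.16]{sagave}. The actual stage-$n$ comparison is between $U_n^\Op(A_\beta) \boxtimes_{\Sigma_n} i^{\square n}$ and $U_n^\Pop(g_!(A_\beta)) \boxtimes_{\Sigma_n} i^{\square n}$, where $U_n^\Op(-)$ is an enveloping-operad functor depending nontrivially on both $\Op$ \emph{and} the partially built algebra $A_\beta$, not just on the underlying orthogonal space of $A_\beta$. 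The morphism being compared at stage $n$ is therefore not $g_n$ tensored with a shared $Q_n$, but a map $g_{n,\beta}\colon U_n^\Op(A_\beta) \to U_n^\Pop(g_!(A_\beta))$ that must itself be controlled. As a consequence, the induction as you have set it up does not close: to control the unit at arity $0$ at stage $\beta+1$ you must know the maps $g_{k,\beta}$ are $\Sigma_k$-global equivalences for \emph{all} $k \geqslant 0$, not just $k=0$. The paper's proof therefore strengthens the induction hypothesis to this simultaneous statement, using the hypothesis on the $g_n$ only in the base case $A_0 = \Op_0$, where $U_k^\Op(\Op_0) \cong \Op_k$ and $g_{k,0}$ literally is $g_k$. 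Without that strengthening, the argument you outline stalls at the first nontrivial cell attachment. The difficulty you anticipate ($\Sigma_n$-freeness of the auxiliary object) is real but secondary; the essential missing idea is the strengthened, two-index induction on $(k,\beta)$.
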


As was the case with Theorem~\ref{thmintrooperadmodelcat}, this result applies in full generality, to any morphism between any two operads. For a morphism $g$ between "nice" operads, it is enough to require that the morphisms $g_n$ are weak equivalences in the underlying category to obtain a Quillen equivalence, as shown by Spitzweck \cite{spitzweck2001operads}. However, for arbitrary operads in orthogonal spaces Theorem~\ref{thmintroquillen} does \emph{not} hold if each $g_n$ is merely a global equivalence, it additionally needs to be a $\Sigma_n$-global equivalence. In particular, if $\Op$ is a topological $E_\infty$-operad given the trivial global structure, $\algop$ is not Quillen equivalent to the category of ultra-commutative monoids. Thus, as in the $G$-equivariant setting mentioned at the beginning, this naive global $E_\infty$-operad is not the best one to consider.

Instead, Theorem~\ref{thmintroquillen} suggest a good notion of what an $E_\infty$-operad in the global equivariant sense should be. We define a \emph{global $E_\infty$-operad} to be an operad $\Op$ in $(\Spc, \boxtimes)$ such that each $\Opn$ is $\Sigma_n$-globally equivalent to $\ast$, the one-point orthogonal space. Then the naive global $E_\infty$-operads of the previous paragraph are not actually global $E_\infty$-operads. For any global $E_\infty$-operad $\Op$, Theorem~\ref{thmintroquillen} implies that the category of algebras over $\Op$ is Quillen equivalent to the category of ultra-commutative monoids. Thus, any algebra over a global $E_\infty$-operad can be rectified to an ultra-commutative monoid, and so these algebras also encode the highest possible level of commutativity.

In this article we provide several examples of global $E_\infty$-operads. Some of these are global analogs of classical operads in (equivariant) homotopy theory. These include a global version of the little disks operad and the Steiner operad, which are constructed in a similar way to the little disks and Steiner $G$-operads associated to a $G$-universe for a compact Lie group $G$.

In the $G$-equivariant case, there is a whole hierarchy of non-equivalent operads between a naive $E_\infty$-operad in $G$-spaces and an $E_\infty$-$G$-operad. These in-between operads are called \emph{$N_\infty$-operads}, and were introduced by Blumberg and Hill \cite{BLUMBERG2015658}. They codify various levels of commutativity, by imposing the existence of certain additive transfers/multiplicative norms. In the global setting, there is also a hierarchy of operads between the naive global $E_\infty$-operads and the global $E_\infty$-operads. These operads in orthogonal spaces are the global analogs of $N_\infty$-operads. We provide a classification of them in \cite{globalNin}.

\subsection{Structure of this paper}

In Section~\ref{sectionfirst} we begin by recalling the basic properties of operads as defined in any symmetric monoidal category. We then introduce unstable global homotopy theory, to put in context the questions that we examine. We also give plenty of examples of operads in orthogonal spaces, to build some intuition.

In Section~\ref{sectionGorthogonal}, we study $G$-orthogonal spaces. We begin by defining the $G$-global equivalences, and checking their basic properties. We then look at how $G$-global equivalences interact with taking $G$-orbits and with the box product. Lastly we introduce the $h$-cofibrations of $G$-orthogonal spaces, which are used in the proofs of our main results, Theorem~\ref{thmintrooperadmodelcat} and Theorem~\ref{thmintroquillen}, presented in Section~\ref{sectionmainresults}.

In Section~\ref{sectionEinfinity} we introduce global $E_\infty$-operads, and check that several of the examples of global operads given in Section~\ref{sectionfirst} are global $E_\infty$-operads.

There is a model structure on $G$-orthogonal spaces with the $G$-global equivalences as the weak equivalences. For completeness, we present the construction of this model structure in Appendix~\ref{appendixmodel}. We do not need this model structure to prove our main theorems.

\subsection{Notation and conventions}

We introduce here various mathematical and notational conventions that are used throughout this article.

Whenever we talk about a space we are referring to a compactly generated weak Hausdorff topological space. We use $\Topcat$ to denote the category of such spaces. In the rare cases where we refer to a general topological space, we do so explicitly. We underline the names used for specific categories, like $\Setcat$ or $\Topcat$, but not the variables like $\cat$. In particular, $\Gcat$ denotes the one-object groupoid associated to a group $G$.

We often use $i_l$ to refer to the boundary map $i_l \colon \partial D^l \rightarrow D^l$ in $\Topcat$, for each $l \geq 0$. Similarly we use $j_l$ for the inclusion $j_l \colon D^l \cong D^l \times \{0\} \to D^l \times [0, 1]$ for $l \geq 0$.

We use $\times$ for the categorical product, $\boxtimes$ for the box product of orthogonal spaces introduced in Remark~\ref{remproductsinspc}, and $\otimes$ for the tensor product in a generic symmetric monoidal category.

In this article we only consider compact Lie groups, and closed subgroups of them. By default, an inner product space refers to a real inner product space, finite-dimensional unless stated otherwise, and for a compact Lie group $G$, a $G$-representation means an orthogonal $G$-representation in an inner product space, also finite-dimensional unless stated otherwise.

A \emph{complete $G$-universe} is a countably-infinite-dimensional orthogonal $G$-representation with non-zero fixed points, and such that for each finite-dimensional $G$-representation $V$, a countably infinite direct sum of copies of $V$ embeds $G$-equivariantly into $\Ug$. We denote a complete $G$-universe by $\Ug$.

We write $\Sigma_n$ for the symmetric group on $n$ elements. By default, group actions are left group actions. Sometimes we turn a right action into a left action and vice versa by acting via the inverse, without saying so explicitly.

Let $G$ be a compact Lie group. For any set $\FF$ of closed subgroups of $G$, we say that a morphism $f \colon X \rightarrow Y$ of $G$-spaces is an \emph{$\FF$-equivalence} (respectively an \emph{$\FF$-fibration}) if for any $H \in \FF$ the restriction of $f$ to the $H$-fixed points $f^H \colon X^H \rightarrow Y^H$ is a weak homotopy equivalence (respectively a Serre fibration). For each set $\FF$ of closed subgroups of $G$ there is a cofibrantly generated model structure on $\GTopcat$ the category of $G$-spaces, with the $\FF$-equivalences as weak equivalences and the $\FF$-fibrations as fibrations (see~\cite[Proposition~B.7]{global}). We refer to the cofibrations of this model structure as \emph{$\FF$-cofibrations}. If the set $\FF$ is the set of all closed subgroups of $G$ we instead use $G$-equivalences, $G$-fibrations, and $G$-cofibrations to refer to these classes of morphisms.

Given two compact Lie groups $K$ and $G$ we refer often to the set of \emph{graph subgroups} of $K \times G$, denoted by $\fat(K, G)$ and defined in Definition~\ref{defigraphsub}. We generally use $\Gamma$ to denote a graph subgroup. Whenever we also need to refer to the continuous homomorphism $\phi$ associated to the graph subgroup, we use $\Gamma_\phi$ to denote the graph subgroup.

Finally, when we talk about small objects in a category with respect to a class of morphisms, we follow the conventions of~\cite[Section~2.1.1]{hovey2007model}. We use the letters $\Is$, $\Js$ and $\Ks$ to denote sets of generating (acyclic) cofibrations of various model categories.

\subsection{Acknowledgements}

This article is a modified and expanded version of my Master Thesis. I would like to thank Markus Hausmann, my Master Thesis supervisor, for suggesting the topic and for his guidance. I would also like to thank Magdalena Kędziorek for her advice and many helpful comments, Stefan Schwede for suggesting various improvements and one of the examples of Section~\ref{sectionEinfinity}, Tommy Lundemo and Eva Höning for reading an earlier version of this paper, and the referee for helpful suggestions.

\section{Background}
\label{sectionfirst}

\subsection{Operads}

Let $\cat$ be a cocomplete symmetric monoidal category, where the tensor product preserves all small colimits in both variables. We follow the exposition of \cite{fresse} to define operads in $\cat$. Let $\sob$ denote the category of \emph{symmetric sequences} in $\cat$, these are sequences $\{X(n)\}_{n \in \NN}$ of objects of $\cat$  where each $X(n)$ has a right $\Sigma_n$-action. So explicitly, $\sob$ is the functor category $\Fun(\coprod_{n \in \NN} \Sncat, \cat)$.

One can define a \emph{composition monoidal structure} on $\sob$, denoted by $\circ$ (see \cite[2.2.1 and 2.2.2]{fresse}). Then an \emph{operad} in $\cat$ is just a monoid in $(\sob, \circ)$. An operad $\Op$ in $\cat$ gives a monad $\FF(\Op)$ on $\cat$ (see \cite[2.1.1 and 2.2.1]{fresse}). An \emph{algebra} over this operad is defined as an algebra over the monad $\FF(\Op)$. We use $\algop$ to denote the category of algebras over $\Op$, and write $F_{\algop}$ and $U_{\algop}$ for the adjoint free and forgetful functors between $\cat$ and $\algop$.

From now on, let $\cat$ additionally be a cofibrantly generated model category. Given an operad $\Op$ in $\cat$ we want to lift the model structure of $\cat$ through the forgetful functor $U_{\algop} \colon \algop \to \cat$. That is, we want to consider the class of those morphisms which $U_{\algop}$ sends to weak equivalences, and the class of those sent to fibrations, and ask the question of whether these two classes determine a model structure on $\algop$. If they do, we say that the operad $\Op$ is \emph{admissible}.

The result \cite[Lemma~2.3]{schshi} gives conditions under which one can lift a model structure to the category of algebras over a monad. Let $\Is$ and $\Js$ denote sets of generating cofibrations and acyclic cofibrations of $\cat$ respectively. Set $\Is_\Op=F_{\algop}(\Is)$ and $\Js_\Op=F_{\algop}(\Js)$, and let $\Is_\Op\reg$ and $\Js_\Op\reg$ denote the regular $\Is_\Op$-cofibrations and regular $\Js_\Op$-cofibrations in $\algop$. Those are the transfinite compositions of cobase changes in $\algop$ of morphisms in $\Is_\Op$ and $\Js_\Op$. Applying~\cite[Lemma~2.3]{schshi} to the monad $\FF(\Op)$ associated to an operad $\Op$, and using that $\FF(\Op)$ always preserves filtered colimits (\cite[Proposition~2.4.1]{fresse}) one obtains the following result.

\begin{lemm}[{\cite[Lemma~2.3]{schshi} applied to operads}]
\label{lemmschsh}
Let $\Op$ be an operad in $\cat$. Assume that the sources of morphisms in $\Is_\Op$ and $\Js_\Op$ are small with respect to $\Is_\Op\reg$ and $\Js_\Op\reg$ respectively, and that every morphism in $\Js_\Op\reg$ is a weak equivalence in $\cat$. Then $\algop$ is a cofibrantly generated model category where $U_{\algop}$ creates the weak equivalences and fibrations and $\Is_\Op$ and $\Js_\Op$ are generating sets of cofibrations and acyclic cofibrations.
\end{lemm}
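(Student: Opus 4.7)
The plan is a direct application of \cite[Lemma 2.3]{schshi} to the monad $\FF(\Op)$ on $\cat$ associated to the operad $\Op$. That lemma provides a general lifting criterion: given a cofibrantly generated model category $\cat$ and a monad $T$ on $\cat$ which preserves filtered colimits, the model structure on $\cat$ lifts along the forgetful functor $\cat^T \to \cat$ with generating (acyclic) cofibrations $T(\Is)$ and $T(\Js)$, provided that the sources of $T(\Is)$ and $T(\Js)$ are small relative to $T(\Is)\reg$ and $T(\Js)\reg$ respectively, and that every morphism in $T(\Js)\reg$ is a weak equivalence in $\cat$.

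My first step is to identify $\algop$ with the category of algebras $\cat^{\FF(\Op)}$ over the monad $\FF(\Op)$; under this identification the free functor $F_{\algop}$ becomes the underlying functor of the monad. Consequently $\Is_\Op = \FF(\Op)(\Is)$ and $\Js_\Op = \FF(\Op)(\Js)$, and the classes $\Is_\Op\reg$, $\Js_\Op\reg$ coincide with the classes of relative cell complexes appearing in the Schwede--Shipley hypotheses.

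The second step is to verify the three hypotheses of \cite[Lemma 2.3]{schshi}. The smallness conditions on the sources of $\Is_\Op$ and $\Js_\Op$, together with the acyclicity condition on morphisms in $\Js_\Op\reg$, are assumed outright in the statement of the present lemma. The only remaining hypothesis is that $\FF(\Op)$ preserves filtered colimits, and this is precisely the content of \cite[Proposition~2.4.1]{fresse}. With all hypotheses in place, \cite[Lemma 2.3]{schshi} delivers the desired cofibrantly generated model structure on $\algop$ in which $U_{\algop}$ creates the weak equivalences and fibrations and for which $\Is_\Op$, $\Js_\Op$ are generating sets. There is no genuine obstacle here: the hypotheses of the lemma have been tailored to match the Schwede--Shipley criterion verbatim, and the only non-formal input one supplies is the filtered colimit preservation of $\FF(\Op)$.
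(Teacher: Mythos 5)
Your proposal is correct and matches the paper's own (implicit, inline) proof: the paper likewise derives this lemma by applying \cite[Lemma 2.3]{schshi} to the monad $\FF(\Op)$, noting that the smallness and acyclicity hypotheses are carried over verbatim and that the sole additional input is $\FF(\Op)$ preserving filtered colimits, cited from \cite[Proposition~2.4.1]{fresse}. One small imprecision: $F_{\algop}$ is not the underlying endofunctor of the monad (that is $\FF(\Op) = U_{\algop}\circ F_{\algop}$); in the Schwede--Shipley notation $T\Is$ denotes the morphisms of $T$-algebras obtained by applying the free functor, so $\Is_\Op = F_{\algop}(\Is)$ is the right reading, but the argument is unaffected.
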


We have the following refinement of the result above, inspired by and similar to~\cite[Proposition~11.1.14]{fresse}.

\begin{thm}
\label{thmschsh2}
Let $\cat$ be a symmetric monoidal category which is also a cofibrantly generated model category with sets of generating cofibrations and acyclic cofibrations $\Is$ and $\Js$ respectively, and such that the monoidal product preserves all small colimits in each variable. Let $\Hcof$ be a class of morphisms in $\cat$ which satisfies the following:

\begin{enumerate}[a)]
    \item $\Hcof$ is closed under retracts and transfinite compositions.
    \item The sources of morphisms in $\Is$ and $\Js$ are small with respect to $\Hcof$.
    \item A map which is a transfinite composition of morphisms that are both in $\Hcof$ and are weak equivalences, is a weak equivalence.
\end{enumerate}

Fix any operad $\Op$ in $\cat$, and assume that for each pushout in $\algop$ of the form
\begin{equation}
\label{schsh2pushout}
\begin{tikzcd}
F_{\algop}(X) \arrow[r, "F_{\algop}(i)"] \arrow[d] & F_{\algop}(Y) \arrow[d] \\
A \arrow[r, "f"]                                                           & B \arrow[lu, phantom, "\ulcorner", very near start]                       
\end{tikzcd}
\end{equation}
the following hold:
\begin{enumerate}
    \item If $i\in \Is$ then $U_{\algop}(f)$ is in $\Hcof$.
    \item If $i \in \Js$ then $U_{\algop}(f)$ is a weak equivalence.
\end{enumerate}
Then the conditions of Lemma~\ref{lemmschsh} are satisfied, so $\algop$ is a cofibrantly generated model category where $U_{\algop}$ creates the weak equivalences and fibrations and $\Is_\Op$ and $\Js_\Op$ are generating sets of cofibrations and acyclic cofibrations of $\algop$. Furthermore $U_{\algop}$ sends cofibrations to morphisms in $\Hcof$.
\end{thm}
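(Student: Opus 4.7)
The plan is to reduce to Lemma~\ref{lemmschsh}, whose hypotheses are the smallness of sources of $\Is_\Op$ and $\Js_\Op$ with respect to $\Is_\Op\reg$ and $\Js_\Op\reg$, together with the statement that every morphism in $\Js_\Op\reg$ is a weak equivalence in $\cat$. The additional conclusion that cofibrations of $\algop$ are sent into $\Hcof$ by $U_{\algop}$ comes as a bonus: cofibrations in $\algop$ are retracts of maps in $\Is_\Op\reg$, and $\Hcof$ is closed under retracts by $(a)$, so it will suffice to show $U_{\algop}(\Is_\Op\reg)\subseteq \Hcof$.

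The preliminary observation I would record is that $U_{\algop}$ preserves filtered colimits, since the monad $\FF(\Op)$ associated to an operad does so (the reference to Proposition~2.4.1 of Fresse cited above the statement). In particular $U_{\algop}$ preserves transfinite compositions, so that any question about $U_{\algop}$ of a morphism in $\Is_\Op\reg$ or $\Js_\Op\reg$ reduces to the same question for $U_{\algop}$ applied to a single pushout of a free map. The key intermediate claim is then that both $U_{\algop}(\Is_\Op\reg)$ and $U_{\algop}(\Js_\Op\reg)$ are contained in $\Hcof$. For the $\Is$-case, any map in $\Is_\Op\reg$ is a transfinite composition of pushouts of $F_{\algop}(i)$ with $i\in \Is$; each such pushout lies in $\Hcof$ after applying $U_{\algop}$ by condition $(1)$, and closure of $\Hcof$ under transfinite compositions from $(a)$ finishes the argument. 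The $\Js$-case is handled in exactly the same manner.

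From this intermediate claim the smallness hypotheses become immediate through the free--forgetful adjunction: the source $F_{\algop}(X)$ of a morphism in $\Is_\Op$ is small in $\algop$ relative to $\Is_\Op\reg$ if and only if $X$ is small in $\cat$ relative to $U_{\algop}(\Is_\Op\reg)$, and since the latter is contained in $\Hcof$ by the intermediate claim, the smallness follows from condition $(b)$. The argument for sources of $\Js_\Op$ is verbatim the same. For the remaining weak equivalence condition, any morphism of $\Js_\Op\reg$ is a transfinite composition of pushouts of $F_{\algop}(j)$ with $j\in \Js$; condition $(2)$ makes each such pushout a weak equivalence after applying $U_{\algop}$, and the intermediate claim places it in $\Hcof$, so condition $(c)$ allows us to conclude that the transfinite composition is a weak equivalence in $\cat$.

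The step I expect to require the most care is the $\Js$-case of the intermediate claim, since condition $(2)$ as literally stated only yields weak equivalences, not membership in $\Hcof$. The clean way forward is to run the same pushout-by-pushout argument as in the $\Is$-case, appealing to the pushout analysis for free algebras that underlies both $(1)$ and $(2)$ in the intended applications; this is the only substantive obstacle, and once it is overcome every remaining deduction is a formal consequence of the adjunction together with the closure properties $(a)$--$(c)$.
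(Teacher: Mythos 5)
Your overall reduction to Lemma~\ref{lemmschsh} and your treatment of the $\Is$-case and of the $\Js_\Op\reg$-weak-equivalence condition are fine, and they match the paper's strategy. But the step you flag at the end as "the only substantive obstacle" is a genuine gap, and the resolution you propose does not close it.

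The problem is precisely that hypothesis~(2) only says $U_{\algop}(f)$ is a weak equivalence when $i\in\Js$; it says nothing about membership in $\Hcof$. Your suggestion is to "run the same pushout-by-pushout argument as in the $\Is$-case, appealing to the pushout analysis for free algebras that underlies both~(1) and~(2) in the intended applications." That is not a valid move: the theorem must hold for \emph{any} $\cat$, $\Op$, and $\Hcof$ satisfying the stated hypotheses, not just the specific orthogonal-space setting used later, so there is no "pushout analysis" available beyond what conditions~(1) and~(2) literally provide. Without knowing $U_{\algop}(\Js_\Op\reg)\subseteq\Hcof$, you can establish neither the smallness of sources of $\Js_\Op$ (via the adjunction and condition~(b)) nor the weak-equivalence condition via~(c), since~(c) requires the composands to lie in $\Hcof$ as well as be weak equivalences.

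The paper closes this gap with an argument you do not have. Let $T$ be the class of morphisms in $\algop$ with the right lifting property against $\Is_\Op$; by adjunction these are exactly the maps sent by $U_{\algop}$ to acyclic fibrations in $\cat$. Because $\Js$ consists of (acyclic) cofibrations of $\cat$, adjunction shows $\Js_\Op$ has the left lifting property against $T$, hence so does all of $\Js_\Op\reg$. Now for any $f$ with the left lifting property against $T$, run the small object argument for $\Is_\Op$ to factor $f=p\circ h$ with $h\in\Is_\Op\reg$ and $p\in T$; the retract argument shows $f$ is a retract of $h$, so $U_{\algop}(f)$ is a retract of $U_{\algop}(h)\in\Hcof$ and lies in $\Hcof$ by~(a). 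Applied to elements of $\Js_\Op\reg$ (or to the single-step pushouts $f_\alpha$), this gives exactly the membership in $\Hcof$ that your proposal assumes. You should supply this lifting-property-plus-retract argument explicitly; the remainder of your deduction then goes through as you describe.
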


Note that in the conditions of Theorem~\ref{thmschsh2} the class of morphisms $\Hcof$ is not required to contain all the cofibrations of $\cat$. In our application of Theorem~\ref{thmschsh2} in Subsection~\ref{subsectionTheoremI}, we take $\Hcof$ to be the class of $h$-cofibrations, the morphisms with the homotopy extension property, hence the notation. In most settings, including the model of unstable global homotopy theory we use, the class of $h$-cofibrations does contain all cofibrations, but as mentioned this is not necessary.

\begin{proof}[Proof of Theorem~\ref{thmschsh2}]
We have to check that the sources of morphisms in $\Is_\Op$ and $\Js_\Op$ are small with respect to $\Is_\Op\reg$ and $\Js_\Op\reg$ respectively, and that every morphism in $\Js_\Op\reg$ is a weak equivalence in $\cat$. Then by Lemma~\ref{lemmschsh} the claim follows.

By~\cite[Proposition~4.3.2]{borceux}, $U_{\algop}$ preserves filtered colimits. Morphisms in $\Is_\Op\reg$ are transfinite compositions of cobase changes of morphisms with the form $F_{\algop}(i)$ for $i \in \Is$, as in Diagram~(\ref{schsh2pushout}). Therefore by our assumptions, $U_{\algop}$ sends morphisms in $\Is_\Op\reg$ to $\Hcof$.

Let $X$ be the source of a morphisms in $\Is$. By b) $X$ is $\kappa$-small with respect to $\Hcof$ for some cardinal~$\kappa$. Let $\lambda$ be a $\kappa$-filtered ordinal, and $V \colon \lambda \to \algop$ a $\lambda$-sequence which lands in $\Is_\Op\reg$. Then
\begin{align*}
    & \colim_\lambda \Hom_{\algop}(F_{\algop}(X), V) \cong \colim_\lambda \Hom_\cat(X, U_{\algop} \circ V) \cong \Hom_\cat (X, \colim_\lambda U_{\algop} \circ V ) \cong \\ & \Hom_\cat (X, U_{\algop}(\colim_\lambda V)) \cong \Hom_{\algop} (F_{\algop}(X), \colim_\lambda V)
\end{align*}
and $F_{\algop}(X)$ is $\kappa$-small with respect to $\Is_\Op\reg$. In the second isomorphism we are using that $U_{\algop}$ sends morphisms in $\Is_\Op\reg$ to $\Hcof$ and that $X$ is $\kappa$-small with respect to $\Hcof$.

Let $T$ denote the class of morphisms with the right lifting property with respect to $\Is_\Op$. By adjointness these are precisely those morphisms which $U_{\algop}$ sends to acyclic fibrations. By adjointness again $\Js_\Op$ has the left lifting property with respect to $T$, and then so does $\Js_\Op\reg$.

Let $f$ be a morphism of $\algop$ which has the left lifting property with respect to $T$. Use the small object argument for $\Is_\Op$ on $f$ to obtain that $f$ is a retract of $h \in \Is_\Op\reg$, and therefore $U_{\algop}(f)$ is a retract of $U_{\algop}(h) \in \Hcof$. Therefore $U_{\algop}$ sends $\Js_\Op\reg$ to $\Hcof$, and so we can repeat the previous argument to obtain that sources of morphisms in $\Js_\Op$ are small with respect to $\Js_\Op\reg$.

For the second condition, let $f\in \Js_\Op\reg$ be the transfinite composition of morphisms $f_\alpha$, such that each $f_\alpha$ is a cobase change (in $\algop$) of $F_{\algop}(j_\alpha)$ for $j_\alpha \in \Js$. The morphism $F_{\algop}(j_\alpha)$ has the left lifting property with respect to $T$, and then so does each $f_\alpha$. Then by the previous discussion $U_{\algop}(f_\alpha)$ is in $\Hcof$, and it is a weak equivalence by the hypothesis of the theorem. Since $U_{\algop}$ preserves transfinite compositions, $U_{\algop}(f)$ is a transfinite composition of morphisms that are both in $\Hcof$ and are weak equivalences, and so $U_{\algop}(f)$ is a weak equivalence.
\end{proof}

\begin{rem}
Usually in a category with both a model structure and a monoidal structure, two compatibility conditions are required (see for example \cite[Definition~3.1]{schshi}). These are the pushout product axiom, and the requirement that the unit is cofibrant. These are not necessary to prove Theorem~\ref{thmschsh2}, but something similar to the pushout product axiom is usually needed to actually check that condition (2) of Theorem~\ref{thmschsh2} holds in practice.
\end{rem}

\begin{rem}
The question of under which conditions on a category and on an operad one can lift the model structure to the category of algebras over said operad has been studied in a few different places and with diverse methods. As far as we can tell, the first place where this was examined in generality was in Spitzweck's PhD thesis \cite{spitzweck2001operads}. It was proven there that a model structure can be lifted assuming that the category $\cat$ satisfies the monoid axiom and that the operad is cofibrant (\cite[Theorem~4]{spitzweck2001operads}).

The condition that an operad is cofibrant in the model structure on operads constructed in \cite{spitzweck2001operads} is quite restrictive. It is stronger than asking that each $\Opn$ is cofibrant, or even $\Sigma_n$-cofibrant. In addition in the setting of global homotopy theory, it is not enough to look at the category of algebras over a cofibrant replacement of an operad (in the usual sense), as this gives the wrong homotopy theory, which we show in Remark~\ref{remcofibrantreplacement}.
\end{rem}

\subsection{Unstable global homotopy theory}
\label{subsectionunstable}

Unstable global homotopy theory is the homotopy theory of spaces which have simultaneous and compatible actions by all compact Lie groups. A model for this is the category of orthogonal spaces, studied in detail in Chapter 1 of \cite{global}. 

\begin{defi}[Definition~1.1.1 of \cite{global}]
Let $\Lcat$ be the $\Topcat$-enriched category where the objects are finite-dimensional real inner product spaces, and the morphisms are the linear isometric embeddings between them.

An \emph{orthogonal space} is a $\Topcat$-enriched functor $\Lcat \to \Topcat$. We use $\Spc$ to denote the $\Topcat$-enriched category of orthogonal spaces. Note the similarity of this definition to the definition of orthogonal spectra as enriched functors.
\end{defi}
\vspace{3mm}

If we have a compact Lie group $K$, and $V$ is a $K$-representation, then $X(V)$ inherits a $K$-action, where $k \in K$ acts via $X(k)$. In this sense, orthogonal spaces have actions by all compact Lie groups. For each compact Lie group $K$ we fix a complete $K$-universe $\Uk$ for the rest of this article. Let $s(\Uk)$ denote the poset of finite-dimensional subrepresentations of $\Uk$. Then we can associate to any orthogonal space the following $K$-space
\[X(\Uk) = \colim_{V \in s(\Uk)} X(V),\]
which we call the \emph{underlying $K$-space}. This yields a functor
\[(-)(\Uk) \colon \Spc \to \KTopcat.\]

A \emph{global equivalence} of orthogonal spaces is roughly speaking a morphism which for each compact Lie group $K$, induces $K$-equivalences on suitable homotopy colimits of finite-dimensional $K$-representations and equivariant embeddings between them. The precise definition can be found in \cite[Definition~1.1.2]{global}. It is also a special case of the definition of a $G$-global equivalence that we give in Definition~\ref{defiGglobal} of this article, with $G=e$.

An orthogonal space $X$ is said to be \emph{closed} if for each linear isometric embedding $\psi$ the map $X(\psi)$ is a closed embedding. A morphism between closed orthogonal spaces is a global equivalence if and only if for each compact Lie group $K$ the induced map on the underlying $K$-spaces is a $K$-equivalence, see~\cite[Proposition~1.1.17]{global}, or Proposition~\ref{propglobalcolim} below for the analogous result for $G$-global equivalences.

\begin{rem}
\label{remproductsinspc}
There are two symmetric monoidal structures on $\Spc$ which are relevant for us. The first one is the categorical product, denoted by $\times$, which is computed levelwise. The second is the box product, denoted by $\boxtimes$. It is constructed as a Day convolution product in \cite[Section~1.3]{global}. The unit of both is the terminal one-point constant orthogonal space $\ast$.

The box product can also be defined via a universal property. For each $X, Y \in \Spc$, consider the orthogonal spaces $Z$ with a bimorphism $(X, Y) \to Z$ from $X$ and $Y$. Then the box product of $X$ and $Y$ is an orthogonal space $X \boxtimes Y$ and a bimorphism $i \colon (X, Y) \to X \boxtimes Y$ of orthogonal spaces which is initial among such bimorphisms with source $(X, Y)$.
\end{rem}

\begin{rem}
\label{remmodelstructuresinspc}
On $\Spc$ there are two cofibrantly generated model structures whose weak equivalences are precisely the global equivalences; the global model structure~\cite[Theorem~1.2.21]{global} and the positive global model structure~\cite[Theorem~1.2.23]{global}. In the rest of this article we only consider the positive global model structure.
\end{rem}

\subsection{Examples of operads in unstable global homotopy theory}
\label{subsectionexamples}

In this paper, we study operads in orthogonal spaces with respect to the box product. This subsection is mostly devoted to showcasing several examples, which we study in more detail in Section~\ref{sectionEinfinity}.

\begin{rem}
\label{remoperadboxtocat}
For orthogonal spaces $X, Y \in \Spc$, we can construct a bimorphism $(X, Y) \to X \times Y$ via
\begin{equation}
\label{eqrhodef}
\begin{tikzcd}
X(V) \times Y(W) \arrow[rr, "X(\iota_1) \times Y(\iota_2)"] &  & X(V \oplus W) \times Y(V \oplus W) = (X \times Y)(V \oplus W).
\end{tikzcd}
\end{equation}
This bimorphism yields a morphism of orthogonal spaces
\[\rho_{X, Y} \colon X \boxtimes Y \to X \times Y\]
which is natural in $X$ and $Y$. This means that the identity functor is a lax symmetric monoidal functor from $(\Spc, \times)$ to $(\Spc, \boxtimes)$, or equivalently an oplax symmetric monoidal functor from $(\Spc, \boxtimes)$ to $(\Spc, \times)$.

Therefore given an operad $\Op$ in $(\Spc, \times)$, the natural transformation $\rho$ gives an operad in $(\Spc, \boxtimes)$, with the same $\Opn$ for all $n \geq 0$. We denote this resulting operad in $(\Spc, \boxtimes)$ by $\Op^\boxtimes$.
\end{rem}\vspace{3mm}

\begin{ex}[Constant operads in $\Spc$ obtained from topological operads]
\label{exconstantoperad}
For any $X \in \Topcat$, we can consider the constant orthogonal space $\overline{X}$, that is the constant functor $\Lcat \to \Topcat$ with value $X$. This means that for any group $K$ the underlying $K$-space of $\overline{X}$ is just $X$ with the trivial $K$-action.

Any operad in spaces $\Op$ induces a constant operad $\overline{\Op}$ in $\Spc$, such that $(\overline{\Op})_n= \overline{\Opn}$. 
\end{ex}

\begin{constr}[Operad from a functor to topological operads]
\label{constrfunctoroperad}
Given a continuous functor $F$ from $\Lcat$ to the category of operads in spaces, $\OpeTop$, we can obtain from it an operad in $(\Spc, \times)$ by permuting the functoriality on $\Lcat$ with the functoriality on $\coprod_{n \in \NN} \Sncat$. Thus we obtain objects
\[\Op_{F, n} = (-)_n \circ F \in \SnSpc\]
The operadic structure on each $F(V)$ gives rise to an operadic structure on these $\Op_{F, n}$ with respect to the categorical product of orthogonal spaces. Then the natural morphism $\rho$ from $\boxtimes$ to $\times$ turns $\Op_F$ into an operad with respect to the box product, $\Op^\boxtimes_F$. 

If the functor $F$ is constant, then $\Op_F$ is just the constant operad of Example~\ref{exconstantoperad}. However this process becomes interesting when the actions of the linear isometric embeddings are non-trivial, as we discuss below.
\end{constr}

\begin{ex}[Little disks]
\label{exdisks}
For each inner product space $V$ consider the topological operad $\Ld(V)$ of little disks in $V$. These assemble into a continuous functor
\[\Ld \colon \Lcat \to \OpeTop\]
We understand an element of $\Ld(V)_n$ as a set of $n$ center points $v_i \in D(V)$ in the open unit disk of $V$ and $n$ radii $r_i$ that parametrize a rectilinear embedding of $n$ copies of $D(V)$ into itself. Then for a linear isometric embedding $\psi \colon V \to W$, the map $\Ld(\psi) \colon \Ld(V) \to \Ld(W)$ acts by sending each $v_i$ to $\psi(v_i)$.

By Construction~\ref{constrfunctoroperad} we obtain an operad $\Ld^\times$ in $(\Spc, \times)$, and $\Ld^\boxtimes$, shortened to $\Ld$, in $(\Spc, \boxtimes)$. The operad given by the underlying spaces of each $\Ld_n$ is precisely the $E_\infty$-operad of spaces obtained as the colimit of the little disks operads for $\RR^m$. Similarly, for a compact Lie group $K$, the underlying $K$-space of $\Ld_n$ is exactly $\Ld(\Uk)_n$, the $n$th space of the $K$-equivariant little disks operad for the complete $K$-universe $\Uk$, described for example in \cite[Definition~3.11 (ii)]{BLUMBERG2015658}.

\end{ex}

Analogously, there is a global version of the Steiner operad.

\begin{ex}[Steiner operad]
\label{exsteiner}
 For each inner product space $V$ let $R(V)$ be the space of distance reducing topological embeddings $f \colon V \to V$, where distance reducing means that $\lVert f(x) - f(y) \rVert \leqslant \lVert x - y \rVert$. This is a continuous functor $\Lcat \to \Topcat$, where for each linear isometric embedding $\psi \colon V \to W$ and $f \in R(V)$, the embedding $R(\psi)(f) \colon W \to W$ is given by
\[(\psi \circ f \circ \psi^{-1})\oplus \Ima(\psi)^\bot \colon \Ima(\psi) \oplus \Ima(\psi)^\bot \to \Ima(\psi) \oplus \Ima(\psi)^\bot\]

A Steiner path for $V$ is a map $h \colon [0, 1] \to R(V)$ such that $h(1)=\id_V$. Let $\KK(V)_n$ be the space of tuples $(h_1, \dots, h_n)$ of $n$ Steiner paths such that the images of the $h_i(0)$ are disjoint. These form $\KK(V)$ the Steiner operad for $V$, and these assemble into a continuous functor $F \colon \Lcat \to \OpeTop$, which gives the Steiner operad $\KK$ in $(\Spc, \boxtimes)$. As in the case of the little disks, for a compact Lie group $K$ the underlying $K$-space of $\KK_{\:n}$ is exactly $\KK(\Uk)_n$, the $n$th space of the $K$-equivariant Steiner operad for the complete $K$-universe $\Uk$, described for example in \cite[Definition~3.11 (iv)]{BLUMBERG2015658}. The underlying non-equivariant operad of $\KK$ is an $E_\infty$-operad in spaces.
\end{ex}

\begin{ex}[Endomorphism operads]
The symmetric monoidal category $(\Spc, \boxtimes)$ is closed (see \cite[Remark~C.12]{global}). Let $\Hom$ denote the internal $\Hom$ functor of $\Spc$. For each $X \in \Spc$ we can consider the endomorphism operad $\End(X)$ in $(\Spc, \boxtimes)$, where $\End(X)_n = \Hom(X^{\boxtimes n}, X)$.
\end{ex}

\begin{ex}
\label{exRoperads}
For each compact Lie group $K$ the underlying $K$-space functor
\[(-)(\Uk) \colon \Spc \to \KTopcat\]
has a right adjoint $R_K$, constructed in \cite[Construction 1.2.25]{global}. Being a right adjoint, $R_K$ is strong monoidal with respect to the categorical products in $\KTopcat$ and $\Spc$. Therefore if $\Op$ is any operad in $(\KTopcat, \times)$ then $R_K(\Op)$ is an operad in $(\Spc, \times)$, and by Remark~\ref{remoperadboxtocat} we also obtain an operad in $(\Spc, \boxtimes)$.
\end{ex}

\section{\texorpdfstring{$G$}{G}-orthogonal spaces}
\label{sectionGorthogonal}

In order to talk about operads in $\Spc$, we need to know more about the structure of orthogonal spaces which have an action by the symmetric group $\Sigma_n$. In this section we study orthogonal spaces with an additional action by a general compact Lie group $G$. We denote by $\GSpc$ the category of continuous functors from $\Gcat$ to $\Spc$, which we call \emph{$G$-orthogonal spaces}.

\begin{defi}
\label{defigraphsub}
Let $K$ and $G$ be compact Lie groups. A closed subgroup $\Gamma \leqslant K \times G$ is a \emph{graph subgroup} if $\Gamma \cap (\{e_K\} \times G) = \{e_{K\times G}\}$. We denote the set of graph subgroups of $K \times G$ by $\fat(K, G)$. They are called graph subgroups because for any $\Gamma \in \fat(K, G)$ there is a closed subgroup $H \leqslant K$ and a continuous homomorphism $\phi \colon H \to G$ such that $\Gamma$ is precisely the graph of $\phi$.
\end{defi}

Let $i_l$ denote the boundary map $i_l \colon \partial D^l \rightarrow D^l$ in $\Topcat$, for each $l \geq 0$. We use this notation throughout the paper. Given a $G$-orthogonal space $X$, a compact Lie group $K$, and a $K$-representation $V$, then $X(V)$ has a $(K \times G)$-action.

\begin{defi}[$G$-global equivalence]
\label{defiGglobal}
For a compact Lie group $G$, a morphism $f$ of $\GSpc$ is a \emph{$G$-global equivalence} if for each compact Lie group $K$, each graph subgroup $\Gamma \in \fat(K, G)$, each $K$-representation $V$ and $l \geqslant 0$, the following statement holds. For any lifting problem
\begin{equation*}
\begin{tikzcd}
\partial D^l \arrow[r, "\alpha"] \arrow[d, "i_l", hook] & X(V)^\Gamma \arrow[d, "f(V)^\Gamma"] \\
D^l \arrow[r, "\beta"]                                    & Y(V)^\Gamma                       
\end{tikzcd}
\end{equation*}
there is a $K$-equivariant linear isometric embedding $\psi\colon V \to W$ into a $K$-representation $W$ such that there exists a morphism $\lambda \colon D^l \to X(W)^\Gamma$ which satisfies that in the following diagram
\begin{equation*}
\begin{tikzcd}
\partial D^l \arrow[r, "\alpha"] \arrow[d, "i_l", hook] & X(V)^\Gamma \arrow[r, "X(\psi)^\Gamma"] & X(W)^\Gamma \arrow[d, "f(W)^\Gamma"] \\
D^l \arrow[rru, "\lambda", dashed] \arrow[r, "\beta"']     & Y(V)^\Gamma \arrow[r, "Y(\psi)^\Gamma"] & Y(W)^\Gamma      \end{tikzcd}
\end{equation*}
the upper left triangle commutes, and the lower right triangle commutes up to homotopy relative to $\partial D^l$.
\end{defi}
\vspace{3mm}

Note that for $G= e$ this is just the definition of global equivalence from \cite[Definition~1.1.2]{global} mentioned in Subsection~\ref{subsectionunstable}. As it was the case for global equivalences, this definition is meant to capture that for each compact Lie group $K$ taking some suitable homotopy colimit of $K$-representations yields an $\fat(K, G)$-equivalence. We can make this more explicit with the following proposition, analogous to~\cite[Proposition~1.1.7]{global}.

\begin{defi}
For a compact Lie group $K$, we say that a nested sequence $\{V_i\}_{i \in \NN}$ of $K$-representations
\[V_0 \subset V_1 \subset \dots \subset V_i \subset \dots\]
is \emph{exhaustive} if for each $K$-representation $V$ there is an equivariant linear isometric embedding of $V$ into some $V_i$.
\end{defi}

\begin{prop}
\label{propglobaltel}
A morphism $f\colon X \to Y$ in $\GSpc$ is a $G$-global equivalence if and only if for each compact Lie group $K$ and each exhaustive sequence of $K$-representations $\{V_i\}_{i \in \NN}$, the map
\[\tel_i f(V_i) \colon \tel_i X(V_i) \to \tel_i Y(V_i)\]
induced on the mapping telescopes of the sequences $X(V_i)$ and $Y(V_i)$ of $(K \times G)$-spaces and $(K \times G)$-equivariant maps is an $\fat(K, G)$-equivalence.
\end{prop}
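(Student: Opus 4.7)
The plan is to translate both sides of the equivalence into the same language of lifting problems against the pairs $(D^l, \partial D^l)$ in $\Topcat$. The first observation is that $\Gamma$-fixed points commute with mapping telescopes: the structure maps of a mapping telescope are closed inclusions, and the $\Gamma$-fixed-point functor commutes with filtered colimits along closed embeddings in compactly generated weak Hausdorff spaces. Hence for every graph subgroup $\Gamma \in \fat(K, G)$ there is a natural homeomorphism
\[
(\tel_i X(V_i))^\Gamma \;\cong\; \tel_i \bigl(X(V_i)^\Gamma\bigr),
\]
so the right-hand side of the proposition unwinds to: for every $K$, every exhaustive sequence $\{V_i\}$, and every $\Gamma \in \fat(K,G)$, the map $\tel_i f(V_i)^\Gamma$ is a weak equivalence of spaces. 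The usual characterization of weak equivalences via lifting problems against $(D^l, \partial D^l)$ then matches the format already built into Definition~\ref{defiGglobal}, which is what makes the two conditions comparable.

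For the \enquote{only if} direction I would fix $K$, $\Gamma$, an exhaustive sequence $\{V_i\}$, and a lifting problem $\alpha \colon \partial D^l \to \tel_i X(V_i)^\Gamma$ and $\beta \colon D^l \to \tel_i Y(V_i)^\Gamma$ with $(\tel_i f(V_i)^\Gamma) \circ \alpha = \beta|_{\partial D^l}$. Compactness of $D^l$ together with the cofinality of the inclusions $X(V_j)^\Gamma \hookrightarrow \tel_i X(V_i)^\Gamma$ allows me to replace $\alpha$ and $\beta$ by representatives at a single finite stage $V_i$, reducing to a lifting problem against $f(V_i)^\Gamma$. Applying Definition~\ref{defiGglobal} yields an equivariant embedding $\psi \colon V_i \to W$ and a lift $\lambda \colon D^l \to X(W)^\Gamma$ solving it up to the prescribed homotopy. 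Since $\{V_i\}$ is exhaustive, I can pick $j \geqslant i$ together with a $K$-equivariant embedding $\iota \colon W \to V_j$; the composite $\iota \circ \psi$ and the structure inclusion $V_i \hookrightarrow V_j$ of the exhaustive sequence are then two $K$-equivariant linear isometric embeddings of $V_i$ into $V_j$, which after possibly enlarging $j$ are equivariantly homotopic through linear isometric embeddings. Transporting $\lambda$ along this homotopy gives a lift inside $X(V_j)^\Gamma$ that, when included into the telescope, solves the original problem rel $\partial D^l$.

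For the \enquote{if} direction the argument is more direct. Given $K$, $\Gamma$, a $K$-representation $V$, and a lifting problem $(\alpha, \beta)$ as in Definition~\ref{defiGglobal}, I would embed $V$ into a complete $K$-universe and choose an exhaustive filtration $\{V_i\}$ with $V_0 = V$. Composing $\alpha$ and $\beta$ with the inclusion of $V_0$ into the telescope turns the problem into a lifting problem against the telescope map, which by assumption is an $\fat(K,G)$-equivalence; solving it produces a lift together with a homotopy landing in $\tel_i Y(V_i)^\Gamma$. By compactness of $D^l$ and $D^l \times [0,1]$ both the lift and the homotopy factor through a single finite stage $V_j$, and the structure inclusion $\psi \colon V = V_0 \hookrightarrow V_j$ together with the factored lift is exactly what Definition~\ref{defiGglobal} asks for. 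The main obstacle throughout is the compatibility step in the \enquote{only if} direction: matching the unconstrained embedding $\psi$ produced by the $G$-global equivalence property with the fixed structure maps of the exhaustive sequence. I expect to resolve this with the standard connectivity of the space of $K$-equivariant linear isometric embeddings $V_i \to V_j$, which grows with $j$ in an exhaustive sequence, so that any two such embeddings into a sufficiently large $V_j$ are equivariantly isotopic — this is the same ingredient used in the proof of~\cite[Proposition~1.1.7]{global} for ordinary global equivalences, applied now to $K$-representations rather than to trivial-$G$ representations.
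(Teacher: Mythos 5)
Your overall strategy is the same as the paper's: pass to $\Gamma$-fixed points (which commute with forming mapping telescopes), reduce everything to lifting problems against $(D^l,\partial D^l)$, and exploit compactness. Both directions are essentially what the paper does, but there are two points where the handling diverges or where you are imprecise.

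First, in both directions you claim that compactness of $D^l$ together with ``cofinality of the inclusions $X(V_j)^\Gamma \hookrightarrow \tel_i X(V_i)^\Gamma$'' lets you replace $\alpha$, $\beta$, or a constructed lift by a map into a single stage $X(V_j)^\Gamma$. That is not quite right as stated: a compact map into the telescope only factors through a \emph{truncated telescope} $\tel_{[0,n]} X(V_i)^\Gamma$, not through a single level. You then need the equivariant, $X$-natural deformation retraction of $\tel_{[0,n]} X(V_i)$ onto the top stage $X(V_n)$ (i.e.\ the homotopy from the identity to $c_{X,n}\circ\pi_{X,n}$), and the accompanying transfer lemma for lifting problems across natural homotopies (cited in the paper as~\cite[Lemma~1.1.5]{global}), to finish the reduction. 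This is the technical core of the paper's argument; your sketch glosses over it, so you should make this retraction step explicit.

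Second, in the direction ``$G$-global equivalence $\Rightarrow$ telescope map is an $\fat(K,G)$-equivalence,'' you spot a genuine subtlety: the $W$ and $\psi\colon V_n\to W$ produced by Definition~\ref{defiGglobal} have no a priori relation to the structure maps of the chosen exhaustive sequence, and including the resulting lift back into the telescope requires reconciling $\psi$ with the structure inclusion $V_n\hookrightarrow V_m$. Your fix, using the growing connectivity of the space of $K$-equivariant linear isometric embeddings $V_n\to V_m$ along an exhaustive sequence so that, after enlarging $m$, any two such embeddings become equivariantly isotopic, is correct and is exactly the standard ingredient in the non-equivariant analogue~\cite[Proposition~1.1.7]{global}. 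The paper's own write-up of this step invokes the structure-map identity $\pi^\Gamma_{X,m}\circ c^\Gamma_{X,n,m}=X(\psi)^\Gamma\circ\pi^\Gamma_{X,n}$ without spelling out why the embedding furnished by the $G$-global-equivalence definition may be taken to be the structure map; you have made that implicit reliance explicit. So: your plan works, but the compactness step needs to go through the truncated telescope and its retraction as in the paper, rather than directly through a single stage.
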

\begin{proof}
First we assume that for each compact Lie group $K$ and each exhaustive sequence of orthogonal $K$-representations, $\tel_i f(V_i)$ is an $\fat(K, G)$-equivalence of $(K \times G)$-spaces. Any compact Lie group $K$ has an exhaustive sequence of representations $\{V_i\}_{i \in \NN}$, so for any $K$-representation $V$, any graph subgroup $\Gamma \in \fat(K, G)$ and any lifting problem $(\alpha, \beta)$ for $f(V)^\Gamma$, since $\{V_i\}_{i \in \NN}$ is exhaustive, we can embed $V$ into some $V_n$, and so we assume that $V= V_n$.

Now we fix some notation. Let
\[c_{X, n} \colon X(V_n) \to \tel_i X(V_i)\]
be the canonical $(K \times G)$-equivariant map. Let $\tel_{[0, n]} X(V_i)$ denote the truncated mapping telescope. Let
\[\pi_{X, n} \colon \tel_{[0, n]} X(V_i) \to X(V_n)\]
be the $(K \times G)$-equivariant canonical projection. Slightly abusing notation we also use $c_{X, n}$ for the canonical map
\[c_{X, n} \colon X(V_n) \to \tel_{[0, n]} X(V_i).\]
For $n \leqslant m$, let
\[c_{X, n, m} \colon \tel_{[0, n]} X(V_i) \to \tel_{[0, m]} X(V_i)\]
denote the inclusion of truncated mapping telescopes, and 
\[c_{X, n, \infty} \colon \tel_{[0, n]} X(V_i) \to \tel_i X(V_i)\]
the canonical map.

Taking fixed points commutes with the construction of the mapping telescopes by~\cite[Proposition~B.1]{global}, so $(\tel_i X(V_i))^\Gamma \cong \tel_i X(V_i)^\Gamma$ for each graph subgroup $\Gamma \in \fat(K, G)$. Since we assumed that $\tel_i f(V_i)^\Gamma$ is a weak homotopy equivalence, by~\cite[9.6 Lemma]{may1999concise} there is a map $\lambda$ associated to the lifting problem $(c^\Gamma_{X, n} \circ \alpha, c^\Gamma_{Y, n} \circ \beta)$ such that the upper-left triangle commutes and the lower-right one commutes up to homotopy relative $\partial D^l$, witnessed by a homotopy $H$.
\begin{equation*}
\begin{tikzcd}
\partial D^l \arrow[r, "\alpha"] \arrow[d, "i_l", hook]      & X(V_n)^\Gamma \arrow[r, "{c^\Gamma_{X, n}}"] & \tel_i X(V_i)^\Gamma \arrow[d, "\tel_i f(V_i)^\Gamma"] \\
D^l \arrow[r, "\beta"'] \arrow[rru, "\lambda", dashed] & Y(V_n)^\Gamma \arrow[r, "{c^\Gamma_{Y, n}}"']          & \tel_i Y(V_i)^\Gamma                                
\end{tikzcd}
\end{equation*}
Both $\lambda$ and $H$ have compact domains, and since the $\Gamma$-fixed points of the mapping telescopes are colimits along the closed embeddings $c^\Gamma_{X, n, m}$, both $\lambda$ and $H$ factor through some stage $m \geqslant n$ with $\psi\colon V_n \to V_m$, giving
\[\lambda' \colon D^l \to \tel_{[0, m]} X(V_i)^\Gamma \, \text{and} \, H' \colon D^l \times [0, 1] \to \tel_{[0, m]} Y(V_i)^\Gamma.\]
Then $\pi^\Gamma_{X, m} \circ \lambda'$ and $\pi^\Gamma_{X, m} \circ H'$ satisfy the requirements for the lifting problem $(X(\psi)^\Gamma \circ \alpha, X(\psi)^\Gamma \circ \beta)$, so $f$ is a $G$-global equivalence.

Now assume that $f$ is a $G$-global equivalence. Fix a compact Lie group $K$, a graph subgroup $\Gamma\in\fat(K, G)$, and an exhaustive sequence of $K$-representations $\{V_i\}_{i \in \NN}$. We have to check that $\tel_i f(V_i)^\Gamma$ is a weak homotopy equivalence.

For a lifting problem $(\alpha, \beta)$ for $\tel_i f(V_i)^\Gamma$, since $\partial D^l$ and $D^l$ are compact, $\alpha$ and $\beta$ factor through some stage $n$, as
\[\alpha' \colon \partial D^l \to \tel_{[0, n]} X(V_i)^\Gamma \;\text{and} \;\beta'\colon D^l \to \tel_{[0, n]} Y(V_i)^\Gamma.\]

For each $n$, there is a homotopy from the identity on $\tel_{[0, n]} X(V_i)$ to $c_{X, n} \circ \pi_{X, n}$, which is $(K \times G)$-equivariant and natural in $X$, given by retracting the truncated mapping telescope. By~\cite[Lemma~1.1.5]{global} this means that if there is a solution of the lifting problem
\[(c^\Gamma_{X, n} \circ \pi^\Gamma_{X, n} \circ \alpha', c^\Gamma_{Y, n} \circ \pi^\Gamma_{Y, n} \circ \beta')\]
then there is a solution of the lifting problem $(\alpha', \beta')$. In this proof we use the terminology \emph{solution of a lifting problem}\footnote{To avoid confusion with the more common meaning of the terminology \emph{solution of a lifting problem}, we do not use it outside of this proof.} in the sense of \cite[Lemma~1.1.5]{global} and Definition~\ref{defiGglobal}, i.e. a map that makes the upper-left triangle commute and the lower-right triangle commute up to homotopy relative $\partial D^l$.  
\begin{equation*}
\begin{tikzcd}
\partial D^l \arrow[r, "\alpha'"] \arrow[d, "i_l", hook] & {\tel_{[0, n]} X(V_i)^\Gamma} \arrow[d, "{\tel_{[0, n]} f(V_i)^\Gamma}"] \arrow[r, "{\pi^\Gamma_{X, n}}"] & X(V_n)^\Gamma \arrow[r, "{c^\Gamma_{X, n}}"] \arrow[d, "f(V_m)^\Gamma"] & {\tel_{[0, n]} X(V_i)^\Gamma} \arrow[d, "{\tel_{[0, n]} f(V_i)^\Gamma}"] \\
D^l \arrow[r, "\beta'"']                           & {\tel_{[0, n]} Y(V_i)^\Gamma} \arrow[r, "{\pi^\Gamma_{Y, n}}"']                                         & Y(V_n)^\Gamma \arrow[r, "{c^\Gamma_{Y, n}}"']                         & {\tel_{[0, n]} Y(V_i)^\Gamma}                                         
\end{tikzcd}
\end{equation*}

The new lifting problem $(\pi^\Gamma_{X, n} \circ \alpha', \pi^\Gamma_{Y, n} \circ \beta')$ has as solution $\lambda$ after evaluating at some larger $m \geqslant n$ with embedding $\psi\colon V_n \to V_m$, because $f$ is a $G$-global equivalence and $\{V_i\}_{i \in \NN}$ is an exhaustive sequence of $K$-representations. Then $c^\Gamma_{X, m} \circ \lambda$ is a solution of the lifting problem
\[(c^\Gamma_{X, m} \circ X(\psi)^\Gamma \circ \pi^\Gamma_{X, n} \circ \alpha', c^\Gamma_{Y, m} \circ Y(\psi)^\Gamma \circ \pi^\Gamma_{Y, n} \circ \beta')\]
and since $\pi^\Gamma_{X, m} \circ c^\Gamma_{X, n, m} = X(\psi)^\Gamma \circ \pi^\Gamma_{X, n}$, the map $c^\Gamma_{X, m} \circ \lambda$ is also a solution of
\[(c^\Gamma_{X, m} \circ \pi^\Gamma_{X, m} \circ c^\Gamma_{X, n, m} \circ \alpha', c^\Gamma_{Y, m} \circ \pi^\Gamma_{Y, m} \circ c^\Gamma_{Y, n, m} \circ \beta').\]

By \cite[Lemma~1.1.5]{global} and the previously mentioned homotopy from the identity on $\tel_{[0, m]} X(V_i)$ to $c_{X, m} \circ \pi_{X, m}$, the lifting problem $(c^\Gamma_{X, n, m} \circ \alpha', c^\Gamma_{Y, n, m} \circ \beta')$ has a solution $\lambda'$. Note that we did not obtain a solution of $(\alpha', \beta')$, but since $c^\Gamma_{X, m, \infty} \circ c^\Gamma_{X, n, m} = c^\Gamma_{X, n, \infty}$, the map $c^\Gamma_{X, m, \infty} \circ \lambda'$ is a solution of the original lifting problem
\[(\alpha, \beta)=(c^\Gamma_{X, m, \infty} \circ c^\Gamma_{X, n, m} \circ \alpha', c^\Gamma_{Y, m, \infty} \circ c^\Gamma_{Y, n, m} \circ \beta'). \]

Since any lifting problem for $\tel_i f(V_i)^\Gamma$ has a solution, by~\cite[9.6 Lemma]{may1999concise} the map $\tel_i f(V_i)^\Gamma$ is a weak homotopy equivalence.
\end{proof}

Recall that an orthogonal space $X$ is said to be \emph{closed} if for each linear isometric embedding $\psi$ the map $X(\psi)$ is a closed embedding. We similarly define a \emph{closed $G$-orthogonal space} to be a $G$-orthogonal space $X$ such that $X(\psi)$ is a closed embedding for each linear isometric embedding $\psi$. We have fixed a complete $K$-universe $\Uk$ for each compact Lie group $K$. The \emph{underlying $(K \times G)$-space} of a $G$-orthogonal space $X$ is the underlying $K$-space of $X$ as an orthogonal space with the induced $G$-action. This is precisely the colimit $X(\Uk) = \colim_{V \in s(\Uk)} X(V)$ over the finite-dimensional subrepresentations of $\Uk$. Therefore analogously to~\cite[Proposition~1.1.17]{global} we have the following simpler characterization of $G$-global equivalences.

\begin{prop}
\label{propglobalcolim}
A morphism $f\colon X \to Y$ in $\GSpc$ between closed $G$-orthogonal spaces is a $G$-global equivalence if and only if for each compact Lie group $K$ the map induced on the underlying $(K \times G)$-spaces
\[f(\Uk) \colon X(\Uk) \to Y(\Uk)\]
is an $\fat(K, G)$-equivalence of $(K \times G)$-spaces.
\end{prop}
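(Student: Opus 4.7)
The plan is to deduce this from Proposition~\ref{propglobaltel} by identifying the mapping telescopes appearing there with the underlying $(K \times G)$-spaces, up to $\fat(K, G)$-equivalence.

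The crucial technical input is that if $X$ is a closed $G$-orthogonal space and $\{V_i\}_{i \in \NN}$ is an exhaustive sequence of finite-dimensional subrepresentations of the complete $K$-universe $\Uk$ (such a sequence exists by completeness of $\Uk$), then the canonical map
\begin{equation*}
\tel_i X(V_i) \to \colim_i X(V_i) = X(\Uk)
\end{equation*}
is an $\fat(K, G)$-equivalence of $(K \times G)$-spaces. Indeed, each structure map $X(V_i) \to X(V_{i+1})$ is a $(K \times G)$-equivariant closed embedding by the closedness assumption, and closed embeddings are preserved by passage to $\Gamma$-fixed points for any closed subgroup $\Gamma \leqslant K \times G$. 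Since $\Gamma$-fixed points commute both with mapping telescopes (as used in the proof of Proposition~\ref{propglobaltel}, via \cite[Proposition~B.1]{global}) and with sequential colimits along closed embeddings, and since the mapping telescope of a sequence of closed inclusions of spaces is homotopy equivalent to its colimit, the claim follows.

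For the ``only if'' direction, I would fix a compact Lie group $K$, a graph subgroup $\Gamma \in \fat(K, G)$, and choose an exhaustive sequence inside $\Uk$. By Proposition~\ref{propglobaltel} the map $\tel_i f(V_i)^\Gamma$ is a weak equivalence, and in the naturality square
\begin{equation*}
\begin{tikzcd}
\tel_i X(V_i)^\Gamma \arrow[r, "\simeq"] \arrow[d, "\tel_i f(V_i)^\Gamma"'] & X(\Uk)^\Gamma \arrow[d, "f(\Uk)^\Gamma"] \\
\tel_i Y(V_i)^\Gamma \arrow[r, "\simeq"'] & Y(\Uk)^\Gamma
\end{tikzcd}
\end{equation*}
the horizontal arrows are weak equivalences by the technical input above, so two-out-of-three forces $f(\Uk)^\Gamma$ to be a weak equivalence.

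For the ``if'' direction, the same naturality square shows that $\tel_i f(V_i)$ is an $\fat(K, G)$-equivalence whenever $\{V_i\}$ is exhaustive inside $\Uk$. For an arbitrary exhaustive sequence $\{V_i'\}$ of $K$-representations, a standard cofinality comparison—each $V_i'$ embeds $K$-equivariantly into some term of a chosen exhaustive sequence inside $\Uk$, and conversely—produces a natural $\fat(K, G)$-equivalence between $\tel_i X(V_i')$ and $\tel_i X(V_i)$, reducing to the previous case. Proposition~\ref{propglobaltel} then yields that $f$ is a $G$-global equivalence. The main obstacle is verifying that the telescope-to-colimit comparison remains an equivalence on $\Gamma$-fixed points, which uses the closedness hypothesis in an essential way: without it, fixed points need not commute with the sequential colimit and the identification $\tel_i X(V_i)^\Gamma \simeq X(\Uk)^\Gamma$ can fail.
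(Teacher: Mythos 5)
Your argument takes a genuinely different route from the paper's. The paper proves the proposition directly, by working with lifting problems and the sequential colimit $X(\Uk) = \colim_i X(V_i)$ along closed embeddings: in one direction compactness of $\partial D^l$ and $D^l$ lets a lifting problem for $f(\Uk)^\Gamma$ factor through a finite stage (where the $G$-global equivalence can be invoked); in the other, an embedding $V \hookrightarrow \Uk$ lets one solve the lifting problem over $\Uk$ and pull the solution back through some finite stage. You instead reduce to Proposition~\ref{propglobaltel}, replacing the colimit by the mapping telescope and comparing the two. The two uses of closedness (fixed points commuting with the colimit, compact maps factoring through finite stages) are the same, so the underlying ideas are parallel, but the routes through them are not.

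The place where your version carries a real extra burden is the cofinality step in the ``if'' direction. Proposition~\ref{propglobaltel}, as stated, requires the telescope condition for \emph{every} exhaustive sequence of $K$-representations, not just the one sitting inside $\Uk$, whereas your telescope-to-colimit comparison only covers the latter. The existence of mutual embeddings between terms of two exhaustive sequences does not by itself produce a map of telescopes, so calling the comparison ``natural'' is too strong; one has to interleave the two sequences into a third, choosing the connecting embeddings to be compatible with the given structure maps (possible because $\Uk$ is complete, so there is always room to extend), and then invoke that passing to a cofinal subsequence gives a weakly equivalent telescope. That is standard, but it is a genuine technical chore compressed into a single clause. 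Also, the telescope of a sequence of closed inclusions is in general only \emph{weakly} homotopy equivalent to its colimit (closed embeddings need not be $h$-cofibrations), which is all you use, but the phrase should be adjusted. The paper's direct proof sidesteps the cofinality issue entirely because it only ever needs the one filtration $\{V_i\}$ covering $\Uk$.
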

\begin{proof}
The colimits that define $X(\Uk)$ and $Y(\Uk)$ can be written as sequential colimits
\[\colim_{V \in s(\Uk)} X(V) \cong \colim_{i \in \NN} X(V_i),\]
for a nested sequence of finite-dimensional subrepresentations $\{V_i\}_{i \in \NN}$ of $\Uk$ which cover all of $\Uk$. These are colimits of $(K \times G)$-spaces along closed embeddings because $X$ and $Y$ are closed. Then for each $\Gamma \in \fat(K, G)$, taking $\Gamma$-fixed points commutes with this colimit along closed embeddings (see~\cite[Proposition~B.1 ii)]{global}). Since additionally $\partial D^l$ and $D^l$ are compact, a lifting problem for $(\colim_{i \in \NN} f(V_i))^\Gamma$ factors through some stage $n$ of the sequential colimit. By~\cite[9.6 Lemma]{may1999concise} we obtain that if $f$ is a $G$-global equivalence the map $(\colim_{i \in \NN} f(V_i))^\Gamma$ is a weak homotopy equivalence.

For the other implication, assume that $f(\Uk)^\Gamma$ is a weak homotopy equivalence for each compact Lie group $K$ and each $\Gamma \in \fat(K, G)$. Let $V$ be a $K$-representation. Then $V$ embeds into $\Uk$, so we may fix an embedding $V \to \Uk$ and call it $\psi$. Given any lifting problem
\[(\alpha \colon \partial D^l \to X(V)^\Gamma, \beta \colon D^l \to Y(V)^\Gamma)\]
for $f(V)^\Gamma$, consider the lifting problem
\[(X(\psi)^\Gamma \circ \alpha, Y(\psi)^\Gamma \circ \beta)\]
for $f(\Uk)^\Gamma$. By~\cite[9.6 Lemma]{may1999concise} since $f(\Uk)^\Gamma$ is a weak homotopy equivalence there exists some $\lambda \colon D^l \to X(\Uk)^\Gamma$ such that $\lambda \circ i_l = X(\psi)^\Gamma \circ \alpha$ and $f(\Uk)^\Gamma \circ \lambda$ is homotopic relative $\partial D^l$ to $Y(\psi)^\Gamma \circ \beta$, and we denote this homotopy by $H$.

Both $\lambda$ and $H$ factor through some stage of the colimits as
\[\lambda' \colon D^l \to X(V_j)^\Gamma \;\text{and}\; H' \colon D^l \times [0, 1] \to Y(V_j)^\Gamma.\]
We can choose $j$ so that $V_j$ contains the image of $\psi$ since $\{V_i\}_{i \in \NN}$ covers $\Uk$. Then $\lambda'$ and the homotopy $H'$ witness that $f$ is a $G$-global equivalence.
\end{proof}

 \begin{rem}
 \label{remwhygraph1}
 When defining $G$-global equivalences, we could have decided to look at all subgroups instead of only the graph subgroups of $K \times G$. This would give a strictly smaller class of $G$-global equivalences.
 
 We consider only the graph subgroups because between $G$-free orthogonal spaces, the graph subgroups tell the whole story, since the fixed points of any non-graph subgroup are empty. This means that looking at this bigger class of $G$-global equivalences is enough for the proof of Theorem~\ref{thmoperadmodelcat}. It also leads to Theorem~\ref{thmquillen}, which states that a map $f$ of operads in $(\Spc, \boxtimes)$ gives a Quillen equivalence if and only if each $f_n$ is a $\Sigma_n$-global equivalence in the sense of the graph subgroups, even if the operads themselves are not $\Sigma_n$-free.
 \end{rem}\vspace{3mm}

The $G$-global equivalences are in fact a part of a model structure on $\GSpc$, which we call the $G$-global model structure. We include in this section the basic facts about $G$-global equivalences, as well as the results about $G$-global equivalences that are most relevant to the proofs of Section~\ref{sectionmainresults}. We relegate the construction of this $G$-global model structure to Appendix~\ref{appendixmodel}.

The characterization of $G$-global equivalences given by Proposition~\ref{propglobaltel} makes it simple to check the following general properties of $G$-global equivalences.

\begin{lemm}
\label{lemmGglobal}

For compact Lie groups $G$ and $H$ we have the following properties:
\begin{enumerate}[label = \roman*)]
    \item (2-out-of-6) Consider three composable morphisms of $G$-orthogonal spaces $f$, $g$ and $h$. If $g \circ f$ and $h \circ g$ are $G$-global equivalences, then $f$, $g$, $h$ and $h \circ g \circ f$ are $G$-global equivalences.
    \item A retract of a $G$-global equivalence is a $G$-global equivalence.
    \item If $f\colon X \to Y$ is a $G$-global equivalence and $g$ is homotopic to $f$ through morphisms of $G$-orthogonal spaces, then $g$ is a $G$-global equivalence.
    \item For a $G$-orthogonal space $X$, and an $H$-global equivalence $f\colon Y \to Z$ between $H$-orthogonal spaces, the morphism $X \times f$ is a $(G \times H)$-global equivalence.
    \item For a $G$-global equivalence $f\colon X \to Y$ and an $H$-global equivalence $f'\colon X' \to Y'$, the morphism $f \times f'$ is a $(G \times H)$-global equivalence.
    \item For a $G$-global equivalence $f\colon X \to Y$ and a continuous homomorphism $\eta \colon H \to G$ the restriction $\eta^\ast f$ is an $H$-global equivalence.
\end{enumerate}
\end{lemm}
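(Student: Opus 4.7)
The approach throughout is to invoke the telescope characterisation of Proposition~\ref{propglobaltel}, which reduces a $G$-global equivalence to an $\fat(K, G)$-equivalence of the telescope $\tel_i f(V_i)$ of $(K \times G)$-spaces for every compact Lie group $K$ and exhaustive sequence $\{V_i\}_{i \in \NN}$. Since the functor $f \mapsto \tel_i f(V_i)$ on morphisms of $\GSpc$ preserves composition, retracts, and homotopies, items (i), (ii) and (iii) transfer immediately from the corresponding closure properties of $\fat(K, G)$-equivalences as weak equivalences of the $\fat(K, G)$-model structure on $(K \times G)$-spaces: 2-out-of-6 is valid in any model category, and closure under retracts and under relative homotopy of weak equivalences is standard.

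For (iv) I would argue directly from Definition~\ref{defiGglobal}. A graph subgroup $\Gamma \in \fat(K, G \times H)$ is the graph of a continuous homomorphism $(\phi_1, \phi_2) \colon L \to G \times H$ with $L \leqslant K$ closed; its projections onto $K \times G$ and $K \times H$ are graph subgroups $\Gamma_1 \in \fat(K, G)$ and $\Gamma_2 \in \fat(K, H)$ associated to $\phi_1$ and $\phi_2$ respectively. A direct unfolding of the action shows $(X \times Y)(V)^\Gamma = X(V)^{\Gamma_1} \times Y(V)^{\Gamma_2}$, so that $(X \times f)(V)^\Gamma$ is the identity on the first factor crossed with $f(V)^{\Gamma_2}$. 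Given a lifting problem $((\alpha_X, \alpha_Y), (\beta_X, \beta_Z))$ for $(X \times f)(V)^\Gamma$, its $Y$-component $(\alpha_Y, \beta_Z)$ is a lifting problem for $f(V)^{\Gamma_2}$, which, since $f$ is an $H$-global equivalence, is solved at some $K$-equivariant embedding $\psi \colon V \to W$ by some $\lambda_Y$. The pair $\lambda = (X(\psi)^{\Gamma_1} \circ \beta_X, \lambda_Y)$ then provides the required lift for $X \times f$, the homotopy rel $\partial D^l$ being the product of the constant homotopy on the first factor with the one furnished by $\lambda_Y$.

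For (v) the same direct argument applies, now applying the lifting property of $f$ and $f'$ in succession: solve the $X$-factor lifting problem using $f$ to obtain $\psi_1 \colon V \to W_1$ and $\lambda_X$, push the $X'$-factor lifting problem forward via $\psi_1$, then solve it using $f'$ to obtain $\psi_2 \colon W_1 \to W$ and $\lambda_{X'}$; the composite $\psi_2 \circ \psi_1 \colon V \to W$ together with $(X(\psi_2)^{\Gamma_1} \circ \lambda_X, \lambda_{X'})$ solves the joint lifting problem. For (vi), observe that if $\Gamma \in \fat(K, H)$ is the graph of $\phi \colon L \to H$, then $(\id_K \times \eta)(\Gamma)$ is the graph of $\eta \circ \phi$ and so lies in $\fat(K, G)$, and $(\eta^\ast X)(V)^\Gamma = X(V)^{(\id_K \times \eta)(\Gamma)}$; thus any lifting problem for $\eta^\ast f$ at $\Gamma$ is literally a lifting problem for $f$ at $(\id_K \times \eta)(\Gamma)$, solvable because $f$ is a $G$-global equivalence. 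The only nontrivial bookkeeping is that projections and $(\id_K \times \eta)$-images of graph subgroups remain graph subgroups, which is immediate from the defining intersection condition $\Gamma \cap (\{e_K\} \times -) = \{e\}$.
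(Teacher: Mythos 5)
Your proof is correct. Parts (i)--(iii) use exactly the paper's strategy of reducing via Proposition~\ref{propglobaltel} to closure properties of $\fat(K,G)$-equivalences. For parts (iv)--(vi) you take a different route: rather than passing to mapping telescopes, you argue directly from Definition~\ref{defiGglobal}. Your key observation that a graph subgroup $\Gamma\in\fat(K,G\times H)$ projects to graph subgroups $\Gamma_1\in\fat(K,G)$ and $\Gamma_2\in\fat(K,H)$, so that $(X\times Y)(V)^\Gamma \cong X(V)^{\Gamma_1}\times Y(V)^{\Gamma_2}$, lets you decompose the lifting problem componentwise; similarly for (vi), pushing the graph subgroup forward along $\id_K\times\eta$ converts a lifting problem for $\eta^\ast f$ into one for $f$. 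The paper instead proves (iv) by noting that the canonical comparison $\tel_i(X\times Y)(V_i)\to\tel_i X(V_i)\times\tel_i Y(V_i)$ is an $\fat(K,G\times H)$-equivalence and then arguing at the level of fixed points of telescopes, and proves (v) by factoring $f\times f' = (Y\times f')\circ(f\times X')$ and invoking (iv) twice, whereas you solve the two factors' lifting problems in succession (which is the same idea unwound by hand). Your approach trades the small telescope-comparison lemma for a bit of explicit lifting bookkeeping, which you carry out correctly; both routes are valid and essentially of the same length.
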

\begin{proof}
Let $K$ be a compact Lie group and $\{V_i\}_{i \in \NN}$ an exhaustive sequence of $K$-representations.
\begin{enumerate}[label = \roman*)]
    \item  By Proposition~\ref{propglobaltel} the maps 
    \[\qquad \tel_i (g \circ f)(V_i)= \tel_i g(V_i) \circ \tel_i f(V_i) \;\text{and}\; \tel_i (h \circ g)(V_i)= \tel_i h(V_i) \circ \tel_i g(V_i)\]
    are $\fat(K, G)$-equivalences. Since the class of $\fat(K, G)$-equivalences satisfies the 2-out-of-6 property, by Proposition~\ref{propglobaltel} again we obtain that $f$, $g$, $h$ and $h \circ g \circ f$ are $G$-global equivalences.
    \item As before, if $g$ is a retract of $f$ then $\tel_i g(V_i)$ is a retract of $\tel_i f(V_i)$, and $\fat(K, G)$-equivalences are closed under retracts. 
    \item If $H\colon X \times [0, 1] \to Y$ is a homotopy through morphisms of $G$-orthogonal spaces then it induces a homotopy through $(K \times G)$-equivariant maps on mapping telescopes. Since a map $(K \times G)$-homotopic to an $\fat(K, G)$-equivalence is an $\fat(K, G)$-equivalence, we see that $g$ is also a $G$-global equivalence.
    \item The canonical map
    \[\tel_i (X \times Y)(V_i) \to \tel_i X(V_i) \times \tel_i Y(V_i)\]
    is a $(K \times G \times H)$-equivalence. Consider a graph subgroup $\Gamma_\phi \in \fat(K, G \times H)$ associated to a homomorphism $\phi$. Let $\pi_G \colon G \times H \to G$ and $\pi_H \colon G \times H \to H$ denote the respective projections. Then
    \[(\tel_i X(V_i) \times \tel_i f(V_i))^{\Gamma_\phi} \cong \tel_i X(V_i)^{\Gamma_{\pi_G \circ \phi}} \times \tel_i f(V_i)^{\Gamma_{\pi_H \circ \phi}}\]
    where $\Gamma_{\pi_H \circ \phi}$ is the graph subgroup associated to $\pi_H \circ \phi$. Since $\tel_i f(V_i)$ is an $\fat(K, H)$-equivalence, $\tel_i X(V_i) \times \tel_i f(V_i)$ is an $\fat(K, G \times H)$-equivalence and by the 2-out-of-6 property so is $\tel_i (X \times f)(V_i)$.
    \item We have $f \times f'=(Y \times f') \circ (f \times X')$ and each of these is a $(G \times H)$-global equivalence by iv).
    \item Consider a graph subgroup $\Gamma_\phi \in \fat(K, H)$ associated to a homomorphism $\phi$. Then
    \[(\tel_i \eta^* f(V_i))^{\Gamma_\phi} = (\eta^*(\tel_i f(V_i)))^{\Gamma_\phi} = (\tel_i f(V_i))^{\Gamma_{\eta \circ \phi}}\]
    which is a weak homotopy equivalence. \qedhere
\end{enumerate}
\end{proof}

We now turn to the box product of $G$-orthogonal spaces, to check that it preserves $G$-global equivalences. The box product of orthogonal spaces is fully homotopical with respect to the global equivalences, that is, the box product of two global equivalences is a global equivalence, and this does not require any cofibrancy assumptions on the morphisms or the orthogonal spaces involved. Our goal is to check that the box product is also fully homotopical with respect to the $G$-global equivalences.

Given compact Lie groups $G$ and $H$, a $G$-orthogonal space $X$ and an $H$-orthogonal space $Y$, $X \boxtimes Y$ has a canonical action by $G \times H$, and so does $X \times Y$. The natural morphism $\rho_{X, Y}$ of Remark~\ref{remoperadboxtocat} is $(G \times H)$-equivariant. By \cite[Theorem~1.3.2 (i)]{global} this $\rho_{X, Y}$ is a global equivalence of underlying orthogonal spaces. We now adapt that proof to show that it is additionally a $(G \times H)$-global equivalence. We start with a technical lemma.

\begin{lemm}
\label{lemmLendofunctor}
Given $F \colon \Lcat \to \Lcat$ a continuous endofunctor, a natural transformation $\eta \colon Id \Rightarrow F$, and a $G$-orthogonal space $X$, the morphism $X \circ \eta \colon X \to X \circ F$ is a $G$-global equivalence.
\end{lemm}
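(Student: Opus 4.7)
The plan is to apply the telescope characterization in Proposition~\ref{propglobaltel}: fixing a compact Lie group $K$ and an exhaustive sequence $\{V_i\}_{i \in \NN}$ of $K$-representations, it suffices to show that the induced map
\[\tel_i X(\eta_{V_i}) \colon \tel_i X(V_i) \to \tel_i X(F(V_i))\]
is an $\fat(K, G)$-equivalence of $(K \times G)$-spaces; in fact I would prove the stronger statement that it is a $(K \times G)$-equivariant homotopy equivalence.

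First I would observe that any $K$-representation embeds $K$-equivariantly in some $F(V_i)$, namely by composing a $K$-equivariant linear isometric embedding into some $V_j$ (existence by exhaustivity of $\{V_i\}$) with $\eta_{V_j}$. Applied to the $K$-representation $F(V_i)$, this lets me choose for each $i$ a $K$-equivariant linear isometric embedding $\phi_i \colon F(V_i) \to V_{m(i)}$, where $m$ is strictly increasing. Setting $n_0 = 0$ and $n_{k+1} = m(n_k)$, the maps $\eta$ and $\phi$ interleave into a sequence of $K$-equivariant linear isometric embeddings
\[V_{n_0} \xrightarrow{\eta_{V_{n_0}}} F(V_{n_0}) \xrightarrow{\phi_{n_0}} V_{n_1} \xrightarrow{\eta_{V_{n_1}}} F(V_{n_1}) \xrightarrow{\phi_{n_1}} V_{n_2} \to \cdots\]
whose even-indexed subsequence $\{V_{n_k}\}_k$ is cofinal in $\{V_i\}_i$ and whose odd-indexed subsequence $\{F(V_{n_k})\}_k$ is cofinal in $\{F(V_i)\}_i$.

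I would then apply $X$ and form the mapping telescope $T$ of the resulting sequence of $(K \times G)$-equivariant maps. The inclusions of the even and odd sub-telescopes
\[\sigma_V \colon \tel_k X(V_{n_k}) \to T, \qquad \sigma_F \colon \tel_k X(F(V_{n_k})) \to T\]
are $(K \times G)$-equivariant homotopy equivalences by the standard deformation retraction collapsing the intermediate cylinders of a telescope onto a cofinal sub-telescope. Sliding through the mapping cylinders in $T$ corresponding to the maps $X(\eta_{V_{n_k}})$ produces a $(K \times G)$-equivariant homotopy $\sigma_V \simeq \sigma_F \circ \tel_k X(\eta_{V_{n_k}})$, and by 2-out-of-3 this forces $\tel_k X(\eta_{V_{n_k}})$ to be a $(K \times G)$-equivariant homotopy equivalence. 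Finally, naturality of $\eta$ provides a commutative square comparing this subsequence telescope map to $\tel_i X(\eta_{V_i})$, whose vertical cofinal sub-telescope inclusions are again $(K \times G)$-equivariant homotopy equivalences, and a last 2-out-of-3 yields the conclusion.

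The main technical obstacle is the sliding argument: one must check that the natural homotopies between $\sigma_V|_{X(V_{n_k})}$ and $\sigma_F|_{X(F(V_{n_k}))} \circ X(\eta_{V_{n_k}})$ coming from sliding along individual cylinders in $T$ assemble $(K \times G)$-equivariantly over the whole telescope, and that the concatenated pair of cylinders in $T$ labeled by $\eta_{V_{n_k}}$ and $\phi_{n_k}$ corresponds correctly to the single transition cylinder in the sub-telescope $\tel_k X(V_{n_k})$. This is standard but needs careful bookkeeping.
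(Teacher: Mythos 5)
Your approach is genuinely different from the paper's (the paper verifies Definition~\ref{defiGglobal} directly rather than invoking the telescope characterization), but the step you flag as ``standard but needs careful bookkeeping'' is in fact where the real mathematical content lives, and your sketch does not contain the idea needed to carry it out.

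Here is the gap. Arrange (as you can, by taking $n_{k+1}$ large enough and extending $\eta_{V_{n_k}}$) that $\phi_{n_k} \circ \eta_{V_{n_k}} \colon V_{n_k} \to V_{n_{k+1}}$ is the original transition map, so the even sub-telescope of $T$ is honestly a cofinal sub-telescope of $\tel_i X(V_i)$. Now look at the odd side. The odd sub-telescope of $T$ has objects $X(F(V_{n_k}))$ and transitions $X(\eta_{V_{n_{k+1}}} \circ \phi_{n_k})$, whereas the target telescope $\tel_i X(F(V_i))$ has transitions $X(F(\iota))$ where $\iota$ is the original inclusion. By naturality, $\eta_{V_{n_{k+1}}} \circ \phi_{n_k} = F(\phi_{n_k}) \circ \eta_{F(V_{n_k})}$, while $F(\iota) = F(\phi_{n_k}) \circ F(\eta_{V_{n_k}})$. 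These coincide precisely when $\eta_{F(V_{n_k})} = F(\eta_{V_{n_k}})$, which is false in general (try $F = -\oplus W$ with $\eta$ the first-summand inclusion). So the right-hand vertical arrow in your final square is not a sub-telescope inclusion, and there is no commutative square of the form you invoke. To compare those two telescopes you need exactly the key input that the paper uses: $F(\eta_V)$ and $\eta_{F(V)}$ are homotopic \emph{through $K$-equivariant linear isometric embeddings, relative to $\eta_V$} (quoted from the proof of \cite[Theorem~1.1.10]{global}). This fact is not a bookkeeping triviality but the essential observation driving the proof, and once you have it the paper's direct verification of Definition~\ref{defiGglobal} (set $\psi = \eta_V$, $\lambda = \beta$, and use the relative homotopy for the lower-right triangle) is considerably shorter than the interleaving argument you propose.
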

\begin{proof}
We use the fact that for each compact Lie group $K$ and each $K$-representation $V$ the two embeddings
\[F(\eta_V), \eta_{F(V)}\colon F(V) \to F(F(V))\]
are homotopic relative to $\eta_V \colon V \to F(V)$  through $K$-equivariant linear isometric embeddings. This is shown in the proof of the equivalent result where $X$ is just an orthogonal space in~\cite[Theorem~1.1.10]{global}.

Given a compact Lie group $K$, a $K$-representation $V$, a graph subgroup $\Gamma \in \fat(K, G)$ and a lifting problem $(\alpha, \beta)$ as in the following diagram, the linear isometric embedding $\eta_V$ and the map $\beta$ together witness that $X \circ \eta$ is a $G$-global equivalence. 
\begin{equation*}
\begin{tikzcd}
\partial D^l \arrow[r, "\alpha"] \arrow[d, "i_l", hook] & X(V)^\Gamma \arrow[d, "X(\eta_V)^\Gamma"'] \arrow[r, "X(\eta_V)^\Gamma"]              & X(F(V))^\Gamma \arrow[d, "X(\eta_{F(V)})^\Gamma"] \\
D^l \arrow[r, "\beta"]                            & X(F(V))^\Gamma \arrow[ru, Equal] \arrow[r, "X(F(\eta_V))^\Gamma"'] & X(F(F(V)))^\Gamma                           
\end{tikzcd}
\end{equation*}
The upper left trapezoid commutes by construction. For the lower right triangle, $F(\eta_V)$ and $\eta_{F(V)}$ are homotopic through $K$-equivariant linear isometric embeddings, therefore $X(F(\eta_V))$ and $X(\eta_{F(V)})$ are homotopic through $(K \times G)$-equivariant maps, and $X(F(\eta_V))^\Gamma$ and $X(\eta_{F(V)})^\Gamma$ are homotopic. Since the original homotopy was relative to $\eta_V$, and $\beta \circ i_l = X(\eta_V)^\Gamma \circ \alpha$, the obtained homotopy between $X(F(\eta_V))^\Gamma \circ \beta$ and $X(\eta_{F(V)})^\Gamma \circ \beta$ is relative $i_l$. Thus $X \circ \eta$ is a $G$-global equivalence.
\end{proof}

\begin{prop}
\label{proprhoGeq}
Given a $G$-orthogonal space $X$ and an $H$-orthogonal space $Y$, the morphism of $(G \times H)$-orthogonal spaces $\rho_{X, Y}$ is a $(G \times H)$-global equivalence.
\end{prop}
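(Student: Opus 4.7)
The plan is to adapt the proof of \cite[Theorem~1.3.2~(i)]{global} (the non-equivariant case) to the $(G\times H)$-global setting, using Proposition~\ref{propglobaltel}, Lemma~\ref{lemmLendofunctor}, and the two-out-of-six property from Lemma~\ref{lemmGglobal}(i). The guiding principle is that Schwede's argument is built entirely out of natural transformations between continuous endofunctors of $\Lcat$ and homotopies through linear isometric embeddings, and Lemma~\ref{lemmLendofunctor} already promotes this input to the $(G\times H)$-global setting.

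By Proposition~\ref{propglobaltel}, I would reduce to showing that for every compact Lie group $K$, every exhaustive sequence $\{V_i\}_{i\in\NN}$ of $K$-representations, and every graph subgroup $\Gamma\in\fat(K, G\times H)$, the map $(\tel_i \rho_{X,Y}(V_i))^\Gamma$ is a weak homotopy equivalence. To produce a candidate inverse at the level of telescopes, I would choose (by exhaustiveness, after refining the sequence if necessary) a compatible system of $K$-equivariant linear isometric embeddings $u_i \colon V_i \oplus V_i \hookrightarrow V_{n(i)}$ with $n(i)\geq i$. The universal bimorphism evaluated at $(\id_{X(V_i)}, \id_{Y(V_i)})$ is a $(K\times G\times H)$-equivariant map $X(V_i)\times Y(V_i) \to (X\boxtimes Y)(V_i\oplus V_i)$; post-composing with $(X\boxtimes Y)(u_i)$ and telescoping yields a map $\sigma \colon \tel_i (X\times Y)(V_i) \to \tel_i (X\boxtimes Y)(V_i)$.

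Next I would analyze the two composites with $\tel_i \rho_{X,Y}(V_i)$. By the definition of $\rho$ via~(\ref{eqrhodef}), the restriction of $\tel_i \rho(V_i) \circ \sigma$ to $X(V_i)\times Y(V_i)$ is $(X(u_i\circ\iota_1), Y(u_i\circ\iota_2))$, where $\iota_1, \iota_2 \colon V_i \to V_i\oplus V_i$ are the two summand inclusions. Both $u_i\circ\iota_1$ and $u_i\circ\iota_2$ are $K$-equivariant linear isometric embeddings $V_i \to V_{n(i)}$, and the same input used in the proof of Lemma~\ref{lemmLendofunctor} (and of \cite[Theorem~1.1.10]{global}) shows that, after a further shift, these embeddings are $K$-equivariantly homotopic through isometric embeddings. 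Consequently, $\tel_i\rho(V_i)\circ\sigma$ is $\Gamma$-equivariantly homotopic on telescopes to the map obtained by applying the $(G\times H)$-orthogonal space $X\times Y$ to a natural transformation $\mathrm{Id}\Rightarrow F$ between continuous endofunctors of $\Lcat$; by Lemma~\ref{lemmLendofunctor} this latter map is a $(G\times H)$-global equivalence, and hence (by Proposition~\ref{propglobaltel}) induces a weak equivalence on $\Gamma$-fixed points of telescopes. An analogous argument, using the compatibility of the universal bimorphism with the structure maps of $X\boxtimes Y$, handles the reverse composite $\sigma\circ\tel_i\rho(V_i)$.

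With both composites $\Gamma$-equivalences, the two-out-of-six property applied at the space level to the three consecutive morphisms $\sigma,\; \tel_i\rho(V_i),\; \sigma$ forces $(\tel_i\rho_{X,Y}(V_i))^\Gamma$ to be a weak homotopy equivalence, completing the proof. The main obstacle I anticipate is the bookkeeping required both to produce $\sigma$ as a genuine map of telescopes (choosing the $u_i$ compatibly with the structure maps of the sequence $\{V_i\}$) and to verify that the $K$-equivariant homotopies of isometric embeddings assemble across stages into a genuine $\Gamma$-equivariant homotopy of $\Gamma$-fixed points of telescopes. This is exactly where the graph-subgroup hypothesis pays off: on $\Gamma$-fixed points the $G\times H$-action is controlled by the homomorphism defining $\Gamma$, so purely $K$-equivariant homotopies of embeddings automatically upgrade to $\Gamma$-equivariant ones.
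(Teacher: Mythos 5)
Your proposal has the same underlying idea as the paper's proof — build a map from $X\times Y$ back to a shifted box product via the universal bimorphism, identify both composites with morphisms covered by Lemma~\ref{lemmLendofunctor}, and conclude by two-out-of-six from Lemma~\ref{lemmGglobal}~i). But there is an essential difference in where the map is constructed, and in your version this difference is a genuine gap, not bookkeeping.

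The paper defines the backward map at the level of orthogonal spaces: using the shift endofunctor $\sh \colon \Lcat \to \Lcat$, $\sh(V) = V\oplus V$, one gets a morphism of $(G\times H)$-orthogonal spaces $\lambda \colon X\times Y \to \sh(X\boxtimes Y)$ whose component at $V$ is the universal bimorphism $i_{V,V}$. No choices are made: $\lambda$ is a canonical natural transformation. Then $\sh(\rho_{X,Y})\circ\lambda = X(\iota_1)\times Y(\iota_2)$ on the nose, and $\lambda\circ\rho_{X,Y}$ is $(G\times H)$-equivariantly homotopic to $(X\boxtimes Y)(\iota_1)$; both are $(G\times H)$-global equivalences by Lemma~\ref{lemmLendofunctor}. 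Proposition~\ref{propglobaltel} is never invoked.

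Your proposal instead tries to build a telescope-level map $\sigma$ by choosing $K$-equivariant embeddings $u_i \colon V_i\oplus V_i \hookrightarrow V_{n(i)}$. The problem, which you flag but do not resolve, is that the $u_i$ cannot in general be chosen strictly compatibly with the structure maps of the sequence $\{V_i\}$: the squares comparing $u_i$ and $u_{i+1}$ through the inclusions $V_i\hookrightarrow V_{i+1}$ will only commute up to homotopy through isometric embeddings. Consequently the stagewise maps $X(V_i)\times Y(V_i)\to (X\boxtimes Y)(V_{n(i)})$ do not assemble into a genuine map of telescopes, and $\sigma$ as described does not exist. You would need to replace it by a zig-zag or a homotopy-coherent version, at which point you are redoing the work that the shift-functor construction makes unnecessary. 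The clean way to carry out your plan is precisely the paper's: work with $\sh$ as an endofunctor of $\Spc$ so that $\lambda$ is defined before any telescopes enter, then feed the two composites to Lemma~\ref{lemmLendofunctor} directly.
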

\begin{proof}
Consider the endofunctor $\sh \colon \Lcat \to \Lcat$ that sends $V$ to $V \oplus V$. We have two natural transformations $\iota_1, \iota_2 \colon Id \Rightarrow\sh$ given by the embeddings into the first and second factor respectively. We also denote by $\sh$ the functor of orthogonal spaces given by precomposing with $\sh$, $\sh(X)= X \circ \sh$.

The universal bimorphism $i$ that exhibits $X \boxtimes Y$ as the box product of $X$ and $Y$ gives a morphism of orthogonal spaces $\lambda \colon X \times Y \to \sh(X \boxtimes Y)$ through the maps
\[i_{V, V} \colon X(V) \times Y(V) \to (X \boxtimes Y)(V \oplus V) = (\sh(X \boxtimes Y))(V).\]
We need to check that $\lambda \circ \rho_{X, Y}$ and $\sh(\rho_{X, Y}) \circ \lambda$ in the following diagram
\begin{equation*}
\begin{tikzcd}
X \boxtimes Y \arrow[r, "{\rho_{X, Y}}"] & X \times Y \arrow[r, "\lambda"] & \sh(X \boxtimes Y) \arrow[r, "{\sh(\rho_{X, Y})}"] & \sh(X \times Y)
\end{tikzcd}
\end{equation*}
are $(G \times H)$-global equivalences, and then we can use the 2-out-of-6 property~\ref{lemmGglobal}~i) to obtain that $\rho_{X, Y}$ is a $(G \times H)$-global equivalence.

We have that $\sh(\rho_{X, Y}) \circ \lambda$ evaluated at $V$ is the same as the map associated to $\rho_{X, Y}$ at level $(V, V)$ given in (\ref{eqrhodef}) of Remark~\ref{remoperadboxtocat}, by the constructions of $\lambda$ and $\rho_{X, Y}$. This means that
\[\sh(\rho_{X, Y}) \circ \lambda= X(\iota_1)\times Y(\iota_2),\]
where each morphism on the right is a $G$-global equivalence or an $H$-global equivalence respectively by Lemma~\ref{lemmLendofunctor}, and so their product is a $(G \times H)$-global equivalence by Lemma~\ref{lemmGglobal}~v).

Next we use that $\lambda \circ \rho_{X, Y}$ is homotopic through $(G \times H)$-equivariant morphisms to $(X \boxtimes Y)(\iota_1)$, since the homotopy between them given in the proof of~\cite[Theorem~1.3.2 (i)]{global} is through $(G \times H)$-equivariant morphisms. Additionally $(X \boxtimes Y)(\iota_1)$ is a $(G \times H)$-global equivalence by Lemma~\ref{lemmLendofunctor}, so by Lemma~\ref{lemmGglobal}~iii) $\lambda \circ \rho_{X, Y}$ is a $(G \times H)$-global equivalence.
\end{proof}

\begin{coro}
\label{coroboxgglobal}
For a $G$-global equivalence $f\colon X \to Y$ and an $H$-global equivalence $f'\colon X' \to Y'$, the morphism $f \boxtimes f'$ is a $(G \times H)$-global equivalence. If $H=G$ then $f \boxtimes f'$ is a $G$-global equivalence. Therefore for any $X \in \GSpc$, the functor $X \boxtimes -$ preserves $G$-global equivalences.
\end{coro}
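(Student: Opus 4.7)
The plan is to deduce the general statement from the product case, Lemma~\ref{lemmGglobal} v), by reducing along the natural transformation $\rho$ studied in Proposition~\ref{proprhoGeq}. By the naturality of $\rho$ in both variables (Remark~\ref{remoperadboxtocat}), the square
\begin{equation*}

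\begin{tikzcd}
X \boxtimes X' \arrow[r, "f \boxtimes f'"] \arrow[d, "{\rho_{X, X'}}"'] & Y \boxtimes Y' \arrow[d, "{\rho_{Y, Y'}}"] \\
X \times X' \arrow[r, "f \times f'"']                                    & Y \times Y'
\end{tikzcd}
\end{equation*}
commutes in the category of $(G \times H)$-orthogonal spaces. The two vertical maps are $(G \times H)$-global equivalences by Proposition~\ref{proprhoGeq}, and the bottom horizontal map is a $(G \times H)$-global equivalence by Lemma~\ref{lemmGglobal} v). Thus the composite $\rho_{Y, Y'} \circ (f \boxtimes f') = (f \times f') \circ \rho_{X, X'}$ is a $(G \times H)$-global equivalence, being a composition of two such; combined with the fact that $\rho_{Y, Y'}$ is itself a $(G \times H)$-global equivalence, the 2-out-of-3 consequence of Lemma~\ref{lemmGglobal} i) yields that $f \boxtimes f'$ is a $(G \times H)$-global equivalence.

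To deduce the case $H = G$, I would observe that the $G$-action on $f \boxtimes f'$ (as morphism of $G$-orthogonal spaces) coincides with the restriction of the $(G \times G)$-action along the diagonal homomorphism $\Delta \colon G \to G \times G$. Applying Lemma~\ref{lemmGglobal} vi) to the continuous homomorphism $\Delta$ then converts the already-established $(G \times G)$-global equivalence into a $G$-global equivalence.

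The last statement follows by specializing the previous paragraph: for a fixed $X \in \GSpc$ and a $G$-global equivalence $f' \colon X' \to Y'$, take $f = \id_X$, which is trivially a $G$-global equivalence. Then $\id_X \boxtimes f'$ is a $G$-global equivalence, which is exactly the claim that $X \boxtimes -$ preserves $G$-global equivalences. No step here should present a real obstacle, since all the nontrivial work has already been done in Proposition~\ref{proprhoGeq} and Lemma~\ref{lemmGglobal}; the mild subtlety is only to keep careful track of which compact Lie group acts on which factor so that the appeal to restriction along $\Delta$ is unambiguous.
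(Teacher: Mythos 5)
Your proposal is correct and takes essentially the same approach as the paper: reduce $\boxtimes$ to $\times$ via the naturality square for $\rho$, invoke Proposition~\ref{proprhoGeq} for the vertical maps and Lemma~\ref{lemmGglobal}~v) for the bottom map, conclude by two-out-of-three, and then restrict along the diagonal with Lemma~\ref{lemmGglobal}~vi). The paper states it in one line by writing $\rho_{Y,Y'}\circ(f\boxtimes f')=(f\times f')\circ\rho_{X,X'}$ and applying two-out-of-six, but this is the same argument.
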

\begin{proof}
First,
\[\rho_{Y, Y'} \circ (f \boxtimes f') = (f \times f') \circ \rho_{X, X'},\]
and $\rho_{Y, Y'}$ and $\rho_{X, X'}$ are $(G \times H)$-global equivalences by Proposition~\ref{proprhoGeq}. Since $f \times f'$ is also a $(G \times H)$-global equivalence by Lemma~\ref{lemmGglobal}~v), by the 2-out-of-6 property so is $f \boxtimes f'$.

If $H=G$ by restricting along the diagonal homomorphism $\Delta \colon G \to G \times G$ and using Lemma~\ref{lemmGglobal}~vi) we obtain that $f \boxtimes f'$ is a $G$-global equivalence and therefore $X \boxtimes -$ preserves $G$-global equivalences.
\end{proof}

Now we proceed with a technical lemma which we use to prove the two subsequent propositions. The first one discusses what happens to $G$-global equivalences between $G$-free orthogonal spaces when taking orbits, if $G$ is finite. The second one shows that $G$-global equivalences are preserved by inducing from a finite subgroup.

\begin{lemm}
\label{lemmfreegweqorbit}
Let $H$ be a finite group and $K$ and $G$ compact Lie groups. Assume that we have equivariant maps of $(K \times G \times H)$-spaces $f \colon X \to Y$ and $g\colon Y \to Z$ such that $Z$ is Hausdorff and $H$-free. Then the map on orbits $f/H\colon X/H \to Y/H$ is an $\fat(K, G)$-equivalence if and only if $f$ is an $\fat(K, G \times H)$-equivalence.
\end{lemm}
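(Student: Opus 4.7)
The plan is to reduce the equivalence of the two conditions to a clopen decomposition of each fixed-point space $(X/H)^\Gamma$, indexed by graph subgroups of $K \times G \times H$ that project to $\Gamma$, combined with the standard fact that quotients by free finite group actions preserve and reflect weak equivalences. As a preliminary observation, the existence of the $H$-equivariant composite $g \circ f \colon X \to Z$ into an $H$-free space forces $X$ to be $H$-free (any element of $H$ fixing a point of $X$ would fix its image in $Z$), and similarly $Y$ is $H$-free via $g$.

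Fix $\Gamma = \Gamma_\phi \in \fat(K, G)$ for some $\phi \colon A \to G$ with $A \leqslant K$ closed. An orbit $[x] \in (X/H)^\Gamma$ satisfies $(a, \phi(a)) \cdot x \in H \cdot x$ for each $a \in A$, and by $H$-freeness there is a unique $\psi(a) \in H$ with $(a, \phi(a), \psi(a)) \cdot x = x$. Uniqueness forces $\psi$ to be a homomorphism, and together with the discreteness of $H$ it gives continuity, so $\psi \colon A \to H$ is a continuous homomorphism, determined by $[x]$ up to $H$-conjugacy. Conversely, any point fixed by the graph subgroup $\Gamma_{(\phi, \psi)} \in \fat(K, G \times H)$ projects to an orbit in $(X/H)^\Gamma$ with invariant $[\psi]$, and a further $H$-freeness argument shows that two orbits have the same invariant precisely when they arise from the same fixed-point set. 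This yields the decomposition
\[
(X/H)^\Gamma \;=\; \bigsqcup_{[\psi]} X^{\Gamma_{(\phi, \psi)}} / C_H(\psi(A)),
\]
where $[\psi]$ ranges over $H$-conjugacy classes in $\Hom(A, H)$ and $C_H(\psi(A))$ is the centralizer in $H$ of the image of $\psi$, acting freely on $X^{\Gamma_{(\phi, \psi)}}$ as a subgroup of the freely-acting $H$. Since $A$ is a compact Lie group (so has finitely many components) and $H$ is finite, $\Hom(A, H)$ is a finite set, so the decomposition has finitely many closed pieces and is therefore a clopen partition.

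Applying the analogous decomposition to $Y$, the map $(f/H)^\Gamma$ splits as a disjoint union of the maps $X^{\Gamma_{(\phi, \psi)}}/C_H(\psi(A)) \to Y^{\Gamma_{(\phi, \psi)}}/C_H(\psi(A))$, each being the quotient of $f^{\Gamma_{(\phi, \psi)}}$ by a free action of the finite group $C_H(\psi(A))$. A principal-bundle and five-lemma comparison of the long exact sequences in homotopy shows that such a quotient is a weak equivalence if and only if $f^{\Gamma_{(\phi, \psi)}}$ itself is. Both implications of the lemma are then immediate: if $f$ is an $\fat(K, G \times H)$-equivalence then each $f^{\Gamma_{(\phi, \psi)}}$ is a weak equivalence, hence so is each piece of $(f/H)^\Gamma$, hence $(f/H)^\Gamma$ itself; conversely, every $\widetilde\Gamma \in \fat(K, G \times H)$ arises as some $\Gamma_{(\phi, \psi)}$, and the weak equivalence on the corresponding clopen piece of $(f/H)^{\Gamma_\phi}$ pulls back to show $f^{\widetilde\Gamma}$ is a weak equivalence.

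The main obstacle will be verifying that the pieces of the decomposition form a clopen partition. This hinges on $H$-freeness (to make $\psi$ single-valued and to separate points with non-conjugate invariants) and on the finiteness of $\Hom(A, H)/H$ (so that a finite disjoint cover by closed sets is automatically a cover by clopen sets).
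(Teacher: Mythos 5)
Your argument follows essentially the same route as the paper: decompose $(X/H)^{\Gamma_\phi}$ into pieces indexed by $H$-conjugacy classes of homomorphisms $\psi \colon A \to H$, identify each piece as $X^{\Gamma_{(\phi,\psi)}}/C_H(\psi(A))$, and then argue piecewise via a covering-space / long-exact-sequence comparison. The paper cites \cite[Proposition~B.17]{global} for the decomposition rather than re-deriving it, but that is a cosmetic difference.

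There is, however, a genuine gap. Nowhere in your proof do you use the hypothesis that $Z$ is Hausdorff, which should be a warning sign. The issue is the step where you pass from ``quotient of $f^{\Gamma_{(\phi,\psi)}}$ by a free action of the finite group $C_H(\psi(A))$'' to ``principal bundle and five-lemma comparison of long exact sequences.'' A free action of a finite group does not in general yield a covering map; one needs the action to be properly discontinuous, and that is exactly what the Hausdorff hypothesis buys. In the paper this is handled by observing that $Z^{\Gamma_\psi}$ is Hausdorff (as a subspace of $Z$) and $C(\psi)$-free, hence the $C(\psi)$-action on it is properly discontinuous; proper discontinuity then pulls back along the equivariant maps $f^{\Gamma_\psi}$ and $g^{\Gamma_\psi}$ to $X^{\Gamma_\psi}$ and $Y^{\Gamma_\psi}$, just as you pulled back freeness along $g \circ f$. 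You use the same ``pull back along a map to $Z$'' trick for freeness but not for properness, and without the latter the covering-space argument does not get off the ground. Adding that observation would close the gap.
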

\begin{proof}
First note that since $Z$ is $H$-free, so are $X$ and $Y$. For any graph subgroup $\Gamma_\phi \in \fat(K, G)$ given by a continuous homomorphism $\phi \colon L \to G$,~\cite[Proposition~B.17]{global} gives a natural homeomorphism for $X$, $Y$ and $Z$,
\[\coprod_{[\psi]} X^{\Gamma_\psi}/C(\psi) \to (X/H)^{\Gamma_\phi}.\]
The disjoint union on the left is indexed by the conjugacy classes of continuous homomorphisms $\psi \colon \Gamma_\phi \to H$. Here $C(\psi)$ denotes the centralizer of the image of $\psi$ in $H$. 

Fix a graph subgroup $\Gamma_\phi \in \fat(K, G)$. A homomorphism $\psi \colon \Gamma_\phi \to H$, as a subgroup of $K \times G \times H$, has elements $(k, \phi(k), \psi(k, \phi(k))$ for $k \in L$, so $\Gamma_\psi \in \fat(K, G \times H)$. Conversely, for a graph subgroup $\Gamma_\psi \in \fat(K, G \times H)$, let $\phi$ be the homomorphism $\pi_G \circ \psi \colon L \to G$ where $\pi_G \colon G \times H \to G$ is the projection. Then $\Gamma_\psi$ is a graph subgroup of $\Gamma_\phi \times H$, so that $\psi$ can be seen as a homomorphism $\Gamma_\phi \to H$.

Therefore the map on orbits $f/H$ is an $\fat(K, G)$-equivalence if and only if for each $\Gamma_\psi \in \fat(K, G \times H)$ the map $f^{\Gamma_\psi}/C(\psi)$ is a weak homotopy equivalence.

For each $\Gamma_\psi \in \fat(K, G \times H)$, the centralizer of the image of $\psi$, $C(\psi)\leq H$, is finite. Additionally, $Z^{\Gamma_\psi}$ is $C(\psi)$-free and a subspace of $Z$ so Hausdorff. Therefore the $C(\psi)$-action on $Z^{\Gamma_\psi}$ is properly discontinuous, and since $f^{\Gamma_\psi}$ and $g^{\Gamma_\psi}$ are $C(\psi)$-equivariant, the $C(\psi)$-actions on $X^{\Gamma_\psi}$ and $Y^{\Gamma_\psi}$ are also properly discontinuous.

This means that
\[X^{\Gamma_\psi} \to X^{\Gamma_\psi}/C(\psi) \; \text{and}\; Y^{\Gamma_\psi} \to Y^{\Gamma_\psi}/C(\psi)\]
are covering maps, and since $f^{\Gamma_\psi}$ is $C(\psi)$-equivariant, it induces a map of coverings. Then we consider the long exact sequence of homotopy groups for these covering maps. We obtain that $f^{\Gamma_\psi}/C(\psi)$ is a weak homotopy equivalence if and only if $f^{\Gamma_\psi}$ is a weak homotopy equivalence. For $\pi_n$ for $n \geqslant 2$ this can be seen by using the five lemma and for $\pi_0$ and $\pi_1$ it can be checked explicitly.

Thus we finally obtain that $f/H$ is an $\fat(K, G)$-equivalence if and only if $f$ is an $\fat(K, G \times H)$-equivalence.
\end{proof}

This next proposition is similar to~\cite[Lemma~8.1]{sagave}.

\begin{prop}
\label{propfreefiniteorbitspc}
Let $H$ be a finite group and $G$ a compact Lie group. Consider two morphisms of $(G \times H)$-orthogonal spaces $f\colon X \to Y$ and $g\colon Y \to Z$, where for $Z$ we know that for each inner product space $V$ the space $Z(V)$ is Hausdorff and $H$-free. Then $f/H\colon X/H \to Y/H$ is a $G$-global equivalence if and only if $f$ is a $(G \times H)$-global equivalence.
\end{prop}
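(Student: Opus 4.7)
The plan is to reduce from the level of $G$-orthogonal spaces to the level of mapping telescopes of $(K \times G \times H)$-spaces, and then apply Lemma~\ref{lemmfreegweqorbit} directly there. First, I would fix a compact Lie group $K$ and an exhaustive sequence $\{V_i\}_{i \in \NN}$ of $K$-representations, and use Proposition~\ref{propglobaltel} to rephrase both sides of the equivalence: $f/H$ is a $G$-global equivalence iff $\tel_i (f/H)(V_i)$ is an $\fat(K, G)$-equivalence for every such $K$ and $\{V_i\}$, and $f$ is a $(G \times H)$-global equivalence iff $\tel_i f(V_i)$ is an $\fat(K, G \times H)$-equivalence for every such $K$ and $\{V_i\}$.

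Next I would observe that taking $H$-orbits commutes with evaluation at $V_i$ (colimits in $\GSpc$, and hence in $(G \times H)$-orthogonal spaces, are computed levelwise, so $(X/H)(V_i) = X(V_i)/H$) and with the mapping telescope construction (as orbits is a left adjoint, or equivalently because the telescope is a colimit and $H$ acts only on the space factor of each cylinder $X(V_i) \times [i, i+1]$). Hence the canonical comparison
\[
\tel_i (f/H)(V_i) \;\cong\; (\tel_i f(V_i))/H
\]
is an isomorphism of $(K \times G)$-spaces, so the task reduces to comparing the $\fat(K, G)$-equivalence property of $(\tel_i f(V_i))/H$ with the $\fat(K, G \times H)$-equivalence property of $\tel_i f(V_i)$.

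To apply Lemma~\ref{lemmfreegweqorbit} at the telescope level, I need the target $\tel_i Z(V_i)$ to be Hausdorff and $H$-free. For $H$-freeness: each point of the mapping telescope either sits in the interior of some cylinder $Z(V_i) \times (i, i+1)$, where the $H$-action is free because it is free on $Z(V_i)$ and trivial on $[0,1]$; or it is represented by a boundary point, whose equivalence class under the telescope identifications lies entirely within $H$-orbits because the structure maps $Z(V_i) \to Z(V_{i+1})$ are $H$-equivariant. A fixed-point analysis then shows that $h \cdot p = p$ forces $h = e$. For the Hausdorff condition, the telescope of Hausdorff spaces (in compactly generated weak Hausdorff spaces, along any continuous maps) is weak Hausdorff, which is the property actually used in the proof of Lemma~\ref{lemmfreegweqorbit} when invoking~\cite[Proposition~B.17]{global}.

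With these ingredients in place, Lemma~\ref{lemmfreegweqorbit} applied to the composable maps
\[
\tel_i X(V_i) \xrightarrow{\tel_i f(V_i)} \tel_i Y(V_i) \xrightarrow{\tel_i g(V_i)} \tel_i Z(V_i)
\]
yields that $(\tel_i f(V_i))/H$ is an $\fat(K, G)$-equivalence iff $\tel_i f(V_i)$ is an $\fat(K, G \times H)$-equivalence. Combined with the two applications of Proposition~\ref{propglobaltel}, this is exactly the claimed biconditional. The main subtlety I expect is the verification that $H$ still acts freely on the mapping telescope of $Z(V_i)$, since the structure maps need not be closed embeddings; but this follows from tracking representatives through the telescope identifications and using $H$-equivariance of the structure maps together with freeness on each $Z(V_i)$.
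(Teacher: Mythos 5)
Your proof follows essentially the same route as the paper's: reduce via Proposition~\ref{propglobaltel} to mapping telescopes, observe that taking $H$-orbits commutes with levelwise evaluation and with forming telescopes so that $\tel_i (f/H)(V_i) \cong (\tel_i f(V_i))/H$, check that $\tel_i Z(V_i)$ is Hausdorff and $H$-free, and then apply Lemma~\ref{lemmfreegweqorbit} at the telescope level. One small imprecision: your aside that Lemma~\ref{lemmfreegweqorbit} ``actually'' only needs weak Hausdorffness is off---its proof uses genuine Hausdorffness to get proper discontinuity of the $C(\psi)$-action and hence a covering map---but since the mapping telescope of Hausdorff CGWH spaces along arbitrary continuous maps is in fact Hausdorff, the argument goes through unchanged.
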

\begin{proof}
By Proposition~\ref{propglobaltel} we know that $f/H\colon X/H \to Y/H$ is a $G$-global equivalence if and only if for each compact Lie group $K$ and exhaustive sequence of $K$-representations $\{V_i\}_{i \in \NN}$ the map
\[\tel_i f/H(V_i) \colon \tel_i X/H(V_i) \to \tel_i Y/H(V_i)\]
is an $\fat(K, G)$-equivalence.

Taking $H$-orbits commutes with colimits and product with $[0, 1]$, so it commutes with taking mapping telescopes. Therefore $\tel_i f/H(V_i) \cong \tel_i f(V_i)/H$. Now $f$ and $g$ induce $(K \times G \times H)$-equivariant maps on mapping telescopes
\begin{equation*}
\begin{tikzcd}
\tel_i X(V_i) \arrow[r, "\tel_i f(V_i)"] & \tel_i Y(V_i) \arrow[r, "\tel_i g(V_i)"] & \tel_i Z(V_i).
\end{tikzcd}
\end{equation*}
Since each $Z(V)$ is Hausdorff and $H$-free, so is $\tel_i Z(V_i)$. By Proposition~\ref{propglobaltel} again $f$ is a $(G \times H)$-global equivalence if and only if $\tel_i f(V_i)$ is an $\fat(K, G \times H)$-equivalence for each $K$ and $\{V_i\}_{i \in \NN}$. By Lemma~\ref{lemmfreegweqorbit} $\tel_i f(V_i)/H$ is an $\fat(K, G)$-equivalence if and only if $\tel_i f(V_i)$ is an $\fat(K, G \times H)$-equivalence, which yields the result.
\end{proof}

\begin{prop}
\label{propinduceorbitspc}
For a compact Lie group $G$, a finite subgroup $H \leq G$, and an $H$-global equivalence $f\colon X \to Y$, the morphism $G \times_H f$ is a $G$-global equivalence.
\end{prop}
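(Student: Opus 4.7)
The plan is to introduce an auxiliary $(G \times H)$-action on $G \times X$ that recovers $G \times_H X$ on $H$-orbits, show that $G \times f$ is a $(G \times H)$-global equivalence with respect to this action, and then appeal to Proposition~\ref{propfreefiniteorbitspc}. Concretely, I equip $G \times X$ with the action $(g', h) \cdot (g, x) = (g'gh^{-1}, hx)$, where $K$ acts only through the existing $K$-action on $X(V)$ for each $K$-representation $V$; this is a well-defined $(G \times H)$-orthogonal space whose $H$-orbits are $G \times_H X$ as a $G$-orthogonal space.

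For the first step I would invoke Proposition~\ref{propglobaltel}: fix a compact Lie group $K$, an exhaustive sequence $\{V_i\}_{i\in\NN}$ of $K$-representations, and a graph subgroup $\Gamma_\phi \in \fat(K, G \times H)$ associated to $\phi = (\phi_G, \phi_H)\colon L \to G \times H$. Since $G$ is a fixed locally compact Hausdorff space, products with $G$ commute with mapping telescopes, so $\tel_i (G \times f)(V_i) = G \times \tel_i f(V_i)$. A direct unwinding of the $\Gamma_\phi$-action then shows
\[
(G \times Z)^{\Gamma_\phi} = A_\phi \times Z^{\Gamma_{\phi_H}}, \qquad A_\phi = \{g \in G : g^{-1}\phi_G(k)g = \phi_H(k) \text{ for all } k \in L\},
\]
for any $(K \times H)$-space $Z$ used as the second factor (with $K$ trivial on $G$), where $\Gamma_{\phi_H} \in \fat(K, H)$ is the graph subgroup associated to $\phi_H$. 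Applied to the mapping telescopes, the map $\tel_i (G \times f)(V_i)^{\Gamma_\phi}$ becomes $A_\phi \times \tel_i f(V_i)^{\Gamma_{\phi_H}}$. Since $f$ is an $H$-global equivalence, the second factor is a weak homotopy equivalence by Proposition~\ref{propglobaltel}, and hence so is the product (whether $A_\phi$ is empty or not). This establishes that $G \times f$ is a $(G \times H)$-global equivalence.

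To conclude, I would apply Proposition~\ref{propfreefiniteorbitspc} to the composition $G \times X \xrightarrow{G \times f} G \times Y \xrightarrow{\id} G \times Y$ of $(G \times H)$-orthogonal spaces. Each level $(G \times Y)(V) = G \times Y(V)$ is Hausdorff as a product of Hausdorff spaces, and it is $H$-free because $H \leq G$ acts on the $G$-factor by right translation via $h \mapsto h^{-1}$, which is free on $G$ itself. The proposition then gives that $(G \times f)/H = G \times_H f$ is a $G$-global equivalence, as desired. The main delicate point is the fixed-point identification $(G \times Z)^{\Gamma_\phi} = A_\phi \times Z^{\Gamma_{\phi_H}}$, which requires carefully setting up the twisted action; the rest of the argument is essentially formal.
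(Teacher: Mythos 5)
Your overall strategy matches the paper's: show that $G \times f$ with the twisted $(G \times H)$-action is a $(G \times H)$-global equivalence via a fixed-point computation on mapping telescopes, then pass to $H$-orbits via Proposition~\ref{propfreefiniteorbitspc}. Your fixed-point identity $(G \times Z)^{\Gamma_\phi} = A_\phi \times Z^{\Gamma_{\phi_H}}$ is correct and is the same computation the paper performs, just made more explicit (the paper writes $G^{\Gamma_\phi} \times (\tel_i f(V_i))^{\Gamma_\phi}$ and identifies the second factor with the $\Gamma_{\pi_H\circ\phi}$-fixed points via the projection $K \times G \times H \to K \times H$; your $A_\phi$ is precisely $G^{\Gamma_\phi}$ with the conjugation action unwound).

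However, there is a genuine gap in your final step. You apply Proposition~\ref{propfreefiniteorbitspc} to the chain $G \times X \to G \times Y \xrightarrow{\id} G \times Y$ and assert that $(G \times Y)(V) = G \times Y(V)$ is Hausdorff \enquote{as a product of Hausdorff spaces.} But under the paper's conventions, $Y(V)$ is only a compactly generated \emph{weak} Hausdorff space, not necessarily Hausdorff, so $G \times Y(V)$ need not be Hausdorff either. The Hausdorffness hypothesis in Proposition~\ref{propfreefiniteorbitspc} is used substantively (via Lemma~\ref{lemmfreegweqorbit}, to ensure proper discontinuity and covering maps), so this cannot be waved away. The fix is exactly what the paper does: use the projection $G \times Y \to G$ to the \emph{constant} $(G \times H)$-orthogonal space with value $G$, which is genuinely Hausdorff and $H$-free at every level. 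The proposition only requires the \emph{terminal} object $Z$ of the chain to be levelwise Hausdorff and $H$-free, not $Y$ itself, so projecting off the $Y$-factor resolves the issue.
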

\begin{proof}
We first need to check that $G \times f$ is a $(G \times H)$-global equivalence, for the action where $G$ acts on the left on the $G$ factor, and $H$ acts both on the right on the $G$ factor and on the left on the $f$ factor.

Consider a compact Lie group $K$ and an exhaustive sequence of $K$-representations $\{V_i\}_{i \in \NN}$. The functor $G \times -$ commutes with colimits and the functor $- \times [0, 1]$, so it commutes with taking mapping telescopes. Therefore it suffices to check that $G \times \tel_i f(V_i)$ is an $\fat(K, G \times H)$-equivalence. 

For any graph subgroup $\Gamma_\phi\in\fat(K, G \times H)$, the image of $\Gamma_\phi$ under the projection
\[\pi_{K \times H} \colon K \times G \times H \to K \times H\]
is the graph subgroup $\Gamma_{\pi_H \circ \phi}$. Therefore
\[(\tel_i f(V_i))^{\Gamma_\phi}= (\tel_i f(V_i))^{\Gamma_{\pi_H \circ \phi}},\]
and the latter is a weak homotopy equivalence since $\tel_i f(V_i)$ is an $\fat(K, H)$-equivalence. Then
\[(G \times \tel_i f(V_i))^{\Gamma_\phi} = G^{\Gamma_\phi} \times \tel_i f(V_i)^{\Gamma_\phi}\]
is also a weak homotopy equivalence.

Lastly, the projection $G \times Y \to G$ is a $(G \times H)$-equivariant map, where again $G$ acts on $G$ on the left and $H$ acts on the right on $G$ and on the left on $Y$. With this action $G$ is $H$-free and Hausdorff, so by Proposition~\ref{propfreefiniteorbitspc}, $G \times_H f$ is a $G$-global equivalence.
\end{proof}

In Appendix~\ref{appendixmodel} we further explore some more technical aspects of $G$-orthogonal spaces. In particular, we construct the $G$-global model structure on $\GSpc$. The $G$-flat cofibrations are the cofibrations of this model structure. However for our admissibility results on operads in $\Spc$ we need to work with a bigger class of morphisms than that of the $G$-flat cofibrations. This is why we now introduce the class of $G$-$h$-cofibrations of $\GSpc$. In Appendix~\ref{appendixmodel} we also study the compatibility of $G$-global equivalences and $G$-$h$-cofibrations.

The category $\GSpc$ is tensored over $\Topcat$. Thus we can define what a homotopy of morphisms of $G$-orthogonal spaces is in the usual way using the interval. We can also similarly define what a $G$-homotopy equivalence of $G$-orthogonal spaces is.

\begin{defi}[$G$-$h$-cofibration]
\label{defiGhcof}
A morphism in $\GSpc$ is an \emph{$h$-cofibration} if it has the homotopy extension property. A morphism $f \colon X \to Y$ has the homotopy extension property if and only if there is a retraction in $\GSpc$ for the induced morphism
\[X \times [0, 1] \cup_X Y \to Y \times [0, 1].\]
We call these morphisms the \emph{$G$-$h$-cofibrations}.
\end{defi}

\begin{lemm}
\label{lemmGhcofclosed}
The class of $G$-$h$-cofibrations is closed under coproducts, transfinite compositions, cobase changes and retracts. Additionally each $G$-flat cofibration is a $G$-$h$-cofibration.
\end{lemm}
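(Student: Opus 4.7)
The plan is to reduce the first four closure properties to the standard formal fact that any class of morphisms characterised by a left lifting property is automatically closed under coproducts, transfinite compositions, cobase changes, and retracts, and to handle the final assertion by inspecting the generators of the $G$-global model structure constructed in Appendix~\ref{appendixmodel}.

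To set this up, I would first observe that $\GSpc$, being a $\Topcat$-enriched functor category with values in $\Topcat$, is both tensored and cotensored over $\Topcat$. In particular the endofunctor $-\times [0,1]$ admits a right adjoint $(-)^{[0,1]}$, computed levelwise. The homotopy extension property then translates across this adjunction into a lifting property: a morphism $i\colon X\to Y$ is a $G$-$h$-cofibration if and only if, for every $Z\in\GSpc$, $i$ has the left lifting property against the evaluation map $\ev_0\colon Z^{[0,1]}\to Z$. With this reformulation the four desired closure properties are immediate, as left-lifting classes always satisfy them (for coproducts, a lifting problem for $\coprod_\alpha i_\alpha$ decomposes into the individual lifting problems for each $i_\alpha$ and is solved componentwise; for transfinite compositions, one extends the chosen lift step by step along the sequence).

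For the final assertion I would appeal to the explicit cofibrant generation of the $G$-global model structure in Appendix~\ref{appendixmodel}, where the generating cofibrations are built by applying free $G$-orthogonal space functors (left adjoints to evaluation at a representation, twisted by orbits $(K\times G)/\Gamma$ for graph subgroups $\Gamma$) to the boundary inclusions $i_l\colon \partial D^l\to D^l$. Each $i_l$ has the homotopy extension property in $\Topcat$, and the free functors in question are left adjoints that commute with the tensor $-\times [0,1]$ and with pushouts, so each generator is a $G$-$h$-cofibration; the closure properties just established then propagate this to every $G$-flat cofibration, which by definition is a retract of a transfinite composition of cobase changes of generators. The first half of the argument is entirely formal; the real obstacle here is bookkeeping, in that one must wait for Appendix~\ref{appendixmodel} for the explicit description of the generators and then perform the routine compatibility check between the free functors and the cotensor $(-)^{[0,1]}$.
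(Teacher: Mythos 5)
Your proof of the closure properties coincides exactly with the paper's: both use the reformulation of the homotopy extension property as a left lifting property against $\ev_0 \colon Z^{[0,1]} \to Z$ for all $Z$, and then invoke the standard fact that left-lifting classes are saturated.

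For the second assertion you take a genuinely different route. The paper cites a general result from \cite[Corollary A.30~(iii)]{global}: in a topological model category in which every object is fibrant, cofibrations are $h$-cofibrations; since the $G$-level model structure of Theorem~\ref{thmlevelmodel} is topological and all $G$-orthogonal spaces are $G$-level fibrant, the conclusion follows in one line. You instead verify the claim directly on the generators $L_{\Gamma,\RR^m;G} \times i_l = F_m\big(((O(m)\times G)/\Gamma)\times i_l\big)$, noting that each such map is a $G$-$h$-cofibration because $i_l$ is an $h$-cofibration, products with fixed spaces preserve $h$-cofibrations, and the free functor $F_m$ is a $\Topcat$-enriched left adjoint (so it commutes with $-\times[0,1]$ and with pushouts, the two operations appearing in the retract witnessing the HEP); you then propagate to arbitrary $G$-flat cofibrations via the closure properties just established. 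Both arguments are correct. The paper's is shorter and leans on a black-box lemma; yours is more self-contained and does not require having verified that the $G$-level model structure is topological (a fact which is established in Theorem~\ref{thmlevelmodel} but which your argument avoids needing). One small terminological slip: you refer to the cofibrant generation of the $G$-\emph{global} model structure where you should say $G$-\emph{level}, but since the two model structures share the same cofibrations and the same generating set $\Is_G$, nothing breaks.
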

\begin{proof}
On a category tensored and cotensored over $\Topcat$ the $h$-cofibrations can be equivalently defined as those morphisms that have the left lifting property with respect to $\ev_0\colon X^{[0, 1]} \to X$ for all objects $X$ (see \cite[Definition~A.28]{global}. This shows the first part.

The $G$-level model structure for $\GSpc$ that we construct in Theorem~\ref{thmlevelmodel} is topological, and all objects are fibrant so by~\cite[Corollary~A.30 (iii)]{global} each $G$-flat cofibration is a $G$-$h$-cofibration.
\end{proof}

\begin{lemm}
\label{lemmpropertiesofhcofs}
Let $G$ be a compact Lie group. 
\begin{enumerate}[label = \roman*)]
    \item Consider a closed normal subgroup $H \leq G$. For a $G$-$h$-cofibration of $G$-orthogonal spaces $f\colon X \to Y$, the morphism on orbits $f/H \colon X/H \to Y/H$ is a $(G/H)$-$h$-cofibration.
    \item Consider a continuous homomorphism $\alpha \colon H \to G$ between compact Lie groups. For a $G$-$h$-cofibration of $G$-orthogonal spaces $f \colon X \to Y$, the morphism $\alpha^\ast f \colon \alpha^\ast(X) \to \alpha^\ast(Y)$ is an $H$-$h$-cofibration.
    \item Consider a compact Lie group $H$ and an $H$-orthogonal space $Z$. For a $G$-$h$-cofibration of $G$-orthogonal spaces $f\colon X \to Y$, the morphisms $Z \boxtimes f$ and $Z \times f$ are $(H \times G)$-$h$-cofibrations.
    \item Consider a closed subgroup $H \leq G$. For an $H$-$h$-cofibration of $H$-orthogonal spaces $f\colon X \to Y$, the morphism $G \times_H f \colon G \times_H X \to G \times_H Y$ is a $G$-$h$-cofibration.
\end{enumerate}
\end{lemm}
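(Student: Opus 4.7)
The plan is to exploit the characterization recorded in Definition~\ref{defiGhcof}: a morphism $f \colon X \to Y$ in $\GSpc$ is a $G$-$h$-cofibration if and only if the canonical map
\[\sigma_f \colon X \times [0,1] \cup_X Y \longrightarrow Y \times [0,1]\]
admits a retraction in $\GSpc$. In each of the four items I would show that the functor in question preserves both the pushout defining the source of $\sigma_f$ and the tensor $-\times[0,1]$, so that applying the functor to $\sigma_f$ produces, up to canonical isomorphism, the map $\sigma_{F(f)}$ associated to the transformed morphism. A retraction for $\sigma_f$ then yields, by functoriality, a retraction for $\sigma_{F(f)}$, which is the required $h$-cofibration property.

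For the four cases the two preservation properties are standard. In (i) the orbit functor $(-)/H$ is left adjoint to inflation along $G \to G/H$ and hence preserves all colimits, and since $[0,1]$ carries the trivial $H$-action the canonical map $(X\times[0,1])/H \to (X/H)\times[0,1]$ is an isomorphism. In (ii) the restriction $\alpha^\ast$ preserves all limits and colimits and commutes levelwise with $-\times[0,1]$. In (iv) induction $G\times_H-$ is left adjoint to restriction $\GSpc \to \underline{H}\Spc$, so it preserves colimits, and $G\times_H(X\times[0,1])\cong (G\times_H X)\times[0,1]$ again because $[0,1]$ is $H$-trivial. In (iii) the functor $Z\times-$ preserves colimits and the $[0,1]$-tensor by levelwise inspection, while $Z\boxtimes-$ preserves colimits because $(\Spc,\boxtimes)$ is closed (Remark~\ref{remproductsinspc}) and commutes with $-\times[0,1]$ via the natural isomorphism $Z\boxtimes(X\times[0,1])\cong(Z\boxtimes X)\times[0,1]$ coming from the compatibility of the Day convolution with the $\Topcat$-enrichment of $\Spc$.

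The argument is a uniform formal manipulation once these two preservation properties are noted in each case, so I do not expect any genuine obstacle. The mildest point that warrants explicit mention is the commutation of $\boxtimes$ with $-\times[0,1]$ in (iii), which is a standard feature of a closed symmetric monoidal category enriched over $\Topcat$ rather than something specific to orthogonal spaces; the equivariance of the resulting retractions in all four items is automatic from the fact that the constructions defining $\sigma_f$ and the functors $F$ are carried out inside the appropriate equivariant category.
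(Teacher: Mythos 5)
Your argument is essentially the paper's own: in each case you check that the relevant functor commutes with the pushout $X \times [0,1] \cup_X Y$ and with $-\times[0,1]$, so that applying it to the retraction witnessing the homotopy extension property gives the required retraction. The only cosmetic divergence is that you prove (iv) directly from induction being a colimit-preserving left adjoint, whereas the paper deduces it by chaining (i), (ii) and (iii); both are correct and equally short.
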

\begin{proof}
\begin{enumerate}[label = \roman*)]
    \item Suppose that we have a retraction in $\GSpc$
    \[r\colon Y \times [0, 1] \to X \times [0, 1] \cup_X Y.\]
    Taking orbits commutes with pushouts and the product with $[0, 1]$.
    
    Thus the morphism
    \[r/H \colon Y/H \times [0, 1] \to (X \times [0, 1] \cup_X Y)/H \cong X/H \times [0, 1] \cup_{X/H} Y/H,\]
    is the retraction that witnesses that $f/H$ is a $G/H$-$h$-cofibration.
    \item As before, the functor $\alpha^\ast$ commutes with pushouts and the product with $[0, 1]$, and the morphism $\alpha^\ast r$ is the retraction that witnesses that $\alpha^\ast f$ is an $H$-$h$-cofibration.
    \item The functors $Z \boxtimes -$ and $Z \times -$ commute with pushouts and the product with $[0, 1]$. The $(H \times G)$-equivariant morphisms $Z \boxtimes r$ and $Z \times r$ witness that $Z \boxtimes f$ and $Z \times f$ are $(H \times G)$-$h$-cofibrations respectively.
    \item This follows from i), ii) and iii).\qedhere
\end{enumerate}
\end{proof}

\section{Main Results for Operads in \texorpdfstring{$(\Spc, \boxtimes)$}{(Spc)}}
\label{sectionmainresults}
\subsection{Lifting the positive global model structure to \texorpdfstring{$\algop$}{Alg(O)}}
\label{subsectionTheoremI}

In this subsection, our goal is to prove Theorem~\ref{thmoperadmodelcat}, that states that any operad in $(\Spc, \boxtimes)$ is admissible. By this we mean that for any operad $\Op$ in $(\Spc, \boxtimes)$, the positive global model structure on $\Spc$ lifts through $U_{\algop} \colon \algop \to \Spc$ to give a model structure on $\algop$.

The condition that we need to check to obtain that any operad is admissible is the following.

\begin{cond}
\label{condkey}
For any $Z \in \SnSpc$ and any generating cofibration $i$ of $\Spc$, the morphism $Z \boxtimes_{\Sigma_n} i^{\square n}$ is an $h$-cofibration. For any $Z \in \SnSpc$ and any generating acyclic cofibration $j$ of $\Spc$, the morphism $Z \boxtimes_{\Sigma_n} j^{\square n}$ is an $h$-cofibration and a global equivalence.
\end{cond}

Note that the fact that we consider any possible $Z$ here is crucial in removing any cofibrancy assumptions on the operad. The symbol $\square$ denotes the pushout product of two morphisms and $i^{\square n}$ denotes the $n$-th iterated pushout product of $i$ with itself.

\begin{rem}
Condition~\ref{condkey} is strongly related to the property named \emph{symmetric h-monoidality} defined in \cite[Definition~4.2.4]{PavlovSchPowers}. Note that there are two different definitions of \emph{$h$-cofibrations} in the literature. The one used in \cite{PavlovSchPowers} and \cite{PavlovSchAdmissibility} was first given in \cite[Definition~1.1]{BataninBergerHcof}, and it is weaker than the definition we used for $\Spc$ and $\GSpc$.

The property of $\Spc$ being symmetric $h$-monoidal is not directly related to Condition~\ref{condkey}, however the spirit of it is the same. In \cite[Theorem~5.11]{PavlovSchAdmissibility} it is proven that in a category which satisfies certain technical assumptions and is symmetric $h$-monoidal each operad is admissible. Using Condition~\ref{condkey} instead of symmetric h-monoidality simplifies some arguments in the case of orthogonal spaces. Most of this subsection is dedicated to checking Condition~\ref{condkey}.
\end{rem}
\vspace{3mm}

First of all, in order to check Condition~\ref{condkey} we should give an explicit description of the generating (acyclic) cofibrations of the positive global model structure on $\Spc$. They can also be obtained from the generating (acyclic) cofibrations of the $G$-global model structure described in Theorem~\ref{thmlevelmodel} and Construction~\ref{constrKg} by setting $G=e$ and adding everywhere the requirement that $V \neq 0$ (this $V \neq 0$ requirement is the difference between the \emph{positive} global model structure and the global model structure).

\begin{rem}[Generating (acyclic) cofibrations of the positive global model structure]
\label{remgenerating}
In $\Spc$ we have a semifree orthogonal space for each compact Lie group $G$ and each $G$-representation $V$, given by $L_{G, V} = \Lcat(V, -)/G$. This semifree orthogonal space is the representing object for the functor $(-)(V)^G$ given by evaluating at $V$ and then taking $G$-fixed points.

Recall that $i_l$ denotes the boundary map $i_l \colon \partial D^l \rightarrow D^l$ in $\Topcat$, for each $l \geq 0$. Similarly let $j_l$ denote the inclusion $j_l \colon D^l \cong D^l \times \{0\} \to D^l \times [0, 1]$ for $l \geq 0$. We use this notation throughout the paper.

The morphisms in $\Is$, the generating cofibrations of the positive global model structure, are of the form $L_{G, V} \times i_l$ for a compact Lie group $G$, a faithful $G$-representation $V \neq 0$, and $l \geqslant 0$.

The generating acyclic cofibrations are $\Js \cup \Ks$, where morphisms in $\Js$ are of the form $L_{G, V} \times j_l$ for a compact Lie group $G$, a faithful $G$-representation $V \neq 0$, and $l \geqslant 0$. Morphisms in $\Ks$ are of the form $\iota_{\rho_{G, V, W}} \square i_l$ for a compact Lie group $G$, a faithful $G$-representation $V \neq 0$, a $G$-representation $W$, and $l \geqslant 0$. The morphism $\rho_{G, V, W} \colon L_{G, V \oplus W} \to L_{G, V}$ is given by restriction to $V$, and $\iota_{\rho_{G, V, W}}$ is the mapping cylinder inclusion of $\rho_{G, V, W}$.
\end{rem}
\vspace{3mm}

The generating acyclic cofibrations in the set $\Ks$ are more complex. Before checking Condition~\ref{condkey} for them, we need to prove several auxiliary lemmas. We first deal with the case of the morphisms in $\Is$ and $\Js$.

\begin{prop}
\label{propgeneratingcomplete1}
Let $K$ be a compact Lie group, $n\geqslant 1$, and let $Z$ be a $(K \times\Sigma_n)$-orthogonal space. For a generating cofibration $i \in \Is$, the morphism $Z\boxtimes_{\Sigma_n} i^{\square n}$ is a $K$-$h$-cofibration. For a generating acyclic cofibration $j$ in the set $\Js$, the morphism $Z\boxtimes_{\Sigma_n} j^{\square n}$ is a $K$-$h$-cofibration and a $K$-global equivalence.
\end{prop}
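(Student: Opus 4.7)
The plan is to reduce the claim to a statement about space-level pushout products of $i_l$ (resp.\ $j_l$), exploiting how $\boxtimes$ interacts with the semifree orthogonal spaces and with the tensoring by topological spaces. First I would record that the Day-convolution description of $\boxtimes$ gives natural isomorphisms $L_{G,V}^{\boxtimes n} \cong L_{G^n, V^{\oplus n}}$ as $\Sigma_n$-orthogonal spaces, where $\Sigma_n$ permutes the factors of both $G^n$ and $V^{\oplus n}$. Combined with the $\Topcat$-enrichment of $\boxtimes$, i.e.\ the natural compatibility $(X \times A) \boxtimes (Y \times B) \cong (X \boxtimes Y) \times (A \times B)$ for spaces $A, B$ and orthogonal spaces $X, Y$, an iterated pushout computation would yield, for a generating cofibration $i = L_{G,V} \times i_l \in \Is$, a natural isomorphism of morphisms of $\Sigma_n$-orthogonal spaces
\[ i^{\square n} \cong L_{G^n, V^{\oplus n}} \times i_l^{\square n}, \]
where $i_l^{\square n} \colon \partial (D^l)^n \hookrightarrow (D^l)^n$ denotes the $n$-fold pushout product of $i_l$ in $\Topcat$, with $\Sigma_n$ acting by permuting coordinates. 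The same argument gives $j^{\square n} \cong L_{G^n, V^{\oplus n}} \times j_l^{\square n}$ for $j \in \Js$.

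Applying $Z \boxtimes -$ and using the same compatibility once more yields
\[ Z \boxtimes i^{\square n} \cong (Z \boxtimes L_{G^n, V^{\oplus n}}) \times i_l^{\square n} \]
as morphisms of $(K \times \Sigma_n)$-orthogonal spaces with diagonal $\Sigma_n$-action. The map $i_l^{\square n}$ is a relative $\Sigma_n$-CW inclusion (coming from the product CW structure on $(D^l)^n$), in particular a $\Sigma_n$-$h$-cofibration in $\SnTopcat$. A $\Sigma_n$-equivariant retraction witnessing the HEP, multiplied with $Z \boxtimes L_{G^n, V^{\oplus n}}$, provides a $(K \times \Sigma_n)$-equivariant retraction showing that $(Z \boxtimes L_{G^n, V^{\oplus n}}) \times i_l^{\square n}$ is a $(K \times \Sigma_n)$-$h$-cofibration of $(K \times \Sigma_n)$-orthogonal spaces. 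Since $\Sigma_n$ is normal in $K \times \Sigma_n$, Lemma~\ref{lemmpropertiesofhcofs} i) then forces $Z \boxtimes_{\Sigma_n} i^{\square n}$ to be a $K$-$h$-cofibration, settling the cofibration case.

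The same argument shows that $Z \boxtimes_{\Sigma_n} j^{\square n}$ is a $K$-$h$-cofibration. To see that it is additionally a $K$-global equivalence, I would observe that $j_l$ is a strong deformation retract via the homotopy collapsing the $[0,1]$-coordinate to $0$; collapsing all $n$ interval coordinates simultaneously gives a $\Sigma_n$-equivariant strong deformation retract of $(D^l \times [0,1])^n$ onto $(D^l)^n \times \{0\}^n$, and this restricts to such a deformation retract of the source of $j_l^{\square n}$. Hence $j_l^{\square n}$ is a $\Sigma_n$-equivariant homotopy equivalence. Multiplying with $Z \boxtimes L_{G^n, V^{\oplus n}}$ then gives a $(K \times \Sigma_n)$-equivariant homotopy equivalence, and passing to $\Sigma_n$-orbits produces a $K$-equivariant homotopy equivalence of $K$-orthogonal spaces; any such map is a $K$-global equivalence by Lemma~\ref{lemmGglobal} iii).

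The main technical obstacle will be justifying the identification $i^{\square n} \cong L_{G^n, V^{\oplus n}} \times i_l^{\square n}$ with the correct $\Sigma_n$-action. Unfolding the colimit defining the $n$-fold pushout product produces a punctured cubical diagram indexed by the proper subsets of $\{1,\ldots,n\}$, on which $\Sigma_n$ acts by permutation; one must verify that after identifying the vertex at the subset $S$ with $L_{G^n, V^{\oplus n}} \times \prod_{i \in S} \partial D^l \times \prod_{i \notin S} D^l$, the induced $\Sigma_n$-action on the colimit matches the diagonal action on $L_{G^n, V^{\oplus n}} \times i_l^{\square n}$. This is essentially a bookkeeping exercise, but is the only point where the $\Sigma_n$-action enters in a nontrivial way.
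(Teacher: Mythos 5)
Your proof follows the same route as the paper's: identify $Z \boxtimes i^{\square n}$ with $(Z \boxtimes L_{G,V}^{\boxtimes n}) \times i_l^{\square n}$ (using that $-\times A$ commutes with pushout products and the semifreeness $L_{G,V}^{\boxtimes n} \cong L_{G^n,V^{\oplus n}}$), reduce everything to $\Sigma_n$-equivariant properties of $i_l^{\square n}$ and $j_l^{\square n}$, and pass to $\Sigma_n$-orbits via Lemma~\ref{lemmpropertiesofhcofs}~i). The only slight imprecision is the closing citation: Lemma~\ref{lemmGglobal}~iii) gives homotopy invariance rather than directly asserting that a $K$-homotopy equivalence of $K$-orthogonal spaces is a $K$-global equivalence—combine it with the 2-out-of-6 property~i), or argue as the paper does elsewhere via $K$-level equivalences and Lemma~\ref{lemmlevelglobal}.
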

\begin{rem}
This Proposition is stated in more generality than Condition~\ref{condkey} so that we can also use it later in the proof of Theorem~\ref{thmquillen}.
\end{rem}
\begin{proof}
Let $i=L_{G, V} \times i_l \in \Is$. Then
\[Z\boxtimes i^{\square n}=Z\boxtimes L_{G, V}^{\boxtimes n} \times i_l^{\square n}\]
which is a $(K \times \Sigma_n)$-$h$-cofibration because $i_l^{\square n}$is a $\Sigma_n$-$h$-cofibration of $\Sigma_n$-spaces. Then by Lemma~\ref{lemmpropertiesofhcofs}~i) $Z\boxtimes_{\Sigma_n} i^{\square n}$ is a $K$-$h$-cofibration.

Let $j=L_{G, V} \times j_l \in \Js$. By the same argument as before we obtain that $Z\boxtimes_{\Sigma_n} j^{\square n}$ is a $K$-$h$-cofibration. Since $j_l^{\square n}$ is a $\Sigma_n$-homotopy equivalence of $\Sigma_n$-spaces, we also obtain that $Z\boxtimes L_{G, V}^{\boxtimes n} \times j_l^{\square n}$ is a $(K \times \Sigma_n)$-homotopy equivalence of orthogonal spaces, so $Z\boxtimes_{\Sigma_n} j^{\square n}$ is a $K$-homotopy equivalence. Therefore it is a $K$-level equivalence, and thus a $K$-global equivalence.
\end{proof}

\begin{prop}
\label{propboxtosquare}
Let $f\colon X \to Y$ be a morphism of orthogonal spaces such that for each $n \geqslant 1$ the morphism $f^{\boxtimes n}$ is a $\Sigma_n$-global equivalence, and such that for each $n \geqslant 1$ the morphism $f^{\square n}$ is a $\Sigma_n$-$h$-cofibration. Then for each $n \geqslant 1$ the morphism $f^{\square n}$ is a $\Sigma_n$-global equivalence.
\end{prop}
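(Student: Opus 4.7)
The plan is to proceed by induction on $n$. The base case $n=1$ is immediate, since $f^{\square 1} = f = f^{\boxtimes 1}$ is a $\Sigma_1$-global equivalence by hypothesis.

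For the inductive step, fix $n \geq 2$ and assume the conclusion for all $k < n$. I would exploit the standard $\Sigma_n$-equivariant factorization
\[
X^{\boxtimes n} \xrightarrow{g_n} Q_n(f) \xrightarrow{f^{\square n}} Y^{\boxtimes n}
\]
of $f^{\boxtimes n}$, where $Q_n(f)$ denotes the source of the pushout product, realized as the colimit over the punctured cube $\{0,1\}^n \setminus \{(1,\ldots,1)\}$ of the diagram assigning $X$ to coordinate $0$ and $Y$ to coordinate $1$. Since $f^{\boxtimes n}$ is a $\Sigma_n$-global equivalence by hypothesis, the 2-out-of-3 property (Lemma~\ref{lemmGglobal}~i)) reduces the task to showing that $g_n$ is a $\Sigma_n$-global equivalence.

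To handle $g_n$, I would filter $Q_n(f)$ by the $\Sigma_n$-subobjects $W_k$ defined as the colimit over tuples with at most $k$ entries equal to $1$, so that $W_0 = X^{\boxtimes n}$ and $W_{n-1} = Q_n(f)$. A standard combinatorial analysis exhibits each step $W_{k-1} \to W_k$ (for $1 \leq k \leq n-1$) as the $\Sigma_n$-equivariant cobase change of
\[
\Sigma_n \times_{\Sigma_k \times \Sigma_{n-k}} \bigl(f^{\square k} \boxtimes \id_{X^{\boxtimes(n-k)}}\bigr).
\]
By the inductive hypothesis, $f^{\square k}$ is a $\Sigma_k$-global equivalence, and by the hypothesis of the proposition it is also a $\Sigma_k$-$h$-cofibration. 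Box-producting with $\id_{X^{\boxtimes(n-k)}}$ (via Corollary~\ref{coroboxgglobal} and Lemma~\ref{lemmpropertiesofhcofs}~iii)) and then inducing up to $\Sigma_n$ (via Proposition~\ref{propinduceorbitspc} and Lemma~\ref{lemmpropertiesofhcofs}~iv)) produces a map that is simultaneously a $\Sigma_n$-global equivalence and a $\Sigma_n$-$h$-cofibration.

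The closing step is to invoke the gluing-type fact that the cobase change of a morphism which is both a $\Sigma_n$-global equivalence and a $\Sigma_n$-$h$-cofibration is itself a $\Sigma_n$-global equivalence; this belongs to the material on the interaction between $G$-global equivalences and $G$-$h$-cofibrations developed in Appendix~\ref{appendixmodel}. Each $W_{k-1} \to W_k$ is therefore a $\Sigma_n$-global equivalence, and composing the $n-1$ steps yields that $g_n$ is one as well, closing the induction. The main obstacle is the careful combinatorial identification of the pushout square at each filtration step and verifying that the $\Sigma_n$-action on the filtration pieces matches the induced action from $\Sigma_k \times \Sigma_{n-k}$; the remaining ingredients are clean applications of closure properties already established for $G$-global equivalences and $G$-$h$-cofibrations.
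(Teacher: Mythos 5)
Your proposal is correct and uses essentially the same approach as the paper: Sagave's filtration of $f^{\boxtimes n}$ by pushout squares whose attaching maps are induced from $f^{\square k} \boxtimes X^{\boxtimes(n-k)}$ along $\Sigma_k \times \Sigma_{n-k} \leqslant \Sigma_n$, followed by Corollary~\ref{coroboxgglobal}, Proposition~\ref{propinduceorbitspc}, Lemma~\ref{lemmpropertiesofhcofs}, Corollary~\ref{corocobase}, and finally the 2-out-of-3 (2-out-of-6) property. The paper cites Lemma~A.8 of \cite{sagave} for the filtration where you derive it from the punctured cube; this is the same construction.
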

\begin{proof}
We use strong induction. For the base case, $f^{\square 1}= f^{\boxtimes 1}=f$ is a global equivalence.

Assume that the result holds for each $i < n$. We decompose $f^{\boxtimes n}$ by applying~\cite[Lemma~A.8]{sagave} to the pushout diagram given by $X = X \to Y$, obtaining
\begin{equation*}
\begin{tikzcd}
X^{\boxtimes n}=Q^n_0(f) \arrow[r] & Q^n_1(f) \arrow[r] & \dots \arrow[r] & Q^n_{n-1}(f) \arrow[r, "f^{\square n}"] & Q^n_n(f)= Y^{\boxtimes n}.
\end{tikzcd}
\end{equation*}
Note that the last step of this decomposition is precisely $f^{\square n}$. In the rest of this article we also use $Q^n_{n-1}(f)$ to denote the source of the $n$-fold pushout product of $f$, following the notation of \cite{sagave}, originally introduced in \cite[Section~12]{ElmendorfMandell}. 

For each step $1 \leqslant i < n$ there is a $\Sigma_n$-equivariant pushout diagram of orthogonal spaces
\begin{equation*}
\begin{tikzcd}
\Sigma_n \times_{\Sigma_{n-i} \times \Sigma_i} X^{\boxtimes n-i} \boxtimes Q^i_{i-1}(f) \arrow[d] \arrow[rrrr, "\Sigma_n \times_{\Sigma_{n-i} \times \Sigma_i} X^{\boxtimes n-i} \boxtimes f^{\square i}"] &  &  &  & \Sigma_n \times_{\Sigma_{n-i} \times \Sigma_i} X^{\boxtimes n-i} \boxtimes Y^{\boxtimes i} \arrow[d] \\
Q^n_{i-1}(f) \arrow[rrrr]                                                                                                                                                                                  &  &  &  & Q^n_i(f). \arrow[llllu, phantom, "\ulcorner", very near start]
\end{tikzcd}
\end{equation*}
By Corollary~\ref{coroboxgglobal} and the induction hypothesis $X^{\boxtimes n-i} \boxtimes f^{\square i}$ is a $(\Sigma_{n-i} \times \Sigma_i)$-global equivalence. Then by Proposition~\ref{propinduceorbitspc}
\[\Sigma_n \times_{\Sigma_{n-i} \times \Sigma_i} X^{\boxtimes n-i} \boxtimes f^{\square i}\]
is a $\Sigma_n$-global equivalence. Additionally by Lemma~\ref{lemmpropertiesofhcofs} it is a $\Sigma_n$-$h$-cofibration.

By Corollary~\ref{corocobase} this means that $Q^n_{i-1}(f) \to Q^n_i(f)$ is a $\Sigma_n$-global equivalence for each $1 \leqslant i < n$. Since so is $f^{\boxtimes n}$, by the 2-out-of-6 property for $\Sigma_n$-global equivalences $f^{\square n}$ is a $\Sigma_n$-global equivalence.
\end{proof}

\begin{lemm}
\label{lemmhtyeq}
For $f \colon X \to Y$ a homotopy equivalence between orthogonal spaces and $n \geqslant 1$, the morphism $f^{\boxtimes n}$ is a $\Sigma_n$-homotopy equivalence of orthogonal spaces, and therefore a $\Sigma_n$-global equivalence.
\end{lemm}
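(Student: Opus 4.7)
The plan is to produce an explicit $\Sigma_n$-equivariant homotopy inverse to $f^{\boxtimes n}$ and then deduce the $\Sigma_n$-global equivalence conclusion from Lemma~\ref{lemmGglobal}. First, I would pick a homotopy inverse $g \colon Y \to X$ together with homotopies $H \colon Y \times \overline{[0, 1]} \to Y$ from $f \circ g$ to $\id_Y$ and $H' \colon X \times \overline{[0, 1]} \to X$ from $g \circ f$ to $\id_X$, where $\overline{K}$ denotes the constant orthogonal space on a space $K$, as in Example~\ref{exconstantoperad}. Because $\boxtimes$ is functorial and symmetric, $g^{\boxtimes n}$ is automatically $\Sigma_n$-equivariant, so it is the natural candidate inverse.

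The key ingredient is the natural isomorphism $(A \times \overline{K}) \boxtimes B \cong (A \boxtimes B) \times \overline{K}$ for $A, B \in \Spc$ and $K \in \Topcat$, which I would establish directly from the universal property of the box product in Remark~\ref{remproductsinspc}: bimorphisms out of $(A \times \overline{K}, B)$ correspond via the $\Topcat$-exponential law to continuous maps $K \to \Spc(A \boxtimes B, W)$. Iterating this isomorphism yields a $\Sigma_n$-equivariant identification
\[(Y \times \overline{[0, 1]})^{\boxtimes n} \cong Y^{\boxtimes n} \times \overline{[0, 1]^n}\]
where $\Sigma_n$ acts by simultaneous permutation of the $Y$-factors and of the coordinates of $[0, 1]^n$. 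Next I would precompose $H^{\boxtimes n}$ through this isomorphism with $\id \times \overline{\Delta}$, where $\Delta \colon [0, 1] \to [0, 1]^n$ is the diagonal, which is $\Sigma_n$-equivariant for the trivial action on the source. The resulting morphism $Y^{\boxtimes n} \times \overline{[0, 1]} \to Y^{\boxtimes n}$ is a $\Sigma_n$-equivariant homotopy from $(f \circ g)^{\boxtimes n} = f^{\boxtimes n} \circ g^{\boxtimes n}$ to $\id_{Y^{\boxtimes n}}$. The symmetric construction with $H'$ produces the homotopy $g^{\boxtimes n} \circ f^{\boxtimes n} \simeq_{\Sigma_n} \id_{X^{\boxtimes n}}$, so $f^{\boxtimes n}$ is a $\Sigma_n$-homotopy equivalence.

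For the last sentence of the claim, the identities $\id_{X^{\boxtimes n}}$ and $\id_{Y^{\boxtimes n}}$ are trivially $\Sigma_n$-global equivalences, so Lemma~\ref{lemmGglobal} iii) applied to the homotopies just constructed shows that $(fg)^{\boxtimes n}$ and $(gf)^{\boxtimes n}$ are $\Sigma_n$-global equivalences, and the 2-out-of-6 property of Lemma~\ref{lemmGglobal} i) then forces $f^{\boxtimes n}$ to be one as well. There is no real obstacle in this plan; the only step that requires any care is tracking the $\Sigma_n$-equivariance through the iterated isomorphism $(A \times \overline{K}) \boxtimes B \cong (A \boxtimes B) \times \overline{K}$, but this is forced by naturality and the Yoneda lemma.
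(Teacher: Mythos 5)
Your proof is correct and essentially reproduces the paper's argument: both use the diagonal $\Delta\colon [0,1] \to [0,1]^n$ to turn $H^{\boxtimes n}$ into a $\Sigma_n$-equivariant homotopy, with you additionally spelling out the isomorphism $(A \times \overline{K}) \boxtimes B \cong (A \boxtimes B) \times \overline{K}$ that the paper leaves implicit when it writes $H^{\boxtimes n} \circ (X^{\boxtimes n} \times \Delta)$. The only superficial difference is in the final step, where you route through Lemma~\ref{lemmGglobal}~iii) and the 2-out-of-6 property, whereas the paper more directly observes that a $\Sigma_n$-homotopy equivalence is a $\Sigma_n$-level equivalence and hence (by Lemma~\ref{lemmlevelglobal}) a $\Sigma_n$-global equivalence.
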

\begin{proof}
Let $g \colon Y \to X$ be a homotopy inverse to $f$ and $H$ a homotopy between $f \circ g$ and $Id_X$. Then for each $n \geqslant 1$, 
\[H^{\boxtimes n} \circ (X ^{\boxtimes n} \times \Delta) \colon X ^{\boxtimes n} \times [0, 1] \to Y^{\boxtimes n}\]
is a $\Sigma_n$-equivariant homotopy between $(f \circ g) ^{\boxtimes n}$ and $Id_X ^{\boxtimes n}$, where $\Delta \colon [0, 1] \to [0, 1]^n$ is the diagonal. The same can be done for $g \circ f$.

Then we obtain that $f^{\boxtimes n}$ is a $\Sigma_n$-homotopy equivalence. Therefore it is a $\Sigma_n$-level equivalence, and thus a $\Sigma_n$-global equivalence.
\end{proof}

\begin{prop}
\label{propiotasquarecof}
For each generating acyclic cofibration $k \in \Ks$, the morphism $k^{\square n}$ is a $\Sigma_n$-$h$-cofibration. Concretely, let $G$ be a compact Lie group, consider a faithful $G$-representation $V\neq 0$, a $G$-representation $W$, $n \geqslant 1$ and $l \geqslant 0$. Let $\iota_{\rho_{G, V, W}}$ be the morphism given in Remark~\ref{remgenerating}. Then $k^{\square n} =  (\iota_{\rho_{G, V, W}}\square i_l)^{\square n}$ is a $\Sigma_n$-$h$-cofibration. In particular, since $\iota_{\rho_{G, V, W}}\square i_0 = \iota_{\rho_{G, V, W}}$ we get that $\iota_{\rho_{G, V, W}}^{\square n}$ is a $\Sigma_n$-$h$-cofibration.
\end{prop}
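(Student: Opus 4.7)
The plan is to use associativity and symmetry of the pushout product to separate the two factors of $k$. Writing $\rho := \rho_{G,V,W}$, the identity
\[(\iota_\rho \square i_l)^{\square n} \;\cong\; \iota_\rho^{\square n} \square i_l^{\square n}\]
holds as $\Sigma_n$-equivariant morphisms by re-grouping the $n$ copies of $\iota_\rho$ and the $n$ copies of $i_l$ in the iterated pushout product. This reduces the statement to three items: (a) $i_l^{\square n}$ is a $\Sigma_n$-$h$-cofibration of $\Sigma_n$-spaces; (b) $\iota_\rho^{\square n}$ is a $\Sigma_n$-$h$-cofibration of $\Sigma_n$-orthogonal spaces (the ``in particular'' assertion, corresponding to $l=0$); and (c) the $\boxtimes$-pushout product of the outputs of (a) and (b) is a $\Sigma_n$-$h$-cofibration.

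Item (a) is standard: $i_l^{\square n}$ identifies with the inclusion $\partial\bigl((D^l)^n\bigr) \hookrightarrow (D^l)^n$ with $\Sigma_n$ permuting the coordinates, a $\Sigma_n$-CW-pair. For (c), I would use that $f \square g$ is a cobase change of the morphism obtained by tensoring $g$ with the codomain of $f$; by Lemma~\ref{lemmpropertiesofhcofs} iii) (combined with restriction to the diagonal $\Sigma_n$ via ii)) this tensor is a $\Sigma_n$-$h$-cofibration, and then Lemma~\ref{lemmGhcofclosed} delivers that its cobase change is one too.

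The main step is (b). I would construct an explicit $\Sigma_n$-equivariant retraction
\[r\colon M(\rho)^{\boxtimes n} \times [0,1] \;\longrightarrow\; \bigl(M(\rho)^{\boxtimes n} \times \{0\}\bigr) \;\cup\; \bigl(Q^n_{n-1}(\iota_\rho) \times [0,1]\bigr)\]
of the natural inclusion, witnessing that $\iota_\rho^{\square n}$ is a $\Sigma_n$-$h$-cofibration. The data I plan to use are a cylinder coordinate $u \colon M(\rho) \to [0,1]$ with $u^{-1}(1) = \iota_\rho(L_{G,V \oplus W})$ (taking the value $s$ on a cylinder point $(a,s)$ and $0$ on $L_{G,V}$), its $\Sigma_n$-symmetric aggregate $u_n := \max_i (u \circ p_i)$ on $M(\rho)^{\boxtimes n}$ (which satisfies $u_n^{-1}(1) = Q^n_{n-1}(\iota_\rho)$), and a standard retraction of the square $[0,1]^2$ onto two adjacent sides, applied coordinate-wise with a $\Sigma_n$-symmetric time rescaling.

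The main obstacle is to ensure continuity of $r$ at the seams of each $M(\rho)$-factor, where the $L_{G,V}$-stratum glues to the cylinder via $\rho$, and in the region where all factors lie close to $L_{G,V}$. Both are addressed by rescaling the time parameter in each coordinate by a factor that vanishes as the corresponding $u$-value tends to zero, so that no motion occurs near the $L_{G,V}$-strata; the $\Sigma_n$-equivariance is automatic from using only symmetric aggregates of the cylinder coordinates across factors.
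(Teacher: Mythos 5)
Your reduction $(\iota_\rho \square i_l)^{\square n} \cong \iota_\rho^{\square n} \square i_l^{\square n}$ and item (a) are both fine, but step (c) contains a genuine error that breaks the modular approach. The claim that $f \square g$ is a cobase change of $\mathrm{cod}(f) \otimes g$ is false in general: for $f \colon A \to B$ and $g \colon C \to D$, the cobase change of $B \otimes g$ along $B \otimes C \to A \otimes D \cup_{A \otimes C} B \otimes C$ is $A \otimes D \cup_{A\otimes C} B \otimes D$, which is not $B \otimes D$. (The Gorchinskiy--Positselski lemma the paper invokes says something different: if $h_1$ is a cobase change of $h_2$, then $h_1 \square h_0$ is a cobase change of $h_2 \square h_0$; there is no blanket statement expressing an arbitrary $f \square g$ as a cobase change of a single tensored morphism.) The paper even signposts exactly this issue --- ``we cannot prove that the pushout product of two $\Sigma_n$-$h$-cofibrations is a $\Sigma_n$-$h$-cofibration'' --- which is precisely why its proof carries the $- \square i_l^{\square n}$ factor through the whole decomposition rather than separating it out first. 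Your conclusion in (c) is still salvageable in the specific situation at hand, because one of the two factors is a constant orthogonal space (and $\overline{A} \boxtimes X \cong \overline{A} \times X$), so the classical NDR-pair product argument of Steenrod applies levelwise and produces a natural $\Sigma_n$-equivariant retraction; but that is a different argument from the one you wrote, and you should say so explicitly rather than invoking a nonexistent cobase-change identity.

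Step (b), your proposed explicit retraction exhibiting $\iota_\rho^{\square n}$ as a $\Sigma_n$-$h$-cofibration, is also much sketchier than it appears. The box product $M(\rho)^{\boxtimes n}$ at each level is a coend over orthogonal decompositions, not an $n$-fold product of spaces, so there are no projections $p_i$; you can only extract the cylinder coordinates via the comparison map to the categorical product, and you then have to verify that the retraction you write with these coordinates actually lands back inside $Q^n_{n-1}(\iota_\rho) \subset M(\rho)^{\boxtimes n}$ (a box-product subobject, not a subobject of the categorical product) and is well defined on equivalence classes in the coend. None of this is addressed in your sketch, and the continuity issues you flag are the lesser of two concerns. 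By contrast, the paper never attempts an explicit retraction: it decomposes $\iota_\rho$ as a cobase change of $L_{G, V\oplus W} \times i_1$ precomposed with a split monomorphism, and then uses the Gorchinskiy--Positselski lemmas to express $\iota_\rho^{\square n} \square i_l^{\square n}$ as a finite composition of cobase changes of morphisms of the form $\Sigma_n \times_{\Sigma_{n-j}\times\Sigma_j}(L_{G,V\oplus W}^{\boxtimes n-j} \boxtimes L_{G,V}^{\boxtimes j} \times i_{n-j+ln})$, each of which is a $\Sigma_n$-$h$-cofibration purely by Lemma~\ref{lemmpropertiesofhcofs}. That route entirely avoids both the pushout-product-of-$h$-cofibrations problem and any explicit retraction on box powers, which is exactly why the paper takes it.
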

\begin{proof}
Consider the decomposition of $\iota_{\rho_{G, V, W}}$ given by $g \circ i_{L_{G, V \oplus W}}$ in the following diagram

\[\begin{tikzcd}
	{L_{G, V \oplus W}} & {L_{G, V \oplus W} \amalg L_{G, V}} && {M_{\rho_{G, V, W}}} \\
	& {L_{G, V \oplus W} \amalg L_{G, V \oplus W}} && {L_{G, V \oplus W} \times [0, 1].}
	\arrow["{i_{L_{G, V \oplus W}}}", from=1-1, to=1-2]
	\arrow["g", from=1-2, to=1-4]
	\arrow["{L_{G, V \oplus W} \times i_1}", from=2-2, to=2-4]
	\arrow[from=2-2, to=1-2]
	\arrow[from=2-4, to=1-4]
	\arrow[from=1-4, to=2-2, phantom, "\llcorner", very near start]
\end{tikzcd}\]

We use results from \cite{GORCHINSKIY2016707} that deal with the interaction between the pushout product and operations on morphisms like composition and cobase change. The structure of this proof is convoluted because in general we cannot prove that the pushout product of two $\Sigma_n$-$h$-cofibrations is a $\Sigma_n$-$h$-cofibration. This forces us to carry the $- \square i_l^{\square n}$ term around. We also have to decompose $(\iota_{\rho_{G, V, W}}\square i_l)^{\square n}$ into simpler morphisms which we can prove to be $\Sigma_n$-$h$-cofibrations, using cobase changes and compositions, because these operations preserve the class of $\Sigma_n$-$h$-cofibrations.

By \cite[Lemma~15]{GORCHINSKIY2016707} we can decompose $\iota_{\rho_{G, V, W}}^{\square n}$ into $f_0 \circ f_1 \circ \dots \circ f_n$. Here for each $0 \leqslant j \leqslant n$, the morphism $f_j$ is a $\Sigma_n$-equivariant cobase change of
\[\Sigma_n \times_{\Sigma_{n-j} \times \Sigma_j} g^{\square n-j} \square i_{L_{G, V \oplus W}}^{\square j}.\]

By \cite[Lemma~17]{GORCHINSKIY2016707} we can write $\iota_{\rho_{G, V, W}}^{\square n} \square i_l^{\square n}$ as $f_0' \circ f_1' \circ \dots \circ f_n'$, where each $f_j'$ is a cobase change of $f_j \square i_l^{\square n}$.

We also have \cite[Lemma~13]{GORCHINSKIY2016707}, which states that for any $h_0$ if a morphism $h_1$ is a cobase change of $h_2$, then $h_1 \square h_0$ is a cobase change of $h_2 \square h_0$. By iterating this result, and using the associativity of the pushout product, we obtain that $g^{\square n-j}$ is a cobase change of $(L_{G, V \oplus W} \times i_1)^{\square n-j}$. Similarly $i_{L_{G, V \oplus W}}^{\square j}$ is a cobase change of $\emptyset \to L_{G, V}^{\boxtimes j}$. Furthermore these cobase changes can be checked to be through equivariant maps.

Note that $i_1^{\square n-j} \cong i_{n-j}$. For each $0 \leqslant j \leqslant n$ we can apply \cite[Lemma~13]{GORCHINSKIY2016707} again to obtain that $g^{\square n-j} \square i_{L_{G, V \oplus W}}^{\square j}$ is a $(\Sigma_{n-j} \times \Sigma_j)$-equivariant cobase change of
\[(L_{G, V \oplus W}^{\boxtimes n-j} \times i_1^{\square n-j}) \square (\emptyset \to L_{G, V}^{\boxtimes j}) \cong L_{G, V \oplus W}^{\boxtimes n-j} \boxtimes L_{G, V}^{\boxtimes j} \times i_{n-j}.\]

We want to check that for each $0 \leqslant j \leqslant n$,
\begin{equation}
\label{propiotasquarecofequation}
    (\Sigma_n \times_{\Sigma_{n-j} \times \Sigma_j} (L_{G, V \oplus W}^{\boxtimes n-j} \boxtimes L_{G, V}^{\boxtimes j} \times i_{n-j})) \square i_l^{\square n}
\end{equation}
is a $\Sigma_n$-$h$-cofibration. As a morphism of $(\Sigma_n \times \Sigma_n)$-orthogonal spaces this is isomorphic to
\[\Sigma_n \times_{\Sigma_{n-j} \times \Sigma_j} ((L_{G, V \oplus W}^{\boxtimes n-j} \boxtimes L_{G, V}^{\boxtimes j} \times i_{n-j}) \square i_l^{\square n}).\]

The map $i_{n-j} \square i_l^{\square n} \cong i_{n-j + l n}$ is a $(\Sigma_{n - j} \times \Sigma_n)$-$h$-cofibration of spaces. Therefore by Lemma~\ref{lemmpropertiesofhcofs}
\[L_{G, V \oplus W}^{\boxtimes n-j} \boxtimes L_{G, V}^{\boxtimes j} \times (i_{n-j} \square i_l^{\square n})\]
is a $(\Sigma_{n - j} \times \Sigma_j \times \Sigma_n)$-$h$-cofibration and the morphism (\ref{propiotasquarecofequation}) is a $\Sigma_n$-$h$-cofibration.

Recall that for each $0 \leqslant j \leqslant n$  the morphism $g^{\square n-j} \square i_{L_{G, V \oplus W}}^{\square j}$ is a cobase change of
\[L_{G, V \oplus W}^{\boxtimes n-j} \boxtimes L_{G, V}^{\boxtimes j} \times i_{n-j}.\]
Thus applying \cite[Lemma~13]{GORCHINSKIY2016707} again we obtain that $f_j \square i_l^{\square n}$ is a cobase change of (\ref{propiotasquarecofequation}) so it is also a $\Sigma_n$-$h$-cofibration. We are using the fact that the induction functor $\Sigma_n \times_{\Sigma_{n-j} \times \Sigma_j} -$ preserves pushouts.

Finally, each $f_j'$ was a cobase change of $f_j \square i_l^{\square n}$, so it is also a $\Sigma_n$-$h$-cofibration. Thus their composition
\[k^{\square n} =  (\iota_{\rho_{G, V, W}}\square i_l)^{\square n} = \iota_{\rho_{G, V, W}}^{\square n} \square i_l^{\square n}\]
is a $\Sigma_n$-$h$-cofibration. 
\end{proof}

\begin{prop}
\label{propiotasquareeq}
For each generating acyclic cofibration $k \in \Ks$, the morphism $k^{\square n}$ is a $\Sigma_n$-global equivalence.
\end{prop}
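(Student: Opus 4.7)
The plan is to invoke Proposition \ref{propboxtosquare} applied to $k$. Proposition \ref{propiotasquarecof} already supplies one of the two hypotheses, namely that $k^{\square n}$ is a $\Sigma_n$-$h$-cofibration for every $n \geqslant 1$. It therefore remains to show that $k^{\boxtimes n}$ is a $\Sigma_n$-global equivalence for every $n \geqslant 1$. By Lemma \ref{lemmhtyeq}, this in turn reduces to showing that $k$ itself is a homotopy equivalence of orthogonal spaces, since then $k^{\boxtimes n}$ is automatically a $\Sigma_n$-homotopy equivalence, and in particular a $\Sigma_n$-global equivalence.

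To prove that $k = \iota_{\rho_{G, V, W}} \square i_l$ is a homotopy equivalence in $\Spc$, I exhibit an explicit homotopy inverse using the strong deformation retract structure on the mapping cylinder. The standard retraction $r \colon M_{\rho_{G, V, W}} \to L_{G, V}$ satisfies $r \circ \iota_{\rho_{G, V, W}} = \id_{L_{G, V}}$, and the evident homotopy $H \colon M_{\rho_{G, V, W}} \times [0, 1] \to M_{\rho_{G, V, W}}$ from $\id$ to $\iota_{\rho_{G, V, W}} \circ r$ is constant on the subspace $\iota_{\rho_{G, V, W}}(L_{G, V})$. All of these are morphisms/homotopies in $\Spc$. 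Writing $P = L_{G, V} \times D^l \cup_{L_{G, V} \times \partial D^l} M_{\rho_{G, V, W}} \times \partial D^l$ for the source of $k$, I define the candidate homotopy inverse $s \colon M_{\rho_{G, V, W}} \times D^l \to P$ as the composite of $r \times \id_{D^l}$ with the canonical inclusion $L_{G, V} \times D^l \hookrightarrow P$. The composite $k \circ s$ equals $(\iota_{\rho_{G, V, W}} \circ r) \times \id_{D^l}$, which is homotopic to $\id_{M_{\rho_{G, V, W}} \times D^l}$ via $H \times \id_{D^l}$. For the other composite, I build a candidate homotopy $s \circ k \simeq \id_P$ piecewise, taking the constant homotopy on the $L_{G, V} \times D^l$ component of $P$ and the homotopy $H \times \id_{\partial D^l}$ on the $M_{\rho_{G, V, W}} \times \partial D^l$ component.

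The main obstacle is verifying that this last piecewise homotopy is well-defined on the pushout $P$. The two pieces must agree on the intersection $L_{G, V} \times \partial D^l$, and this is precisely where the constancy of $H$ on $\iota_{\rho_{G, V, W}}(L_{G, V})$ becomes essential: on an element identified with $(\iota_{\rho_{G, V, W}}(y), x) \in M_{\rho_{G, V, W}} \times \partial D^l$, the piece using $H$ evaluates at time $t$ to $(H(\iota_{\rho_{G, V, W}}(y), t), x) = (\iota_{\rho_{G, V, W}}(y), x)$, which represents the same element of $P$ as the constant value $(y, x)$ coming from the $L_{G, V} \times D^l$ side. Once this gluing is checked, $k$ is a homotopy equivalence in $\Spc$, and combining with Lemma \ref{lemmhtyeq}, Proposition \ref{propiotasquarecof}, and Proposition \ref{propboxtosquare} yields the desired conclusion.
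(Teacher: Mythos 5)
There is a fundamental error in your argument that undermines the whole strategy. The morphism $\iota_{\rho_{G, V, W}}$ is the mapping cylinder inclusion associated to $\rho_{G, V, W} \colon L_{G, V \oplus W} \to L_{G, V}$, so its \emph{source} is $L_{G, V \oplus W}$, not $L_{G, V}$. A mapping cylinder $M_f$ for $f \colon A \to B$ deformation retracts onto its \emph{target} $B$, while the inclusion of the \emph{source} $A$ is only a cofibration. Consequently your identity ``$r \circ \iota_{\rho_{G, V, W}} = \id_{L_{G, V}}$'' is false: what actually holds is $\pi_{\rho_{G, V, W}} \circ \iota_{\rho_{G, V, W}} = \rho_{G, V, W}$, which is definitely not an identity. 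In fact $\iota_{\rho_{G, V, W}}$ is a homotopy equivalence if and only if $\rho_{G, V, W}$ is, and $\rho_{G, V, W}$ is \emph{not} a homotopy equivalence of orthogonal spaces --- it is merely a global equivalence. One can see concretely that it fails to be a homotopy equivalence: for an inner product space $U$ with $\dim V \leqslant \dim U < \dim V + \dim W$, the space $L_{G, V \oplus W}(U) = \Lcat(V \oplus W, U)/G$ is empty while $L_{G, V}(U)$ is not, so $\rho_{G, V, W}(U)$ cannot admit a homotopy inverse. Thus $k = \iota_{\rho_{G, V, W}} \square i_l$ is not a homotopy equivalence in $\Spc$, and the reduction via Lemma~\ref{lemmhtyeq} applied to $k$ does not go through.

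The paper's proof circumvents exactly this difficulty. It first shows, by a direct computation on the underlying $K$-spaces of the closed orthogonal space $L_{G,V}^{\boxtimes n} \cong L_{G^n, V^n}$, that $\rho_{G,V,W}^{\boxtimes n}$ is a $\Sigma_n$-global equivalence. Lemma~\ref{lemmhtyeq} \emph{is} then used, but it is applied to the projection $\pi_{\rho_{G,V,W}}$ --- which genuinely is a homotopy equivalence of orthogonal spaces --- to obtain that $\pi_{\rho_{G,V,W}}^{\boxtimes n}$ is a $\Sigma_n$-global equivalence, and hence by 2-out-of-6 that $\iota_{\rho_{G,V,W}}^{\boxtimes n}$ is one as well. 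From there Proposition~\ref{propboxtosquare} with Proposition~\ref{propiotasquarecof} yields that $\iota_{\rho_{G,V,W}}^{\square n}$ is a $\Sigma_n$-global equivalence, and Corollary~\ref{coropushoutproducteq} finishes the argument for $k^{\square n} = \iota_{\rho_{G,V,W}}^{\square n} \square i_l^{\square n}$. The essential input you are missing is the first, non-formal step: a computation establishing that $\rho_{G,V,W}^{\boxtimes n}$ is a $\Sigma_n$-global equivalence. There is no way to bypass this by purely formal homotopy-theoretic means, since $\rho_{G,V,W}$ is only a global equivalence and global equivalences are not homotopy equivalences.
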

\begin{proof}
The generating acyclic cofibration $k$ is of the form $\iota_{\rho_{G, V, W}} \square i_l$ for a compact Lie group $G$, a faithful $G$-representation $V \neq 0$, a $G$-representation $W$, and $l \geqslant 0$.

We first check that
\[\rho_{G, V, W}^{\boxtimes n} \colon L_{G, V \oplus W}^{\boxtimes n} \to L_{G, V}^{\boxtimes n}\]
is a $\Sigma_n$-global equivalence. By~\cite[Example 1.3.3]{global} the orthogonal space $L_{G, V}^{\boxtimes n}$ is isomorphic to $L_{G^n, V^n}$ and thus closed. The $(\Sigma_n \wr G)$-representation $V^n$ is faithful, so for each compact Lie group $K$ by~\cite[Proposition~1.1.26 (ii)]{global} the restriction map
\[\rho_{V^n, W^n}(\Uk) \colon \Lcat(V^n \oplus W^n, \Uk) \to \Lcat(V^n, \Uk)\]
is a $(K \times (\Sigma_n \wr G))$-homotopy equivalence. Using that
\[\Lcat(V^n, \Uk) \cong \colim_{V' \in s(\Uk)} \Lcat(V^n, V')\]
and the fact that $-/G^n$ preserves colimits, we can obtain that
\[\rho_{G, V, W}^{\boxtimes n}(\Uk) \cong \rho_{V^n, W^n}(\Uk)/G^n\]
is a $(K \times \Sigma_n)$-homotopy equivalence. Therefore $\rho_{G, V, W}^{\boxtimes n}(\Uk)$ is an $\fat(K, \Sigma_n)$-equivalence, and so $\rho_{G, V, W}^{\boxtimes n}$ is a $\Sigma_n$-global equivalence.

Now we use the mapping cylinder to decompose $\rho_{G, V, W}$ as $\pi_{\rho_{G, V, W}} \circ \iota_{\rho_{G, V, W}}$. Since $\pi_{\rho_{G, V, W}}$ is a homotopy equivalence, by Lemma~\ref{lemmhtyeq} $\pi_{\rho_{G, V, W}}^{\boxtimes n}$ is a $\Sigma_n$-global equivalence, and then so is $\iota_{\rho_{G, V, W}}^{\boxtimes n}$.

We use Proposition~\ref{propboxtosquare} and Proposition~\ref{propiotasquarecof} to obtain that for each $n \geqslant 1$ the morphism $\iota_{\rho_{G, V, W}}^{\square n}$ is a $\Sigma_n$-global equivalence. Finally by Corollary~\ref{coropushoutproducteq} and Proposition~\ref{propiotasquarecof} again we get that $k^{\square n} = \iota_{\rho_{G, V, W}}^{\square n} \square i_l^{\square n}$ is a $\Sigma_n$-global equivalence.
\end{proof}

\begin{prop}
\label{propgeneratingcomplete2}
Let $n\geqslant 1$ and let $Z$ be a $\Sigma_n$-orthogonal space. For each generating acyclic cofibration $k \in \Ks$, the morphism $Z\boxtimes_{\Sigma_n} k^{\square n}$ is an $h$-cofibration and a global equivalence.
\end{prop}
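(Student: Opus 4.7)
The approach is to combine Propositions \ref{propiotasquarecof} and \ref{propiotasquareeq}, which together establish that $k^{\square n}$ is a $\Sigma_n$-$h$-cofibration and a $\Sigma_n$-global equivalence, and then to carry both properties through the functor $Z \boxtimes_{\Sigma_n} (-)$. I would factor this functor as: first box with $Z$ (viewed under the outer $(\Sigma_n \times \Sigma_n)$-action), then restrict along the diagonal $\Sigma_n \to \Sigma_n \times \Sigma_n$, and finally take $\Sigma_n$-orbits.

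For the box step, Lemma \ref{lemmpropertiesofhcofs} iii) shows that $Z \boxtimes k^{\square n}$ is a $(\Sigma_n \times \Sigma_n)$-$h$-cofibration, and Corollary \ref{coroboxgglobal} shows that it is a $(\Sigma_n \times \Sigma_n)$-global equivalence. For the diagonal restriction, Lemma \ref{lemmpropertiesofhcofs} ii) preserves the $h$-cofibration and Lemma \ref{lemmGglobal} vi) preserves the global equivalence, so the restricted morphism is both a $\Sigma_n$-$h$-cofibration and a $\Sigma_n$-global equivalence.

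The final step is taking $\Sigma_n$-orbits. The $h$-cofibration part is immediate from Lemma \ref{lemmpropertiesofhcofs} i). The main obstacle is the global equivalence after orbits: in general, $\Sigma_n$-orbits do not preserve $\Sigma_n$-global equivalences without some freeness assumption (compare the hypotheses of Proposition \ref{propfreefiniteorbitspc}), and here neither the source nor the target of $k^{\square n}$ is $\Sigma_n$-free. The point I would exploit is that the morphism is simultaneously a $\Sigma_n$-$h$-cofibration and a $\Sigma_n$-global equivalence; this combined data should be strong enough (morally, it realizes the morphism as an equivariant strong deformation retract up to enlarging representations) for the orbit map to remain a global equivalence. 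This is exactly the kind of compatibility between $G$-$h$-cofibrations and $G$-global equivalences under $G$-orbits that is developed in Appendix \ref{appendixmodel}; invoking the relevant statement from there finishes the proof.
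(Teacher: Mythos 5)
Your first steps are correct and match the paper: you use Propositions~\ref{propiotasquarecof} and~\ref{propiotasquareeq} together with Corollary~\ref{coroboxgglobal} and Lemma~\ref{lemmpropertiesofhcofs} to get that $Z\boxtimes k^{\square n}$ is a $\Sigma_n$-$h$-cofibration and a $\Sigma_n$-global equivalence, and the $h$-cofibration claim after taking $\Sigma_n$-orbits follows from Lemma~\ref{lemmpropertiesofhcofs}~i). The gap is in how you handle the global equivalence after orbits. You correctly observe that neither the source nor the target of $Z\boxtimes k^{\square n}$ is $\Sigma_n$-free in general (since $Z$ is arbitrary), and from this you conclude that Proposition~\ref{propfreefiniteorbitspc} does not apply; you then appeal to a result from Appendix~\ref{appendixmodel} that supposedly lets the combination ``$h$-cofibration \emph{and} global equivalence'' survive taking orbits. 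No such result exists in Appendix~\ref{appendixmodel} (or anywhere in the paper): the appendix material on orbits is confined to Lemma~\ref{lemmpropertiesofhcofs}~i) for $h$-cofibrations, while the only orbit statement for global equivalences is Proposition~\ref{propfreefiniteorbitspc}, which genuinely requires the freeness hypothesis. The combined-data heuristic you describe would need its own proof, and the paper does not supply one because it is not needed.

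What you missed is that the hypothesis of Proposition~\ref{propfreefiniteorbitspc} does not demand that the objects involved in the morphism themselves be $\Sigma_n$-free: it only asks for a further $\Sigma_n$-equivariant map from the target of the morphism to some $\Sigma_n$-orthogonal space that is levelwise Hausdorff and $\Sigma_n$-free. Here $k = \iota_{\rho_{G,V,W}}\square i_l$ with $V$ a faithful, nonzero $G$-representation. The target of $k^{\square n}$ is $(M_{\rho_{G,V,W}}\times D^l)^{\boxtimes n}$, and projecting $D^l$ to a point and composing with $\pi_{\rho_{G,V,W}}^{\boxtimes n}$, together with $Z \to \ast$, gives a $\Sigma_n$-equivariant morphism from the target of $Z\boxtimes k^{\square n}$ to $L_{G,V}^{\boxtimes n} \cong L_{G^n,V^n}$. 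At each inner product space $U$, the space $L_{G^n,V^n}(U)$ is Hausdorff (the $G^n$-action on $\Lcat(V^n,U)$ is free, smooth and proper) and $\Sigma_n$-free (since $V^n$ is a faithful $\Sigma_n$-representation) — or empty, which is harmless. So Proposition~\ref{propfreefiniteorbitspc} applies directly, with $Z\boxtimes k^{\square n}$ in the role of $f$ and this projection in the role of $g$, and yields the desired global equivalence after orbits.
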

\begin{proof}
First,
\[Z \boxtimes k^{\square n}= Z \boxtimes (\iota_{\rho_{G, V, W}} \square i_l)^{\square n}\]
is a $\Sigma_n$-$h$-cofibration by Proposition~\ref{propiotasquarecof}, and a $\Sigma_n$-global equivalence by Proposition~\ref{propiotasquareeq} and Corollary~\ref{coroboxgglobal}.

Consider the $\Sigma_n$-orthogonal space $L_{G, V}^{\boxtimes n} \cong L_{G^n, V^n}$. For each inner product space $U$ the group $G^n$ acts freely (since $V$ is faithful), smoothly and properly (since $G^n$ is compact) on $\Lcat(V^n, U)$, as long as $\lvert U \rvert \geqslant \lvert V \rvert ^n$. Therefore
\[L_{G^n, V^n}(U)=\Lcat(V^n, U)/G^n\]
is Hausdorff, and since $V^n$ is a faithful $\Sigma_n$-representation, $L_{G, V}^{\boxtimes n}(U) \cong L_{G^n, V^n}(U)$ is also $\Sigma_n$-free.

If $\lvert U \rvert < \lvert V \rvert ^n$ then $\Lcat(V^n, U)$ is empty, so in particular $L_{G^n, V^n}(U)$ is still Hausdorff and $\Sigma_n$-free.

The morphism $\pi_{\rho_{G, V, W}}$ induces a $\Sigma_n$-equivariant map of orthogonal spaces from the target of $Z \boxtimes k^{\square n}$ to $\ast \boxtimes L_{G, V}^{\boxtimes n} \times \ast$, and so by Proposition~\ref{propfreefiniteorbitspc} $Z \boxtimes_{\Sigma_n} k^{\square n}$ is a global equivalence. It is an $h$-cofibration by Lemma~\ref{lemmpropertiesofhcofs}~i).
\end{proof}

Note that the fact that $L_{G, V}^{\boxtimes n}(U)$ is $\Sigma_n$-free in this last proof is important. It lets us avoid the assumption that the components $\Opn$ of the operad $\Op$ are $\Sigma_n$-free in the following theorem.

\begin{thm}[Theorem~\ref{thmintrooperadmodelcat}]
\label{thmoperadmodelcat}
Let $\Op$ be any operad in $(\Spc, \boxtimes)$ the category of orthogonal spaces, with the positive global model structure and the symmetric monoidal structure given by the box product. Then there is a cofibrantly generated model category structure on $\algop$ the category of algebras over $\Op$, where the forgetful functor $U_{\algop}$ creates the weak equivalences and fibrations, and sends cofibrations in $\algop$ to $h$-cofibrations in $\Spc$.
\end{thm}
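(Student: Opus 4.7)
The plan is to apply Theorem~\ref{thmschsh2} with $\cat = \Spc$ equipped with the positive global model structure and with $\Hcof$ the class of $h$-cofibrations in $\Spc$. First I would verify the three abstract conditions a), b), c) on $\Hcof$. Closure under retracts and transfinite compositions is standard and essentially content of Lemma~\ref{lemmGhcofclosed} specialized to $G=e$. For smallness, the domains of the generating (acyclic) cofibrations listed in Remark~\ref{remgenerating} are of the form $L_{G,V} \times \partial D^l$ (and the analogous mapping-cylinder source for $\Ks$); each $L_{G,V}$ takes compact values, so these domains are small with respect to transfinite compositions of morphisms that are levelwise closed embeddings, and in particular with respect to $\Hcof$. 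For condition c), I would show that a transfinite composition of $h$-cofibrations which are also global equivalences is again a global equivalence: using Proposition~\ref{propglobaltel} (applied with $G=e$), it suffices to observe that mapping telescopes and fixed points both commute with colimits along closed embeddings, and that a sequential colimit of weak equivalences of spaces along closed embeddings is a weak equivalence.

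The main step is then to verify conditions (1) and (2) of Theorem~\ref{thmschsh2} for the operad $\Op$. Here I would invoke the pushout-filtration from~\cite{sagave}: given a pushout square
\[
\begin{tikzcd}
F_{\algop}(X) \arrow[r, "F_{\algop}(i)"] \arrow[d] & F_{\algop}(Y) \arrow[d] \\
A \arrow[r, "f"] & B \arrow[lu, phantom, "\ulcorner", very near start]
\end{tikzcd}
\]
in $\algop$, there is a filtration $A = B_0 \to B_1 \to \cdots$ in $\Spc$ whose sequential colimit is $U_{\algop}(B)$, and such that each stage $B_{n-1} \to B_n$ fits into a pushout square in $\Spc$ of the form
\[
\begin{tikzcd}
Z_n \boxtimes_{\Sigma_n} Q^n_{n-1}(i) \arrow[r, "Z_n \boxtimes_{\Sigma_n} i^{\square n}"] \arrow[d] & Z_n \boxtimes_{\Sigma_n} Y^{\boxtimes n} \arrow[d] \\
B_{n-1} \arrow[r] & B_n
\end{tikzcd}
\]
for a suitable $\Sigma_n$-orthogonal space $Z_n$ depending on $A$ and $\Op$. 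It is crucial that $Z_n$ is completely arbitrary, since we place no cofibrancy assumption on~$\Op$; this is why Propositions~\ref{propgeneratingcomplete1} and~\ref{propgeneratingcomplete2} are stated for an arbitrary $Z$.

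With this filtration in hand, conditions (1) and (2) of Theorem~\ref{thmschsh2} follow directly. If $i \in \Is$, Proposition~\ref{propgeneratingcomplete1} shows that each $Z_n \boxtimes_{\Sigma_n} i^{\square n}$ is an $h$-cofibration, hence so is each $B_{n-1} \to B_n$ by cobase change, and therefore $U_{\algop}(f)$ is a transfinite composition of $h$-cofibrations and so lies in $\Hcof$. If $i \in \Js$ (in either of its two forms, the morphisms of $\Js$ proper handled by Proposition~\ref{propgeneratingcomplete1} and those in $\Ks$ handled by Proposition~\ref{propgeneratingcomplete2}), the same argument produces $h$-cofibrations that are moreover global equivalences at each stage; condition c) on $\Hcof$ then upgrades the transfinite composition to a global equivalence. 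The hypotheses of Theorem~\ref{thmschsh2} are thus satisfied, which yields the desired cofibrantly generated model structure on $\algop$ with $U_{\algop}$ creating weak equivalences and fibrations and sending cofibrations into $\Hcof$, exactly the statement of the theorem.

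The main obstacle in this plan is verifying condition (2) cleanly: one has to be sure that the $\Sigma_n$-orthogonal spaces $Z_n$ produced by the Sagave-style filtration are genuinely arbitrary (no free or cofibrancy condition), so that Propositions~\ref{propgeneratingcomplete1} and~\ref{propgeneratingcomplete2} apply with no extra hypothesis on~$\Op$. The rest of the argument is largely formal, assembling previously established homotopical properties of the box product and the $h$-cofibrations of $\Spc$.
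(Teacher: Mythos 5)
Your proposal is correct and follows essentially the same route as the paper: applying Theorem~\ref{thmschsh2} with $\Hcof$ the class of $h$-cofibrations (checking a) via Lemma~\ref{lemmGhcofclosed}, b) via the smallness of semifree domains as in Lemma~\ref{lemmsmallsources}, and c) via the telescope argument packaged in Corollary~\ref{corotransfinitegglobal}), then invoking the Sagave filtration with arbitrary $\Sigma_n$-orthogonal spaces $U_n^\Op(A)$ at each stage and feeding the resulting pushout squares into Propositions~\ref{propgeneratingcomplete1} and~\ref{propgeneratingcomplete2}. The only minor slip is notational: you write ``if $i\in\Js$ (in either of its two forms)'' where you mean $i\in\Js\cup\Ks$, since $\Js$ and $\Ks$ are separate sets in Remark~\ref{remgenerating} and the two propositions handle $\Js$ and $\Ks$ respectively.
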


\begin{proof}
Let $\Hcof$ be the class of $h$-cofibrations in $\Spc$. It satisfies conditions a), b) and c) of Theorem~\ref{thmschsh2} by Lemma~\ref{lemmGhcofclosed}, Lemma~\ref{lemmsmallsources}, and Corollary~\ref{corotransfinitegglobal} respectively, with $G = e$ in all of them.

Consider a morphism $i \colon X\to Y$ in $\Spc$ and a pushout in $\algop$ of the form
\begin{equation*}
\begin{tikzcd}
F_{\mathpzc{Alg}(\Op)}(X) \arrow[r, "F_{\mathpzc{Alg}(\Op)}(i)"] \arrow[d] & F_{\mathpzc{Alg}(\Op)}(Y) \arrow[d] \\
A \arrow[r, "f"]                                                           & B. \arrow[lu, phantom, "\ulcorner", very near start]                       
\end{tikzcd}
\end{equation*}

We use the filtration of~\cite[Proposition~A.16]{sagave}, originally introduced in the proof of \cite[Theorem~12.4]{ElmendorfMandell}, with $k=0$, where $U_0^\Op = U_{\algop}$. We obtain a decomposition of $U_{\algop}(f)$ as the infinite composition of morphisms
\[f_n\colon P_{n-1} U_{\algop}(B) \to P_{n} U_{\algop}(B)\]
for $n \geqslant 1$, with $P_0 U_{\algop}(B)=U_{\algop}(A)$. For each $n \geqslant 1$, \cite[Proposition~A.16]{sagave} gives the following pushout in $\Spc$
\begin{equation*}
\begin{tikzcd}
U_n^\Op(A) \boxtimes_{\Sigma_n} Q^n_{n-1}(i) \arrow[d] \arrow[rr, "U_n^\Op(A) \boxtimes_{\Sigma_n} i^{\square n}"] &  & U_n^\Op(A) \boxtimes_{\Sigma_n} (Y)^{\boxtimes n} \arrow[d] \\
P_{n-1} U_0^\Op(B) \arrow[rr, "f_n"]                                                                               &  & P_{n} U_0^\Op(B). \arrow[llu, phantom, "\ulcorner", very near start]                               
\end{tikzcd}
\end{equation*}
Both the class $\Hcof$, and the class of morphisms in $\Spc$ which are both $h$-cofibrations and global equivalences, are closed under infinite composition and cobase change (see the results of Appendix~\ref{appendixmodel}). Propositions~\ref{propgeneratingcomplete1} and~\ref{propgeneratingcomplete2} imply that if $i$ is a generating cofibration $U_n^\Op(A) \boxtimes_{\Sigma_n} i^{\square n}$ is an $h$-cofibration, and if $i$ is a generating acyclic cofibration then $U_n^\Op(A) \boxtimes_{\Sigma_n} i^{\square n}$ is an $h$-cofibration and global equivalence. Therefore all the conditions of Theorem~\ref{thmschsh2} hold, and $\algop$ is a cofibrantly generated model category where $U_{\algop}$ creates the weak equivalences and fibrations. Furthermore $U_{\algop}$ sends cofibrations in $\algop$ to $h$-cofibrations in $\Spc$.
\end{proof}

\subsection{Characterizing which morphisms of operads induce Quillen equivalences}

We study now morphisms of operads and the associated functors between their respective categories of algebras, with the goal of classifying which morphisms of operads in orthogonal spaces induce Quillen equivalences between the respective categories of algebras.

Consider for now a general symmetric monoidal category $(\cat, \otimes, \ast)$, where the tensor product preserves all colimits in both variables. Let $g \colon \Op \to \Pop$ be a morphism of operads, understood as a morphism of monoids in $(\sob, \circ)$. The morphism $g$ induces an adjoint pair of functors
\[\adjunction{g!}{\algop}{\algpop}{g^*},\]
called the extension functor and the restriction functor respectively. The specific details can be found in~\cite[Section~3.3.5]{fresse} for example.

We use $\theta \colon \FF(\Op) \Rightarrow \FF(\Pop)$ to denote the natural transformation induced by $g$ between the monads $\FF(\Op)$ and $\FF(\Pop)$, associated to the operads $\Op$ and $\Pop$ respectively. For $X$ an algebra over $\Pop$, we use $\zeta_X \colon \FF(\Pop)(X) \to X$ to denote its structure map. Then $g^*(X)$ is just $X$ with structure map $\zeta_X \circ \theta_X$. Additionally, since $\Uop \circ g^\ast=\Upop$, we have that $g_! \circ \Fop$ is left adjoint to $\Upop$, so it is naturally isomorphic to $\Fpop$. This is the only information about the extension functor that we need.

For the proof of Theorem~\ref{thmquillen} we need to consider again the functors $U^\Op_k$ for $k \geqslant 0$ from~\cite[Proposition~10.1]{sagave}, originally introduced in the proof of \cite[Theorem~12.4]{ElmendorfMandell}. The functor $U^\Op_k$ goes from $\algop$ to $\Skcatfull$, the category of $\Sigma_k$-objects in $\cat$, and $U^\Op_0 = U_{\algop}$.

\begin{constr}
\label{construk}
Let $\Op$ and $\Pop$ be two operads, and let $g$ be a morphism of operads $g \colon \Op \to \Pop$. For a general $\Op$-algebra $X$, a $\Pop$-algebra $Y$, and a map of $\Op$-algebras $\gamma \colon X \to g^\ast(Y)$, we construct certain maps
\[g_{k, \gamma} \colon U^\Op_k (X) \to U^\Pop_k(Y)\]
in $\Skcatfull$ for each $k \geqslant 0$, in a way that is natural in $\gamma$ and preserves filtered colimits. It is important to note that the morphism $g_{k, \gamma}$ is not $U^\Op_k(\gamma)$ unless $k = 0$. In fact, $U^\Op_k \circ g^\ast$ is not $U^\Pop_k$ for $k \neq 0$, so $g_{k, \gamma}$ and $U^\Op_k(\gamma)$ do not have the same target for $k \neq 0$.

Consider the functors
\[\Op(-, k) \colon \cat \to \Skcatfull\]
constructed in~\cite[Section~A.9]{sagave}, which for an operad $\Op$ and $k \geqslant 0$ are given by 
\[\Op(X, k) = \coprod_{n \in \NN} \Op(n + k) \otimes_{\Sigma_n} X^{\otimes n}.\]
Note that $\Op(-, 0) = \FF(\Op)$. The construction of the functors $U_k^\Op$ in~\cite[Definition~A.10]{sagave} is given by the coequalizer
\begin{equation*}
\begin{tikzcd}
{\Op(\Op(X, 0), k)} \ar[r,shift left=.75ex,"\partial_0"] \ar[r,shift right=.75ex,swap,"\partial_1"] & {\Op(X, k)} \arrow[r] & U^\Op_k(X).
\end{tikzcd}
\end{equation*}
The morphism of operads $g$ and the map of $\Op$-algebras $\gamma$ together induce a $\Sigma_k$-equivariant morphism of coequalizer diagrams. The induced morphism between the coequalizers $U^\Op_k(X)$ and $U^\Pop_k(Y)$ is our desired $g_{k, \gamma}$.

This construction preserves filtered colimits because tensor powers preserve them, and thus so do the functors $\Op(-, k)$.
\end{constr}\vspace{5mm}

Now we restrict ourselves to the case of operads in $(\Spc, \boxtimes)$, where we have the model structures on $\algop$ obtained in Theorem~\ref{thmoperadmodelcat}.

\begin{prop}
\label{propquillenadjunction}
For any morphism $g \colon \Op \to \Pop$ of operads in $(\Spc, \boxtimes)$, the restriction functor $g^\ast$ preserves and reflects fibrations and weak equivalences. Thus the pair $(g_!, g^\ast)$ is a Quillen adjunction.
\end{prop}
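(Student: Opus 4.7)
The plan is very short because almost everything follows formally from Theorem~\ref{thmoperadmodelcat} together with the basic fact that restriction along $g$ leaves the underlying orthogonal space unchanged. Specifically, for any $\Pop$-algebra $X$, the restriction $g^\ast(X)$ has the same underlying object $X$ with structure map $\zeta_X \circ \theta_X$, as recalled just before the statement. In functor terms this means
\[U_{\algop} \circ g^\ast = U_{\algpop},\]
and $g^\ast$ is the identity on underlying morphisms.

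By Theorem~\ref{thmoperadmodelcat}, both model structures on $\algop$ and $\algpop$ are lifted along the respective forgetful functors: a morphism $f$ in $\algpop$ is a weak equivalence (respectively fibration) if and only if $U_{\algpop}(f)$ is a global equivalence (respectively a positive global fibration) in $\Spc$. Using the identity $U_{\algop} \circ g^\ast = U_{\algpop}$, this condition on $f$ is equivalent to $U_{\algop}(g^\ast f)$ being a global equivalence (respectively fibration), which is exactly the condition for $g^\ast f$ to be a weak equivalence (respectively fibration) in $\algop$. Hence $g^\ast$ preserves and reflects both classes.

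For the final sentence, observe that acyclic fibrations in both categories are likewise created by the forgetful functors, so $g^\ast$ preserves acyclic fibrations as well. Since $g^\ast$ is a right adjoint that preserves fibrations and acyclic fibrations, the pair $(g_!, g^\ast)$ is automatically a Quillen adjunction. There is no real obstacle here: the content of the proposition is essentially bookkeeping once Theorem~\ref{thmoperadmodelcat} is in hand, and the only point that needs to be made explicit is the commutation of forgetful functors with restriction, which is immediate from the definition of $g^\ast$.
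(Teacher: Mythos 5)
Your proof is correct and follows exactly the same route as the paper: observe that $U_{\algop}\circ g^\ast = U_{\algpop}$, recall from Theorem~\ref{thmoperadmodelcat} that the forgetful functors create (hence preserve and reflect) weak equivalences and fibrations, and conclude. The paper states this in one line; your write-up just spells out the bookkeeping more fully.
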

\begin{proof}
The functors $\Uop$ and $\Upop$ preserve and reflect fibrations and weak equivalences, and $\Uop \circ g^\ast=\Upop$.
\end{proof}

\begin{thm}[Theorem~\ref{thmintroquillen}]
\label{thmquillen}
Let $g \colon \Op \to \Pop$ be a morphism of operads in $(\Spc, \boxtimes)$ the category of orthogonal spaces, with the positive global model structure and the symmetric monoidal structure given by the box product. Then the induced adjunction $(g_!, g^\ast)$ is a Quillen equivalence between the respective categories of algebras if and only if for each $n \geqslant 0$ the morphism $g_n \colon\Opn \to \Popn$ is a $\Sigma_n$-global equivalence.
\end{thm}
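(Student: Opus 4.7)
The plan is to reduce both directions to a uniform cellular-filtration argument centred on the functors $U^\Op_k\colon\algop\to\Skcatfull$ from Construction~\ref{construk}. The key technical ingredient is \cite[Proposition~A.16]{sagave}: for a pushout of $\Op$-algebras along $F_{\algop}(i)$ (with $i\colon X\to Y$ in $\Spc$) producing $A\to A'$, the induced map $U^\Op_k(A)\to U^\Op_k(A')$ is a $\Sigma_k$-equivariant transfinite composition of cobase changes of
\[
U^\Op_{k+n}(A)\boxtimes_{\Sigma_n\times\Sigma_k}\bigl(Q^n_{n-1}(i)\to Y^{\boxtimes n}\bigr),
\]
where $\Sigma_n$ permutes the tensor factors and $\Sigma_k$ acts through the inclusion $\Sigma_n\times\Sigma_k\hookrightarrow\Sigma_{n+k}$. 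A parallel filtration applies to $U^\Pop_k(g_!A)\to U^\Pop_k(g_!A')$, and the comparison is organised by the morphisms $g_{k,\gamma}$ of Construction~\ref{construk}.

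For the backward direction, assume each $g_n$ is a $\Sigma_n$-global equivalence. By transfinite induction on a cell presentation of an arbitrary cofibrant $\Op$-algebra $A$, I would prove that for every $k\geqslant 0$ the map $g_{k,\eta_A}\colon U^\Op_k(A)\to U^\Pop_k(g_!A)$ induced by the unit $\eta_A\colon A\to g^\ast g_!A$ is a $\Sigma_k$-global equivalence. The base case $A=F_{\algop}(\emptyset)$ holds because $U^\Op_k(F_{\algop}(\emptyset))\cong\Op(k)$ as a $\Sigma_k$-orthogonal space (by direct inspection of the defining coequalizer), $g_!F_{\algop}(\emptyset)=F_{\algpop}(\emptyset)$, and $g_{k,\eta}$ is exactly $g_k$, which is a $\Sigma_k$-global equivalence by hypothesis. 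For the inductive step one compares the filtrations on $U^\Op_k(A')$ and $U^\Pop_k(g_!A')$ level by level: at the $n$-th stage the comparison is a cobase change of the pushout product of $g_{k+n,\eta_A}$ (a $(\Sigma_n\times\Sigma_k)$-global equivalence by the inductive hypothesis) with $i^{\square n}$. These pushout products remain $\Sigma_k$-global equivalences after induction along $\Sigma_n\times\Sigma_k\hookrightarrow\Sigma_{n+k}$ and $\Sigma_n$-orbits, using Corollary~\ref{coroboxgglobal}, Propositions~\ref{propinduceorbitspc} and~\ref{propfreefiniteorbitspc}, together with $\Sigma_k$-equivariant analogues of Propositions~\ref{propgeneratingcomplete1} and~\ref{propgeneratingcomplete2} for the pushout products $i^{\square n}$. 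Transfinite compositions and cobase changes of $h$-cofibrant global equivalences preserve global equivalences (Appendix~\ref{appendixmodel}). Specialising to $k=0$ shows that $U_{\algop}(\eta_A)$ is a global equivalence for every cofibrant $A$, so the derived unit is a weak equivalence and $(g_!,g^\ast)$ is a Quillen equivalence.

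For the forward direction, assume $(g_!,g^\ast)$ is a Quillen equivalence, and evaluate the derived unit at the cofibrant initial $\Op$-algebra $A_0\defeq F_{\algop}(\emptyset)$. Since left adjoints preserve the initial object, $g_!A_0=F_{\algpop}(\emptyset)$, which is fibrant as all objects are, so the derived unit is simply the unit $\eta\colon A_0\to g^\ast F_{\algpop}(\emptyset)$ and it is a weak equivalence in $\algop$. Through Construction~\ref{construk}, the map $g_{n,\eta}$ unravels to $g_n$ itself once one identifies $U^\Op_n(A_0)\cong\Op(n)$ and $U^\Pop_n(F_{\algpop}(\emptyset))\cong\Pop(n)$ as $\Sigma_n$-orthogonal spaces. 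The same cellular-induction framework used in the backward direction then yields the sub-lemma that whenever $\gamma\colon A\to g^\ast B$ is a weak equivalence of $\Op$-algebras with $A$ cofibrant (which, under the Quillen equivalence hypothesis, is equivalent to the adjoint $g_!A\to B$ being a weak equivalence in $\algpop$), the map $g_{n,\gamma}$ is a $\Sigma_n$-global equivalence. Applying the sub-lemma to $\gamma=\eta$ delivers that $g_n$ is a $\Sigma_n$-global equivalence.

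The main obstacle is the simultaneous cellular induction across all $U^\Op_k$: the filtration of $U^\Op_k(A')$ mixes in the higher $U^\Op_{k+n}(A)$, so one cannot induct at a single $k$ in isolation, but must carry the entire family $\{g_{k,\gamma}\}_{k\geqslant 0}$ along the induction. Carefully controlling the interplay of box products, $\Sigma_n$-orbits, and induction functors $\Sigma_{n+k}\times_{\Sigma_n\times\Sigma_k}(-)$ using the technology of Section~\ref{sectionGorthogonal} is the technical heart of the argument.
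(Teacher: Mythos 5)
Your backward direction follows the paper's argument closely: the same cellular induction over the Sagave filtration carried simultaneously over the whole family $\{g_{k,\gamma}\}_{k\geqslant 0}$, anchored at the base case $A_0=F_{\algop}(\emptyset)$, where $g_{k,\eta_{A_0}}$ identifies with $g_k$ and the hypothesis that each $g_n$ is a $\Sigma_n$-global equivalence is invoked. That part is sound.

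The forward direction, however, has a genuine gap: as written it is circular. You want to prove, by cellular induction on a cofibrant $A$, the ``sub-lemma'' that $g_{n,\gamma}$ is a $\Sigma_n$-global equivalence whenever $\gamma\colon A\to g^\ast B$ is a weak equivalence. But the base case of that induction is exactly $A=F_{\algop}(\emptyset)$, for which $g_{n,\eta}$ unravels to $g_n$ itself --- so the base case of the sub-lemma is precisely the conclusion you are trying to establish. The Quillen equivalence hypothesis only tells you that $U_{\algop}(\eta_{A_0})$ is a global equivalence, i.e.\ that $g_{0,\eta_{A_0}}=g_0$ is a global equivalence; it says nothing directly about $g_{k,\eta_{A_0}}=g_k$ for $k\geqslant 1$, since these do not appear in the underlying morphism of $\eta_{A_0}$. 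You cannot get around this by starting the cellular induction on the initial algebra.

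The paper uses a genuinely different device for the forward direction. Instead of $F_{\algop}(\emptyset)$, it evaluates the unit at $F_{\algop}(L_\RR)$, where $L_\RR=\Lcat(\RR,-)$ is a positively flat free orthogonal space. The underlying map of the unit is then the morphism of free algebras
\[
\theta_{L_\RR}=\coprod_{n\in\NN} g_n\boxtimes_{\Sigma_n} L_\RR^{\boxtimes n},
\]
and since this is a coproduct of orthogonal spaces, each summand $g_n\boxtimes_{\Sigma_n}L_\RR^{\boxtimes n}$ is a global equivalence. Crucially, $L_\RR^{\boxtimes n}\cong L_{\RR^n}$ is levelwise $\Sigma_n$-free and Hausdorff, so Proposition~\ref{propfreefiniteorbitspc} upgrades $g_n\boxtimes_{\Sigma_n}L_{\RR^n}$ being a global equivalence to $g_n\boxtimes L_{\RR^n}$ being a $\Sigma_n$-global equivalence; passing from $\boxtimes$ to $\times$ via $\rho$ and using nonemptiness of the relevant fixed-point telescopes then strips off the $L_{\RR^n}$ factor and yields that $g_n$ itself is a $\Sigma_n$-global equivalence. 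The extra information you need --- which your approach fails to capture --- comes from probing by a free algebra on a cell $L_{G,V}$ whose $n$-fold box powers are $\Sigma_n$-free, not merely from the initial algebra.
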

\begin{proof}
The right adjoint $g^*$ preserves and reflects weak equivalences. Therefore the pair $(g_!, g^\ast)$ is a Quillen equivalence if and only if for each cofibrant $A \in \algop$ the unit $\eta_A \colon A \to g^\ast(g_!(A))$ is a weak equivalence in $\algop$, that is, a global equivalence of underlying orthogonal spaces (see for example \cite[Lemma~3.3]{Erdal_2019} for a proof).

We first assume that each $g_n$ is a $\Sigma_n$-global equivalence, and check that for each cofibrant $A \in \algop$ the unit $\eta_A \colon A \to g^\ast(g_!(A))$ is a global equivalence.
 
First assume that the cofibrant algebra $A$ is the colimit of a $\lambda$-sequence of morphisms $\{f_\beta\}_{\beta \in \lambda}$ beginning at $A_0=\Op_0$, for a limit ordinal $\lambda$. Note that the initial object of $\algop$ is $\Op_0$, since it is $F_{\algop}(\emptyset)$. Assume that each $f_\beta$ is a cobase change of a morphism of the form $\Fop(i_\beta)$, for $i_\beta \in \Is \; \; i_\beta \colon X_\beta \to Y_\beta$ a generating positive flat cofibration of orthogonal spaces. We want to check that $\Uop(\eta_A)$ is a global equivalence.

By evaluating the unit of the adjunction $\eta$ on the $\lambda$-sequence that gives rise to $A$, we obtain the following diagram
\vspace*{-3mm}
\begin{equation*}
\begin{tikzcd}
A_0 = \Op_0 \arrow[rr, "f_0"] \arrow[d, "\eta_{A_0}"]      & & A_1 \arrow[rr, "f_1"] \arrow[d, "\eta_{A_1}"]              &  & \dots \arrow[r] & A_\beta \arrow[rr, "f_\beta"] \arrow[d, "\eta_{A_\beta}"]         &  & \dots  \\
g^\ast(g_!(A_0)) \arrow[rr, "g^\ast(g_!(f_0))"] & & g^\ast(g_!(A_1)) \arrow[rr, "g^\ast(g_!(f_1)))"] & & \dots \arrow[r] & g^\ast(g_!(A_\beta)) \arrow[rr, "g^\ast(g_!(f_\beta))"] & & \dots .
\end{tikzcd}
\end{equation*}
We apply $\Uop$ to the whole diagram. By Theorem~\ref{thmoperadmodelcat}, $\Uop$ sends cofibrations to $h$-cofibrations, so $\Uop(f_\beta)$ is an $h$-cofibration. The morphism $g_!(f_\beta)$ is a cofibration, so
\[\Uop(g^\ast(g_!(f_\beta)))=\Upop(g_!(f_\beta))\]
is also an $h$-cofibration. Since $\Uop$ preserves filtered colimits, $\Uop(\eta_A)$ is
\[\colim_{\beta \in \lambda} \Uop(\eta_{A_\beta}).\]

We have to check that each $\Uop(\eta_{A_\beta})$ is a global equivalence, and for this we follow the proof of the similar statement in~\cite[Lemma~9.13]{sagave}. We prove this by induction, but we in fact need to work with a stronger property. For each $\beta$ and each $k \geqslant 0$, let $g_{k, \beta}$ be the morphism $g_{k, \eta_{A_\beta}}$ given in Construction~\ref{construk}. We check by transfinite induction on $\beta$ that for each $k \geqslant 0$ the morphism
\[g_{k, \beta} \colon U^\Op_k(A_\beta) \to U^\Pop_k(g_!(A_\beta))\]
is a $\Sigma_k$-global equivalence. For $k=0$ this reduces to our desired result.

The base case concerns $A_0 = \Op_0 = F_{\algop}(\emptyset)$. By~\cite[Lemma~A.13]{sagave} the $\Sigma_k$-orthogonal space $U^\Op_k(F_{\algop}(\emptyset))$ is isomorphic to $\Op(\emptyset, k)$, and  $\Op(\emptyset, k)$ equals $\Op_k$. Similarly $g_!(F_{\algop}(\emptyset))$ is isomorphic to $F_{\algpop}(\emptyset)$, and then
\[U^\Pop_k(F_{\algpop}(\emptyset)) \cong \Pop(\emptyset, k) = \Pop_k,\]
and under these identifications, the morphism $g_{k, 0}$ corresponds to $g_k$, which is a $\Sigma_k$-global equivalence by the condition of the theorem. Remarkably, this is the only part of the proof where this condition is used.

Then we check the induction step for a successor ordinal $\beta + 1$. For this we use the filtration of~\cite[Proposition~A.16]{sagave}, originally introduced in the proof of \cite[Theorem~12.4]{ElmendorfMandell}, in the same way that it is used in the proof of~\cite[Lemma~9.13]{sagave}.
\tikzset{font={\fontsize{9pt}{12}\selectfont}}
\begin{equation}
\label{thmquillen1}
\begin{tikzcd}
U^\Op_k(A_\beta)=F_0 U^\Op_k(A_{\beta + 1}) \arrow[d, "{g_{k, \beta}}"] \arrow[r] & F_1 U^\Op_k(A_{\beta + 1}) \arrow[d] \arrow[r] & \dots \arrow[r] & \colim\limits_{j \in \NN} F_j U^\Op_k(A_{\beta + 1}) = U^\Op_k(A_{\beta +1}) \arrow[d, "{g_{k, \beta + 1}}"] \\
U^\Pop_k(g_!(A_\beta))=F_0 U^\Pop_k(g_!(A_{\beta + 1})) \arrow[r]           & F_1 U^\Pop_k(g_!(A_{\beta + 1})) \arrow[r]  & \dots \arrow[r]           & \colim\limits_{j \in \NN} F_j U^\Pop_k(g_!(A_{\beta + 1})) = U^\Pop_k(g_!(A_{\beta +1}))              
\end{tikzcd}
\end{equation}
\tikzset{font={\fontsize{11pt}{12}\selectfont}}
Assume that for each $k \geqslant 0$ the morphism $g_{k, \beta}$ is a $\Sigma_k$-global equivalence. Each horizontal map is a cobase change of
\[U_{j + k}^\Op(A_{\beta}) \boxtimes_{\Sigma_j} i_\beta^{\square j} \; \text{or} \; U_{j + k}^\Pop(g_!(A_{\beta})) \boxtimes_{\Sigma_j} i_\beta^{\square j},\]
which are $\Sigma_k$-$h$-cofibrations by Proposition~\ref{propgeneratingcomplete1}.

Each vertical map is obtained from the previous by the following morphism of pushout diagrams,
\begin{equation*}
\begin{tikzcd}
F_{j-1} U^\Op_k(A_{\beta + 1}) \arrow[d] & U_{j + k}^\Op(A_{\beta}) \boxtimes_{\Sigma_j} Q^j_{j-1}(i_\beta) \arrow[rrr, "U_{j + k}^\Op(A_{\beta}) \boxtimes_{\Sigma_j} i_\beta^{\square j}"] \arrow[l] \arrow[d, "g_{j + k, \beta} \boxtimes_{\Sigma_j} Q^j_{j-1}(i_\beta)"] & & & U_{j + k}^\Op(A_{\beta}) \boxtimes_{\Sigma_j} (Y_\beta)^{\boxtimes j} \arrow[d, "g_{j + k, \beta} \boxtimes_{\Sigma_j} (Y_\beta)^{\boxtimes j}"'] \\
F_{j-1} U^\Pop_k(g_!(A_{\beta + 1}))  & U_{j + k}^\Pop(g_!(A_{\beta})) \boxtimes_{\Sigma_j} Q^j_{j-1}(i_\beta) \arrow[rrr, "U_{j + k}^\Pop(g_!(A_{\beta})) \boxtimes_{\Sigma_j} i_\beta^{\square j}"] \arrow[l]  & & & U_{j + k}^\Pop(g_!(A_{\beta})) \boxtimes_{\Sigma_j} (Y_\beta)^{\boxtimes j}.
\end{tikzcd}
\end{equation*}
By the induction hypothesis the morphism 
\[g_{j + k, \beta} \colon U_{j + k}^\Op(A_{\beta}) \to U_{j + k}^\Pop(g_!(A_{\beta}))\]
is a $\Sigma_{j + k}$-global equivalence. Here $Y_\beta=L_{G, V} \times i_l$, so we can project to $L_{G, V}^{\boxtimes j}$ and use Corollary~\ref{coroboxgglobal} and Proposition~\ref{propfreefiniteorbitspc} as in the proof of Proposition~\ref{propgeneratingcomplete2} to check that the two rightmost vertical maps are $\Sigma_k$-global equivalences.

Then we can use induction on $j$ and the Gluing~Lemma~\ref{lemmgluing} to obtain that each vertical map of (\ref{thmquillen1}) is also a $\Sigma_k$-global equivalence. Finally by Lemma~\ref{lemmtransfinitent} $g_{k, \beta + 1}$ is a $\Sigma_k$-global equivalence.

If $\beta$ is a limit ordinal, we just need to use Lemma~\ref{lemmtransfinitent}, and the fact that the construction of $g_{k, \beta}$ preserves filtered colimits.

We have proven that $g_{k, \beta}$ is a $\Sigma_k$-global equivalence for each $k$ and $\beta$. Setting $k=0$ we have our original intended result that $\Uop(\eta_{A_\beta})$ is a global equivalence for each $\beta$. By Lemma~\ref{lemmtransfinitent} with $G = e$ the morphism $U_{\algop}(\eta_A)$ is a global equivalence.

If $A \in \algop$ is cofibrant, then it is a retract of an algebra $A'$ of the kind we were considering at the beginning of this proof, and the unit $\eta_A$ is a retract of $\eta_{A'}$. Since retracts preserve weak equivalences, $\eta_A$ is a weak equivalence in $\algop$. Therefore $(g_!, g^\ast)$ is a Quillen equivalence.

\vspace{8mm}

We prove the other implication now. Assume that $(g_!, g^\ast)$ is a Quillen equivalence. We want to prove that for each $n \geqslant 0$ the morphism $g_n$ is a $\Sigma_n$-global equivalence. Consider the free orthogonal space $L_\RR=\Lcat(\RR, -)$, which is positively flat. Then $F_{\algop}(L_\RR)$ is cofibrant in $\algop$. Since $(g_!, g^\ast)$ is a Quillen equivalence the unit
\[\eta_{F_{\algop}(L_\RR)} \colon F_{\algop}(L_\RR) \to g^\ast(g_!(F_{\algop}(L_\RR)))\]
is a weak equivalence, so its underlying morphism of orthogonal spaces is a global equivalence.

The $\Pop$-algebra $g_!(F_{\algop}(L_\RR))$ is naturally isomorphic to $F_{\algpop}(L_\RR)$. After postcomposing $\eta_{F_{\algop}(L_\RR)}$ with $g^\ast$ of this isomorphism, we obtain a morphism
\[F_{\algop}(L_\RR) \to g^\ast(F_{\algpop}(L_\RR)),\]
whose underlying morphism of orthogonal spaces is precisely
\[\theta_{L_\RR} = \coprod_{n \in \NN} g_n \boxtimes_{\Sigma_n} L_\RR^{\boxtimes n}.\]

Since $\theta_{L_\RR}$ is a global equivalence, each $g_n \boxtimes_{\Sigma_n} L_\RR^{\boxtimes n}$ is a global equivalence. If $n=0$ we obtain that $g_0$ is a global equivalence. For each $n \geqslant 1$, $L_\RR^{\boxtimes n}\cong L_{\RR^n}$, and the orthogonal space $L_{\RR^n}$ is $\Sigma_n$-free and Hausdorff at each inner product space $V$. Thus by Proposition~\ref{propfreefiniteorbitspc} the morphism $g_n \boxtimes L_{\RR^n}$ is a $\Sigma_n$-global equivalence for each $n \geqslant 1$.

The morphisms $\rho_{\Opn, L_{\RR^n}}$ and $\rho_{\Popn, L_{\RR^n}}$ are $\Sigma_n$-global equivalences by Proposition~\ref{proprhoGeq} and Lemma~\ref{lemmGglobal}~vi). By the 2-out-of-6 property of $\Sigma_n$-global equivalences we obtain that $g_n \times L_{\RR^n}$ is a $\Sigma_n$-global equivalence.
\begin{equation*}
\begin{tikzcd}
{\Opn \boxtimes L_{\RR^n} } \arrow[d, "{\rho_{\Opn, L_{\RR^n}}}"'] \arrow[rr, "{g_n \boxtimes L_{\RR^n} }"] & & {\Popn \boxtimes L_{\RR^n} } \arrow[d, "{\rho_{\Popn, L_{\RR^n}}}"] \\
{\Opn \times L_{\RR^n} } \arrow[rr, "{g_n \times L_{\RR^n} }"]                                                 & & {\Popn \times L_{\RR^n} }                                             
\end{tikzcd}
\end{equation*}
By Proposition~\ref{propglobaltel} for each compact Lie group $K$ and each exhaustive sequence of $K$-representations $\{V_i\}_{i \in \NN}$ the map
\[\tel_i (g(V_i) \times L_{\RR^n}(V_i)) \colon \tel_i (\Op_n(V_i) \times L_{\RR^n}(V_i)) \to \tel_i (\Pop_n(V_i) \times L_{\RR^n}(V_i))\]
is an $\fat(K, \Sigma_n)$-equivalence. The canonical map
\[\tel_i (\Op_n(V_i) \times L_{\RR^n}(V_i)) \to (\tel_i \Op_n(V_i)) \times (\tel_j L_{\RR^n}(V_j))\]
is also an $\fat(K, \Sigma_n)$-equivalence, and the same holds for $\Pop_n$. Therefore $(\tel_i g_n(V_i)) \times (\tel_j L_{\RR^n}(V_j))$ is an $\fat(K, \Sigma_n)$-equivalence.

For each $\Gamma_\phi \in \fat(K, \Sigma_n)$, we can pull the $\Sigma_n$-action on $\RR_n$ through $\phi \colon H \to \Sigma_n$ to get an $H$-action. Then the $H$-representation $\RR^n$ embeds into some $K$-representation (see \cite[Theorem~III. 4.5]{diecklie}), which in turn embeds into some $V_i$, so $(\tel_j L_{\RR^n}(V_j))^{\Gamma_\phi}$ is non-empty. Thus $\tel_i g_n(V_i)$ is an $\fat(K, \Sigma_n)$-equivalence, and $g_n$ is a $\Sigma_n$-global equivalence for each $n \geqslant 0$.
\end{proof}

\begin{rem}
The previous theorem generalizes, in the setting of algebras over operads in $(\Spc, \boxtimes)$, the classical result that a morphism $g$ between cofibrant operads induces a Quillen equivalence if the underlying morphism of each $g_n$ is a weak equivalence (see \cite[12.5.A]{fresse} for example). For orthogonal spaces, and a morphism $g$ between operads which are \emph{not necessarily cofibrant}, by the previous theorem we require the stronger condition that each $g_n$ is not just a global equivalence, but also a $\Sigma_n$-global equivalence.

The question of which morphisms between more general operads induce Quillen equivalences was also answered in \cite[Theorem~7.5]{PavlovSchAdmissibility}. The key property there is whether the morphisms $g_n$ are \emph{symmetric flat} weak equivalences as defined in \cite[Definition~2.1 vii)]{PavlovSchAdmissibility}. However, $\Sigma_n$-global equivalences are not necessarily symmetric flat.
\end{rem}

\begin{rem}
\label{remcofibrantreplacement}
Given $\Op$ an operad in $(\Spc, \boxtimes)$, we could take a cofibrant replacement of it in the $J$-semi model category $\OpeSpc$ of operads in $(\Spc, \boxtimes)$, constructed in \cite[Theorem~3]{spitzweck2001operads}. This would be a cofibrant operad $\Op'$ and a morphism of operads $g \colon \Op' \to \Op$ such that each $g_n$ is a global equivalence. But as we just saw in Theorem~\ref{thmquillen}, this $g$ does not induce a Quillen equivalence between the categories of algebras of $\Op$ and $\Op'$ unless each $g_n$ is additionally a $\Sigma_n$-global equivalence. This means that simply taking a cofibrant replacement $\Op'$ in $\OpeSpc$ of an operad $\Op$, and looking at the model structure on $\mathpzc{Alg}(\Op')$ does not give the correct homotopy theory of the algebras over $\Op$.

Additionally, we cannot have a functor $F^c \colon \OpeSpc \to \OpeSpc$, with a natural transformation $\eta \colon F^c \Rightarrow Id_{\OpeSpc}$ such that each $\eta(\Op)_n$ is a $\Sigma_n$-global equivalence, and $F^c(\Op)$ is cofibrant in the $J$-semi model structure of \cite[Theorem~3]{spitzweck2001operads}. Assume that we had such a functor $F^c$, then consider a morphism of operads $g \colon \Op \to \Op'$ which satisfies that each $g_n$ is a global equivalence, but does not satisfy that each $g_n$ is a $\Sigma_n$-global equivalence. An example of such a morphism is given by the unique morphism from one of the naive global $E_\infty$-operads of Remark~\ref{remconstantEinfinity} to the terminal operad $\Comm$.

In that case each $F^c(g)_n$ would be a global equivalence by the 2-out-of-6 property, so $F^c(g)$ induces a Quillen equivalence between $\mathpzc{Alg}(F^c(\Op))$ and $\mathpzc{Alg}(F^c(\Op'))$ because $F^c(\Op)$ and $F^c(\Op')$ are cofibrant operads. The morphisms of operads $\eta(\Op)$ and $\eta(\Op')$ would also induce Quillen equivalences by Theorem~\ref{thmquillen}, But this would imply that $g$ induces a Quillen equivalence between the categories of algebras, which contradicts the only if part of Theorem~\ref{thmquillen}.

This means that in order to study the genuine homotopy theory of algebras over operads in $(\Spc, \boxtimes)$, we cannot restrict ourselves to looking only at cofibrant operads.
\end{rem}

\section{Global \texorpdfstring{$E_\infty$}{E-infinity}-operads}
\label{sectionEinfinity}

Let $\Comm$ be the terminal operad in $(\Spc, \boxtimes)$, where each $\Comm_n$ is the constant one-point orthogonal space. Algebras over $\Comm$ are precisely the commutative monoids in $\Spc$ with respect to the box product, which are called commutative orthogonal monoid spaces or ultra-commutative monoids in~\cite[Definition~1.4.14]{global}. The unit and multiplication maps imply that a commutative monoid in $(\Spc, \boxtimes)$ is precisely a lax symmetric monoidal functor $(\Lcat, \oplus) \to (\Topcat, \times)$.

\begin{defi}
A \emph{global $E_\infty$-operad} in $(\Spc, \boxtimes)$ is an operad $\Op$ in $(\Spc, \boxtimes)$ such that each $\Opn$ is $\Sigma_n$-globally equivalent to $\ast$ with the trivial $\Sigma_n$-action.
\end{defi}

\begin{rem}
By Theorem~\ref{thmquillen}, if $\Op$ is a global $E_\infty$-operad in $(\Spc, \boxtimes)$ and $g$ is the unique morphism of operads $g \colon \Op \to \Comm$, then the induced Quillen adjunction $(g_!, g^\ast)$ is a Quillen equivalence between $\algop$ and $\algcomm$, the category of ultra-commutative monoids. This justifies why we gave the previous definition of a global $E_\infty$-operad. 

Furthermore, the algebras over a global $E_\infty$-operad are endowed with plenty of additional structure, just like ultra-commutative monoids. It is also relatively simple to characterize when a given operad in $\Spc$ (like the ones constructed in Subsection~\ref{subsectionexamples}) is a global $E_\infty$-operad.
\end{rem}

\begin{prop}
\label{propeinftyzigzag}
Let $\Op$ be a global $E_\infty$-operad in $(\Spc, \boxtimes)$, and let $g \colon \Op \to \Comm$ be the unique morphism of operads. There is a homotopical functor $R \colon \algop \to \algcomm$ and a zigzag of natural weak equivalences between $g^\ast \circ R$ and the identity on $\algop$.

For $A \in \algop$, $R(A)$ is an ultra-commutative monoid, thus $R$ is a functor that rectifies algebras over global $E_\infty$-operads into ultra-commutative monoids.
\end{prop}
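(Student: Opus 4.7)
The plan is to define $R$ as the derived left adjoint of $g^\ast$. Since $\algop$ is cofibrantly generated by Theorem~\ref{thmoperadmodelcat}, the small object argument produces a functorial cofibrant replacement $Q \colon \algop \to \algop$ together with a natural transformation $q \colon Q \Rightarrow \id_{\algop}$ such that each component $q_A \colon Q(A) \to A$ is an acyclic fibration and $Q(A)$ is cofibrant. Define
\[
R(A) \defeq g_!(Q(A)),
\]
which takes values in $\algcomm$ since $g_!$ lands there, so each $R(A)$ is indeed an ultracommutative monoid.

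To see that $R$ is homotopical, observe first that $Q$ sends weak equivalences to weak equivalences between cofibrant objects: if $f \colon A \to B$ is a weak equivalence, then so is $Q(f)$ by the two-out-of-three property applied to the naturality square for $q$. Then $g_!$, being a left Quillen functor (Proposition~\ref{propquillenadjunction}) with $g^\ast$ right Quillen, preserves weak equivalences between cofibrant objects by Ken Brown's lemma. Composing, $R = g_! \circ Q$ is homotopical.

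For the zigzag, consider for each $A \in \algop$ the diagram
\begin{equation*}

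\begin{tikzcd}
A & Q(A) \arrow[l, "q_A"'] \arrow[r, "\eta_{Q(A)}"] & g^\ast(g_!(Q(A))) = g^\ast(R(A)),
\end{tikzcd}
\end{equation*}
where $\eta$ is the unit of the adjunction $(g_!, g^\ast)$. Both maps are natural in $A$: the left arrow by naturality of $q$, the right by naturality of $\eta$ combined with functoriality of $Q$. The left arrow $q_A$ is an acyclic fibration by construction, hence a weak equivalence in $\algop$. For the right arrow, $Q(A)$ is cofibrant and, by Theorem~\ref{thmquillen} (using that $\Op$ is a global $E_\infty$-operad so each $g_n$ is a $\Sigma_n$-global equivalence), the adjunction $(g_!, g^\ast)$ is a Quillen equivalence. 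Hence the unit $\eta_{Q(A)}$ is a weak equivalence in $\algop$, as required.

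The argument is essentially a formal consequence of the Quillen equivalence established in Theorem~\ref{thmquillen}; the main technical input that is genuinely needed is already absorbed into that theorem, so the only remaining point is to make the cofibrant replacement functorial, which is standard for cofibrantly generated model categories via the small object argument.
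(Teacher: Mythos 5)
Your proof is correct and follows essentially the same route as the paper: both define $R$ as $g_!$ composed with a functorial cofibrant replacement obtained from the small object argument, produce the zigzag $A \leftarrow C(A) \to g^\ast(g_!(C(A)))$, and invoke the Quillen equivalence from Theorem~\ref{thmquillen} (together with the fact that $g^\ast$ preserves and reflects weak equivalences) to see the unit on a cofibrant object is a weak equivalence. The only cosmetic difference is that you spell out the Ken Brown argument for $R$ being homotopical, which the paper leaves implicit.
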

\begin{proof}
Let $C \colon \algop \to \algop$ be a cofibrant replacement functor in $\algop$ constructed via the small object argument, and let $\alpha \colon C \Rightarrow \id_{\algop}$ be the associated natural weak equivalence. Then $\Uop(\alpha_A)$ is a global equivalence for each $A \in \algop$. Furthermore, the adjunction unit for $C(A)$, the morphism $\eta_{C(A)} \colon C(A) \to g^\ast(g_!(C(A)))$, is a global equivalence in $\Spc$ because the right adjoint $g^\ast$ preserves and reflects weak equivalences (see \cite[Lemma~3.3]{Erdal_2019}). Then $R = g_! \circ C$ is the desired functor, and $\alpha$ and $\eta$ form the desired zigzag of natural weak equivalences.
\end{proof}

\begin{lemm}
\label{lemmadmissiblefortimes}
The operads $\Ld$ and $\KK$ constructed in Examples~\ref{exdisks} and~\ref{exsteiner} respectively are reduced ($\Ld_0= \KK_0 = \ast$). For each $n \geqslant 0$, the orthogonal spaces $\Ld_n$ and $\KK_{\:n}$ are closed, and for each $V \in \Lcat$, the $\Sigma_n$-spaces $\Ld_n(V)$ and $\KK_{\:n}(V)$ are $\Sigma_n$-free and Hausdorff.
\end{lemm}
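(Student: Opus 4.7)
My plan is to dispatch the four assertions (reduced, closed, Hausdorff, $\Sigma_n$-free) essentially independently for each operad, with the closedness for $\KK$ being the main point of care.

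First I would handle reducedness and the pointwise properties. By Construction~\ref{constrfunctoroperad}, $\Ld_n(V) = \Ld(V)_n$ and $\KK_{\:n}(V) = \KK(V)_n$, so for $n = 0$ both spaces consist of a unique empty configuration and therefore $\Ld_0 = \KK_0 = \ast$. For Hausdorff-ness and freeness of the $\Sigma_n$-action, I note that $\Ld(V)_n$ sits as a subspace of $(D(V) \times (0,\infty))^n$, which is Hausdorff, while $\KK(V)_n$ sits inside $\Map([0,1], R(V))^n \subseteq \Map([0,1], \Map(V,V))^n$ with the compact-open topology, again Hausdorff. For freeness, in either case a non-trivial permutation $\sigma \in \Sigma_n$ can only fix a configuration if two distinct indices carry the same little disk (respectively the same Steiner path). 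But in both operads one requires the images (the little disks in $D(V)$, respectively the images of $\ev_0(h_i)$ in $V$) to be pairwise \emph{disjoint} and non-empty, which forces $\sigma = e$.

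For the closedness of the orthogonal space $\Ld_n$, for a linear isometric embedding $\psi \colon V \to W$ the map $\Ld_n(\psi)$ is given by $(v_i, r_i)_i \mapsto (\psi(v_i), r_i)_i$. Since $\psi$ is an isometric embedding of inner product spaces, it is a closed embedding and a homeomorphism onto its (closed) image $\Ima(\psi) \leqslant W$; applying this coordinatewise and using that disjointness of little disks is preserved and reflected under $\psi$ (again because $\psi$ is an isometry), one checks that $\Ld_n(\psi)$ is a homeomorphism onto a closed subspace of $\Ld_n(W)$. This step is essentially bookkeeping.

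The main obstacle is closedness for $\KK_{\:n}$, where I need to first prove that $R(\psi) \colon R(V) \to R(W)$ is a closed embedding, and then propagate this to Steiner paths and $n$-tuples. Writing $R(\psi)(f) = (\psi \circ f \circ \psi^{-1}) \oplus \id_{\Ima(\psi)^\perp}$, I would identify the image of $R(\psi)$ as the set of $g \in R(W)$ satisfying the two closed conditions $g|_{\Ima(\psi)^\perp} = \id$ and $g(\Ima(\psi)) \subseteq \Ima(\psi)$, and exhibit a continuous inverse on this image by $g \mapsto \psi^{-1} \circ g|_{\Ima(\psi)} \circ \psi$; closedness of the image and continuity of the inverse both use that $\Ima(\psi)$ is a closed subspace of $W$ together with standard properties of the compact-open topology on distance-reducing self-maps. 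Post-composition then gives a closed embedding $\Map([0,1], R(V)) \to \Map([0,1], R(W))$, and taking $n$-fold products restricts to a closed embedding $\KK(V)_n \to \KK(W)_n$, provided one verifies once more that the disjointness condition on images of $\ev_0$ is preserved and reflected by $\psi$; this final compatibility follows because $\psi$ is an isometry, so it preserves and reflects disjointness of subsets of $V$.
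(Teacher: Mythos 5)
Your overall route is the same as the paper's: the paper treats all of these verifications as immediate consequences of the corresponding pointwise properties of $\Ld(V)$ and $\KK(V)$ and of the levelwise maps $\Ld_n(\psi)$, $\KK_{\:n}(\psi)$, and you are simply making those checks explicit. The parts for $\Ld$, and the reducedness, Hausdorffness and $\Sigma_n$-freeness of both operads, are fine.

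The one place where you assert something genuinely nontrivial without argument is the claimed identification of the image of $R(\psi) \colon R(V) \to R(W)$ with the set
\[
A \;=\; \{\, g \in R(W) \;:\; g|_{\Ima(\psi)^\perp} = \id, \ g(\Ima(\psi)) \subseteq \Ima(\psi)\,\}.
\]
The maps in $\Ima R(\psi)$ split as a direct sum $(\psi f \psi^{-1}) \oplus \id$, whereas the two conditions defining $A$ only prescribe $g$ on the two coordinate subspaces separately; since $g$ is in general nonlinear, this a priori does not force $g(u + u^{\perp}) = g(u) + u^{\perp}$ for all $u \in \Ima(\psi)$, $u^{\perp} \in \Ima(\psi)^\perp$. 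This is not a ``standard property of the compact-open topology'' but rather uses the distance-reducing hypothesis in an essential way. The identification \emph{is} true, but here is the argument I would want to see: write $U = \Ima(\psi)$. From $g(t v^{\perp}) = t v^{\perp}$ for every unit vector $v^{\perp} \in U^\perp$ and the inequality $|\langle g(w), v^{\perp}\rangle - t| \leqslant \lVert w - t v^{\perp}\rVert$, letting $t \to \pm\infty$ yields $\langle g(w), v^{\perp}\rangle = \langle w, v^{\perp}\rangle$, so $g$ preserves the $U^\perp$\mbox{-}component of every point. Writing $g(w) = g_U(w) + \pi_{U^\perp}(w)$ with $g_U(w) \in U$, the distance-reducing condition then forces $\lVert g_U(w_1) - g_U(w_2)\rVert \leqslant \lVert \pi_U(w_1 - w_2)\rVert$, so $g_U$ factors through $\pi_U$, and $g(u + u^\perp) = g_U(u) + u^\perp = g(u) + u^\perp$ as desired.

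Alternatively you can avoid establishing $\Ima R(\psi) = A$ altogether: your map $r \colon A \to R(V)$, $g \mapsto \psi^{-1} \circ g|_{U} \circ \psi$, is continuous and satisfies $r \circ R(\psi) = \id_{R(V)}$, so $R(\psi) \circ r$ is a continuous retraction of the closed set $A$ onto $\Ima R(\psi)$, and therefore $\Ima R(\psi) = \{g \in A : R(\psi)(r(g)) = g\}$ is closed in $A$ (hence in $R(W)$) as an equalizer in a Hausdorff space. Either repair closes the gap, and the rest of your propagation to Steiner paths and then to $n$-tuples is correct.
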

\begin{proof}
This follows from the properties of the little disks operads $\Ld(V)$ and Steiner operads $\KK(V)$ for an inner product space $V$. By construction they are reduced, and for each $n \geqslant 0$ they are $\Sigma_n$-free and Hausdorff, so the same is true for $\Ld$ and $\KK$. For a linear isometric embedding $\psi \colon V \to W$, the maps $\Ld_n(\psi)$ and $\KK_{\:n}(\psi)$ are closed embeddings, so the operads $\Ld$ and $\KK$ are closed.
\end{proof}

We now give several examples of global $E_\infty$-operads. To check that $\Ld$ and $\KK$ are global $E_\infty$-operads we first need the following technical lemma.

\begin{lemm}
\label{lemmequivconf}
Let $K$ be a compact Lie group, $\Uk$ a $K$-universe (not necessarily complete), $L \leqslant K$, and $T$ an $L$-set. Let $\conf^L_T(\Uk)$ denote the space of $L$-equivariant $T$-configurations in $\Uk$, that is, $L$-equivariant embeddings of $T$ in $\Uk$. Then $\conf^L_T(\Uk)$ is either empty or contractible.
\end{lemm}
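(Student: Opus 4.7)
The plan is to exhibit an explicit contraction of $\conf^L_T(\Uk)$ onto a chosen basepoint. Assuming the space is non-empty, I fix $f_\ast \in \conf^L_T(\Uk)$ and let $W_\ast \leqslant \Uk$ be the finite-dimensional $L$-subrepresentation spanned by $f_\ast(T)$. The strategy is to first push an arbitrary configuration $f$ into the orthogonal complement $W_\ast^\perp$ by an $L$-equivariant linear isometric self-embedding of $\Uk$, and then linearly interpolate with $f_\ast$; because the resulting paths split across the orthogonal decomposition $W_\ast \oplus W_\ast^\perp$, injectivity will be preserved throughout.

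The two ingredients I need are (i) an $L$-equivariant linear isometric embedding $\iota \colon \Uk \to W_\ast^\perp$, and (ii) a path $\{\gamma_s\}_{s \in [0,1]}$ of $L$-equivariant linear isometric self-embeddings of $\Uk$ with $\gamma_0 = \id_{\Uk}$ and $\gamma_1 = \iota$. Both follow from the fact that $\Uk$, viewed as an $L$-representation, contains each of its finite-dimensional $L$-subrepresentations with infinite multiplicity: every finite-dimensional $L$-representation embeds into the restriction of some finite-dimensional $K$-representation (Theorem~III.4.5 of \cite{diecklie}, already used elsewhere in the paper), which in turn embeds infinitely often into $\Uk$. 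Hence deleting the finite-dimensional $W_\ast$ leaves room for a full copy of $\Uk$, giving (i), and (ii) is the classical contractibility of the space of equivariant linear isometric self-embeddings of a complete $L$-universe.

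The contraction is then $H(f, t) = \gamma_{2t} \circ f$ on $t \in [0, 1/2]$ and
\[H(f, t) \;=\; (2t - 1)\, f_\ast \;+\; (2 - 2t)\, \iota \circ f \quad \text{on } t \in [1/2, 1].\]
The first stage stays in $\conf^L_T(\Uk)$ because each $\gamma_{2t}$ is a linear isometric embedding, so precomposition preserves $L$-equivariant injections. For the second stage, an identity $H(f, t)(x) = H(f, t)(y)$ decomposes under the orthogonal splitting into $(2t-1)(f_\ast(x) - f_\ast(y)) = 0$ in $W_\ast$ and $(2-2t)(\iota f(x) - \iota f(y)) = 0$ in $W_\ast^\perp$; since the two scalars never vanish simultaneously on $[1/2, 1]$ and both $f_\ast$ and $\iota \circ f$ are injective, we obtain $x = y$. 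The formulas $H(f, 0) = f$ and $H(f, 1) = f_\ast$ then give the desired contraction.

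The main obstacle is arranging ingredient (i): constructing $\iota$ requires $\Uk$ to have enough room as an $L$-representation to absorb the finite-dimensional perturbation $W_\ast$, which is exactly the point at which the completeness hypothesis enters. Passing from the $K$-universe structure to a sufficiently complete $L$-universe structure via the cited embedding theorem is what makes both (i) and (ii) available, and hence what drives the whole argument.
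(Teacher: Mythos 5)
Your architecture (push an arbitrary configuration into the orthogonal complement of a chosen one, then convexly interpolate) mirrors the paper's, but one of your justifications is wrong in a way that reveals a misreading of the hypotheses. You state that the completeness hypothesis is ``exactly the point at which'' you obtain ingredient (i), but the lemma explicitly says the $K$-universe $\Uk$ is \emph{not} required to be complete, and indeed the proposition that applies this lemma invokes it only with a complete universe so the generality is not idle. Your chain of reasoning---``every finite-dimensional $L$-representation embeds into the restriction of some finite-dimensional $K$-representation, which in turn embeds infinitely often into $\Uk$''---proves that $\Uk$ absorbs \emph{arbitrary} finite-dimensional $L$-representations, which is false for an incomplete universe. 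The claim you actually need is weaker and does not require completeness: $W_\ast$ is already an $L$-subrepresentation of $\Uk$, and each $K$-irreducible summand of $\Uk$ appears with infinite multiplicity (that is what makes $\Uk$ a universe), so after restricting to $L$ each $L$-irreducible present in $\Uk\vert_L$ still appears infinitely often; removing the finite-dimensional $W_\ast$ therefore does not change the isomorphism type of $\Uk\vert_L$, which is what produces $\iota$. Your ingredient (ii) similarly needs the path-connectedness of $L$-equivariant isometric self-embeddings of a universe that is not necessarily complete; the classical result holds in this generality, so this is merely a misattribution rather than a gap.

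It is worth noting the paper sidesteps both external inputs by taking $\iota$ to be the explicit ``shift'' $P\colon \bigoplus_n U_n \to \bigoplus_n U_n$, $(u_0,u_1,\dots)\mapsto(0,u_0,u_1,\dots)$ obtained from the countable-multiplicity decomposition of $\Uk$. Because $P$ is built by hand and is even $K$-equivariant, no completeness or absorption argument is needed, and the homotopy from the identity to $P\circ{-}$ can be taken to be the straight-line convex combination $(1-s)\operatorname{id} + sP$: this is not a linear isometry, but its injectivity is checked by a direct coordinatewise induction, so the paper never needs your ingredient (ii) at all. The second stage in the paper is then the same convex interpolation with a chosen $f_0$, after composing with a high enough power $P^{\circ m+1}$ so that the images of $f_0$ and $P^{\circ m+1}\circ f$ lie in complementary blocks---essentially your $W_\ast \oplus W_\ast^\perp$ argument in disguise. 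So the two proofs differ mainly in that the paper's is self-contained and works directly from the definition of a $K$-universe, whereas yours imports two facts from $L$-universe theory, one of which you justify under a hypothesis the lemma does not grant.
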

\begin{proof}
Decompose $\Uk$ as
\[\Uk \cong \bigoplus_{\lambda \in \Lambda} \bigoplus_{n \in \NN} \lambda \cong \bigoplus_{n \in \NN} \bigoplus_{\lambda \in \Lambda} \lambda = \bigoplus_{n \in \NN} U_n\]

where $\Lambda$ is a set of finite-dimensional irreducible $K$-representations.

Let $P$ be the linear isometric embedding
\[\bigoplus _{n \in \NN} U_n \to \bigoplus _{n \in \NN} U_n, \quad (u_0, u_1, \dots) \mapsto (0, u_0, u_1, \dots).\]
Then $P$ is a $K$-equivariant non-surjective linear isometric embedding.

We give a homotopy $H$ between the identity and
\[P \circ - \colon \conf^L_T(\Uk) \to \conf^L_T(\Uk),\]
the map given by postcomposition with $P$. For each $s \in [0, 1]$, $f \in \conf^L_T(\Uk)$ and $t \in T$ this homotopy $H$ is given by
\[H_s(f)(t)= (1-s)(f(t)) + s(P(f(t))).\]
It is easy to see that each $H_s(f)$ is $L$-equivariant. If $H_s(f)(t)= H_s(f)(t')$, arguing coordinatewise we see that $t=t'$.

Now assume that $\conf^L_T(\Uk)$ is non-empty, so that $f_0 \in \conf^L_T(\Uk)$. There is an $m \geqslant 0$ such that the image of $f_0$ is contained in $\bigoplus_{n \leqslant m} U_n$. Then 
\[H'_s(f)(t)= s(P^{\circ m+1}(f(t))) + (1-s)(f_0(t))\]
gives a homotopy between the constant map with value $f_0$ and the map
\[P^{\circ m+1} \circ - \colon \conf^L_T(\Uk) \to \conf^L_T(\Uk).\]

We can easily see that $H'_s(f)$ is $L$-equivariant, and we can check that it is an embedding by looking at the projection to $\bigoplus_{n \leqslant m} U_n$ and to its orthogonal complement separately.

With $H$ we can obtain a homotopy from the identity to $P^{\circ m+1} \circ -$, and combining that homotopy with $H'$ we obtain that $\conf^L_T(\Uk)$ is contractible.
\end{proof}

\begin{prop}
The operads $\Ld$ and $\KK$ in $(\Spc, \boxtimes)$ are global $E_\infty$-operads.

\end{prop}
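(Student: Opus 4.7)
The plan is to verify the hypotheses of Proposition~\ref{propglobalcolim}. By Lemma~\ref{lemmadmissiblefortimes} both $\Ld_n$ and $\KK_n$ are closed orthogonal spaces, so it suffices to check that for every compact Lie group $K$, every $n \geqslant 0$, and every graph subgroup $\Gamma_\phi \in \fat(K, \Sigma_n)$ associated to a continuous homomorphism $\phi \colon L \to \Sigma_n$ with $L \leqslant K$, the fixed point spaces $\Ld_n(\Uk)^{\Gamma_\phi}$ and $\KK_n(\Uk)^{\Gamma_\phi}$ are weakly contractible. Since $\Ld_n(\Uk) = \colim_{V \in s(\Uk)} \Ld_n(V)$ and $\KK_n(\Uk) = \colim_{V \in s(\Uk)} \KK_n(V)$ are colimits along closed embeddings, fixed points can be computed termwise by \cite[Proposition~B.1]{global}. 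Thus a $\Gamma_\phi$-fixed point is represented by a tuple at some finite stage $V$ which is $L$-equivariant in the sense that $L$ permutes the $n$ labels through $\phi$ while acting on the ambient space through its $K$-representation structure.

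The core idea is to compare both fixed point spaces to the space $\conf^L_{T_\phi}(\Uk)$ of $L$-equivariant configurations, where $T_\phi$ denotes $\{1,\dots,n\}$ with the $L$-action via $\phi$. For $\Ld$, the map sending a little disks configuration to its tuple of centers gives an $L$-equivariant projection
\[
\Ld_n(\Uk)^{\Gamma_\phi} \longrightarrow \conf^L_{T_\phi}(D(\Uk)),
\]
whose fiber over a fixed tuple of centers is the non-empty convex, hence contractible, set of $L$-invariant admissible radii; a straight-line deformation then exhibits this projection as an $L$-equivariant homotopy equivalence. Radial rescaling provides an $L$-equivariant homeomorphism $D(\Uk) \cong \Uk$, identifying the target with $\conf^L_{T_\phi}(\Uk)$. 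For $\KK$, the map that sends a tuple of Steiner paths $(h_1, \dots, h_n)$ to the tuple $(\ev_0(h_1)(0), \dots, \ev_0(h_n)(0))$ of starting centers is likewise an $L$-equivariant homotopy equivalence onto $\conf^L_{T_\phi}(\Uk)$: the fiber consists of tuples of $L$-equivariantly compatible Steiner paths with prescribed starting centers, which contracts onto the constant path at the identity by linear reparametrization.

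Finally, Lemma~\ref{lemmequivconf} reduces the desired contractibility to the non-emptiness of $\conf^L_{T_\phi}(\Uk)$. The permutation representation $\RR[T_\phi]$ is a finite-dimensional $L$-representation and therefore embeds $L$-equivariantly into some induced $K$-representation $\mathrm{Ind}_L^K(\RR[T_\phi])$, which in turn embeds into $\Uk$ since $\Uk$ is a complete $K$-universe. Sending each $t \in T_\phi$ to (a suitable translate of) the corresponding standard basis vector produces an $L$-equivariant embedding $T_\phi \hookrightarrow \Uk$, so $\conf^L_{T_\phi}(\Uk)$ is non-empty and hence contractible. The main obstacle I anticipate is the equivariant book-keeping in the second paragraph: one must ensure that the contracting homotopies (shrinking radii for $\Ld$, straightening paths for $\KK$) are compatible with the $L$-action defined by $\phi$ and pass through the colimit over $V \in s(\Uk)$; once this is in place, everything funnels into Lemma~\ref{lemmequivconf}.
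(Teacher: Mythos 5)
Your overall strategy agrees with the paper's: reduce the fixed-point spaces of $\Ld_n$ and $\KK_{\:n}$ to the space $\conf^L_{T_\phi}(\Uk)$ of equivariant configurations (your $\conf^L_{T_\phi}$ is the paper's $\conf^H_{T_\phi}$), then apply Lemma~\ref{lemmequivconf} using completeness of $\Uk$ for non-emptiness; your final paragraph is a correct and in fact more explicit version of that last step. The divergence, and the gap, is in the middle: the paper first establishes a $(K\times\Sigma_n)$-equivariant homotopy equivalence $\Ld(V)_n\simeq\conf_n(V)$ and $\KK(V)_n\simeq\conf_n(V)$ for each representation $V$ by citing \cite[Lemma~1.2]{MayGuillou} and \cite[Proposition~1.5]{MayGuillou}, and only afterwards passes to the colimit and to $\Gamma_\phi$-fixed points (where a genuine equivariant homotopy equivalence automatically induces weak equivalences of fixed points). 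You instead try to build a homotopy equivalence directly on the $\Gamma_\phi$-fixed point spaces, and this is where the argument does not quite go through.

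For the little disks, your description of the fibers of the ``centers'' map as non-empty convex sets is fine, but to conclude from that alone that the projection is a homotopy equivalence you need either a continuous global section (e.g.\ choosing radii continuously as a function of the configuration of centers) or a quasifibration/Dold-type argument, together with a check that this choice is compatible with stabilization along $s(\Uk)$ --- you flag this but do not resolve it, and the paper's footnote warns that even the published proof of \cite[Lemma~1.2]{MayGuillou} needs patching. For the Steiner operad the argument as written is incorrect: the fiber of the centers map over a configuration $(c_1,\dots,c_n)$ consists of tuples of Steiner paths whose time-zero embeddings map the origin to the prescribed $c_i$, and the constant path at $\id_V$ is \emph{not} a point of that fiber unless all $c_i=0$; so ``contracts onto the constant path at the identity'' describes a deformation of the whole Steiner space, not a fiberwise contraction, and the actual homotopy in \cite[Proposition~1.5]{MayGuillou} is more delicate (one shrinks the time-zero embeddings towards small translates, not towards the identity). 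Citing the two MayGuillou results for a $(K\times\Sigma_n)$-equivariant homotopy equivalence, as the paper does, both repairs the Steiner step and sidesteps the equivariant bookkeeping you anticipated.
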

\begin{proof}
Both operads $\Ld$ and $\KK$ are closed by Lemma~\ref{lemmadmissiblefortimes}. For each compact Lie group $K$ and each $K$-representation $V$, the $(K \times \Sigma_n)$-spaces $\Ld(V)_n$ and $\KK(V)_n$ are $(K \times \Sigma_n)$-homotopy equivalent to $\conf_n(V)$, the configuration space of $n$ points in $V$, where $K$ acts on $V$ and $\Sigma_n$ acts by permuting the points. This is \cite[Lemma~1.2]{MayGuillou} and \cite[Proposition~1.5]{MayGuillou} respectively\footnote{In the proof of \cite[Lemma~1.2]{MayGuillou}, one has to add a small condition to ensure that the little disks are contained in the unit disk and that the constructed maps are well defined.}.

We have that 
\[\conf_n(\Uk) \cong \colim_{V \in s(\Uk)} \conf_n(V).\]
Consider any graph subgroup $\Gamma_\phi \in \fat(K, \Sigma_n)$, with $\phi \colon H \to \Sigma_n$ and $H \leqslant K$. Let $T_\phi$ be the set with $n$ elements and an $H$-action given by $\phi$. Since taking fixed points commutes with filtered colimits along closed embeddings, and the poset $s(\Uk)$ has a cofinal subsequence, $\Ld_n(\Uk)^{\Gamma_\phi}$ and $\KK_{\:n}(\Uk)^{\Gamma_\phi}$ are weakly homotopy equivalent to
\[\conf_n(\Uk)^{\Gamma_\phi} \cong \conf^H_{T_\phi}(\Uk),\]
where $\conf^H_{T_\phi}(\Uk)$ is the space of $H$-equivariant $T_\phi$-configurations in $\Uk$, that is, $H$-equivariant embeddings of $T_\phi$ in $\Uk$. Since $\Uk$ is a complete universe, $\conf^H_{T_\phi}(\Uk)$ is non-empty, so by Lemma~\ref{lemmequivconf} it is contractible. Thus $\Ld_n$ and $\KK_{\:n}$ are $\Sigma_n$-globally contractible, and $\Ld$ and $\KK$ are global $E_\infty$-operads.
\end{proof}

Recall that $L_V = \Lcat(V, -)$ is the orthogonal space represented by $V$.

\begin{prop}
For any $V \in \Lcat$ with $V \neq 0$, the endomorphism operad $\End(L_V)$ in $(\Spc, \boxtimes)$ is a global $E_\infty$-operad.
\end{prop}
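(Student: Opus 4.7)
The plan is to identify $\End(L_V)_n$ explicitly using the closed monoidal structure of $(\Spc, \boxtimes)$ and then reduce the $\Sigma_n$-global contractibility to the classical contractibility of spaces of linear isometric embeddings into an infinite-dimensional inner product space.

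First I would compute $\End(L_V)_n$. Since Day convolution of representables gives $L_U \boxtimes L_W \cong L_{U \oplus W}$, iterating yields $L_V^{\boxtimes n} \cong L_{V^n}$ (as in the calculation already used in the proof of Proposition~\ref{propiotasquareeq}), and the canonical $\Sigma_n$-action on the left corresponds to the permutation action on $V^n$ on the right. By the enriched Yoneda lemma applied to $L_U$, the internal hom against a representable is then the shift
\[\End(L_V)_n(U) \;=\; \Hom(L_{V^n}, L_V)(U) \;=\; \Spc(L_U \boxtimes L_{V^n}, L_V) \;=\; L_V(U \oplus V^n) \;=\; \Lcat(V, U \oplus V^n),\]
with $\Sigma_n$ acting by post-composition through the permutation action on $V^n$. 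Each structure map is a closed embedding, so $\End(L_V)_n$ is a closed $\Sigma_n$-orthogonal space and Proposition~\ref{propglobalcolim} applies.

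Next, I would show that for every compact Lie group $K$ and every graph subgroup $\Gamma_\phi \in \fat(K, \Sigma_n)$ coming from a homomorphism $\phi\colon H \to \Sigma_n$, the fixed-point space $\End(L_V)_n(\Uk)^{\Gamma_\phi} = \Lcat(V, \Uk \oplus V^n)^{\Gamma_\phi}$ is contractible. Since $V$ carries the trivial action and $\Uk \oplus V^n$ is a $(K \times \Sigma_n)$-representation with $K$ acting on $\Uk$ and $\Sigma_n$ acting by permutation on $V^n$, such fixed embeddings are exactly those whose image lies in the $H$-fixed subspace, so
\[\Lcat(V, \Uk \oplus V^n)^{\Gamma_\phi} \;=\; \Lcat\bigl(V,\; \Uk^H \oplus (V^n)^{\phi(H)}\bigr),\]
a non-equivariant space of linear isometric embeddings of a finite-dimensional $V$ into a real inner product space. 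Since $\Uk$ is a complete $K$-universe it contains countably infinitely many copies of the trivial representation, so $\Uk^K$ and hence $\Uk^H$ is infinite-dimensional, making the target infinite-dimensional. The space of linear isometric embeddings of a finite-dimensional space into an infinite-dimensional real inner product space is well known to be non-empty and contractible (it is a filtered colimit of Stiefel manifolds whose connectivity grows without bound).

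The only genuinely nontrivial ingredient is the identification $\Hom(L_W, X)(U) = X(U \oplus W)$, which however follows formally from $L_U \boxtimes L_W \cong L_{U \oplus W}$ together with the enriched Yoneda lemma, so I would present it as a one-line Day convolution computation. Once this shift formula is established, the remainder of the proof is a direct combination of the defining properties of a complete universe and the classical contractibility of Stiefel-type spaces, so no step presents a real obstacle.
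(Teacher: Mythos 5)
Your proof is correct and follows essentially the same route as the paper: both identify $\End(L_V)_n(\Uk)$ with $\Lcat(V, \Uk \oplus V^n)$ via the $\boxtimes$--$\Hom$ adjunction, $L_U \boxtimes L_{V^n} \cong L_{U \oplus V^n}$, and enriched Yoneda, and then both reduce contractibility of the $\Gamma$-fixed points to the contractibility of $\Lcat(V, -)$ applied to an infinite-dimensional inner product space. The only cosmetic difference is that you first identify the whole $\Sigma_n$-orthogonal space $\End(L_V)_n \cong \Lcat(V, - \oplus V^n)$ and invoke closedness plus Proposition~\ref{propglobalcolim} to pass to underlying $K$-spaces, whereas the paper writes the underlying $K$-space directly as a colimit over $s(\Uk)$ -- these are the same computation.
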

\begin{proof}
We have to check that $\Hom(L_V^{\boxtimes n}, L_V) \to \ast$ is a $\Sigma_n$-global equivalence.

Let $\Uk$ be a complete $K$-universe for $K$ a compact Lie group. Then the underlying $K$-space of $\End(L_V)_n$ is

\begin{align*}
    \Hom(L_V^{\boxtimes n}, L_V)(\Uk)= \colim\limits_{W \in s(\Uk)} \Hom(L_V^{\boxtimes n}, L_V)(W) \cong \colim\limits_{W \in s(\Uk)} \Spc(L_W, \Hom(L_V^{\boxtimes n}, L_V)) \cong  \\ \colim\limits_{W \in s(\Uk)} \Spc(L_W \boxtimes L_{V^n}, L_V) \cong \colim\limits_{W \in s(\Uk)} \Lcat(V, W \oplus V^n) \cong  \Lcat(V, \Uk  \oplus V^n).
\end{align*}

The first three isomorphisms are induced by a chain of isomorphisms for each $W$, and we have to check that they are natural in $W$. For the second isomorphism this holds by the naturality of the $\boxtimes$-$\Hom$ adjunction, and for the first and third because of the naturality of the enriched Yoneda lemma. By the same reason they are $(K \times \Sigma_n)$-equivariant.

For any $\Gamma \in \fat(K, \Sigma_n)$ we consider the $\Gamma$-fixed points
\[\Hom(L_V^{\boxtimes n}, L_V)(\Uk)^{\Gamma} \cong \Lcat(V, \Uk  \oplus V^n)^{\Gamma} \cong \Lcat(V, (\Uk  \oplus V^n)^{\Gamma}).\]
Since $\Uk$ is a complete $K$-universe, $(\Uk  \oplus V^n)^{K \times \Sigma_n}$ is infinite-dimensional, and thus so is $(\Uk  \oplus V^n)^{\Gamma}$, so
\[\Lcat(V, (\Uk  \oplus V^n)^{\Gamma}) \cong \Lcat(V, \RR^\infty) \simeq \ast.\qedhere\]
\end{proof}

\begin{rem}
For any $V \in \Lcat$ with $V \neq 0$, the orthogonal space $L_V$ is an algebra over the global $E_\infty$-operad $\End(L_V)$. However, $L_V$ cannot be given the structure of a commutative monoid over the box product (ultra-commutative monoid). In particular, $L_V(0)=\Lcat(V, 0) = \emptyset$, so there are no morphisms of orthogonal spaces from $\ast$ to $L_V$, and thus we cannot give it a unit.
\end{rem}

\begin{rem}
\label{remconstantEinfinity}
Let $\Op$ be an $E_\infty$-operad in $\Topcat$. This is an operad such that $\Opn$ is $\Sigma_n$-free and weakly contractible for each $n \geqslant 0$. Let $\overline{\Op}$ be the constant operad in orthogonal spaces associated to $\Op$, which is closed. The space $\overline{\Op}_n(\Uk)$ is just $\Opn$ with the trivial $K$-action, which means that for $n \geqslant 2$ and a graph subgroup $\Gamma \in \fat(K, \Sigma_n)$ not contained in $K \times \{e\}$, we have that $(\overline{\Op}_n(\Uk))^\Gamma = \emptyset$. Therefore $\overline{\Op}_n$ is not $\Sigma_n$-globally equivalent to $\ast$ for $n \geqslant 2$, and so the constant operad $\overline{\Op}$ is not a global $E_\infty$-operad.

A similar situation occurs in the classical world of equivariant operads. A non-equivariant $E_\infty$-operad given the trivial $G$-action is not a good example of an $E_\infty$-operad in $G$-spaces. Instead one wants to look at $E_\infty$-$G$-operads, the ones for which $\Opn$ is $\fat(G, \Sigma_n)$-equivalent to a point (first defined in \cite[Definition~VII.1.2]{ESHT}).
\end{rem}

\begin{rem}
In the $G$-equivariant setting, there is a hierarchy of non-equivalent operads between a non-equivariant operad given the trivial $G$-action and an $E_\infty$-$G$-operad. These in-between operads are called \emph{$N_\infty$-operads}, and were introduced in \cite{BLUMBERG2015658}. They codify various levels of commutativity, by imposing the existence of certain additive transfers/multiplicative norms, which exist for commutative monoids in $G$-spaces and commutative $G$-ring spectra respectively.

In the global setting, there is also a hierarchy of operads between the naive global $E_\infty$-operads of Remark~\ref{remconstantEinfinity} and the global $E_\infty$-operads. These operads in orthogonal spaces are the global analogs of $N_\infty$-operads. A classification of them will appear in a separate article \cite{globalNin}.
\end{rem}

\appendix
\section{More about \texorpdfstring{$G$}{G}-orthogonal spaces}
\label{appendixmodel}

We now construct the $G$-global model structure on the category of $G$-orthogonal spaces, for $G$ a compact Lie group. The process is the same as the one used in \cite[Section~1.2]{global} to construct the global model structure on $\Spc$, and most of the proofs are almost identical. We first construct a level model structure on $\GSpc$ applying the results from \cite[Appendix C]{global}, and then we consider the left Bousfield localization with the $G$-global equivalences as the weak equivalences. We include here several technical results needed to construct this $G$-global model structure, or used in the main part of this paper.

In this appendix we assume the definitions and results of Section~\ref{sectionGorthogonal}.

\subsection{\texorpdfstring{$G$}{G}-level model structure}

For each compact Lie group $G$ there is an isomorphism of $\Topcat$-enriched categories
\begin{equation*}
    \Fu(\Lcat \times \Gcat, \Topcat) \cong \Fu(\Gcat , \Fu(\Lcat, \Topcat)) = \Gcat\text{-}\Spc.
\end{equation*}

We have that $\dat = \Lcat \times \Gcat$ is a skeletally small $\Topcat$-enriched category. On $\dat$ we have a dimension function on the objects $\lvert - \rvert$ given by the dimension of the inner product space of $\Lcat$. This function satisfies that if $\lvert V \rvert > \lvert W \rvert$ then $\dat(V, W)= \emptyset$ and if $\lvert V \rvert = \lvert W \rvert$ then $V$ and $W$ are isomorphic in $\dat$. We need to choose for each $m \geqslant 0$ an object of $\dat$ of dimension $m$, and we pick $\RR^m$. 

As input to obtain the $G$-level model structure, we have to consider for each $m \geqslant 0$ a model structure on the category of spaces with an action by $\dat(\RR^m, \RR^m)= O(m) \times G$. We take the model structure given by the graph subgroups, the $\fat(O(m), G)$-projective model structure, where a morphism $f$ of $\OmGTopcat$ is a weak equivalence (respectively a fibration) if and only if $f^\Gamma$ is a weak homotopy equivalence (respectively a Serre fibration) for each $\Gamma \in \fat(O(m), G)$. Recall that $\fat(O(m), G)$ is the set of graph subgroups of $O(m) \times G$, those $\Gamma \in O(m) \times G$ such that $\Gamma \cap (\{e_{O(m)}\} \times G) = \{e_{O(m) \times G}\}$

This $\fat(O(m), G)$-projective model structure is proper, cofibrantly generated, and topological. See for example \cite[Proposition~B.7]{global} for the construction. We call the weak equivalences of this model structure the $\fat(O(m), G)$-equivalences, and we do the same for the fibrations, acyclic fibrations, cofibrations and acyclic cofibrations. We can use the results from \cite[Appendix C]{global} to construct a level model structure on $\GSpc$ based on the graph subgroups. For this, the following consistency condition, described in~\cite[Definition~C.22]{global}, has to be satisfied.

\begin{lemm}[Consistency condition]
For each $m, n \geqslant 0$ and each $\fat(O(m), G)$-acyclic cofibration $i$, the morphism
\[(\Lcat(\RR^m, \RR^{m+n})\times G) \times_{O(m)\times G} i\]
is an $\fat(O(m + n), G)$-acyclic cofibration.
\end{lemm}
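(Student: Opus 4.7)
The plan is to reduce the condition to checking it on generating acyclic cofibrations, and then to split the verification into an acyclicity argument and a cofibrancy argument, the latter being the main technical point.

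The $\fat(O(m), G)$-projective model structure on $\OmGTopcat$ is cofibrantly generated, with generating acyclic cofibrations of the form $(O(m) \times G)/\Gamma \times j_l$ for $\Gamma \in \fat(O(m), G)$ and $l \geqslant 0$. The functor $(\Lcat(\RR^m, \RR^{m+n}) \times G) \times_{O(m) \times G} (-)$ is a left adjoint (its right adjoint is the corresponding mapping space), so it preserves retracts, pushouts and transfinite compositions; it therefore sends acyclic cofibrations to acyclic cofibrations provided it does so on these generators. Applied to such a generator, the functor produces
\[((\Lcat(\RR^m, \RR^{m+n}) \times G)/\Gamma) \times j_l,\]
viewed as a morphism of $(O(m+n) \times G)$-spaces.

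Acyclicity is immediate: $j_l$ is a strong deformation retract via the retraction $(x,t) \mapsto (x,0)$ and the linear homotopy, both of which act only on the $[0,1]$-factor; hence the same data witness the full morphism as an $(O(m+n) \times G)$-equivariant strong deformation retract, yielding weak homotopy equivalences on all $\Gamma'$-fixed points for $\Gamma' \leqslant O(m+n) \times G$. The main obstacle is cofibrancy. Since the $\fat(O(m+n), G)$-projective model structure is topological and $j_l$ is a cofibration of spaces, it suffices to prove that the $(O(m+n) \times G)$-space $X := (\Lcat(\RR^m, \RR^{m+n}) \times G)/\Gamma$ is $\fat(O(m+n), G)$-cofibrant.

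The Stiefel manifold $\Lcat(\RR^m, \RR^{m+n})$ is a compact smooth manifold with a smooth $(O(m+n) \times O(m))$-action, so $\Gamma$ acts smoothly on the smooth manifold $\Lcat(\RR^m, \RR^{m+n}) \times G$. Writing $\Gamma = \Gamma_\alpha$ for a continuous homomorphism $\alpha \colon H \to G$ with $H \leqslant O(m)$, this $\Gamma$-action is free: if $(\phi, \alpha(\phi)) \in \Gamma$ fixes $(a, h)$, then $a\phi = a$, and since $a$ is injective as a linear map this forces $\phi = e$. Hence $X$ is a smooth manifold carrying a smooth $(O(m+n) \times G)$-action, and by Illman's equivariant triangulation theorem $X$ admits an $(O(m+n) \times G)$-CW structure. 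It only remains to check that every isotropy subgroup occurring there is a graph subgroup. The isotropy of $[(a, h)] \in X$ consists of pairs $(\psi, g) \in O(m+n) \times G$ satisfying $\psi a = a\phi$ and $g = h\alpha(\phi)h^{-1}$ for some $\phi \in H$; setting $\psi = e_{O(m+n)}$ and using injectivity of $a$ again forces $\phi = e$, hence $g = e_G$. Thus the isotropy intersects $\{e_{O(m+n)}\} \times G$ trivially, confirming it is a graph subgroup, and $X$ is $\fat(O(m+n), G)$-cofibrant as required.
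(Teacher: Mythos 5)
Your proof is correct and follows the same overall strategy as the paper: reduce to generating acyclic cofibrations via the left-adjoint property, obtain the morphism $((\Lcat(\RR^m, \RR^{m+n}) \times G)/\Gamma) \times j_l$, and show that the $(O(m+n) \times G)$-space $X = (\Lcat(\RR^m, \RR^{m+n}) \times G)/\Gamma$ is $\fat(O(m+n), G)$-cofibrant. The one genuine difference is how the equivariant CW structure on $X$ is obtained. The paper applies Illman's theorem to the compact manifold $\Lcat(\RR^m, \RR^{m+n}) \times G$ with its full $(O(m+n) \times G \times O(m) \times G)$-action, then passes to the orbit space via the general fact (its Proposition B.14) that quotients of equivariant CW complexes by closed subgroups are again equivariant CW complexes, and finally uses $G$-freeness to conclude that only graph-subgroup cells appear. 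You instead observe that the $\Gamma$-action on $\Lcat(\RR^m, \RR^{m+n}) \times G$ is already free (since linear isometric embeddings are injective), so that $X$ is itself a smooth compact $(O(m+n) \times G)$-manifold; you then apply Illman directly to $X$ and check by hand that every isotropy subgroup meets $\{e\} \times G$ trivially. Your route saves the appeal to B.14 at the cost of invoking the principal-bundle fact that free quotients of smooth manifolds by compact Lie groups are smooth manifolds, and it has the advantage of making explicit why the isotropy groups are graph subgroups. One small observation: your separate deformation-retract argument for acyclicity is redundant, since in a topological model category the product of a cofibrant object with an acyclic cofibration of $\Topcat$ is automatically an acyclic cofibration, which is how the paper concludes.
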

\begin{proof}
The functor
\[(\Lcat(\RR^m, \RR^{m+n})\times G) \times_{O(m)\times G} -\]
is a left adjoint to the functor given by
\[\Map(\Lcat(\RR^m, \RR^{m+n})\times G, -)^{O(m + n)\times G}.\]
Therefore we only need to check that it sends the generating acyclic cofibrations to acyclic cofibrations. 

The generating acyclic cofibrations of the $\fat(O(m), G)$-projective model structure are of the form $((O(m) \times G)/\Gamma)\times j_l$, for $\Gamma \in \fat(O(m), G)$ and $l \geqslant 0$. Then the functor mentioned at the beginning takes this generating acyclic cofibration to
\[((\Lcat(\RR^m, \RR^{m+n})\times G)/\Gamma) \times j_l.\]

The left $G$-action on $G$ is free, and because $\Gamma$ is a graph subgroup and the $O(m)$-action on $\Lcat(\RR^m, \RR^{m+n})$ is free, the left $G$-action on $(\Lcat(\RR^m, \RR^{m+n})\times G)/\Gamma$ is also free. We consider now $\Lcat(\RR^m, \RR^{m+n})\times G$ as an $(O(m+n) \times G \times O(m) \times G)$-space, where the component $O(m+n) \times G$ acts on the left, and $O(m) \times G$ originally acts on the right so we precompose with $(-)^{-1}$ to obtain a left action. The space $\Lcat(\RR^m, \RR^{m+n})\times G$ is a smooth $(O(m+n) \times G \times O(m) \times G)$-manifold. Illman's theorem \cite[7.2]{Illman} provides an $(O(m+n) \times G \times O(m) \times G)$-equivariant triangulation, so $\Lcat(\RR^m, \RR^{m+n})\times G$ is cofibrant in the projective model structure with respect to all subgroups of $O(m+n) \times G \times O(m) \times G$. 

By \cite[B.14 (i)]{global} and \cite[B.14 (iii)]{global}, $(\Lcat(\RR^m, \RR^{m+n})\times G)/\Gamma$ is cofibrant in the projective model structure with respect to all subgroups of $O(m+n) \times G$. This in particular means that it is a retract of a generalized $(O(m+n) \times G)$-CW-complex. Each cell $(O(m+n) \times G)/\Delta \times D^{l'}$ for a subgroup $\Delta \leqslant O(m+n) \times G$ and $l' \geqslant 0$ that appears in this equivariant CW-structure induces a $(O(m+n) \times G)$-equivariant map
\[f \colon (O(m+n) \times G)/\Delta \times D^{l'} \to (\Lcat(\RR^m, \RR^{m+n})\times G)/\Gamma.\]
Since the target of $f$ is $G$-free, so is the source, and this implies that $\Delta$ is a graph subgroup. As only graph subgroups can appear in this CW-structure, $(\Lcat(\RR^m, \RR^{m+n})\times G)/\Gamma$ is also cofibrant in the $\fat(O(m + n), G)$-projective model structure. Recall that the $\fat(O(m + n), G)$-projective model structure is topological. Thus $((\Lcat(\RR^m, \RR^{m+n})\times G)/\Gamma) \times j_l$ is the product of a cofibrant object with an acyclic cofibration of $\Topcat$, so it is an acyclic cofibration.
\end{proof}

\begin{thm}[$G$-level model structure]
\label{thmlevelmodel}
There is a topological cofibrantly generated model structure on the category $\GSpc$ of $G$-orthogonal spaces, which we call the $G$-level model structure. The weak equivalences (respectively the fibrations) are those morphisms $f$ such that for each $m\geqslant0$ and each graph subgroup $\Gamma \in \fat(O(m), G)$, the map $f(\RR^m)^\Gamma$ is a weak homotopy equivalence (respectively a Serre fibration). We call these respectively the \emph{$G$-level equivalences} and the \emph{$G$-level fibrations}.

A set of generating cofibrations is
\[\Is_G= \{\,((\Lcat(\RR^m, -)\times G)/\Gamma) \times i_l \mid m,l \geqslant 0, \Gamma \in \fat(O(m), G) \,\}.\]
A set of generating acyclic cofibrations is
\[\Js_G = \{\,((\Lcat(\RR^m, -)\times G)/\Gamma) \times j_l \mid m,l \geqslant 0, \Gamma \in \fat(O(m), G) \,\}.\]
We call the cofibrations of this model structure the \emph{$G$-flat cofibrations}.
\end{thm}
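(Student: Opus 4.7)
The plan is to apply the general machinery of \cite[Appendix~C]{global}, which takes as input a skeletally small $\Topcat$-enriched indexing category equipped with a suitable dimension function and, for each $m$, a cofibrantly generated proper topological model structure on the category of $\dat(\RR^m, \RR^m)$-spaces, together with a consistency condition, and produces a level model structure on the category of $\Topcat$-enriched functors $\dat \to \Topcat$.

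Essentially all the setup is already in place: $\GSpc$ has been identified with $\Fu(\dat, \Topcat)$ for $\dat = \Lcat \times \Gcat$, the dimension function inherited from $\Lcat$ satisfies the required vanishing and uniqueness-up-to-isomorphism properties, and $\RR^m$ has been fixed as the representative in each dimension. As the input model structure at level $m$ I would take the $\fat(O(m), G)$-projective model structure on $\OmGTopcat$, whose existence, properness, topological enrichment, and cofibrant generation come from \cite[Proposition~B.7]{global}; its generating (acyclic) cofibrations are of the form $((O(m) \times G)/\Gamma) \times i_l$ and $((O(m) \times G)/\Gamma) \times j_l$ for $\Gamma \in \fat(O(m), G)$ and $l \geqslant 0$.

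The only non-formal hypothesis required is the consistency condition of \cite[Definition~C.22]{global}, which is precisely the content of the preceding lemma. That is where the real work lives: its verification uses Illman's triangulation theorem to give $\Lcat(\RR^m, \RR^{m+n}) \times G$ a CW-structure with respect to the full product group $O(m+n) \times G \times O(m) \times G$, after which the graph-subgroup condition $\Gamma \cap (\{e\} \times G) = \{e\}$ forces the relevant orbits to be $G$-free, hence cofibrant in the $\fat(O(m+n), G)$-projective model structure.

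With all hypotheses verified, the main theorem of \cite[Appendix~C]{global} produces the topological cofibrantly generated $G$-level model structure on $\GSpc$ directly. The characterizations of the weak equivalences and fibrations as levelwise $\Gamma$-equivalences and $\Gamma$-fibrations for $\Gamma \in \fat(O(m), G)$ are immediate from the construction, since both classes are defined and detected level by level. Finally, the generating sets $\Is_G$ and $\Js_G$ arise as the images of the input generating (acyclic) cofibrations under the semi-free left adjoint $(\Lcat(\RR^m, -) \times G) \times_{O(m) \times G} (-)$ to evaluation at $\RR^m$, which yields exactly the displayed formulas.
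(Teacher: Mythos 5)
Your proposal matches the paper's proof exactly: both identify $\GSpc$ with $\Fu(\Lcat \times \Gcat, \Topcat)$, feed the $\fat(O(m), G)$-projective model structures on $\OmGTopcat$ into the level-model-structure machinery of \cite[Appendix~C]{global}, verify the consistency condition of \cite[Definition~C.22]{global} via Illman's theorem and $G$-freeness of the relevant orbits, and read off the generating sets from the semi-free left adjoints to evaluation. The only small point glossed over is that the paper establishes the ``topological'' property in a separate step (using \cite[Proposition~B.5]{global} together with the cofibrancy of the semi-free objects $(\Lcat(\RR^m, -)\times G)/\Gamma$) rather than getting it directly from \cite[Proposition~C.23]{global}, but this does not change the argument.
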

\begin{proof}
Such a model structure exists by~\cite[Proposition~C.23 (i)]{global}. It is cofibrantly generated by~\cite[Proposition~C.23 (iii)]{global} because each of the chosen model structures on $\OmGTopcat$ is cofibrantly generated.

The functor
\[(-)(\RR^m) \colon \GSpc \to \OmGTopcat\]
given by evaluation at $\RR^m$ has a left adjoint, which we denote by $F_m$, and it is given by
\[F_m(A) = (\Lcat(\RR^m, -) \times G) \times_{O(m) \times G} A.\]
The generating cofibrations obtained from \cite[Proposition~C.23 (iii)]{global} are those of the form $F_m(i)$ where $i$ is a generating cofibration of $\OmGTopcat$, which are of the form $((O(m) \times G)/\Gamma) \times i_l$ for $\Gamma \in \fat(O(m), G)$ and $l \geqslant 0$. Similarly the generating acyclic cofibrations are of the form $F_m(j)$ for $j$ a generating acyclic cofibration of $\OmGTopcat$.

Each $G$-orthogonal space of the form $(\Lcat(\RR^m, -)\times G)/\Gamma$ is cofibrant in this $G$-level model structure, because $F_m(((O(m) \times G)/\Gamma) \times i_0)$ is a generating cofibration. Using~\cite[Proposition~B.5]{global} we obtain that this model structure is topological, taking $\mathpzc{G}$ and $\mathpzc{Z}$ in the statement of that proposition to be
\[\mathpzc{G}= \{\, (\Lcat(\RR^m, -)\times G)/\Gamma  \mid m \geqslant 0, \Gamma \in \fat(O(m), G) \,\} \; \text{and} \; \mathpzc{Z}= \emptyset.\qedhere\]\end{proof}

Note that we should call this model structure on $\GSpc$ the "$G$-graph level model structure" to distinguish it from other possible model structures on $\GSpc$. In particular, there is at least the level model structure that we would obtain by considering all subgroups of $O(m) \times G$, and not just the graph subgroups. There is also a projective model structure on $\Fu(\Gcat , \Spc)$. However since we never talk about these two other model structures on $\GSpc$, we omit the adjective "graph" everywhere.

\begin{lemm}
\label{lemmlevelglobal}
If $f \colon X \to Y$ is a $G$-level equivalence then for any compact Lie group $K$ and any faithful $K$-representation $V$ the map $f(V)$ is an $\fat(K, G)$-equivalence. In particular, $f$ is also a $G$-global equivalence.
\end{lemm}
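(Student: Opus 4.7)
The plan is to reduce the statement about faithful $K$-representations to the defining property of a $G$-level equivalence (which concerns only $\mathbb{R}^m$), and then to bootstrap the $G$-global equivalence statement by enlarging an arbitrary representation to a faithful one.

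First I would treat the case of a faithful $K$-representation $V$ of dimension $m$. The idea is that choosing an orthonormal basis identifies the underlying inner product space of $V$ with $\mathbb{R}^m$, and the $K$-action is then encoded by a continuous homomorphism $\alpha_V \colon K \to O(m)$. Since $V$ is faithful, $\alpha_V$ is injective, and since $K$ is compact it is a closed embedding. By functoriality of $X$ and $Y$, the $(K \times G)$-equivariant map $f(V)$ is precisely the restriction of the $(O(m) \times G)$-equivariant map $f(\mathbb{R}^m)$ along $\alpha_V \times \id_G$. Now given any $\Gamma' \in \fat(K, G)$, I would verify that its image $\Gamma \defeq (\alpha_V \times \id_G)(\Gamma')$ lies in $\fat(O(m), G)$: injectivity of $\alpha_V \times \id_G$ implies $\Gamma$ is a subgroup isomorphic to $\Gamma'$, and if $(e_{O(m)}, g) \in \Gamma$ then it pulls back to some $(e_K, g) \in \Gamma'$, forcing $g = e_G$ because $\Gamma'$ is a graph subgroup. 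Under these identifications $f(V)^{\Gamma'} = f(\mathbb{R}^m)^{\Gamma}$, which is a weak homotopy equivalence by the $G$-level equivalence hypothesis.

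For the "in particular" statement I would argue as follows. Given an arbitrary compact Lie group $K$, any $K$-representation $V$ (not necessarily faithful), any $\Gamma' \in \fat(K, G)$ and any lifting problem $(\alpha, \beta)$ for $f(V)^{\Gamma'}$ as in Definition~\ref{defiGglobal}, choose a faithful $K$-representation $U$ (which exists since every compact Lie group embeds into some orthogonal group) and set $W = V \oplus U$ with the canonical $K$-equivariant inclusion $\psi \colon V \to W$. Then $W$ is faithful, so by the first step $f(W)^{\Gamma'}$ is a weak homotopy equivalence. Composing the original lifting problem with $X(\psi)^{\Gamma'}$ and $Y(\psi)^{\Gamma'}$ yields a lifting problem for $f(W)^{\Gamma'}$, and \cite[9.6 Lemma]{may1999concise} supplies a map $\lambda \colon D^l \to X(W)^{\Gamma'}$ making the upper-left triangle commute strictly and the lower-right triangle commute up to homotopy relative to $\partial D^l$, which is exactly what is required for $f$ to be a $G$-global equivalence.

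The argument is essentially formal from the definitions; the only small subtlety is the verification that conjugation by a faithful representation sends graph subgroups to graph subgroups, which is immediate from injectivity of $\alpha_V$. Everything else is a direct unwinding.
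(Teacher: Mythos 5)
Your proof is correct and follows essentially the same route as the paper's: identifying $V$ with $\mathbb{R}^m$ via a basis, using faithfulness of $V$ to see that the induced map $K\times G\to O(m)\times G$ carries graph subgroups to graph subgroups, and then matching fixed-point spaces. The paper leaves the ``in particular'' step to the reader, whereas you spell out the enlargement of an arbitrary $V$ to the faithful representation $V\oplus U$ and the invocation of \cite[9.6 Lemma]{may1999concise}; this is indeed the intended argument and fills in the implicit part cleanly.
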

\begin{proof}
As a finite-dimensional inner product space, $V$ is isomorphic to $\RR^l$ for some $l \geqslant 0$. Let $\Gamma \in \fat(K, G)$ be a graph subgroup. Then the image of $\Gamma$ under the homomorphism $K \times G \to O(\RR^l) \times G$ induced by said isomorphism is a graph subgroup $\Gamma'$. Then $X(V)^\Gamma$ is naturally (on $X$) homeomorphic to $X(\RR^l)^{\Gamma'}$. Since $f(\RR^l)^{\Gamma'}$ is a weak homotopy equivalence, so is $f(V)^\Gamma$.
\end{proof}

\begin{rem}
\label{remGsemifree}
For an inner product space $V$ and a closed subgroup $H \leqslant O(V) \times G$, the $G$-orthogonal space
\[\dat(V, -)/H=(\Lcat(V, -) \times G)/H,\]
which we denote by $L_{H, V ;G}$, is special. It has a certain freeness condition, namely it is the representing object for the functor $(-)(V)^H$ given by evaluating at $V$ and then taking $H$-fixed points. We refer to these as the \emph{semifree $G$-orthogonal spaces}, since they have the same property as the semifree orthogonal spaces $L_{H, V}$.

Explicitly the natural isomorphism between the functors
\[\GSpc(L_{H, V ;G}, -), \: (-)(V)^H \: \colon \GSpc \to \Topcat\]
is given by $f \mapsto f(V)([\id_V, e])$. The inverse isomorphism is given on $Y \in \GSpc$ by sending a point $y_0 \in Y(V)^H$to the morphism of $G$-orthogonal spaces $f$ given by
\begin{alignat*}{3}
    (\Lcat(V, W) \times G)/H     \to & Y(W) \\ [\psi, g]  \mapsto & Y(\psi)(g y_0).
\end{alignat*}
\end{rem}
\vspace{4mm}

Analogously to the case of the semifree orthogonal spaces, the box product of a semifree $G$-orthogonal space and a semifree $K$-orthogonal space has a nice structure. As a $(G \times K)$-orthogonal space it is isomorphic to a semifree $(G \times K)$-orthogonal space, and this can be deduced from the result for orthogonal spaces. Note however that the box product of two semifree $G$-orthogonal spaces with the $G$-action given by restriction along the diagonal is not a semifree $G$-orthogonal space in general.

\begin{lemm}
\label{lemmgsemibox}
For compact Lie groups $G$ and $K$, inner product spaces $V$ and $V'$, and closed subgroups $H \leqslant O(V) \times G$ and $H' \leqslant O(V') \times K$, we have that $L_{H, V ; G} \boxtimes L_{H', V'; K}$ is isomorphic as a $(G \times K)$-orthogonal space to $L_{H \times H', V \oplus V'; G \times K}$.
\end{lemm}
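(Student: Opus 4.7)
My plan is to reduce the statement to the known non-equivariant isomorphism $\Lcat(V,-) \boxtimes \Lcat(V',-) \cong \Lcat(V \oplus V',-)$ (recalled in \cite[Example~1.3.3]{global}) by treating the $G$- and $K$-factors as quotients by free group actions. First I would rewrite $L_{H,V;G}$ as the quotient of $\Lcat(V,-) \times G$ by the diagonal action of $H \leqslant O(V) \times G$, where $O(V)$ acts on $\Lcat(V,-)$ by precomposition with the inverse and $G$ acts on itself by right translation; the residual left $G$-action on the $G$-factor is what endows the quotient with its $G$-orthogonal space structure, consistent with Remark~\ref{remGsemifree}. The analogous description applies to $L_{H',V';K}$. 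Because the box product is a Day convolution and so preserves colimits in each variable, this yields
\[
L_{H,V;G} \boxtimes L_{H',V';K} \;\cong\; \bigl((\Lcat(V,-) \times G) \boxtimes (\Lcat(V',-) \times K)\bigr)/(H \times H').
\]

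Next I would simplify the numerator. Using that the box product distributes over the tensoring of orthogonal spaces by topological spaces (so that the $G$ and $K$ factors, viewed as products with constant spaces, can be pulled out) together with the non-equivariant isomorphism above, I would produce
\[
(\Lcat(V,-) \times G) \boxtimes (\Lcat(V',-) \times K) \;\cong\; \Lcat(V \oplus V',-) \times (G \times K)
\]
as $(O(V) \times O(V') \times G \times K)$-orthogonal spaces, where $O(V) \times O(V')$ acts on the right hand side through the block diagonal embedding $O(V) \times O(V') \hookrightarrow O(V \oplus V')$ followed by the precomposition-by-inverse action on $\Lcat(V \oplus V',-)$. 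Taking $(H \times H')$-orbits on both sides, with $H \times H'$ viewed as a subgroup of $O(V \oplus V') \times G \times K$ via this block inclusion, produces $L_{H \times H', V \oplus V'; G \times K}$ on the right, completing the argument.

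The main technical hurdle is the equivariance verification for the middle step: I must check that the isomorphism $\Lcat(V,-) \boxtimes \Lcat(V',-) \cong \Lcat(V \oplus V',-)$ intertwines the separate $O(V)$- and $O(V')$-actions on the source with the block diagonal $O(V) \times O(V')$-action on the target. This follows by unpacking the defining bimorphism $\Lcat(V,U_1) \times \Lcat(V',U_2) \to \Lcat(V \oplus V', U_1 \oplus U_2)$, $(\psi,\psi') \mapsto \psi \oplus \psi'$: for $(o,o') \in O(V) \times O(V')$ one has $(\psi \circ o^{-1}) \oplus (\psi' \circ o'^{-1}) = (\psi \oplus \psi') \circ (o \oplus o')^{-1}$, which is precisely the precomposition action of the block image of $(o,o')$ on $\psi \oplus \psi'$. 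Granted this compatibility, the $(H \times H')$-quotients on both sides match, yielding the desired $(G \times K)$-equivariant isomorphism.
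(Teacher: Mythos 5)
Your proof follows essentially the same route as the paper's: commute $\boxtimes$ with the quotients by $H$ and $H'$ using that the box product preserves colimits in each variable, then invoke the isomorphism $\Lcat(V,-) \boxtimes \Lcat(V',-) \cong \Lcat(V\oplus V',-)$ from \cite[Example~1.3.3]{global}. The only difference is cosmetic — you spell out the equivariance check under the block-diagonal embedding $O(V)\times O(V')\hookrightarrow O(V\oplus V')$ and the splitting off of the $G$- and $K$-factors as constant orthogonal spaces, whereas the paper compresses both into the phrase ``and its naturality on $V$ and $V'$''.
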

\begin{proof}
Since the box product preserves colimits in both variables, we have that
\begin{alignat*}{2}
&(\Lcat(V, -) \times G)/H \boxtimes (\Lcat(V', -) \times K)/H' \cong ((\Lcat(V, -) \times G)\boxtimes (\Lcat(V', -) \times K))/(H \times H')\\
& \cong (\Lcat(V \oplus V', -) \times G \times K)/(H \times H').
\end{alignat*}
Here we also used the isomorphism $\Lcat(V, -) \boxtimes \Lcat(V', -) \cong \Lcat(V \oplus V', -)$ from \cite[Example 1.3.3]{global} and its naturality on $V$ and $V'$.
\end{proof}

\begin{lemm}
\label{lemmpushoutprodgflat}
The pushout product of a $G$-flat cofibration (recall that these are the cofibrations of the $G$-level model structure) and a $K$-flat cofibration is a $(G \times K)$-flat cofibration.
\end{lemm}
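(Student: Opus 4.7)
The plan is to follow the standard two-step strategy for pushout products in a monoidal model category. I first fix a morphism $f$ in $\GSpc$ and note that the class of morphisms $g$ in the category of $K$-orthogonal spaces such that $f \square g$ is a $(G \times K)$-flat cofibration is closed under pushouts, transfinite compositions, coproducts, and retracts, since the functor $f \square -$ preserves each of these operations. An analogous statement holds for the other argument. Hence, by the usual cellular induction, it suffices to verify the claim when both $f$ and $g$ are generating cofibrations drawn from the sets $\Is_G$ and $\Is_K$ described in Theorem~\ref{thmlevelmodel}.

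So I would take $f = L_{\Gamma, \RR^m ; G} \times i_l$ with $\Gamma \in \fat(O(m), G)$ and $g = L_{\Gamma', \RR^n; K} \times i_k$ with $\Gamma' \in \fat(O(n), K)$. Because $\boxtimes$ preserves colimits separately in each variable, it distributes over the tensoring by topological spaces; combining this with Lemma~\ref{lemmgsemibox} yields the natural isomorphism
\[(L_{\Gamma, \RR^m; G} \times X) \boxtimes (L_{\Gamma', \RR^n; K} \times Y) \cong L_{\Gamma \times \Gamma', \RR^{m+n}; G \times K} \times (X \times Y).\]
Feeding this into the pushout square that defines $f \square g$, and using that $i_l \square i_k$ is the boundary inclusion of the product disk $D^l \times D^k \cong D^{l+k}$ and hence isomorphic in $\Topcat$ to $i_{l+k}$, I obtain
\[f \square g \cong L_{\Gamma \times \Gamma', \RR^{m+n}; G \times K} \times i_{l+k}.\]

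What remains is to check that $\Gamma \times \Gamma'$, viewed as a subgroup of $O(m+n) \times (G \times K)$ via the block-diagonal inclusion $O(m) \times O(n) \hookrightarrow O(m+n)$ together with the obvious rearrangement of factors, is a graph subgroup. If an element $((A, g), (B, k)) \in \Gamma \times \Gamma'$ has block-diagonal part equal to $e$, then $A = I_m$ and $B = I_n$, so the graph conditions on $\Gamma$ and $\Gamma'$ individually force $g = e$ and $k = e$. Thus $\Gamma \times \Gamma' \in \fat(O(m+n), G \times K)$, and this identifies $f \square g$ with a generating cofibration in $\Is_{G \times K}$. I do not expect a substantial obstacle here: the only real content is Lemma~\ref{lemmgsemibox}, which has already been established, so the argument reduces to a careful but routine unwinding of definitions.
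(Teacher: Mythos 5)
Your proposal is correct and follows essentially the same route as the paper: reduce to generating cofibrations (the paper does this by citing Hovey, Lemma~4.2.4, which packages the cellular-induction argument you outline), then identify $f \square g$ with $L_{\Gamma \times \Gamma', \RR^{m+n}; G \times K} \times i_{l+k}$ via Lemma~\ref{lemmgsemibox} and check that $\Gamma \times \Gamma'$ is a graph subgroup of $O(m+n) \times G \times K$.
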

\begin{proof}
Given a generating $G$-flat cofibration $f=L_{\Gamma, \RR^m ; G} \times i_l$ and a generating $K$-flat cofibration $g=L_{\Gamma', \RR^n ; K} \times i_k$, their pushout product is by Lemma~\ref{lemmgsemibox} isomorphic to
\[L_{\Gamma \times \Gamma', \RR^{m + n}; G \times K} \times (i_l \square i_k)\]
as a morphism of $(G \times K)$-orthogonal spaces. The subgroup
\[\Gamma \times \Gamma' \leqslant O(m) \times O(n) \times G \times K \leqslant O(m + n) \times G \times K\]
is a graph subgroup because both $\Gamma$ and $\Gamma'$ are graph subgroups. Additionally $i_l \square i_k$ is homeomorphic to $i_{l + k}$, and so $f \square g$ is a generating $(G \times K)$-flat cofibration.

Since the box product of orthogonal spaces is closed, \cite[Lemma~4.2.4]{hovey2007model} implies that the pushout product of a $G$-flat cofibration and a $K$-flat cofibration is a $(G \times K)$-flat cofibration.
\end{proof}

\begin{lemm}
\label{lemmlevelproper}
The $G$-level model structure is proper.
\end{lemm}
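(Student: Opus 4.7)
The plan is to reduce both halves of properness of the $G$-level model structure on $\GSpc$ to the properness of the $\fat(O(m), G)$-projective model structures on $\OmGTopcat$, exploiting the fact that limits, colimits, $G$-level equivalences and $G$-level fibrations are all computed (or tested) levelwise. The first input is that for each $m\geqslant 0$ the $\fat(O(m), G)$-projective model structure is itself proper: this follows from properness of the standard model structure on spaces combined with the fact that $\Gamma$-fixed points commute with pullbacks and with pushouts along closed inclusions, applied to the usual cellular description via $(O(m)\times G)/\Gamma\times i_l$.

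For right properness I would take a pullback square in $\GSpc$ with $p$ a $G$-level fibration and $f$ a $G$-level equivalence. The evaluation functor $(-)(\RR^m)$ is a right adjoint to $F_m$ and hence preserves pullbacks, so at each level $\RR^m$ one obtains a pullback of $(O(m)\times G)$-spaces in which $p(\RR^m)$ is an $\fat(O(m), G)$-fibration and $f(\RR^m)$ is an $\fat(O(m), G)$-equivalence. Right properness of the $\fat(O(m), G)$-projective model structure then finishes the argument level by level.

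For left properness the same reduction works, provided one verifies that every $G$-flat cofibration $i\colon X\to Y$ evaluates at each $\RR^n$ to an $\fat(O(n), G)$-cofibration. I would check this directly on the generators from Theorem~\ref{thmlevelmodel}: the generator $L_{\Gamma,\RR^m;G}\times i_l$ evaluated at $\RR^n$ is $(\Lcat(\RR^m,\RR^n)\times G)/\Gamma\times i_l$, which is the empty morphism when $n<m$ and otherwise the product of an $\fat(O(n), G)$-cofibrant $(O(n)\times G)$-space with $i_l$, hence an $\fat(O(n), G)$-cofibration since that model structure is topological. The cofibrancy of $(\Lcat(\RR^m,\RR^n)\times G)/\Gamma$ has already been extracted via Illman's theorem in the proof of the consistency condition preceding Theorem~\ref{thmlevelmodel}. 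Because $(-)(\RR^n)$ commutes with pushouts, transfinite compositions and retracts, and $\fat(O(n), G)$-cofibrations are closed under these operations, this property propagates from the generators to all $G$-flat cofibrations, and left properness of the $\fat(O(m), G)$-projective model structures then gives the conclusion.

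The main obstacle is the verification that $G$-flat cofibrations are levelwise $\fat(O(n), G)$-cofibrations, i.e.\ the cofibrancy of the semifree pieces $(\Lcat(\RR^m,\RR^n)\times G)/\Gamma$ in the $\fat(O(n), G)$-projective model structure. Fortunately this ingredient (which rests on Illman's smooth $G$-CW theorem) has essentially been dealt with during the construction of the model structure, so what remains is a routine closure argument.
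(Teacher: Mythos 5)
Your proof is correct, and for right properness it coincides with the paper's argument: evaluate at each $\RR^m$ (which is a right adjoint and hence preserves pullbacks) and invoke right properness of the $\fat(O(m), G)$-projective model structure on $\OmGTopcat$, citing \cite[Proposition~B.7]{global}.

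For left properness you take a genuinely different route from the paper. You verify that $G$-flat cofibrations are levelwise $\fat(O(n), G)$-cofibrations by tracing the generators $L_{\Gamma, \RR^m; G} \times i_l$ through each level $\RR^n$, reusing the $\fat(O(n), G)$-cofibrancy of $(\Lcat(\RR^m, \RR^n) \times G)/\Gamma$ established in the consistency lemma (via Illman's theorem and $G$-freeness), and then propagating to all $G$-flat cofibrations because evaluation preserves pushouts, transfinite composition and retracts; after that, left properness of each $\fat(O(n), G)$-projective model structure finishes the job. The paper avoids this levelwise cofibrancy verification by noting the weaker fact that every $G$-flat cofibration is a $G$-$h$-cofibration (Lemma~\ref{lemmGhcofclosed}), so that it is a levelwise $h$-cofibration of $(O(m)\times G)$-spaces, and then applies the Gluing Lemma \cite[Proposition~B.6]{global}. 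Both arguments are valid; the paper's is shorter because it does not need to identify the cofibration class levelwise, only an ambient class ($h$-cofibrations) for which the Gluing Lemma holds, whereas your argument gives the sharper intermediate statement that $G$-flat cofibrations are levelwise $\fat(O(n), G)$-cofibrations, a fact that can be useful elsewhere.
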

\begin{proof}
First we check right properness. Consider the following pullback diagram
    \[\begin{tikzcd}
	A & X \\
	B & Y
	\arrow["g", from=1-1, to=1-2]
	\arrow[from=1-1, to=2-1]
	\arrow["f", from=1-2, to=2-2]
	\arrow["h"', from=2-1, to=2-2]
	\arrow[from=1-1, to=2-2, phantom, "\lrcorner", very near start]
    \end{tikzcd}\]
where f is a $G$-level fibration and $h$ is a $G$-level equivalence. Let $m \leqslant 0$. Evaluating at $\RR^m$ yields a diagram of $(O(m) \times G)$-spaces, which is a pullback diagram because limits of $G$-orthogonal spaces and $(O(m) \times G)$-spaces are computed in $\Topcat$. Then $f(\RR^m)$ is an $\fat(O(m), G)$-fibration and $h(\RR^m)$ is an $\fat(O(m), G)$-equivalence, and since the $\fat(O(m), G)$-projective model structure is right proper by \cite[B.7]{global}, $g(\RR^m)$ is also an $\fat(O(m), G)$-equivalence. Thus $g$ is a $G$-level equivalence.

To check left properness one can use the dual argument. We additionally need to use that if a morphism $f$ of $G$-orthogonal spaces is a $G$-flat cofibration, then it is a $G$-$h$-cofibration (see Lemma~\ref{lemmGhcofclosed}), which means that each $f(\RR^m)$ is an $h$-cofibration of $(O(m) \times G)$-spaces, and then we need to use the Gluing lemma \cite[B.6]{global}.
\end{proof}

\subsection{\texorpdfstring{$G$-$h$}{G-h}-cofibrations and \texorpdfstring{$G$}{G}-global equivalences}

We now check that $G$-global equivalences are preserved by various constructions along $G$-$h$-cofibrations. We use these results to finish the construction of the $G$-global model structure, and in the main part of this paper.

\begin{lemm}[Gluing Lemma]
\label{lemmgluing}
Given a commutative diagram of $G$-orthogonal spaces
\begin{equation*}
\begin{tikzcd}
Y \arrow[d, "\beta"] & X \arrow[l, "f"'] \arrow[r, "g"] \arrow[d, "\alpha"] & Z \arrow[d, "\gamma"] \\
Y'                   & X' \arrow[l, "f'"'] \arrow[r, "g'"]                  & Z'                   
\end{tikzcd}
\end{equation*}
where $\alpha$, $\beta$ and $\gamma$ are $G$-global equivalences, and $f$ and $f'$ are $G$-$h$-cofibrations, the morphism induced on the pushouts $Y \cup_X Z \to Y' \cup_{X'} Z'$ is a $G$-global equivalence.
\end{lemm}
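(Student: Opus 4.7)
The strategy is to reduce the claim to the topological Gluing Lemma for $(K\times G)$-spaces with respect to the $\fat(K,G)$-projective model structure, by applying the mapping-telescope functor $\tel_i(-)(V_i)$ levelwise. Fix a compact Lie group $K$ and an exhaustive sequence $\{V_i\}_{i\in\NN}$ of $K$-representations. By Proposition~\ref{propglobaltel}, proving that the induced map on pushouts $Y\cup_X Z\to Y'\cup_{X'}Z'$ is a $G$-global equivalence is equivalent to showing that for every such $K$ and $\{V_i\}$ the map $\tel_i(Y\cup_X Z)(V_i)\to\tel_i(Y'\cup_{X'}Z')(V_i)$ is an $\fat(K,G)$-equivalence of $(K\times G)$-spaces.

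Next I would verify that the functor $\tel_i(-)(V_i)\colon\GSpc\to(K\times G)\text{-}\Topcat$ preserves pushouts: evaluation at each $V_i$ is a left adjoint, and the mapping telescope is a colimit built from coproducts, pushouts and the product with $[0,1]$ (which, having the path space as right adjoint, also preserves colimits). Hence $\tel_i(Y\cup_X Z)(V_i)$ is canonically identified with the pushout $\tel_iY(V_i)\cup_{\tel_iX(V_i)}\tel_iZ(V_i)$, and similarly for the primed diagram, so the map of interest is the canonical map between these pushouts.

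Now I translate the hypotheses through $\tel_i(-)(V_i)$. The three vertical arrows $\alpha,\beta,\gamma$ become $\fat(K,G)$-equivalences of $(K\times G)$-spaces by Proposition~\ref{propglobaltel}. For the horizontal maps: if $f$ is a $G$-$h$-cofibration, witnessed by a retraction $r\colon Y\times[0,1]\to X\times[0,1]\cup_X Y$ in $\GSpc$, then evaluating $r$ at each $V_i$ yields a $(K\times G)$-equivariant retraction exhibiting $f(V_i)$ as an $h$-cofibration of $(K\times G)$-spaces. Passing to mapping telescopes preserves $h$-cofibrations along a sequence of compatible $h$-cofibrations (the retractions assemble, using that $\tel_i$ commutes with $-\times[0,1]$ and with pushouts), so $\tel_if(V_i)$ and $\tel_if'(V_i)$ are $h$-cofibrations of $(K\times G)$-spaces.

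At this point the classical Gluing Lemma for $(K\times G)$-spaces in the $\fat(K,G)$-projective model structure~\cite[Proposition~B.6]{global} applies to the diagram of mapping telescopes, giving that the induced map on pushouts is an $\fat(K,G)$-equivalence. Combined with the first paragraph, this proves that the induced map $Y\cup_X Z\to Y'\cup_{X'}Z'$ is a $G$-global equivalence. The approach is essentially bookkeeping: the real content is packaged in Proposition~\ref{propglobaltel} and in the topological Gluing Lemma for $\fat(K,G)$-equivariant spaces, and I do not expect any serious obstacle beyond checking the routine compatibility of $\tel_i$ with pushouts, $h$-cofibrations and the evaluation functors.
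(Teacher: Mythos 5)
Your proof is correct and follows essentially the same route as the paper: reduce via Proposition~\ref{propglobaltel} to the diagram of mapping telescopes of $(K\times G)$-spaces, note that forming telescopes commutes with pushouts and $-\times[0,1]$ so that the horizontal maps become $h$-cofibrations, and then apply the topological Gluing Lemma for the $\fat(K,G)$-projective model structure. The paper's proof is merely a more condensed presentation of the same bookkeeping.
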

\begin{proof}
Consider a compact Lie group $K$ and an exhaustive sequence of $K$-representations $\{V_i\}_{i \in \NN}$. We have the following diagram of equivariant morphisms of $(K \times G)$-spaces
\begin{equation*}
\begin{tikzcd}
\tel_i Y(V_i) \arrow[d, "\tel_i \beta(V_i)"] & \tel_i X(V_i) \arrow[l, "\tel_i f(V_i)"'] \arrow[r, "\tel_i g(V_i)"] \arrow[d, "\tel_i \alpha(V_i)"] & \tel_i Z(V_i) \arrow[d, "\tel_i \gamma(V_i)"] \\
\tel_i Y'(V_i)                               & \tel_i X'(V_i) \arrow[l, "\tel_i f'(V_i)"'] \arrow[r, "\tel_i g'(V_i)"]                              & \tel_i Z'(V_i).                              
\end{tikzcd}
\end{equation*}
Here by Proposition~\ref{propglobaltel} $\tel_i \alpha(V_i)$, $\tel_i \beta(V_i)$ and $\tel_i \gamma(V_i)$ are $\fat(K, G)$-equivalences, and the formation of mapping telescopes commutes with pushouts, retracts and $- \times [0, 1]$, so $\tel_i f(V_i)$ and $\tel_i f'(V_i)$ are $h$-cofibrations of $(K \times G)$-spaces. Therefore by the Gluing Lemma for $\fat(K, G)$-equivalences (see for example~\cite[Proposition~B.6]{global}) the induced map on the pushouts of the mapping telescopes is also an $\fat(K, G)$-equivalence. Since again taking mapping telescopes commutes with pushouts, this means that $Y \cup_X Z \to Y' \cup_{X'} Z'$ is a $G$-global equivalence.
\end{proof}

\begin{coro}
\label{corocobase}
For a pushout diagram of $G$-orthogonal spaces
\begin{equation*}
\begin{tikzcd}
X \arrow[r, "f"] \arrow[d, "g"] & Y \arrow[d]   \\
X' \arrow[r, "f'"]         & Y' \arrow[lu, phantom, "\ulcorner", very near start]
\end{tikzcd}
\end{equation*}
where $f$ is a $G$-global equivalence and either $f$ or $g$ is a $G$-$h$-cofibration, $f'$ is a $G$-global equivalence.
\end{coro}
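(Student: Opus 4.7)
The plan is to reduce both statements to the Gluing Lemma (Lemma~\ref{lemmgluing}) by exhibiting, in each case, a commutative three-term ladder whose induced map on pushouts is precisely $f'$.

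For the case where $g$ is a $G$-$h$-cofibration, I would apply the Gluing Lemma to the diagram
\[
\begin{tikzcd}
X' \arrow[d, Equal] & X \arrow[l, "g"'] \arrow[r, Equal] \arrow[d, Equal] & X \arrow[d, "f"] \\
X' & X \arrow[l, "g"'] \arrow[r, "f"'] & Y.
\end{tikzcd}
\]
The two left horizontal arrows are copies of $g$, hence $G$-$h$-cofibrations by assumption, and the three vertical maps $\id_{X'}$, $\id_X$, $f$ are all $G$-global equivalences. The pushout of the top row collapses to $X'$, while the pushout of the bottom row is the original $Y'$. A direct unwinding of the universal properties shows that the induced map $X' \to Y'$ equals $f'$, so the Gluing Lemma yields the claim.

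For the case where $f$ is a $G$-$h$-cofibration, I would apply the Gluing Lemma instead to
\[
\begin{tikzcd}
X \arrow[d, "f"] & X \arrow[l, Equal] \arrow[r, "g"] \arrow[d, Equal] & X' \arrow[d, Equal] \\
Y & X \arrow[l, "f"'] \arrow[r, "g"'] & X'.
\end{tikzcd}
\]
Now the two left horizontal arrows are $\id_X$ (trivially an $h$-cofibration) and $f$ (a $G$-$h$-cofibration by assumption), while the verticals $f$, $\id_X$, $\id_{X'}$ are again all $G$-global equivalences. The top pushout is $X'$ and the bottom pushout is $Y'$, and the induced map is again $f'$.

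The only delicate point is the identification of the induced map on pushouts with $f'$, which reduces in both cases to the commutativity relation $f' \circ g = \tilde g \circ f$ in the original square (where $\tilde g \colon Y \to Y'$ denotes the cobase change of $g$). I do not anticipate any real obstacle: once those two ladders are written down and the induced maps are identified, the Gluing Lemma does all the work, and the two cases are handled in a symmetric fashion.
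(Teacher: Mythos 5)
Your proposal is correct and uses exactly the same key idea as the paper: applying the Gluing Lemma (Lemma~\ref{lemmgluing}) to a three-term ladder with two identity columns whose induced map on pushouts is $f'$. The paper only displays your second diagram (the case where $f$ is a $G$-$h$-cofibration), implicitly leaving the case where $g$ is the $G$-$h$-cofibration to the symmetry of pushouts; you write out both ladders explicitly, which is slightly more careful but identical in substance.
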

\begin{proof}
Apply the previous lemma to the diagram
\begin{equation*}
\begin{tikzcd}
X \arrow[d, "f"] & X \arrow[-,double line with arrow={-,-}]{l} \arrow[-,double line with arrow={-,-}]{d} \arrow[r, "g"] & X' \arrow[-,double line with arrow={-,-}]{d} \\
Y                & X \arrow[l, "f"'] \arrow[r, "g"]     & X'.
\end{tikzcd}
\end{equation*}
\end{proof}

\begin{coro}
\label{coropushoutproducteq}
For morphisms of $G$-orthogonal spaces $f \colon X_1 \to Y_1$ and $g \colon X_2 \to Y_2$ such that $f$ is a $G$-global equivalence and either $f$ or $g$ is a $G$-$h$-cofibration, their pushout product $f \square g$ is a $G$-global equivalence.

Similarly, assume instead that $f \colon X_1 \to Y_1$ is a morphism of $G$-orthogonal spaces and $g \colon X_2 \to Y_2$ is a map of $G$-spaces. If either $f$ is a $G$-global equivalence or $g$ is a $G$-equivalence, and either $f$ or $g$ is a $G$-$h$-cofibration, their pushout product $f \square g$ is a $G$-global equivalence.
\end{coro}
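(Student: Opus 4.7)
The plan is to factor $f \boxtimes Y_2$ through the pushout that defines $f \square g$ and then conclude by the 2-out-of-3 property of $G$-global equivalences. Explicitly, form the pushout
\[P \;=\; (X_1 \boxtimes Y_2) \cup_{X_1 \boxtimes X_2} (Y_1 \boxtimes X_2),\]
together with the canonical map $p_1 \colon X_1 \boxtimes Y_2 \to P$, so that $f \boxtimes Y_2 = (f \square g) \circ p_1$.

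For the first statement, I would first observe that $f \boxtimes X_2$ and $f \boxtimes Y_2$ are $G$-global equivalences by Corollary~\ref{coroboxgglobal}, since $X \boxtimes -$ preserves $G$-global equivalences. Next, whichever of $f, g$ is a $G$-$h$-cofibration, Lemma~\ref{lemmpropertiesofhcofs} iii) combined with restriction along the diagonal $G \to G \times G$ via Lemma~\ref{lemmpropertiesofhcofs} ii) shows that the corresponding leg $f \boxtimes X_2$ or $X_1 \boxtimes g$ of the pushout square defining $P$ is a $G$-$h$-cofibration. Corollary~\ref{corocobase} then gives that $p_1$ is a $G$-global equivalence, since it is the pushout of the $G$-global equivalence $f \boxtimes X_2$ and at least one of the two legs of the pushout square is a $G$-$h$-cofibration. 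The 2-out-of-3 property of $G$-global equivalences (Lemma~\ref{lemmGglobal} i)) applied to $f \boxtimes Y_2 = (f \square g) \circ p_1$ now concludes that $f \square g$ is a $G$-global equivalence.

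For the second (mixed) statement the same skeleton applies, replacing $\boxtimes$ with the levelwise cartesian product $\times$ of $\GSpc$ and its tensoring over $\GTopcat$: the needed preservation properties of $G$-global equivalences are supplied by Lemma~\ref{lemmGglobal} iv)--vi), and those of $G$-$h$-cofibrations by Lemma~\ref{lemmpropertiesofhcofs} iii). The one additional input required is that a $G$-equivalence of $G$-spaces, viewed as a morphism of constant $G$-orthogonal spaces, is a $G$-global equivalence; this follows from Proposition~\ref{propglobaltel}, because for a constant $G$-orthogonal space and any exhaustive sequence $\{V_i\}$ of $K$-representations, the mapping telescope is $(K \times G)$-homotopy equivalent to the original $G$-space with trivial $K$-action, so the $\Gamma_\phi$-fixed points for any graph subgroup $\Gamma_\phi \in \fat(K, G)$ with $\phi \colon H \to G$ compute the $\phi(H)$-fixed points in the original $G$-space. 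Depending on which of $f, g$ carries the weak equivalence hypothesis, one factors $f \square g$ through $p_1$ (yielding $f \times Y_2$) or through the companion inclusion $p_2 \colon Y_1 \times X_2 \to P$ (yielding $Y_1 \times g$); in either case the same Corollary~\ref{corocobase} plus 2-out-of-3 argument closes the proof. The only real bookkeeping is the reduction of the $(G \times G)$-equivariant statements produced by Corollary~\ref{coroboxgglobal} and Lemma~\ref{lemmpropertiesofhcofs} iii) to $G$-equivariant ones via the diagonal restriction, which is routine.
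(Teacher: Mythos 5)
Your proof is correct and takes essentially the same route as the paper: factor $f \boxtimes Y_2$ (or, in the mixed case, the appropriate one of $f \times Y_2$ and $Y_1 \times g$) through the pushout $P$, use Corollary~\ref{corocobase} to see that the map into $P$ is a $G$-global equivalence since the pushed-out map is one and one of the two legs is a $G$-$h$-cofibration, and then invoke two-out-of-three (a consequence of Lemma~\ref{lemmGglobal}~i)). You are somewhat more explicit than the paper in a few places — you spell out that $Z \boxtimes f$ inherits the $G$-$h$-cofibration property via Lemma~\ref{lemmpropertiesofhcofs}~iii) followed by diagonal restriction via ii), and you justify rather than merely assert that a $G$-equivalence of $G$-spaces viewed as constant $G$-orthogonal spaces is a $G$-global equivalence — but the underlying argument, including the reduction of the mixed statement to the orthogonal-space statement via constant orthogonal spaces, matches the paper's.
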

\begin{proof}
By Lemma~\ref{lemmGglobal}~iv) $f \boxtimes X_2$ and $f \boxtimes Y_2$ are $G$-global equivalences. Depending on the hypothesis, either $f \boxtimes X_2$ or $X_1 \boxtimes g$ is a $G$-$h$-cofibration, so by Corollary~\ref{corocobase} the morphism $\alpha$ is a $G$-global equivalence, and by the 2-out-of-6 property so is $f \square g$.

\begin{equation*}
\begin{tikzcd}
X_1 \boxtimes X_2 \arrow[d, "f \boxtimes X_2"'] \arrow[r, "X_1 \boxtimes g"] & X_1 \boxtimes Y_2 \arrow[rdd, "f \boxtimes Y_2", bend left] \arrow[d, "\alpha"] &                   \\
Y_1 \boxtimes X_2 \arrow[rrd, "Y_1 \boxtimes g"', bend right] \arrow[r]      & P \arrow[rd, "f \square g", dashed] \arrow[lu, phantom, "\ulcorner", very near start]                                 &                   \\
                                                                             &                                                                                 & Y_1 \boxtimes Y_2
\end{tikzcd}
\end{equation*}

The same is true if $g$ is a map of $G$-spaces, since the product of an orthogonal space with a space is the same as the box product with the associated constant orthogonal space. A $G$-equivalence between constant orthogonal spaces is a $G$-global equivalence, and similarly a $G$-$h$-cofibration of spaces is a $G$-$h$-cofibration between constant orthogonal spaces.
\end{proof}

\begin{lemm}
\label{lemmtransfinitent}
For a limit ordinal $\lambda$, consider two $\lambda$-sequences in $\GSpc$, which are colimit preserving functors $X \colon \lambda \to \GSpc$ and $Y \colon \lambda \to \GSpc$, and a natural transformation $f$ between them. If for each $\beta \in \lambda$ the morphisms $g_\beta \colon X_\beta \to X_{\beta + 1}$ and $h_\beta \colon Y_\beta \to Y_{\beta + 1}$ are $G$-$h$-cofibrations and the morphism $f_\beta \colon X_\beta \to Y_\beta$ is a $G$-global equivalence, the morphism induced on the colimits
\[\colim_{\beta \in \lambda} f_\beta \colon \colim_{\beta \in \lambda} X_\beta \to \colim_{\beta \in \lambda} Y_\beta\]
is a $G$-global equivalence.
\end{lemm}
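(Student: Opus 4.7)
The plan is to reduce to the mapping telescope characterization of Proposition~\ref{propglobaltel}, commute the telescope with the $\lambda$-colimit, and then invoke a transfinite Gluing argument on the resulting $\lambda$-sequence of $\fat(K, G)$-equivalences.

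Fix a compact Lie group $K$ and an exhaustive sequence $\{V_i\}_{i \in \NN}$ of $K$-representations. By Proposition~\ref{propglobaltel}, it suffices to show that
\[\tel_i \left( \colim_{\beta \in \lambda} f_\beta \right)(V_i)\]
is an $\fat(K, G)$-equivalence of $(K \times G)$-spaces. The mapping telescope, evaluation at $V_i$, and the $\lambda$-colimit are all colimits in $\Topcat$, so they commute, giving a natural isomorphism with $\colim_\beta \tel_i f_\beta(V_i)$. Each component $\tel_i f_\beta(V_i)$ is an $\fat(K, G)$-equivalence by Proposition~\ref{propglobaltel} applied to the $G$-global equivalence $f_\beta$.

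Next I would check that the transitions in the $\lambda$-sequences $\{\tel_i X_\beta(V_i)\}_\beta$ and $\{\tel_i Y_\beta(V_i)\}_\beta$ are $h$-cofibrations of $(K \times G)$-spaces. Evaluation at $V_i$ preserves the homotopy extension property, as it preserves the relevant pushout and product with $[0,1]$, so $g_\beta(V_i)$ and $h_\beta(V_i)$ are $h$-cofibrations of $(O(V_i) \times G)$-spaces, hence of $(K \times G)$-spaces after restriction along the representation $K \to O(V_i)$. The mapping telescope is itself a levelwise colimit construction, so it sends these $h$-cofibrations to $h$-cofibrations of $(K \times G)$-spaces.

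At this point the claim reduces to a transfinite version of the Gluing Lemma: a natural transformation between two $\lambda$-sequences of $h$-cofibrations in $(K \times G)\Topcat$ that is pointwise an $\fat(K, G)$-equivalence induces an $\fat(K, G)$-equivalence on colimits. I would prove this by transfinite induction on $\mu \leq \lambda$, invoking the ordinary Gluing Lemma for $\fat(K, G)$-equivalences (\cite[Proposition~B.6]{global}) at successor stages. The limit step reduces, via \cite[Proposition~B.1]{global}, to the observation that $\Gamma$-fixed points commute with colimits along closed embeddings in compactly generated weak Hausdorff spaces, together with the standard fact that the compact sources $S^{l-1}$ and $D^l$ detecting weak homotopy equivalences factor through some stage of such a colimit. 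The main obstacle is this limit step, but it is routine; everything else follows mechanically from the telescope characterization of $G$-global equivalences and the compatibility of $h$-cofibrations with evaluation and telescope formation.
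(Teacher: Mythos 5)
Your reduction to the telescope picture, the commutation of colimits, and the handling of the limit step all match the paper's argument precisely: pass to $\tel_i f_\beta(V_i)$ via Proposition~\ref{propglobaltel}, observe the transitions become $h$-cofibrations (hence closed embeddings) of $(K\times G)$-spaces, and then use that $\Gamma$-fixed points commute with filtered colimits along closed embeddings and that compact probes $\partial D^l, D^l$ factor through some stage. That is exactly how the paper concludes.

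The one piece of scaffolding that does not hold up is the proposed ``transfinite induction on $\mu \leq \lambda$, invoking the ordinary Gluing Lemma at successor stages.'' In a $\lambda$-sequence there is no pushout at a successor ordinal --- the colimit of the truncation at $\mu+1$ is simply $X_\mu$ --- and $f_\mu \colon X_\mu \to Y_\mu$ is already a $G$-global equivalence by hypothesis, so the successor step is vacuous and the Gluing Lemma (Proposition~B.6 of~\cite{global}) has no diagram to act on. The only step with content is the limit step, which you correctly identify but which is then the whole lemma, so the induction buys you nothing. The paper instead argues directly, with no induction: the $\Gamma$-fixed-point functor and the compactness-of-probes argument apply uniformly to the $\lambda$-indexed colimit along closed embeddings. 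Drop the induction and the Gluing Lemma reference; everything else is fine.
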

\begin{proof}
By Proposition~\ref{propglobaltel} it is enough to check that for each compact Lie group $K$ and exhaustive sequence of $K$-representations $\{V_i\}_{i \in I}$ the map $\tel_i (\colim_{\beta \in \lambda} f_\beta)(V_i)$ is an $\fat(K, G)$-equivalence. The construction of the mapping telescopes commutes with taking colimits, so this map is isomorphic to $\colim_{\beta \in \lambda} (\tel_i f_\beta(V_i))$.

For each $\beta \in \lambda$ the map $\tel_i f_\beta(V_i)$ is an $\fat(K, G)$-equivalence, and the maps $\tel_i g_\beta(V_i)$ and $\tel_i h_\beta(V_i)$ are $h$-cofibrations of $(K \times G)$-spaces, and so in particular $h$-cofibrations of underlying compactly generated weak Hausdorff spaces, and therefore closed embeddings.

For each $\Gamma \in \fat(K, G)$ taking $\Gamma$-fixed points commutes with filtered colimits along closed embeddings (see~\cite[Proposition~B.1 ii)]{global}). Colimits with the shape of a filtered poset and built out of closed embeddings of compactly generated weak Hausdorff spaces can be computed in the category of all topological spaces (see~\cite[Proposition~A.14 (ii)]{global}). Weak Hausdorff spaces are $T_1$, so by~\cite[Proposition~2.4.2]{hovey2007model} maps from compact spaces ($\partial D^l$ and $D^l$ in this case) into the colimit of a $\lambda$-sequence of closed embeddings (for $\lambda$ a limit ordinal) factor through some stage $\beta \in \lambda$. Therefore compact spaces are finite in $\Topcat$ relative closed embeddings.

This implies that, for the $\lambda$-sequences given by $(\tel_i g_\beta(V_i))^\Gamma$ and $(\tel_i h_\beta(V_i))^\Gamma$, which consist of closed embeddings, and the natural transformation between them given by the maps $(\tel_i f_\beta(V_i))^\Gamma$ which are weak homotopy equivalences, the map induced on the colimits
\[\colim_{\beta \in \lambda} (\tel_i f_\beta(V_i))^\Gamma \cong (\colim_{\beta \in \lambda} (\tel_i f_\beta(V_i)))^\Gamma\]
is a weak homotopy equivalence. Therefore $\tel_i (\colim_{\beta \in \lambda} f_\beta)(V_i)$ is an $\fat(K, G)$-equivalence.
\end{proof}

\begin{coro}
\label{corotransfinitegglobal}
A transfinite composition of morphisms in $\GSpc$ that are $G$-$h$-cofibrations and $G$-global equivalences is a $G$-global equivalence.
\end{coro}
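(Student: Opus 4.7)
The plan is to deduce the corollary directly from Lemma~\ref{lemmtransfinitent} by comparing the given $\lambda$-sequence against a constant one. So let $X \colon \lambda \to \GSpc$ be a $\lambda$-sequence (with $\lambda$ a limit ordinal) whose transitions $g_\beta \colon X_\beta \to X_{\beta+1}$ are simultaneously $G$-$h$-cofibrations and $G$-global equivalences; we want the transfinite composition $X_0 \to \colim_\beta X_\beta$ to be a $G$-global equivalence. I would form the constant $\lambda$-sequence $Y$ with $Y_\beta = X_0$ and identity transitions (which are trivially $G$-$h$-cofibrations, having the identity as a retraction in Definition~\ref{defiGhcof}), together with the natural transformation $f \colon Y \Rightarrow X$ whose component $f_\beta$ is the transfinite composition of the $g_\alpha$ for $\alpha < \beta$. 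If I can show each $f_\beta$ is a $G$-global equivalence, then Lemma~\ref{lemmtransfinitent} applied to $f$ produces a $G$-global equivalence
\[
\colim_\beta f_\beta \colon X_0 = \colim_\beta Y_\beta \longrightarrow \colim_\beta X_\beta,
\]
which is exactly the transfinite composition in question.

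The remaining work is a transfinite induction showing each $f_\beta$ is a $G$-global equivalence. The base case $f_0 = \id_{X_0}$ is immediate. For a successor $\beta+1$, we have $f_{\beta+1} = g_\beta \circ f_\beta$, and the class of $G$-global equivalences is closed under composition (a direct consequence of the 2-out-of-6 property in Lemma~\ref{lemmGglobal} i), or alternatively of Proposition~\ref{propglobaltel} together with composition closure of $\fat(K,G)$-equivalences). For a limit ordinal $\gamma < \lambda$, I would apply Lemma~\ref{lemmtransfinitent} to the restrictions $X|_\gamma$, $Y|_\gamma$, and $f|_\gamma$: the induction hypothesis supplies that each prior $f_\alpha$ ($\alpha < \gamma$) is a $G$-global equivalence, and the $G$-$h$-cofibration hypotheses on both sequences are in place (identities on the $Y$-side, by assumption on the $X$-side), so the lemma yields that $f_\gamma \colon X_0 \to X_\gamma = \colim_{\alpha < \gamma} X_\alpha$ is a $G$-global equivalence.

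I do not anticipate any genuine obstacle here; the argument is essentially bookkeeping, packaging the corollary as an instance of Lemma~\ref{lemmtransfinitent} via a constant comparison sequence. The only point requiring a second's thought is ensuring that identity maps qualify as $G$-$h$-cofibrations so that Lemma~\ref{lemmtransfinitent} is indeed applicable to the pair $(Y, X)$, and this is immediate from the definition.
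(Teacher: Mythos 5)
Your proposal is correct and follows essentially the same route as the paper: both compare the given $\lambda$-sequence against the constant sequence at $X_0$ and invoke Lemma~\ref{lemmtransfinitent}. The only cosmetic difference is that the paper phrases the argument as a transfinite induction on $\lambda$ itself (invoking the statement of the corollary at smaller ordinals to handle the intermediate $f_\beta$), whereas you fix a limit $\lambda$ and run an inner transfinite induction on $\beta < \lambda$ -- these are equivalent up to bookkeeping.
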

\begin{proof}
We check this via transfinite induction on the ordinal $\lambda$. Let $Y \colon \lambda \to \GSpc$ be a $\lambda$-sequence such that for each $\beta \in \lambda$ the morphism $h_\beta \colon Y_\beta \to Y_{\beta+1}$ is a $G$-$h$-cofibration and a $G$-global equivalence. The base case and the case where $\lambda$ is a successor ordinal hold because composition of two $G$-global equivalences is a $G$-global equivalence.

If $\lambda$ is a limit ordinal, set $X \colon\lambda \to \GSpc$ as the constant functor $X_\beta=Y_0$. Define a natural transformation $f \colon X \Rightarrow Y$ by letting $f_\beta$ be the morphism $Y_0 \to Y_\beta$. This is the transfinite composition of $Y$ restricted to $\beta + 1$. Then by the induction hypothesis $f_\beta$ is a $G$-global equivalence for each $\beta \in \lambda$. Then we use Lemma~\ref{lemmtransfinitent} to obtain that $\colim_{\beta \in \lambda} f_\beta$ is a $G$-global equivalence, but this morphism is precisely the transfinite composition of $Y$.
\end{proof}

\subsection{\texorpdfstring{$G$}{G}-global model structure}

We now go back to constructing the $G$-global model structure, starting with the fibrations.

\begin{defi}[$G$-global fibration]
\label{defiGglobalfib}
A morphism of $G$-orthogonal spaces $f \colon X \to Y$ is a \emph{$G$-global fibration} if it is a $G$-level fibration, and for each compact Lie group $K$, every graph subgroup $\Gamma \in \fat(K, G)$, and every linear isometric embedding of $K$-representations $\psi \colon V \to W$ with $V$ faithful, the induced map $X(V)^\Gamma \to Y(V)^\Gamma \times_{Y(W)^\Gamma} X(W)^\Gamma$ is a weak homotopy equivalence. Since $f$ is a $G$-level fibration, and so $f(V)^\Gamma$ and $f(W)^\Gamma$ are Serre fibrations, this is equivalent to the following square being homotopy cartesian
\[\begin{tikzcd}
	{X(V)^\Gamma} & {X(W)^\Gamma} \\
	{Y(V)^\Gamma} & {Y(W)^\Gamma}.
	\arrow["{f(V)^\Gamma}"', from=1-1, to=2-1]
	\arrow["{X(\psi)^\Gamma}", from=1-1, to=1-2]
	\arrow["{f(W)^\Gamma}"', from=1-2, to=2-2]
	\arrow["{Y(\psi)^\Gamma}", from=2-1, to=2-2]
\end{tikzcd}\]
\end{defi}

\begin{constr}
\label{constrKg}
Fix a compact Lie group $G$. We now construct the set $\Ks_G$, where $\Js_G \cup \Ks_G$ is a set of generating acyclic cofibrations for the $G$-global model structure of Theorem~\ref{thmGglobalmodel}. Recall that $\Js_G$ is the set of generating acyclic cofibrations of the $G$-level model structure given in Theorem~\ref{thmlevelmodel}. Let $K$ be a compact Lie group, let $V$ be a faithful $K$-representation, let $W$ be a $K$-representation and let $\Gamma \in \fat(K, G)$ be a graph subgroup. We consider the following restriction morphism of $G$-orthogonal spaces
\[\rho_{\Gamma, V, W; G} \colon L_{\Gamma, V \oplus W; G} = (\Lcat(V \oplus W, -) \times G) / \Gamma \to (\Lcat(V, -) \times G) / \Gamma = L_{\Gamma, V; G}.\]
The morphism $\rho_{\Gamma, V, W; G}$ is a $G$-global equivalence because the semifree $G$-orthogonal spaces are closed and given a compact Lie group $L$, the map
\[\rho_{V, W} \colon \Lcat(V \oplus W, \Ul) \to \Lcat(V, \Ul)\]
is a $(K \times L)$-homotopy equivalence by \cite[1.1.26 ii)]{global} (recall that $\Ul$ here is a complete $L$-universe).

Now let $\kappa$ be a set of representatives of isomorphism classes of triples $(K, \Gamma, V, W)$ consisting of a compact Lie group $K$, a faithful $K$-representation $V$, a $K$-representation $W$, and a graph subgroup $\Gamma \in \fat(K, G)$. Let $\Ks_G$ be the set
\[\Ks_G= \bigcup_{(K, \Gamma, V, W) \in \kappa} \{\, \iota_{\rho_{\Gamma, V, W; G}} \square i_l \mid l \geqslant 0\, \}.\]

Recall that $\iota_{\rho_{\Gamma, V, W; G}}$ denotes the inclusion of the mapping cylinder $L_{\Gamma, V \oplus W; G} \to M_{\rho_{\Gamma, V, W; G}}$. Note that here we allow $V$ to be $0$. For the generating acyclic cofibrations of the positive global model structure on $\Spc$, we do require that $V \neq 0$. If we did that here, in Definition~\ref{defiGglobalfib}, and in Theorem~\ref{thmlevelmodel}, we would obtain the \emph{positive $G$-global model structure}.
\end{constr}

\begin{lemm}
\label{lemmJandK}
Any morphism in $\Js_G \cup \Ks_G$ is a $G$-global equivalence and a $G$-flat cofibration. Any morphism obtained from $\Js_G \cup \Ks_G$ by transfinite composition and cobase changes is also a $G$-global equivalence and a $G$-flat cofibration.
\end{lemm}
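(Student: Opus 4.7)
The plan is to split the verification in two: first show that every single morphism in $\Js_G \cup \Ks_G$ is both a $G$-flat cofibration and a $G$-global equivalence, and then observe that both of these classes are closed under the operations of transfinite composition and cobase change (with the second requiring that $G$-flat cofibrations are in particular $G$-$h$-cofibrations, which was recorded in Lemma~\ref{lemmGhcofclosed}).

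For the morphisms in $\Js_G$ there is nothing to do: by construction these are the generating acyclic cofibrations of the $G$-level model structure of Theorem~\ref{thmlevelmodel}, so they are $G$-flat cofibrations and $G$-level equivalences, and Lemma~\ref{lemmlevelglobal} promotes $G$-level equivalences to $G$-global equivalences.

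The substantive case is $\Ks_G$. Fix $k = \iota_{\rho_{\Gamma, V, W; G}} \square i_l$, and write $\rho = \rho_{\Gamma, V, W; G}$ and $\iota_\rho$ for its mapping cylinder inclusion $L_{\Gamma, V \oplus W; G} \to M_\rho$. I would first argue that $\iota_\rho$ itself is a $G$-flat cofibration. Both semifree objects $L_{\Gamma, V \oplus W; G}$ and $L_{\Gamma, V; G}$ are cofibrant (they are targets of generating $G$-flat cofibrations from $\emptyset$, after identifying $V \oplus W \cong \RR^m$), and the standard decomposition
\[
L_{\Gamma, V \oplus W; G} \longrightarrow L_{\Gamma, V \oplus W; G} \sqcup L_{\Gamma, V; G} \longrightarrow M_\rho
\]
exhibits $\iota_\rho$ as a composition of cobase changes of $G$-flat cofibrations; in particular, the second arrow is a cobase change of the pushout product $(\emptyset \to L_{\Gamma, V \oplus W; G}) \square (\partial[0,1] \to [0,1])$, which is a $G$-flat cofibration because the $G$-level model structure is topological. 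Next, $\iota_\rho$ is a $G$-global equivalence: the deformation retraction $\pi_\rho \colon M_\rho \to L_{\Gamma, V; G}$ is a $G$-homotopy equivalence, in particular a $G$-level equivalence, hence a $G$-global equivalence; combined with $\rho = \pi_\rho \circ \iota_\rho$ (a $G$-global equivalence by the argument in Construction~\ref{constrKg}) and 2-out-of-3 in Lemma~\ref{lemmGglobal}, this gives the claim. Finally, taking the pushout product with $i_l$: the resulting map is a $G$-flat cofibration by the topological structure of the $G$-level model category, and a $G$-global equivalence by Corollary~\ref{coropushoutproducteq}, since $\iota_\rho$ is both a $G$-global equivalence and a $G$-$h$-cofibration.

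For the second sentence of the lemma, $G$-flat cofibrations are closed under transfinite composition and cobase change as cofibrations of a model structure. For $G$-global equivalences, combine Lemma~\ref{lemmGhcofclosed} (a $G$-flat cofibration is a $G$-$h$-cofibration), Corollary~\ref{corocobase} (cobase change of a $G$-global equivalence along a $G$-$h$-cofibration is a $G$-global equivalence), and Corollary~\ref{corotransfinitegglobal} (a transfinite composition of morphisms that are simultaneously $G$-$h$-cofibrations and $G$-global equivalences is a $G$-global equivalence). The single step that requires more than a direct citation is the verification that $\iota_\rho$ is a $G$-flat cofibration; once that is in hand, both halves of the lemma reduce to bookkeeping with the tools already assembled.
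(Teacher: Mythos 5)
Your proof is correct and follows essentially the same route as the paper: the same treatment of $\Js_G$ via Lemma~\ref{lemmlevelglobal}, the same factorization $L_{\Gamma, V \oplus W; G} \to L_{\Gamma, V \oplus W; G} \sqcup L_{\Gamma, V; G} \to M_{\rho}$ to get the $G$-flat cofibration, the same $\pi_\rho$ and two-out-of-three argument for the global equivalence, Corollary~\ref{coropushoutproducteq} for the pushout product with $i_l$, and closure under cobase change and transfinite composition via Lemma~\ref{lemmGhcofclosed}, Corollary~\ref{corocobase} and Corollary~\ref{corotransfinitegglobal}. The only cosmetic difference is that you spell out the cofibrancy of the semifree objects and the pushout-product presentation of the second arrow, where the paper simply cites the topological structure of the $G$-level model category; and you cite ``2-out-of-3 in Lemma~\ref{lemmGglobal}'' while that lemma states the 2-out-of-6 form, from which 2-out-of-3 follows immediately.
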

\begin{proof}
Any morphism $f \in \Js_G$ is an acyclic cofibration in the $G$-level model structure, so it is a $G$-flat cofibration and by Lemma~\ref{lemmlevelglobal} a $G$-global equivalence.

Fix a compact Lie group $K$, a faithful $K$-representation $V$, a $K$-representation $W$, a graph subgroup $\Gamma \in \fat(K, G)$, and $l \geqslant 0$. Consider $f = \iota_{\rho_{\Gamma, V, W; G}} \square i_l$ in $\Ks_G$. We saw in Construction~\ref{constrKg} that $\rho_{\Gamma, V, W; G}$ is a $G$-global equivalence. The projection $M_{\rho_{\Gamma, V, W; G}} \to L_{\Gamma, V; G}$ from the mapping cylinder of $\rho_{\Gamma, V, W; G}$ to its target is a homotopy equivalence in $\GSpc$. Therefore it is a $G$-level equivalence, and thus a $G$-global equivalence. By the 2-out-of-6 property $\iota_{\rho_{\Gamma, V, W; G}}$ is also a $G$-global equivalence.

The $G$-orthogonal spaces $L_{\Gamma, V \oplus W; G}$ and $L_{\Gamma, V; G}$ are $G$-flat orthogonal spaces because they are isomorphic to $L_{\Gamma', \RR^{n+m}; G}$ and $L_{\Gamma'', \RR^n; G}$ respectively, for some $n, m \geqslant 0$, $\Gamma' \in \fat(O(n+m), G)$ and $\Gamma'' \in \fat(O(n), G)$. Then we obtain that 
\[L_{\Gamma, V \oplus W; G} \to L_{\Gamma, V \oplus W; G} \amalg L_{\Gamma, V; G}\]
is a $G$-flat cofibration. Also since the $G$-level model structure of Theorem~\ref{thmlevelmodel} is topological, $L_{\Gamma, V \oplus W; G} \times i_1$ is a $G$-flat cofibration. Putting this together we obtain that $\iota_{\rho_{\Gamma, V, W; G}}$ is a $G$-flat cofibration, and again because the $G$-level model structure is topological so is $f$. By Corollary~\ref{coropushoutproducteq}, $f = \iota_{\rho_{\Gamma, V, W; G}} \square i_l$ is a $G$-global equivalence.

Using the closure properties of Corollary~\ref{corocobase} and Corollary~\ref{corotransfinitegglobal} we obtain the second part of the lemma.
\end{proof}

\begin{lemm}
\label{lemmsmallsources}
The sources of all morphisms in $\Is_G$, $\Js_G$ and $\Ks_G$ are finite (and thus small) with respect to the class of maps that are levelwise closed embeddings. Since $G$-$h$-cofibrations are levelwise closed embeddings, they are also finite with respect to the class of $G$-$h$-cofibrations.
\end{lemm}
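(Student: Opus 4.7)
The plan is to reduce to the representability of the semifree $G$-orthogonal spaces recorded in Remark~\ref{remGsemifree}, combined with the fact that $\GSpc$ is tensored over $\Topcat$. For any closed subgroup $H \leqslant O(V) \times G$ and any space $T$ this yields a natural isomorphism
\[
\GSpc(L_{H, V; G} \times T, X) \cong \Topcat(T, X(V)^H).
\]
Every source of a morphism in $\Is_G$ or $\Js_G$ already has the shape $L_{\Gamma, \RR^m; G} \times T$ with $T \in \{\partial D^l, D^l\}$ compact. The source of a morphism $\iota_{\rho_{\Gamma, V, W; G}} \square i_l$ in $\Ks_G$ is the pushout
\[
L_{\Gamma, V \oplus W; G} \times D^l \;\cup_{L_{\Gamma, V \oplus W; G} \times \partial D^l}\; M_{\rho_{\Gamma, V, W; G}} \times \partial D^l,
\]
and the mapping cylinder $M_{\rho_{\Gamma, V, W; G}}$ is itself a finite pushout built from $L_{\Gamma, V \oplus W; G} \times [0,1]$ and $L_{\Gamma, V; G}$. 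Since finite limits of sets commute with filtered colimits, an object built as a finite colimit of objects which are finite with respect to some class of morphisms is again finite with respect to that class, so it suffices to handle the basic shape $L_{\Gamma', V'; G} \times T'$ with $T'$ a compact space.

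For this basic building block, fix a limit ordinal $\lambda$ and a $\lambda$-sequence $Y_\bullet$ in $\GSpc$ whose transition maps are levelwise closed embeddings. Colimits in $\GSpc$ are computed levelwise, so $Y_\bullet(V')$ is a $\lambda$-sequence of $(O(V') \times G)$-spaces along closed embeddings. By~\cite[Proposition~B.1 (ii)]{global} taking $\Gamma'$-fixed points commutes with this colimit, and by~\cite[Proposition~A.14 (ii)]{global} such a colimit of compactly generated weak Hausdorff spaces along closed embeddings agrees with the colimit taken in all topological spaces. Since the resulting space is in particular $T_1$, the compact space $T'$ is finite with respect to such colimits by~\cite[Proposition~2.4.2]{hovey2007model} (the same fact used in the proof of Lemma~\ref{lemmtransfinitent}), which means $\Topcat(T', -)$ commutes with the colimit. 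Combining this with the semifree isomorphism displayed above shows that $L_{\Gamma', V'; G} \times T'$ is finite with respect to levelwise closed embeddings. The final assertion of the lemma is then immediate, since a $G$-$h$-cofibration evaluated at any inner product space $V$ is an $h$-cofibration of $(O(V) \times G)$-spaces and hence a closed embedding.

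The main technical nuisance is organising the $\Ks_G$ case: one first has to unwind the mapping cylinder and the pushout product so that the source is displayed as an explicit finite pushout of semifree-times-compact pieces. Once this bookkeeping is done, preservation of finiteness under finite colimits (pullbacks commute with filtered colimits of sets) together with the standard fact that compact spaces are finite with respect to closed embeddings in $\Topcat$ closes the argument uniformly for all three sets $\Is_G$, $\Js_G$, $\Ks_G$.
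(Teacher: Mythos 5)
Your proof is correct and follows essentially the same approach as the paper: reduce to the semifree building blocks via the representability isomorphism and the fact that colimits in $\GSpc$ are computed levelwise, invoke that compact spaces are finite with respect to closed embeddings using the same citations, and handle the $\Ks_G$ sources by writing them as finite colimits of compact-times-semifree pieces and appealing to the commutation of finite limits of sets with filtered colimits.
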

\begin{proof}

We first check that for any compact Lie group $K$, faithful $K$-representation $V$, graph subgroup $\Gamma \in \fat(K, G)$, and compact space $A$, the $G$-orthogonal space $L_{\Gamma, V; G} \times A$ is finite with respect to morphisms which are levelwise closed embeddings.

We recalled in the proof of Lemma~\ref{lemmtransfinitent} that compact spaces are finite in $\Topcat$ relative closed embeddings. Taking $\Gamma$-fixed points commutes with filtered colimits along closed embeddings. Consider a limit ordinal $\lambda$, and a $\lambda$-sequence $X \colon \lambda \to \GSpc$ of levelwise closed embeddings. By the semifreeness property of $L_{\Gamma, V; G} \times A$, and since colimits in $\GSpc$ are computed levelwise, we have that 
\begin{align*}
    \GSpc(L_{\Gamma, V; G} \times A, \colim_{\beta \in \lambda} X_\beta) \cong \Topcat(A, (\colim_{\beta \in \lambda} X_\beta)(V)^\Gamma) \cong \Topcat(A, \colim_{\beta \in \lambda} (X_\beta(V)^\Gamma)) \cong \\ \colim_{\beta \in \lambda} \Topcat(A, (X_\beta(V)^\Gamma)) \cong \colim_{\beta \in \lambda} \GSpc(L_{\Gamma, V; G} \times A, X_\beta).
\end{align*}
So for a generating cofibration $i \in \Is_G$, its source is of the form $L_{\Gamma, \RR^m; G} \times \partial D^l$, so it is finite relative levelwise closed embeddings. Similarly the source of a generating acyclic cofibration $j \in \Js_G$ is $L_{\Gamma, \RR^m; G} \times D^l$, so it is also finite relative levelwise closed embeddings.

For a generating acyclic cofibration $k = \iota_{\rho_{\Gamma, V, W; G}} \square i_l$ in $\Ks_G$, its source is
\[L_{\Gamma, V \oplus W; G} \times D^l \cup_{L_{\Gamma, V \oplus W; G} \times \partial D^l} M_{\rho_{\Gamma, V, W; G}} \times \partial D^l.\]
The $G$-orthogonal space $M_{\rho_{\Gamma, V, W; G}} \times \partial D^l$ is a finite colimit of objects of the form $L_{\Gamma, V; G} \times A$. Therefore it is also finite relative levelwise closed embeddings, because in $\Setcat$ finite limits commute with filtered colimits. By the same argument, the source of $k$ is also finite relative levelwise closed embeddings.

$G$-$h$-cofibrations are levelwise $h$-cofibrations of spaces, which are closed embeddings in the category of compactly generated weak Hausdorff spaces. Therefore $G$-$h$-cofibrations are levelwise closed embeddings.
\end{proof}

\begin{lemm}
\label{lemmGfiblifting}
A morphism in $\GSpc$ is a $G$-global fibration if and only if it has the right lifting property with respect to $\Js_G \cup \Ks_G$.
\end{lemm}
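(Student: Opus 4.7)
The plan is to prove the two directions of the biconditional simultaneously by splitting $\Js_G \cup \Ks_G$ into its two constituents and characterizing the right lifting property against each piece. First I would argue that a morphism $f \colon X \to Y$ has the right lifting property against $\Js_G$ if and only if $f$ is a $G$-level fibration in the sense of Theorem~\ref{thmlevelmodel}. Each map in $\Js_G$ has the form $L_{\Gamma, \RR^m; G} \times j_l$, and by the semifreeness property of Remark~\ref{remGsemifree} combined with the $\Topcat$-tensoring of $\GSpc$, the right lifting property of $f$ against $L_{\Gamma, \RR^m; G} \times j_l$ is equivalent to $f(\RR^m)^\Gamma$ lifting against $j_l \colon D^l \to D^l \times [0,1]$. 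Since $\{j_l\}_{l \geqslant 0}$ generates the acyclic cofibrations of the Serre model structure on $\Topcat$, this recovers exactly the $G$-level fibration condition.

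Second, assuming $f$ is already a $G$-level fibration, I would analyze lifting against $\Ks_G$. A morphism $k = \iota_{\rho_{\Gamma, V, W; G}} \square i_l \in \Ks_G$ is a pushout product, so by the standard pushout-product adjunction, the right lifting property of $f$ against $k$ for all $l \geqslant 0$ is equivalent to the pullback-hom map
\[
\GSpc(M_{\rho_{\Gamma, V, W; G}}, X) \longrightarrow \GSpc(L_{\Gamma, V \oplus W; G}, X) \times_{\GSpc(L_{\Gamma, V \oplus W; G}, Y)} \GSpc(M_{\rho_{\Gamma, V, W; G}}, Y)
\]
being a Serre fibration and a weak equivalence of spaces. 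Applying the semifreeness isomorphism $\GSpc(L_{\Gamma, U; G}, Z) \cong Z(U)^\Gamma$ and unpacking $M_{\rho_{\Gamma, V, W; G}}$ as a pushout, I would identify $\GSpc(M_{\rho_{\Gamma, V, W; G}}, X)$ with the mapping path space of $X(\psi)^\Gamma \colon X(V)^\Gamma \to X(V \oplus W)^\Gamma$, where $\psi \colon V \to V \oplus W$ is the canonical inclusion. Under this identification the pullback-hom map becomes the canonical comparison from $X(V)^\Gamma$ to the homotopy pullback of $f(V \oplus W)^\Gamma$ along $Y(\psi)^\Gamma$. Since $f$ is a $G$-level fibration and any graph subgroup $\Gamma \in \fat(K, G)$ with $V$ faithful maps to a graph subgroup of $\fat(O(V \oplus W), G)$ after fixing an isomorphism $V \oplus W \cong \RR^{\dim(V \oplus W)}$, this comparison map is automatically a Serre fibration, and it is additionally a weak equivalence precisely when the square appearing in Definition~\ref{defiGglobalfib} is homotopy cartesian for the inclusion $\psi \colon V \to V \oplus W$.

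Finally I would check that ranging over inclusions $V \hookrightarrow V \oplus W$, as indexed by $\Ks_G$, is equivalent to ranging over arbitrary $K$-equivariant linear isometric embeddings $\psi \colon V \to W'$ as in Definition~\ref{defiGglobalfib}. Indeed, any such $\psi$ yields a $K$-equivariant orthogonal decomposition $W' = \psi(V) \oplus \psi(V)^{\perp}$, and the induced isomorphism $V \oplus \psi(V)^{\perp} \cong W'$ identifies the inclusion of the first summand with $\psi$, so the two families of homotopy cartesian squares coincide. The main technical step is the mapping-path-space identification in the middle paragraph and the resulting recognition that an acyclic Serre fibration on the comparison map is the same thing as homotopy cartesianness of the square; the remainder is standard adjunction bookkeeping, using crucially that $G$-level fibrancy already supplies the Serre fibration half of the acyclic fibration condition.
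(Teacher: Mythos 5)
Your proof is correct and follows essentially the same route as the paper: split $\Js_G \cup \Ks_G$, identify the RLP against $\Js_G$ with the $G$-level fibration condition, represent the comparison map via the semifree objects, and reduce to the isometric-embedding normal form $i_V\colon V\to V\oplus W$. The only difference is that where the paper cites \cite[Proposition~1.2.16]{global} to pass from homotopy cartesianness of the square to the RLP against $\iota_{\rho_{\Gamma,V,W;G}}\square i_l$, you spell out the underlying pushout-product/mapping-path-space argument explicitly, which is exactly what that cited result encodes.
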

\begin{proof}
Every linear isometric embedding of $K$-representations is isomorphic to an embedding of the form $i_V \colon V \to V \oplus W$. Thus Definition~\ref{defiGglobalfib} can be altered slightly to say that a morphism $f$ is a $G$-global fibration if and only if it is a $G$-level fibration and for each compact Lie group $K$, graph subgroup $\Gamma \in \fat(K, G)$, and $K$-representations $V$ and $W$, the square
\[\begin{tikzcd}
	{X(V)^\Gamma} & {X(V \oplus W)^\Gamma} \\
	{Y(V)^\Gamma} & {Y(V \oplus W)^\Gamma}
	\arrow["{f(V)^\Gamma}"', from=1-1, to=2-1]
	\arrow["{X(i_V)^\Gamma}", from=1-1, to=1-2]
	\arrow["{f(V \oplus W)^\Gamma}"', from=1-2, to=2-2]
	\arrow["{Y(i_V)^\Gamma}", from=2-1, to=2-2]
\end{tikzcd}\]
is homotopy cartesian. By Remark~\ref{remGsemifree}, the morphism $\rho_{\Gamma, V, W; G}$ represents the natural transformation
\[(-)(i_V)^\Gamma \colon (-)(V)^\Gamma \Rightarrow (-)(V \oplus W)^\Gamma.\] 
By applying \cite[Proposition~1.2.16]{global} to the $G$-level model structure we obtain that the previous square is homotopy cartesian if and only if  $f$ has the right lifting property with respect to $\iota_{\rho_{\Gamma, V, W; G}} \square i_l$ for all $l \geqslant 0$. The set $\Js_G$ is a set of generating acyclic cofibrations of the $G$-level model structure, so a morphism is a $G$-level fibration if and only if it has the right lifting property with respect to $\Js_G$. Therefore a morphism in $\GSpc$ is a $G$-global fibration if and only if it has the right lifting property with respect to $\Js_G \cup \Ks_G$.
\end{proof}

\begin{lemm}
\label{lemmpullbackfibGglobal}
A pullback of a $G$-global equivalence along a $G$-level fibration is also a $G$-global equivalence.
\end{lemm}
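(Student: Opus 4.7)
The plan is to verify the definition of $G$-global equivalence directly (Definition~\ref{defiGglobal}) for the pullback map. Write the pullback square as
\[
\begin{tikzcd}
A \arrow[r, "g"] \arrow[d, "q"'] & X \arrow[d, "p"] \\
B \arrow[r, "f"'] & Y
\end{tikzcd}
\]
with $f$ a $G$-global equivalence and $p$ a $G$-level fibration. Fix a compact Lie group $K$, a $K$-representation $V$, a graph subgroup $\Gamma \in \fat(K, G)$, and a lifting problem $(\alpha, \beta)$ for $g(V)^\Gamma$. Postcomposing with $q$ and $p$ yields a lifting problem for $f(V)^\Gamma$, and since $f$ is a $G$-global equivalence, one obtains a $K$-equivariant linear isometric embedding $\psi \colon V \to W$, a map $\mu \colon D^l \to B(W)^\Gamma$ with $\mu \circ i_l = B(\psi)^\Gamma \circ q(V)^\Gamma \circ \alpha$, and a homotopy $H \colon D^l \times [0,1] \to Y(W)^\Gamma$ relative $\partial D^l$ from $Y(\psi)^\Gamma \circ p(V)^\Gamma \circ \beta$ to $f(W)^\Gamma \circ \mu$.

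By enlarging $W$ to $W \oplus R$ for $R$ a faithful $K$-representation (e.g.\ the regular representation), I may assume that $W$ itself is a faithful $K$-representation. Under the resulting inclusion $K \hookrightarrow O(W)$, the image of $\Gamma$ in $O(W) \times G$ remains a graph subgroup, so $p(W)^\Gamma$ is a Serre fibration by the $G$-level fibration hypothesis. I will then use the relative homotopy lifting property of $p(W)^\Gamma$: define a partial lift of $H$ by $X(\psi)^\Gamma \circ \beta$ on $D^l \times \{0\}$ and by the constant map at $X(\psi)^\Gamma \circ \beta \circ i_l$ on $\partial D^l \times \{0\} \cup \partial D^l \times [0,1]$. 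Since the inclusion $D^l \times \{0\} \cup \partial D^l \times [0,1] \hookrightarrow D^l \times [0,1]$ is a cofibration and a deformation retract, the Serre fibration $p(W)^\Gamma$ admits a full lift $\tilde H \colon D^l \times [0,1] \to X(W)^\Gamma$ of $H$ extending this data, and in particular $\tilde H$ is constant on $\partial D^l \times [0,1]$.

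To finish, since taking $\Gamma$-fixed points commutes with pullbacks, $A(W)^\Gamma \cong B(W)^\Gamma \times_{Y(W)^\Gamma} X(W)^\Gamma$. The pair $\lambda \defeq (\mu, \tilde H(-,1))$ defines a map $D^l \to A(W)^\Gamma$. The equality $\lambda \circ i_l = A(\psi)^\Gamma \circ \alpha$ follows from the boundary conditions on $\mu$ and from $\tilde H(-,1) \circ i_l = X(\psi)^\Gamma \circ g(V)^\Gamma \circ \alpha$ (constancy on $\partial D^l$), while $\tilde H$ itself provides the required homotopy from $g(W)^\Gamma \circ \lambda = \tilde H(-,1)$ to $X(\psi)^\Gamma \circ \beta = \tilde H(-,0)$, relative to $\partial D^l$.

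The only subtle point is that the $G$-level fibration hypothesis only supplies Serre fibrations on $\fat(O(m), G)$-fixed points, whereas the $G$-global equivalence condition involves $\fat(K, G)$-fixed points for arbitrary $K$; for non-faithful $K$-representations, a graph subgroup of $K \times G$ need not map to a graph subgroup of $O(W) \times G$. This is precisely what the enlargement $W \rightsquigarrow W \oplus R$ to a faithful representation resolves, turning the obstruction into a routine application of the relative homotopy lifting property.
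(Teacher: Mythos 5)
Your proof is correct and follows the same strategy as the paper's: pose a lifting problem for the pullback map, transfer it through the pullback to a lifting problem for the $G$-global equivalence $f$, invoke that definition to obtain $\psi$, $\mu$ and the relative homotopy $H$, lift $H$ through the fibration on $\Gamma$-fixed points, and assemble a solution at level $W$ via the pullback universal property.

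The one thing you do that the paper does not make explicit is justify why $p(W)^\Gamma$ is a Serre fibration. The paper simply asserts this; but the $G$-level fibration hypothesis only produces Serre fibrations on $\fat(O(m),G)$-fixed points, and for an arbitrary $K$-representation $W$ the image in $O(W)\times G$ of a graph subgroup $\Gamma_\phi \in \fat(K,G)$ need not be a graph subgroup (it fails precisely when $\phi$ is nontrivial on $\ker(K\to O(W))\cap H$). Your enlargement of $W$ to a faithful $K$-representation is exactly the step that repairs this, and it costs nothing since solutions to the lifting problem in Definition~\ref{defiGglobal} persist under further enlargement of $W$. One tiny correction: the regular representation of a non-finite compact Lie group is infinite-dimensional, so appeal instead to the existence of \emph{some} faithful finite-dimensional $K$-representation (Peter--Weyl). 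With that caveat, your write-up is the careful version of the paper's argument.
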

\begin{proof}
Consider the pullback square
\begin{equation}
\begin{tikzcd}
\label{lemmpullbackfibGglobal1}
	P & X \\
	Z & Y
	\arrow["f", from=1-2, to=2-2]
	\arrow["h"', from=2-1, to=2-2]
	\arrow["g"', from=1-1, to=2-1]
	\arrow["k", from=1-1, to=1-2]
	\arrow["\lrcorner"{anchor=center, pos=0.125}, draw=none, from=1-1, to=2-2]
\end{tikzcd}
\end{equation}
where $f$ is a $G$-global equivalence, and $h$ is a $G$-level fibration. Consider a compact Lie group $K$, a $K$-representation $V$, a graph subgroup $\Gamma \in \fat(K, G)$, and a lifting problem given by $\alpha \colon \partial D^l \to P(V)^\Gamma$ and $\beta \colon D^l \to Z(V)^\Gamma$ with $g(V)^\Gamma \circ \alpha = \beta \circ i_l$. Since $f$ is a $G$-global equivalence, there is a $K$-representation $W$, a $K$-equivariant linear isometric embedding $\psi \colon V \to W$, and a morphism $\lambda \colon D^l \to X(W)^\Gamma$ such that
\[\lambda \circ i_l = X(\psi)^\Gamma \circ k(V)^\Gamma \circ \alpha\]
and there is a $\partial D^l$-relative homotopy $H$ from $Y(\psi)^\Gamma \circ h(V)^\Gamma \circ \beta$ to $f(W)^\Gamma \circ \lambda$ relative $\partial D^l$. Since $h(W)^\Gamma$ is a Serre fibration, there is a lift $H'$ in the following diagram
\[\begin{tikzcd}
	{D^l \times \{0\} \cup \partial D^l \times [0, 1]} &&&&&& {Z(W)^\Gamma} \\
	{D^l \times [0, 1]} &&&&&& {Y(W)^\Gamma}.
	\arrow["H", from=2-1, to=2-7]
	\arrow["{H'}", dashed, from=2-1, to=1-7]
	\arrow["{h(W)^\Gamma}", from=1-7, to=2-7]
	\arrow[from=1-1, to=2-1, hook]
	\arrow["{(Z(\psi)^\Gamma \circ \beta) \cup ((g(W)^\Gamma \circ P(\psi)^\Gamma \circ \alpha) \times [0, 1])}", from=1-1, to=1-7]
\end{tikzcd}\]
Since the square (\ref{lemmpullbackfibGglobal1}) is a pullback there is a unique $\lambda' \colon D^l \to P(W)^\Gamma$ such that $g(W)^\Gamma \circ \lambda' = H'(-, 1)$ and $k(W)^\Gamma \circ \lambda' = \lambda$. Also by the universal property of the pullback (\ref{lemmpullbackfibGglobal1}) we obtain that $\lambda' \circ i_l = P(\psi)^\Gamma \circ \alpha$. Since $H'$ is a homotopy relative $\partial D^l$ between $g(W)^\Gamma \circ \lambda'$ and $Z(\psi)^\Gamma \circ \beta$, we obtain that $g$ is a $G$-global equivalence.
\end{proof}

\begin{lemm}
\label{lemmGglobalandisGlevel}
If $f \colon X \to Y$ is a $G$-global equivalence and a $G$-global fibration then it is also a $G$-level equivalence.
\end{lemm}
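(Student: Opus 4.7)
The goal is to show $f(\RR^m)^\Gamma$ is a weak homotopy equivalence for every $m \geqslant 0$ and every $\Gamma \in \fat(O(m), G)$. Since $f$ is a $G$-global fibration, each $f(\RR^m)^\Gamma$ is already a Serre fibration, so by \cite[9.6 Lemma]{may1999concise} it suffices to show that for every $l \geqslant 0$ and every lifting problem $(\alpha \colon \partial D^l \to X(\RR^m)^\Gamma,\ \beta \colon D^l \to Y(\RR^m)^\Gamma)$ there is a map $\mu \colon D^l \to X(\RR^m)^\Gamma$ with $\mu \circ i_l = \alpha$ and $f(\RR^m)^\Gamma \circ \mu \simeq \beta$ relative to $\partial D^l$. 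The plan is to take $K = O(m)$ and $V = \RR^m$, which is a faithful $K$-representation, so both the $G$-global equivalence condition and the $G$-global fibration condition will be available.

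First, apply the $G$-global equivalence property to the original lifting problem: there exist a $K$-equivariant embedding $\psi \colon V \to W$ and a map $\lambda \colon D^l \to X(W)^\Gamma$ with $\lambda \circ i_l = X(\psi)^\Gamma \circ \alpha$ and a homotopy $H$ from $Y(\psi)^\Gamma \circ \beta$ to $f(W)^\Gamma \circ \lambda$ relative to $\partial D^l$. Next, upgrade $\lambda$ to a strict lift: the inclusion $(D^l \times \{1\}) \cup (\partial D^l \times [0,1]) \hookrightarrow D^l \times [0,1]$ is an acyclic cofibration, so the Serre fibration $f(W)^\Gamma$ admits a lift of $H$ (reversed) starting at $\lambda$, constant $X(\psi)^\Gamma \circ \alpha$ on $\partial D^l$. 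Evaluating at $0$ produces $\tilde\lambda \colon D^l \to X(W)^\Gamma$ with $f(W)^\Gamma \circ \tilde\lambda = Y(\psi)^\Gamma \circ \beta$ and $\tilde\lambda \circ i_l = X(\psi)^\Gamma \circ \alpha$. Finally, since $V$ is faithful, the $G$-global fibration axiom says that the square relating $f(V)^\Gamma$ and $f(W)^\Gamma$ is homotopy cartesian; because both vertical maps are Serre fibrations, the strict pullback models the homotopy pullback, and the matching map $g \colon X(V)^\Gamma \to Y(V)^\Gamma \times_{Y(W)^\Gamma} X(W)^\Gamma$ is a weak equivalence. The pair $(\beta, \tilde\lambda)$ together with $\alpha$ form a lifting problem for $g$, so applying \cite[9.6 Lemma]{may1999concise} to $g$ yields a map $\mu \colon D^l \to X(V)^\Gamma$ with $\mu \circ i_l = \alpha$ and $g \circ \mu \simeq (\beta, \tilde\lambda)$ relative $\partial D^l$. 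Projecting to $Y(V)^\Gamma$ gives $f(V)^\Gamma \circ \mu \simeq \beta$ relative $\partial D^l$, so $\mu$ solves the original lifting problem in the sense required by May's lemma.

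The main bookkeeping issue is ensuring that $V = \RR^m$ is treated as a faithful $K$-representation for a suitable $K$; taking $K = O(m)$ works because then the $\psi$ produced by the $G$-global equivalence condition is automatically $K$-equivariant and $V$ is faithful, opening up the homotopy cartesian hypothesis. The other subtle point is the strictification step: one must verify that the required initial data on $(D^l \times \{1\}) \cup (\partial D^l \times [0,1])$ are compatible, which follows directly from $\lambda \circ i_l = X(\psi)^\Gamma \circ \alpha$ together with $H$ being relative to $\partial D^l$ and $f(V)^\Gamma \circ \alpha = \beta \circ i_l$. Everything else is a straightforward application of the standard equivalence "Serre fibration plus approximate lifting in the sense of May 9.6 implies weak equivalence."
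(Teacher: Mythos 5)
Your proposal is correct and follows essentially the same route as the paper's proof: apply the $G$-global equivalence condition to produce an approximate lift after embedding into a larger representation, strictify using the Serre fibration property, then invoke the homotopy-cartesian condition from the $G$-global fibration definition to pull the lift back to level $\RR^m$ via May's 9.6 Lemma. The only cosmetic difference is that you take $K=O(m)$ throughout (so $\psi$ is $O(m)$-equivariant and $\RR^m$ is visibly faithful), whereas the paper works with the subgroup $H\leqslant O(m)$ underlying the graph subgroup; both are fine since $\RR^m$ is a faithful $H$-representation for any $H\leqslant O(m)$.
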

\begin{proof}
Consider $m \geqslant 0$, a graph subgroup $\Gamma \in \fat(O(m), G)$ given by a homomorphism $H \to G$ with $H \leqslant O(m)$, and a lifting problem of the form
\[
\begin{tikzcd}
\partial D^l \arrow[r, "\alpha"] \arrow[d, hook, "i_l"] & X(\RR^m)^\Gamma \arrow[d, "f(\RR^m)^\Gamma"] \\
D^l \arrow[r, "\beta"]                                    & Y(\RR^m)^\Gamma.                       
\end{tikzcd}\]
Since $f$ is a $G$-global equivalence there is an embedding of $H$-representations $\psi \colon \RR^m \to V$ and a map $\lambda \colon D^l \to X(V)^\Gamma$ such that $\lambda \circ i_l = X(\psi)^\Gamma \circ \alpha$ and $f(V)^\Gamma \circ \lambda$ is homotopic relative $\partial D^l$ to $Y(\psi)^\Gamma \circ \beta$.

Since $f$ is a $G$-level fibration, $f(V)^\Gamma$ is a Serre fibration. By lifting against
\[D^l \times \{0\} \cup \partial D^l \times [0, 1] \to D^l \times [0, 1]\]
which is a cofibration of spaces we can replace $\lambda$ with a $\lambda'$ such that $\lambda' \circ i_l = X(\psi)^\Gamma \circ \alpha$ and $f(V)^\Gamma \circ \lambda' = Y(\psi)^\Gamma \circ \beta$.

Since $f$ is a $G$-global fibration,
\[(f(\RR^m)^\Gamma, X(\psi)^\Gamma) \colon X(\RR^m)^\Gamma \to Y(\RR^m)^\Gamma \times_{Y(V)^\Gamma} X(V)^\Gamma\]
is a weak homotopy equivalence. This means that by~\cite[9.6 Lemma]{may1999concise} there is a map $\lambda''$ in the following diagram
\[\begin{tikzcd}
\partial D^l \arrow[r, "\alpha"] \arrow[d, hook, "i_l"] & X(\RR^m)^\Gamma \arrow[d, "{(f(\RR^m)^\Gamma, X(\psi)^\Gamma)}"] \\
D^l \arrow[r, "{(\beta, \lambda')}"'] \arrow[ru, "\lambda''", dashed] & Y(\RR^m)^\Gamma \times_{Y(V)^\Gamma} X(V)^\Gamma
\end{tikzcd}\]
such that the upper-left triangle commutes and the lower-right triangle commutes up to homotopy relative $\partial D^l$. Thus by~\cite[9.6 Lemma]{may1999concise} again $f(\RR^m)^\Gamma$ is a weak homotopy equivalence, and so $f$ is a $G$-level equivalence.\end{proof}

\begin{thm}[$G$-global model structure]
\label{thmGglobalmodel}
There is a proper topological cofibrantly generated model structure on the category $\GSpc$ of $G$-orthogonal spaces, with the $G$-global equivalences as the weak equivalences, the $G$-global fibrations as the fibrations, and the $G$-flat cofibrations of the $G$-level model structure as the cofibrations. We call this model structure the $G$-global model structure. 

$\Is_G$ is a set of generating cofibrations of this model structure. The set $\Js_G \cup \Ks_G$ is a set of generating acyclic cofibrations. Recall that $\Is_G$, $\Js_G$ and $\Ks_G$ were given in Theorem~\ref{thmlevelmodel} and Construction~\ref{constrKg}.
\end{thm}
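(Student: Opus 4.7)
The plan is to invoke Kan's recognition principle for cofibrantly generated model structures with weak equivalences the $G$-global equivalences, generating cofibrations $\Is_G$, and generating acyclic cofibrations $\Js_G \cup \Ks_G$. The class of $G$-global equivalences satisfies 2-out-of-3 and is closed under retracts by Lemma~\ref{lemmGglobal}, and Lemma~\ref{lemmsmallsources} yields smallness of the sources of $\Is_G$ and $\Js_G \cup \Ks_G$ with respect to the relevant cell classes, since any such cell morphism is in particular a $G$-$h$-cofibration and hence a levelwise closed embedding. Every transfinite composition of cobase changes of maps in $\Js_G \cup \Ks_G$ is both a $G$-flat cofibration and a $G$-global equivalence by Lemma~\ref{lemmJandK}, which takes care of one of the two main hypotheses of the recognition principle.

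The crux is the remaining compatibility condition, which I would verify in the form ``a $G$-global equivalence that is also a $(\Js_G \cup \Ks_G)$-injective is an $\Is_G$-injective''. Lemma~\ref{lemmGfiblifting} identifies $(\Js_G \cup \Ks_G)$-injectives with $G$-global fibrations, and then Lemma~\ref{lemmGglobalandisGlevel} upgrades such a map to a $G$-level equivalence. A $G$-level equivalence which is a $G$-level fibration is a $G$-level acyclic fibration, and these are precisely the $\Is_G$-injectives in the $G$-level model structure of Theorem~\ref{thmlevelmodel}. Conversely, any $\Is_G$-injective is a $G$-level acyclic fibration, hence a $G$-global equivalence by Lemma~\ref{lemmlevelglobal}, and it is also a $(\Js_G \cup \Ks_G)$-injective because $\Ks_G$ consists of $G$-flat cofibrations by Lemma~\ref{lemmJandK}. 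This completes the hypotheses of the recognition principle and produces the desired cofibrantly generated model structure. The cofibrations coincide with the $G$-flat cofibrations because $\Is_G$ already generates the cofibrations of the $G$-level model structure.

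For the ancillary properties, right properness follows from Lemma~\ref{lemmpullbackfibGglobal} once one observes that $G$-global fibrations are in particular $G$-level fibrations; left properness follows from Corollary~\ref{corocobase} combined with Lemma~\ref{lemmGhcofclosed}, which records that every $G$-flat cofibration is a $G$-$h$-cofibration. Topologicality is inherited from the $G$-level structure: the cofibration half of the pushout-product axiom is untouched from Theorem~\ref{thmlevelmodel} (using Lemma~\ref{lemmpushoutprodgflat}), while the acyclic half follows from Corollary~\ref{coropushoutproducteq} applied to a $G$-global acyclic cofibration (a $G$-global equivalence that is simultaneously a $G$-$h$-cofibration) against a cofibration of $\Topcat$. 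The main obstacle throughout is the compatibility condition of the recognition principle, which is precisely what forces the intricate set $\Ks_G$ built from the restriction maps $\rho_{\Gamma, V, W; G}$, and for which Lemma~\ref{lemmGglobalandisGlevel} is the decisive technical input.
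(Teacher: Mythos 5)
Your proposal is correct and relies on exactly the same technical inputs as the paper; the only real difference is one of packaging. Where you invoke Kan's recognition principle and verify its hypotheses, the paper unwinds the argument into a hands-on verification of the two factorization and two lifting axioms (using the $G$-level model structure for the ``cofibration followed by acyclic fibration'' factorization, the small object argument on $\Js_G\cup\Ks_G$ for the other, and a retract argument for the acyclic-cofibration lifting property); both packagings hinge on the same chain Lemma~\ref{lemmGfiblifting} $\to$ Lemma~\ref{lemmGglobalandisGlevel}, together with Lemma~\ref{lemmJandK}, Lemma~\ref{lemmsmallsources}, Lemma~\ref{lemmpullbackfibGglobal}, and Corollary~\ref{corocobase}, so the substance is identical. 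The one place you genuinely deviate is topologicality, where the paper simply cites an abstract sufficient criterion from the reference (with $\mathpzc{G}$ the semifree generators and $\mathpzc{Z}$ the mapping-cylinder maps $\iota_{\rho_{\Gamma,V,W;G}}$), whereas you argue the pushout-product axiom directly: reuse the cofibration half from the topological $G$-level structure of Theorem~\ref{thmlevelmodel}, and get the acyclic half from Corollary~\ref{coropushoutproducteq} (both directions: $G$-flat acyclic cofibration against a topological cofibration, and $G$-flat cofibration against a topological acyclic cofibration, each reduced to a $G$-$h$-cofibration plus a $G$-global equivalence). This direct route works, though the parenthetical appeal to Lemma~\ref{lemmpushoutprodgflat} is slightly misdirected — that lemma concerns the box product of two $G$-orthogonal spaces rather than the tensoring over $\Topcat$; the fact you actually need is just that the $G$-level model structure is already topological, which Theorem~\ref{thmlevelmodel} provides.
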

\begin{proof}
$\GSpc$ is complete and cocomplete. The $G$-global equivalences satisfy the 2-out-of-6 property and are closed under retracts by Lemma~\ref{lemmGglobal}~i) and ii) respectively. The $G$-global fibrations and $G$-flat cofibrations are closed under retracts because they can be defined via lifting properties, see Lemma~\ref{lemmGfiblifting} and Theorem~\ref{thmlevelmodel} respectively. Now we have to check the lifting and factorization axioms.

 Given a morphism in $\GSpc$, we can use the $G$-level model structure of Theorem~\ref{thmlevelmodel} to decompose it into $f \circ i$ where $i$ is a $G$-flat cofibration and $f$ is a $G$-level fibration and a $G$-level equivalence, so it is also a $G$-global equivalence by Lemma~\ref{lemmlevelglobal}. Given $\psi \colon V \to W$ a linear isometric embedding of faithful $K$-representations, in the square
\[\begin{tikzcd}
	{X(V)^\Gamma} & {X(W)^\Gamma} \\
	{Y(V)^\Gamma} & {Y(W)^\Gamma}
	\arrow["{f(V)^\Gamma}"', from=1-1, to=2-1]
	\arrow["{X(\psi)^\Gamma}", from=1-1, to=1-2]
	\arrow["{f(W)^\Gamma}"', from=1-2, to=2-2]
	\arrow["{Y(\psi)^\Gamma}", from=2-1, to=2-2]
\end{tikzcd}\]
the two vertical morphisms are weak equivalences by Lemma~\ref{lemmlevelglobal}. Therefore this square is homotopy cartesian and $f$ is a $G$-global fibration. This gives one of the factorization axioms.

For the second factorization axiom, we apply Quillen's small object argument to the set $\Js_G \cup \Ks_G$, which we can do by Lemma~\ref{lemmsmallsources}. This factors any morphism into $f \circ j$, where by Lemma~\ref{lemmJandK} we know that $j$ is a $G$-flat cofibration and a $G$-global equivalence, and $f$ has the right lifting property with respect to $\Js_G \cup \Ks_G$, so by Lemma~\ref{lemmGfiblifting} it is a $G$-global fibration. This gives the second factorization axiom. Note for later that this $j$ by construction has the left lifting property with respect to $G$-global fibrations.

One of the lifting axioms can be obtained from the $G$-level model structure. By Lemma~\ref{lemmGglobalandisGlevel}, a morphism which is both a $G$-global fibration and a $G$-global equivalence is a $G$-level equivalence, so it has the right lifting property with respect to the $G$-flat cofibrations.

Lastly, consider a morphism $g$ which is both a $G$-flat cofibration and a $G$-global equivalence. We can use Quillen's small object argument on the set $\Js_G \cup \Ks_G$ again to decompose $g$ into $f \circ j$, where $f$ is a $G$-global fibration and $j$ is a $G$-global equivalence which has the left lifting property with respect to $G$-global fibrations. By the 2-out-of-6 property $f$ is also a $G$-global equivalence. Then by the previously proven lifting axiom $g$ is a retract of $j$, so it also has the left lifting property with respect to $G$-global fibrations.

This model structure is right proper by Lemma~\ref{lemmpullbackfibGglobal} ($G$-global fibrations are $G$-level fibrations) and left proper by Corollary~\ref{corocobase}. Using~\cite[Proposition~B.5]{global} we obtain that this model structure is topological, taking $\mathpzc{G}$ and $\mathpzc{Z}$ in that statement to be
\[\mathpzc{G}= \{\, L_{\Gamma, \RR^m; G}  \mid m \geqslant 0, \Gamma \in \fat(O(m), G) \,\} \,\text{and}\, \mathpzc{Z}= \{\, \iota_{\rho_{\Gamma, V, W; G}} \mid (K, \Gamma, V, W) \in \kappa \}.\qedhere\]
\end{proof}

\begin{rem}
\label{remwhygraph2}
As mentioned in Remark~\ref{remwhygraph1}, we can define a different class of $G$-global equivalences by checking the condition from Definition~\ref{defiGglobal} on all subgroups of $K \times G$ instead of only on the graph subgroups. We can do the same for all the results of this appendix, replacing $\fat(K, G)$ everywhere by the set of all closed subgroups of $K \times G$. We can take the $G$-level model structure given by all subgroups briefly mentioned right after Theorem~\ref{thmlevelmodel}, and localize it at this smaller class of $G$-global equivalences. This gives us a model structure with this smaller class of $G$-global equivalences as the weak equivalences, as well as fibrations and cofibrations that are similarly defined by looking at all subgroups instead of just the graph subgroups. However, as shown by the various results of this article, the $G$-global model structure constructed in this appendix is more relevant when looking at operads in $\Spc$.
\end{rem}

\printbibliography

\end{document}